\theoremstyle{plain}
\newtheorem{Theorem}{Theorem}[section]
\newtheorem{Proposition}[Theorem]{Proposition}
\newtheorem{Lemma}[Theorem]{Lemma}
\newtheorem{Corollary}[Theorem]{Corollary}
\newtheorem{Example}[Theorem]{Example}
\newtheorem{Setting}[Theorem]{Setting}
\theoremstyle{remark}
\newtheorem{Definition}[Theorem]{Definition}
\newtheorem{Remark}[Theorem]{Remark}
\newcommand{\mfS}{\mathfrak{S}}
\newcommand{\liminv}{\varprojlim}
\newcommand{\normal}{\triangleleft}
\newcommand{\Gal}{{\rm Gal}}
\newcommand{\GalS}{{{\bf Gal}_\mfS}}
\newcommand{\Q}{{\mathbb{Q}}}
\newcommand{\rk}{{\rm rk}}
\newcommand{\function}{{:}\;}
\newcommand{\PSCC}{{{\rm P}\mfS{\rm CC}}}
\newcommand{\KtotS}{{K^\mfS}}
\newcommand{\mf}{\mathfrak}
\newcommand{\mfp}{{\mf p}}
\newcommand{\mfq}{{\mf q}}
\newcommand{\mfP}{{\mf P}}
\newcommand{\mfQ}{{\mf Q}}
\newcommand{\mcS}{{\mathcal{S}}}
\newcommand{\mcG}{{\mathcal{G}}}
\newcommand{\mcH}{{\mathcal{H}}}
\newcommand{\mcA}{{\mathcal{A}}}
\newcommand{\mcB}{{\mathcal{B}}}
\newcommand{\mcO}{{\mathcal{O}}}
\newcommand{\mcL}{{\mathcal{L}}}
\newcommand{\mcLr}{{\mcL_{\rm ring}}}
\newcommand{\mbG}{{\mathbf{G}}}
\newcommand{\mbH}{{\mathbf{H}}}
\newcommand{\mbA}{{\mathbf{A}}}
\newcommand{\mbB}{{\mathbf{B}}}
\newcommand{\CC}{{\rm CC}}
\newcommand{\mfpinS}{{\mfp\in\mfS}}
\def\freeprod{
\def\freeprodD{{\prod\kern-13pt{*}\kern5pt}}
\def\freeprodT{\prod\kern-11pt{*}\kern2pt}
\mathop{\mathchoice{\freeprodD}{\freeprodT}
{not defined}{not defined}}
}
\def\hefresh{
   \def\hefreshD{\mathop{\raise1.5pt\hbox{${\smallsetminus}$}}}
   \def\hefreshS{\mathop{\raise0.85pt\hbox{$\scriptstyle\smallsetminus$}}}
   \mathchoice{\hefreshD}{\hefreshD}{\hefreshS}{\hefreshS}}
\renewcommand{\setminus}{\hefresh}
\def\dotunion{
\def\dotunionD{\bigcup\kern-10.5pt\cdot\kern5pt}
\def\dotunionT{\bigcup\kern-8.5pt\cdot\kern3.5pt}
\mathop{\mathchoice{\dotunionD}{\dotunionT}{}{}}}
\renewcommand{\biguplus}{\dotunion}
\newcommand{\bfsigma}{\mbox{\boldmath$\sigma$}}
\newcommand{\smallbfsigma}{\mbox{\boldmath$\scriptstyle\sigma$}}
\newcommand{\rmdef}{\circ}
\newcommand{\equivleq}{\sim}
\begin{document}

\author{Arno Fehm}
\title[Elementary theory of totally $\mfS$-adic numbers]{The elementary theory of\\large fields of totally $\mfS$-adic numbers}

\begin{abstract}
We analyze the elementary theory of certain fields $\KtotS(\bfsigma)$ 
of totally $\mfS$-adic algebraic numbers
that were introduced and studied
by Geyer-Jarden and Haran-Jarden-Pop.
In particular, we provide an axiomatization of these theories and prove their decidability,
thereby giving a common generalization of classical decidability results of Jarden-Kiehne, Fried-Haran-V\"olklein and Ershov.
\end{abstract}

\maketitle

\bibliographystyle{alpha}

\section{Introduction}

\noindent
Let $\mfS$ be a finite set of absolute values on a number field $K$.
By $\KtotS$ we denote the field of totally $\mfS$-adic numbers -- the maximal Galois extension of $K$ in which the elements of $\mfS$ are totally split.
For an integer $e\geq0$ and an $e$-tuple $\bfsigma=(\sigma_1,\dots,\sigma_e)\in\Gal(K)^e$ of elements of the absolute Galois group of $K$,
we let $\KtotS(\bfsigma)$ be the fixed field of the group $\left<\sigma_1,\dots,\sigma_e\right>\leq\Gal(K)$ inside $\KtotS$.

These fields $\KtotS(\bfsigma)$ were studied by Jarden-Razon \cite{JardenRazon}, Geyer-Jarden \cite{GeyerJarden},
and recently in a series of papers by Haran-Jarden-Pop \cite{HJPd}, \cite{HJPe}.
In particular, these authors prove that for almost all $\bfsigma\in\Gal(K)^e$, in the sense of Haar measure on the compact group $\Gal(K)^e$, 
the field $\KtotS(\bfsigma)$ satisfies a local-global principle for rational points on varieties, and its absolute Galois group has a nice description as a free product of local factors.

Combining these results we are able to give an axiomatization 
of the theory $T_{{\rm almost},\mfS,e}$ of first-order sentences in the language of rings that hold in almost all $\KtotS(\bfsigma)$ (Theorem~\ref{almostaxioms})
and we prove the decidability of this theory (Theorem \ref{TheoremSigma}):

\begin{Theorem}\label{TheoremIntro1}
Let $\mfS$ be a finite set of absolute values on a number field $K$, and let $e\geq0$.
Then the first-order theory $T_{{\rm almost},\mfS,e}$ of almost all $\KtotS(\bfsigma)$, $\bfsigma\in\Gal(K)^e$, is decidable.
\end{Theorem}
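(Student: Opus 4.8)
The plan is to deduce decidability of $T_{{\rm almost},\mfS,e}$ from the promised explicit axiomatization (Theorem~\ref{almostaxioms}) together with a relative decidability argument that reduces truth of ring-theoretic sentences in $\KtotS(\bfsigma)$ to arithmetic over $K$ and over the completions $K_\mfp$, $\mfp\in\mfS$. First I would recall that by Theorem~\ref{almostaxioms} the theory $T_{{\rm almost},\mfS,e}$ is \emph{recursively axiomatized}: its models are, up to elementary equivalence, the fields that are $\PSC$ (or $\PSCC$) with respect to the relevant family of local conditions coming from $\mfS$ and that carry a distinguished Galois-theoretic structure encoding the free-product description of Haran-Jarden-Pop, with the ``$e$ generators'' recorded by an appropriate sentence scheme bounding the corank. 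The key point for decidability is that each axiom in this list is effectively writable down from the finite data $(K,\mfS,e)$; granting this, $T_{{\rm almost},\mfS,e}$ is at least recursively enumerable, so it remains to produce a decision procedure, equivalently (by a standard argument) to show the theory is also co-r.e., i.e.\ complete in a recursive way, or to exhibit a recursive complete extension.

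The main work, then, is a \emph{quantifier-reduction / model-completeness-style} analysis: I would show that, modulo the axioms, every first-order formula is equivalent to a Boolean combination of (i) sentences in the language of rings interpreted in the base number field $K$ and its finite extensions, and (ii) ``local'' sentences about the fixed completions $K_\mfp$ for $\mfp\in\mfS$ — both of which are decidable by classical results (decidability of the theory of $K$ via Robinson, and decidability of the theories of local fields $K_\mfp$ via Ax--Kochen--Er\v{s}ov in the $p$-adic case and the real/archimedean case). This is exactly the mechanism by which the cited theorems of Jarden--Kiehne (the case $\mfS=\emptyset$, $\Ktot{}(\bfsigma)$ PAC-like), Fried--Haran--V\"olklein, and Er\v{s}ov (the $p$-adic totally split case) were obtained, and the statement is advertised as a common generalization of these. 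Concretely, I expect the argument to run through the local-global principle for rational points (the $\PSC$/$\PSCC$ property) to eliminate existential quantifiers ranging over the field in favor of solvability conditions checkable locally, combined with the free-product structure of $\Gal(\KtotS(\bfsigma))$ to translate statements about definable Galois-theoretic configurations into statements about the finitely many local factors and about embedding problems over $K$ with prescribed local behavior, the latter being decidable because $\Gal(K)$ acts on a recursively presented profinite setup.

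The hard part will be the effective handling of the Galois-theoretic side: one must verify that the relevant embedding problems (finite split embedding problems with prescribed local conditions at the places of $\mfS$, subject to the rank/corank constraints imposed by the $e$ generators $\bfsigma$) are \emph{uniformly solvable in an effective way}, so that the reduction to the decidable theories of $K$ and the $K_\mfp$ is genuinely algorithmic rather than merely existential. Here I would lean on the structure theorems of Haran--Jarden--Pop describing $\Gal(\KtotS(\bfsigma))$ for almost all $\bfsigma$ as a free product of groups of the form $\mathrm{Gal}(K_\mfp)$ (suitably counted) together with a free profinite factor, and on the fact that this description is governed by finitely many parameters over $K$; the decidability then follows by feeding the resulting finitary arithmetic-and-local data into the known decision procedures. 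Once the quantifier reduction is in place, completeness of $T_{{\rm almost},\mfS,e}$ (hence, with recursive axiomatizability, decidability) is immediate because the reduced sentences have determined truth values, and collecting these yields the algorithm; this last packaging step is routine.
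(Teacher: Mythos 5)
Your proposal starts from the right place --- Theorem~\ref{almostaxioms} gives an explicit axiomatization $T_{{\rm tot},\mfS,e}$ of $T_{{\rm almost},\mfS,e}$, and the paper does verify (Lemma~\ref{Ttotperecursive}) that this axiomatization is recursive, so $T_{{\rm almost},\mfS,e}$ is recursively enumerable. But from there the argument goes wrong, for a fundamental reason: $T_{{\rm almost},\mfS,e}$ is \emph{not} a complete theory, so the step you call ``routine packaging'' (getting decidability from r.e.~plus completeness) is simply not available. Concretely, let $f\in K[X]$ be an irreducible polynomial of degree $\geq 2$ that splits completely in $\KtotS$ but not in $K$. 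Then the sentence $(\exists x)(f(x)=0)$ holds in $\KtotS(\bfsigma)$ exactly on a set of $\bfsigma$ of Haar measure equal to $|\Sigma_{L,\lambda}|/[L:K]^e$ for $L$ the splitting field, and this lies strictly between $0$ and $1$; neither it nor its negation belongs to $T_{{\rm almost},\mfS,e}$. Your claim that ``the reduced sentences have determined truth values'' is therefore false, and without it the proposed quantifier reduction to decidable sentences about $K$ and the $K_\mfp$ does not yield a decision procedure. Likewise, ``exhibiting a recursive complete extension'' would decide the theory of one particular $\KtotS(\bfsigma)$ but would say nothing about which sentences hold on a full-measure set, so it cannot replace the missing step either.

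The ingredient your sketch omits --- and the genuinely new one in the paper --- is the Haar measure computation. The paper shows (Proposition~\ref{characterization}, Lemma~\ref{testsentences}) that two models of $T_{{\rm tot},\mfS,e}$ are elementarily equivalent over $K$ if and only if they satisfy the same \emph{test sentences}, i.e.\ Boolean combinations of sentences $(\exists x)(f(x)=0)$ with $f$ splitting completely over $\KtotS$; equivalently, elementary equivalence is governed entirely by the algebraic part $F\cap\tilde{K}$. By the general model-theoretic fact \cite[7.8.2]{FJ3}, this implies that every $\mcLr(K)$-sentence $\theta$ is provably equivalent modulo $T_{{\rm tot},\mfS,e}$ to some test sentence $\lambda_\theta$, and because $T_{{\rm tot},\mfS,e}$ is recursive one can \emph{search} for such a $\lambda_\theta$ effectively (the set of test sentences being recursive by Lemma~\ref{testsentencesrecursive}). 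The decidability then rests on Lemma~\ref{testrecursive}: the map $\lambda\mapsto\mu(\Sigma_{\mfS,e}(\lambda))\in\Q$ is recursive, since for a test sentence $\lambda$ the set $\Sigma_{\mfS,e}(\lambda)$ is a union of cosets of $\Gal(L_\lambda)^e$ in $\Gal(K)^e$ and the measure is a counting problem inside the finite group $\Gal(L_\lambda/K)^e$, computable from the splitting algorithm of $K$. Finally, $\theta\in T_{{\rm almost},\mfS,e}$ if and only if $\mu(\Sigma_{\mfS,e}(\lambda_\theta))=1$. Your concern about the effective solvability of embedding problems is not where the difficulty actually sits: that is absorbed into the recursive axiomatization and G\"odel completeness; what makes the decision procedure work is the measure computation, together with the reduction to test sentences, neither of which appears in your proposal.
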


This theorem is a common generalization of classical decidability results of
Jarden-Kiehne \cite{JardenKiehne} (the case $\mfS=\emptyset$),
Fried-Haran-V\"olklein \cite{FHV94} (the case $K=\mathbb{Q}$, $e=0$ and $\mfS$ consisting only of the archimedean absolute value),
and Ershov \cite{ErshovNLGF} (the case $K=\mathbb{Q}$, $e=0$ and $\mfS$ consisting only of $p$-adic absolute values).

In fact, we prove a more general and stronger statement, see Theorem \ref{TheoremSigma}.
The main part of the proof consists of an analysis of the absolute Galois group of $\KtotS(\bfsigma)$ together with local data,
and the model theory of such structures.

\section{Preliminaries on profinite groups and spaces}
\label{secprelim}
\label{secprofinite}

\noindent
We assume that the reader is familiar with the basic theory of profinite groups, 
as presented in \cite[Ch.~1]{FJ3}, \cite[Ch.~2]{RibesZalesskii}.

We always consider profinite groups as topological groups,
so in particular homomorphisms between profinite groups are continuous group homomorphisms.
By $H\leq G$ (resp.~$H\normal G$) we indicate that $H$ is a closed (resp.~normal closed) subgroup of $G$.
If $X\subseteq G$, we denote by $\left< X \right>$ the closed subgroup
generated by $X$ in $G$.
We use the symbol $1$ to denote both the unit element of $G$,
and the trivial subgroup $\{1\}\leq G$.

In the category of profinite groups,  
direct products,
inverse limits 
and fibre products exist, \cite[22.2.1]{FJ3}.
For the notion of the {\bf rank} $\rk(G)$ of a profinite group $G$
and the notion of a {\bf free} profinite group see \cite[Ch.~17]{FJ3}.
We denote by $\hat{F}_e$ the free profinite group of rank $e$.

\begin{Lemma}\label{GaschuetzSubgroup}
Let $\pi\function G\rightarrow H$ be an epimorphism of profinite groups.
Let $e\geq0$,
let $N\normal G$ be a closed normal subgroup with $\rk(G/N)\leq e$,
and let $h_1,\dots,h_e\in H$ such that $H=\left<h_1,\dots,h_e,\pi(N)\right>$.
Then there exist $g_1,\dots,g_e\in G$ such that
$G=\left<g_1,\dots,g_e,N\right>$ and
$\pi(g_i)=h_i$, $i=1,\dots,e$.
\end{Lemma}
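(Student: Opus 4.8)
The plan is to deduce the statement from the profinite version of Gasch\"utz's lemma applied to the quotient $G/N$, and then to correct the resulting lifts inside $N$ so that they map exactly to the $h_i$. First I would pass to quotients: since $\pi$ is an epimorphism and $N\normal G$, the image $\pi(N)$ is a closed normal subgroup of $H$, so $\pi$ induces an epimorphism $\bar\pi\function G/N\rightarrow H/\pi(N)$. Writing $\bar h_i=h_i\pi(N)\in H/\pi(N)$ for the image of $h_i$, the hypothesis $H=\left<h_1,\dots,h_e,\pi(N)\right>$ says exactly that $H/\pi(N)=\left<\bar h_1,\dots,\bar h_e\right>$. Thus $\bar\pi$ is an epimorphism from a profinite group of rank $\leq e$ onto a profinite group generated by the $e$ elements $\bar h_1,\dots,\bar h_e$.

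Next I would apply the profinite Gasch\"utz lemma (see \cite[Ch.~17]{FJ3}): it yields $\bar g_1,\dots,\bar g_e\in G/N$ with $\left<\bar g_1,\dots,\bar g_e\right>=G/N$ and $\bar\pi(\bar g_i)=\bar h_i$ for $i=1,\dots,e$. Choose preimages $g_i'\in G$ of the $\bar g_i$ under the projection $G\rightarrow G/N$. The closed subgroup $\left<g_1',\dots,g_e',N\right>$ contains $N$, and its image in $G/N$ contains $\left<\bar g_1,\dots,\bar g_e\right>=G/N$; since it contains $N$ it is the full preimage of its image, hence $\left<g_1',\dots,g_e',N\right>=G$.

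It then remains to adjust the $g_i'$ within $N$. From $\bar\pi(\bar g_i)=\bar h_i$ we get $\pi(g_i')^{-1}h_i\in\pi(N)$, and since $\pi$ restricts to an epimorphism $N\rightarrow\pi(N)$ there is $n_i\in N$ with $\pi(n_i)=\pi(g_i')^{-1}h_i$. Setting $g_i=g_i'n_i$ we obtain $\pi(g_i)=\pi(g_i')\pi(n_i)=h_i$, while $\left<g_1,\dots,g_e,N\right>=\left<g_1',\dots,g_e',N\right>=G$ because each $n_i$ lies in $N$. This produces the required tuple $g_1,\dots,g_e$.

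The one genuinely nontrivial ingredient is the profinite form of Gasch\"utz's lemma with precisely these hypotheses, namely: an epimorphism $A\rightarrow B$ of profinite groups with $\rk(A)\leq e$ and $B$ generated by $e$ elements admits, over any prescribed generating $e$-tuple of $B$, a generating $e$-tuple of $A$. If one wants a self-contained argument in place of a citation, this is proved by a compactness/inverse-limit argument: for each open normal subgroup $M\normal G/N$ the set of $e$-tuples of $G/N$ lying over $(\bar h_1,\dots,\bar h_e)$ and generating $G/N$ modulo $M$ is clopen and, by the classical finite Gasch\"utz lemma, nonempty; these sets are directed downward, so their intersection is nonempty by compactness of $(G/N)^e$, and any tuple in the intersection works. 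This inverse-limit bookkeeping is the main obstacle; the reduction to $G/N$ and the final correction inside $N$ are routine.
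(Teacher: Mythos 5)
Your proposal is correct and takes essentially the same route as the paper: pass to the induced epimorphism $\bar\pi\colon G/N\rightarrow H/\pi(N)$, invoke the profinite Gasch\"utz lemma (\cite[17.7.2]{FJ3}) to get a generating $e$-tuple of $G/N$ over $(\bar h_1,\dots,\bar h_e)$, and then multiply the chosen lifts by suitable elements of $N$ to make them hit the $h_i$ on the nose. The only cosmetic difference is that the paper cites the profinite Gasch\"utz lemma directly rather than re-proving it via the compactness/inverse-limit argument you sketch at the end.
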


\begin{proof}
Let $\bar{G}=G/N$, $\bar{H}=H/\pi(N)$, and let $\bar{\pi}\function \bar{G}\rightarrow\bar{H}$
be the induced epimorphism.
Then $\bar{H}=\left<\bar{h}_1,\dots,\bar{h}_e\right>$, so Gasch\"utz' lemma \cite[17.7.2]{FJ3}
implies that there are $g_1,\dots,g_e\in G$ such that
$\bar{G}=\left<\bar{g}_1,\dots,\bar{g}_e\right>$ and
$\bar{\pi}(\bar{g}_i)=\bar{h}_i$, $i=1,\dots,e$.
So, $G=\left<g_1,\dots,g_e,N\right>$ and there are $n_1,\dots,n_e\in N$ such that
$\pi(g_i)=h_i\pi(n_i)$, $i=1,\dots,e$.
Thus, setting $g_i^\prime=g_in_i^{-1}$, we have
$G=\left<g_1^\prime,\dots,g_e^\prime,N\right>$
and $\pi(g_i^\prime)=h_i$, $i=1,\dots,e$.
\end{proof}

A {\bf profinite space} is a totally disconnected compact Hausdorff space.
Profinite spaces can be characterized as
inverse limits of finite discrete spaces, 
or as zero-dimensional compact Hausdorff spaces, \cite[1.1.12]{RibesZalesskii}.
Any product and any finite coproduct (i.e.~direct sum) of profinite spaces is a profinite space,
and a subspace of a profinite space is profinite if and only if it is closed.
Since profinite spaces are compact Hausdorff,
any continuous map between profinite spaces is closed,
and any continuous bijection of profinite spaces is a homeomorphism.

\section{Group Piles}
\label{secpiles}

\noindent
The notion of group piles was introduced in \cite{HJPd} to enrich profinite groups with extra local data.
We recall this notion and extend it.
Our main innovation is the introduction of a certain quotient $\bar{\mathbf{G}}$ 
that measures the failure of a deficient group pile $\mathbf{G}$ to be self-generated.

Fix a finite set $\mfS$ not containing $0$ and let $e\geq0$.

\begin{Definition}\label{Defsubgrtop}
Let $G=\liminv _N G/N$ be a profinite group,
where $N$ runs over all open normal subgroups of $G$.
Then the set ${\rm Subgr}(G)$ of all closed subgroups of $G$
is equipped with a profinite topology, 
induced by ${\rm Subgr}(G)=\liminv_N{\rm Subgr}(G/N)$.
The group $G$ acts continuously on ${\rm Subgr}(G)$ by conjugation.
A homomorphism $\alpha\function G\rightarrow H$ of profinite groups
induces a map
${\rm Subgr}(\alpha)\function {\rm Subgr}(G)\rightarrow{\rm Subgr}(H)$
given by $\Gamma\mapsto\alpha(\Gamma)$.
\end{Definition}

\begin{Lemma}\label{Subgrfunctor}
The map ${\rm Subgr}$ is a covariant functor from the category 
of profinite groups (with homomorphisms) 
to the category of profinite spaces
(with continuous maps).
\end{Lemma}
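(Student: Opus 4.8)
The statement has two components: that $\mathrm{Subgr}(G)$ is a profinite space for each profinite group $G$, and that for each homomorphism $\alpha\function G\rightarrow H$ the rule $\Gamma\mapsto\alpha(\Gamma)$ gives a well-defined continuous map $\mathrm{Subgr}(\alpha)\function\mathrm{Subgr}(G)\rightarrow\mathrm{Subgr}(H)$; functoriality is then formal. The first component is essentially built into Definition~\ref{Defsubgrtop}: since each $G/N$ is finite, $\mathrm{Subgr}(G/N)$ is a finite discrete space, so $\mathrm{Subgr}(G)=\liminv_N\mathrm{Subgr}(G/N)$ is a profinite space (here one uses the standard identification of closed subgroups $\Gamma\leq G$ with compatible families $(\Gamma N/N)_N$, the inverse being $(\Gamma_N)_N\mapsto\bigcap_N q_N^{-1}(\Gamma_N)$ for $q_N\function G\rightarrow G/N$). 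Well-definedness of $\mathrm{Subgr}(\alpha)$ is also easy: if $\Gamma\leq G$ then $\alpha(\Gamma)$ is the continuous image of a compact set in a Hausdorff group, hence a closed subgroup of $H$.

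The substantive point is continuity, and the plan is to exhibit $\mathrm{Subgr}(\alpha)$ as an inverse limit of maps of finite discrete spaces and invoke the universal property of $\liminv$. For each open normal subgroup $M\normal H$, the subgroup $\alpha^{-1}(M)\normal G$ is open and normal, and $\alpha$ induces an injective homomorphism $\bar\alpha_M\function G/\alpha^{-1}(M)\rightarrow H/M$, hence a (continuous, trivially) map $\mathrm{Subgr}(\bar\alpha_M)\function\mathrm{Subgr}(G/\alpha^{-1}(M))\rightarrow\mathrm{Subgr}(H/M)$ of finite discrete spaces. Composing with the limit projection $\pi_{\alpha^{-1}(M)}\function\mathrm{Subgr}(G)\rightarrow\mathrm{Subgr}(G/\alpha^{-1}(M))$ yields continuous maps $f_M\function\mathrm{Subgr}(G)\rightarrow\mathrm{Subgr}(H/M)$, and for $M'\subseteq M$ the evident triangle commutes, because $\alpha^{-1}(M')\subseteq\alpha^{-1}(M)$ makes the quotient maps compatible and $\mathrm{Subgr}$ of a composite of finite-group homomorphisms is visibly the composite. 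By the universal property of $\mathrm{Subgr}(H)=\liminv_M\mathrm{Subgr}(H/M)$, the family $(f_M)_M$ assembles into a continuous map $\mathrm{Subgr}(G)\rightarrow\mathrm{Subgr}(H)$.

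It then remains to check that this induced map really is $\Gamma\mapsto\alpha(\Gamma)$. For $\Gamma\leq G$, the projection $\pi_{\alpha^{-1}(M)}$ sends $\Gamma$ to $\Gamma\alpha^{-1}(M)/\alpha^{-1}(M)$, and $\bar\alpha_M$ carries this to $\alpha(\Gamma)M/M$, which is precisely the image of the closed subgroup $\alpha(\Gamma)$ under $H\rightarrow H/M$; as $M$ ranges over all open normal subgroups of $H$, these images determine $\alpha(\Gamma)$, so the induced map coincides with $\mathrm{Subgr}(\alpha)$. Functoriality is now immediate from the set-level formula: $\mathrm{Subgr}(\mathrm{id}_G)=\mathrm{id}$, and $\mathrm{Subgr}(\beta\circ\alpha)(\Gamma)=\beta(\alpha(\Gamma))=\mathrm{Subgr}(\beta)(\mathrm{Subgr}(\alpha)(\Gamma))$ for composable $\alpha,\beta$. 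The only step requiring genuine care is the continuity argument; the rest is bookkeeping with the inverse limit descriptions.
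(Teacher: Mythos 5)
Your proof is correct, and it fills in precisely the details that the paper's one-line proof ("it is easy to check\ldots") leaves to the reader: the well-definedness via compactness, and continuity by realizing $\mathrm{Subgr}(\alpha)$ as the inverse limit of the maps $\mathrm{Subgr}(\bar\alpha_M)$ between finite discrete spaces and invoking the universal property. This is the standard argument implicit in Definition~\ref{Defsubgrtop}, so the approach matches the paper's.
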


\begin{proof}
It is easy to check that if
$\alpha\function G\rightarrow H$ is a homomorphism of profinite groups,
then the induced map ${\rm Subgr}(G)\rightarrow{\rm Subgr}(H)$ is continuous.
\end{proof}

\begin{Lemma}\label{continuous}
If $H$ is a closed subgroup of a profinite group $G$,
then ${\rm Subgr}(H)$ is a closed subspace of ${\rm Subgr}(G)$.
\end{Lemma}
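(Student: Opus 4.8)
The plan is to deduce the statement from the functoriality of ${\rm Subgr}$ (Lemma~\ref{Subgrfunctor}) together with the basic fact, recorded at the end of Section~\ref{secprelim}, that a continuous bijection of profinite spaces is a homeomorphism. Let $\iota\function H\hookrightarrow G$ be the inclusion. By Definition~\ref{Defsubgrtop} the induced map ${\rm Subgr}(\iota)\function{\rm Subgr}(H)\to{\rm Subgr}(G)$ sends a closed subgroup $\Gamma\leq H$ to $\iota(\Gamma)=\Gamma$, regarded now as a closed subgroup of $G$; thus on underlying sets it is simply the set-theoretic inclusion of the closed subgroups of $H$ into those of $G$, and it is continuous by Lemma~\ref{Subgrfunctor}. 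Since $\iota$ is injective, ${\rm Subgr}(\iota)$ is injective as well: two closed subgroups of $H$ that agree as subgroups of $G$ agree.

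Next I would invoke compactness. Both ${\rm Subgr}(H)$ and ${\rm Subgr}(G)$ are profinite spaces, hence the former is compact and the latter is Hausdorff, so the image ${\rm Subgr}(\iota)({\rm Subgr}(H))$ is a compact, hence closed, subspace of ${\rm Subgr}(G)$, and in particular is itself a profinite space. The corestriction ${\rm Subgr}(H)\to{\rm Subgr}(\iota)({\rm Subgr}(H))$ is then a continuous bijection of profinite spaces, hence a homeomorphism. Identifying ${\rm Subgr}(H)$ with this closed subspace of ${\rm Subgr}(G)$ via the natural map ${\rm Subgr}(\iota)$ yields the claim.

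I do not expect a real obstacle here: the only point one must check by hand is that ${\rm Subgr}(\iota)$ is injective and coincides with the set-theoretic inclusion of closed subgroups, which is immediate from the definitions, after which everything is formal. A variant avoiding compactness would argue directly on the defining inverse systems: for each open normal $N\normal G$ one has $H\cap N\normal H$ open with $H/(H\cap N)\cong HN/N$, the subgroups $H\cap N$ are cofinal among the open normal subgroups of $H$, and ${\rm Subgr}(HN/N)$ is a subset of the finite discrete space ${\rm Subgr}(G/N)$; passing to the limit over $N$ — the transition maps being compatible by functoriality — exhibits ${\rm Subgr}(H)=\liminv_N{\rm Subgr}(HN/N)$ as a closed subspace of ${\rm Subgr}(G)=\liminv_N{\rm Subgr}(G/N)$.
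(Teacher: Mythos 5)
Your proof is correct and follows essentially the same route as the paper's: apply Lemma~\ref{Subgrfunctor} to get a continuous map ${\rm Subgr}(H)\to{\rm Subgr}(G)$, then use that both spaces are compact Hausdorff to conclude the map is a closed topological embedding. The paper's version is more terse, but the compactness argument and the identification of the map as the set-theoretic inclusion are the same.
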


\begin{proof}
By Lemma~\ref{Subgrfunctor},
the inclusion $\iota\colon{\rm Subgr}(H)\rightarrow{\rm Subgr}(G)$ is continuous.
Since both spaces are compact Hausdorff, $\iota$ is closed,
and thus a topological embedding.
\end{proof}

\begin{Definition}\label{Defreduced}\label{Deforder}
A {\bf group pile} is a structure 
$\mathbf{G}=(G,\mcG_0,\mathcal{G}_\mfp)_{\mfpinS}$
consisting of 
\begin{enumerate}
 \item a profinite group $G$,
 \item a nonempty $G$-invariant closed subset $\mcG_0\subseteq{\rm Subgr}(G)$ such that
        the elements of $\mcG_0$ are pairwise conjugate in $G$, and
 \item a $G$-invariant closed subset $\mathcal{G}_\mfp\subseteq{\rm Subgr}(G)$ for each $\mfpinS$.
\end{enumerate}
The {\bf order} resp.~{\bf rank} of $\mathbf{G}$
is the order resp.~rank of $G$.
A {\bf finite} group pile is a group pile of finite order.
Let $\mathcal{G}=\bigcup_{\mfpinS}\mathcal{G}_\mfp$.
We call $\mathbf{G}$
{\bf self-generated} if
there exists $G_0\in\mcG_0$ such that
$G=\left<G_0,\mathcal{G}\right>$, i.e.~$G$ is generated by $G_0$ and the groups in $\mathcal{G}_\mfp$, $\mfpinS$.
It is called {\bf bare} if $\mathcal{G}=\{1\}$, and {\bf deficient} if $\mcG_0=\{1\}$.
The {\bf deficient reduct} of $\mbG$ is 
$\mbG^{\rmdef}=(G,\{1\},\mcG_\mfp)_\mfpinS$.
Instead of $(G,\{1\},\mcG_\mfp)_\mfpinS$, we also write $(G,\mcG_\mfp)_\mfpinS$.
We call $\mbG$ {\bf separated} if the sets $\mcG_\mfp$, $\mfp\in \{0\}\cup\mfS$, are disjoint,
and {\bf reduced} if there are no non-trivial inclusions among the elements of $\mcG$.
\end{Definition}

\begin{Remark}
Note that if $\mbG$ is self-generated, then
$G=\left<G_0,\mcG\right>$ for {\em any} $G_0\in\mcG_0$.
Condition (2) says that $\mcG_0$ consists of a single $G$-orbit in ${\rm Subgr}(G)$,
i.e.~there exists $G_0\in\mcG_0$ such that $\mcG_0=(G_0)^G:=\{ (G_0)^g \colon g\in G\}$.
Hence, our notion of group piles coincides with the group piles of \cite{HJPd},
except for a small difference in notation concerning $\mcG_0$.
The notion of group piles is
also related to the 
\textquoteleft$\Delta^*$-groups\textquoteright~in 
\cite{ErshovCLEP2}, \cite{ErshovFree}, and \cite{ErshovUniformly}.
\end{Remark}

\begin{Definition}
A {\bf homomorphism} of group piles
$$
 f\function(G,\mcG_0,\mathcal{G}_\mfp)_{\mfpinS}\rightarrow(H,\mcH_0,\mathcal{H}_\mfp)_{\mfpinS}
$$ 
is a homomorphism of profinite groups $f\function G\rightarrow H$
such that 
$f(\mathcal{G}_\mfp)\subseteq\mathcal{H}_\mfp$ for each $\mfp\in \{0\}\cup\mfS$.
It is an {\bf epimorphism} if $f\function G\rightarrow H$ is surjective and $f(\mathcal{G}_\mfp)=\mathcal{H}_\mfp$ for each $\mfp\in \{0\}\cup\mfS$.
It is an {\bf isomorphism} if in addition $f\function G\rightarrow H$
is an isomorphism.
The homomorphism $f$ is called {\bf rigid}
if $f|_\Gamma$ is injective for each $\Gamma\in\mathcal{G}$.
If $N$ is a closed normal subgroup of $G$, define
the {\bf quotient} 
$\mathbf{G}/N=(G/N,\mcG_{0,N},\mathcal{G}_{\mfp,N})_{\mfpinS}$ 
by 
$\mathcal{G}_{\mfp,N}=\left\{ \Gamma N/N \colon \Gamma\in\mathcal{G}_\mfp \right\} \subseteq {\rm Subgr}(G/N)$.
This is again a group pile,
the quotient map $G\rightarrow G/N$ extends to an epimorphism of group piles
$\mbG\rightarrow\mbG/N$,
and every epimorphism of group piles is of this form.
\end{Definition}

\begin{Remark}
We identify the category of bare deficient group piles (with homomorphisms)
with the category
of profinite groups (with homomorphisms)
via the forgetful functor $(G,\mcG_0,\mathcal{G}_\mfp)_{\mfpinS}\mapsto G$.
\end{Remark}

\begin{Lemma}\label{lem:invlimgp}
In the category of group piles with epimorphisms, inverse limits exist.
\end{Lemma}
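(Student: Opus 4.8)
The plan is to construct the inverse limit of a system $(\mathbf{G}_i)_{i\in I}$ of group piles with epimorphisms $f_{ij}\function\mathbf{G}_j\to\mathbf{G}_i$ (for $i\leq j$) concretely, by taking the inverse limit $G=\liminv_i G_i$ of the underlying profinite groups together with the obvious projections $\pi_i\function G\to G_i$, and then defining, for each $\mfp\in\{0\}\cup\mfS$, the subset $\mcG_\mfp\subseteq{\rm Subgr}(G)$ to consist of all closed subgroups $\Gamma\leq G$ such that $\pi_i(\Gamma)\in\mcG_{i,\mfp}$ for every $i\in I$; equivalently, identifying ${\rm Subgr}(G)$ with $\liminv_i{\rm Subgr}(G_i)$ via Definition~\ref{Defsubgrtop}, we set $\mcG_\mfp=\liminv_i\mcG_{i,\mfp}$ (using that the maps ${\rm Subgr}(f_{ij})$ restrict to maps $\mcG_{j,\mfp}\to\mcG_{i,\mfp}$, as $f_{ij}$ are epimorphisms of group piles). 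One then must check that $\mathbf{G}=(G,\mcG_0,\mcG_\mfp)_{\mfpinS}$ is a group pile and that the $\pi_i$ are epimorphisms of group piles satisfying the universal property.

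First I would verify the axioms of Definition~\ref{Defreduced}. Each $\mcG_\mfp$ is closed in ${\rm Subgr}(G)$ because it is an inverse limit of closed subsets inside the profinite space $\liminv_i{\rm Subgr}(G_i)$, and it is $G$-invariant because conjugation is compatible with the projections $\pi_i$ and each $\mcG_{i,\mfp}$ is $G_i$-invariant. For condition (2) on $\mcG_0$: nonemptiness follows since each $\mcG_{i,0}$ is nonempty and compact and the transition maps are surjective (an epimorphism of group piles sends $\mcG_{j,0}$ onto $\mcG_{i,0}$), so the inverse limit $\mcG_0=\liminv_i\mcG_{i,0}$ is a nonempty (inverse limit of nonempty compact spaces with surjective transition maps). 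The pairwise-conjugacy condition is the delicate point and I expect it to be the main obstacle: given $\Gamma,\Gamma'\in\mcG_0$, for each $i$ the set $C_i=\{g_i\in G_i\function \Gamma_i^{g_i}=\Gamma_i'\}$ is nonempty and closed; these form an inverse system of nonempty compact sets, but one must confirm the transition maps $C_j\to C_i$ are well-defined (clear, since $f_{ij}$ is a group-pile homomorphism commuting with conjugation) so that $\liminv_i C_i\neq\emptyset$, yielding $g\in G$ with $\Gamma^g=\Gamma'$.

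Next I would check that each $\pi_i\function\mathbf{G}\to\mathbf{G}_i$ is an epimorphism of group piles: surjectivity of $\pi_i\function G\to G_i$ is standard for inverse limits of profinite groups with surjective transition maps, and $\pi_i(\mcG_\mfp)=\mcG_{i,\mfp}$ follows from the surjectivity of the transition maps on the $\mfp$-components together with a compactness argument (every element of $\mcG_{i,\mfp}$ lifts along the cofinal system). Compatibility $\pi_i=f_{ij}\circ\pi_j$ is immediate. Finally, the universal property: given a group pile $\mathbf{H}$ with epimorphisms $q_i\function\mathbf{H}\to\mathbf{G}_i$ compatible with the $f_{ij}$, the underlying profinite groups give a unique continuous homomorphism $q\function H\to G$ with $\pi_i\circ q=q_i$; one checks $q$ is an epimorphism of group piles by noting $q(\mathcal{H}_\mfp)$ is a closed $H$-invariant subset of ${\rm Subgr}(G)$ projecting onto each $\mcG_{i,\mfp}$ under $\pi_i$, hence equal to $\liminv_i\mcG_{i,\mfp}=\mcG_\mfp$ (again a compactness/inverse-limit argument, using that $q(\mathcal{H}_\mfp)$ is compact and its image under $\pi_i$ is all of $\mcG_{i,\mfp}$ since each $q_i$ is an epimorphism). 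This establishes existence of inverse limits in the category of group piles with epimorphisms.
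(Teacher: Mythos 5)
Your construction is exactly the one the paper gives: the paper's proof simply exhibits $G=\varprojlim_i G_i$ and $\mcG_\mfp=\varprojlim_i\mcG_{i,\mfp}\subseteq{\rm Subgr}(G)$ for $\mfp\in\{0\}\cup\mfS$ and asserts that this is the inverse limit, leaving all verifications to the reader. You take the same route and fill in the details correctly, including the genuinely nontrivial points (nonemptiness and pairwise conjugacy of $\mcG_0$ via inverse limits of nonempty compact sets, surjectivity of $\pi_i$ on the $\mcG_\mfp$, and the universal property).
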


\begin{proof}
For a directed set $I$ and
an inverse family $\mathbf{G}_i=(G_i,\mcG_{i,0},\mcG_{i,\mfp})_\mfpinS$, $i\in I$, of group piles,
$\mathbf{G}=(G,\mcG_0,\mcG_\mfp)_\mfpinS$ with
$G:=\varprojlim_{i\in I} G_i$ and $\mathcal{G}_\mfp:=\varprojlim_{i\in I}\mathcal{G}_{i,\mfp}\subseteq{\rm Subgr}(G)$, $\mfpinS\cup\{0\}$,
is an inverse limit.
\end{proof}

\begin{Definition}\label{defebounded}
For a group pile $\mathbf{G}=(G,\mcG_0,\mathcal{G}_\mfp)_{\mfpinS}$
let $G^\prime:=\left<\mathcal{G}\right>$
be the closed subgroup generated by the subgroups in $\mathcal{G}_\mfp$, $\mfpinS$.
Let $\mathbf{G}^\prime:=(G^\prime,\mathcal{G}_\mfp)_{\mfpinS}$ 
and
$\bar{\mathbf{G}}:=\mathbf{G}/G^\prime$.
We say that $\mbG$ is
{\bf $e$-generated}
if $\rk(\bar\mbG)\leq e$,
and
{\bf $e$-bounded} if $\mbG$ is self-generated and $\rk(G_0)\leq e$ for all $G_0\in\mcG_0$.
\end{Definition}

\begin{Lemma}
$\mathbf{G}^\prime$ is a self-generated and deficient group pile,
and $\bar{\mathbf{G}}$ is a bare group pile.
\end{Lemma}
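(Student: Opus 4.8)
The plan is simply to unwind the definitions; all assertions are bookkeeping verifications. To see that $\mbG^\prime=(G^\prime,\{1\},\mcG_\mfp)_\mfpinS$ is a group pile I would check the three conditions of Definition~\ref{Defreduced}. The group $G^\prime=\langle\mcG\rangle$ is a closed subgroup of $G$, hence profinite. The set $\{1\}$ is a nonempty closed one-point subset of ${\rm Subgr}(G^\prime)$, it is $G^\prime$-invariant, and its elements are trivially pairwise conjugate. For each $\mfpinS$, every $\Gamma\in\mcG_\mfp$ is contained in $\langle\mcG\rangle=G^\prime$, so $\mcG_\mfp\subseteq{\rm Subgr}(G^\prime)$; by Lemma~\ref{continuous}, ${\rm Subgr}(G^\prime)$ is a closed subspace of ${\rm Subgr}(G)$, so $\mcG_\mfp$ --- being a closed subset of ${\rm Subgr}(G)$ contained in ${\rm Subgr}(G^\prime)$ --- is closed in ${\rm Subgr}(G^\prime)$; and $G^\prime$-invariance of $\mcG_\mfp$ follows from its $G$-invariance because $G^\prime\leq G$. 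The group pile $\mbG^\prime$ is deficient by construction, since $\mcG_0=\{1\}$, and it is self-generated: the only candidate witness is $G_0=1\in\mcG_0$, and then $\langle G_0,\mcG\rangle=\langle\mcG\rangle=G^\prime$, which is exactly the required condition.

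Next I turn to $\bar{\mbG}=\mbG/G^\prime$. For this quotient to be defined I first check that $G^\prime\normal G$: the set $\mcG=\bigcup_\mfpinS\mcG_\mfp$ is stable under conjugation by every $g\in G$, since each $\mcG_\mfp$ is $G$-invariant; hence the closed subgroup it generates satisfies $gG^\prime g^{-1}=\langle g\mcG g^{-1}\rangle=\langle\mcG\rangle=G^\prime$, so $G^\prime$ is normal in $G$. Therefore $\bar{\mbG}$ is a well-defined group pile, by the properties of quotients recorded right after the definition of $\mbG/N$. It remains to see that $\bar{\mbG}$ is bare. By the definition of the quotient, the $\mfp$-component of $\bar{\mbG}$ is $\{\Gamma G^\prime/G^\prime\colon\Gamma\in\mcG_\mfp\}$; but $\Gamma\subseteq G^\prime$ for every $\Gamma\in\mcG_\mfp$, so each such $\Gamma G^\prime/G^\prime$ is the trivial subgroup of $G/G^\prime$, and the $\mfp$-component equals $\{1\}$ for every $\mfpinS$. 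Hence the union over $\mfpinS$ of the $\mfp$-components of $\bar{\mbG}$ is $\{1\}$, which is precisely the condition for $\bar{\mbG}$ to be bare.

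There is no genuine obstacle here; the statement is a structural lemma recording that the constructions $\mbG\mapsto\mbG^\prime$ and $\mbG\mapsto\bar{\mbG}$ are compatible with the axioms for group piles. The only point that is not purely formal is the topological one --- that $\mcG_\mfp$ remains a closed subset after passing from ${\rm Subgr}(G)$ to ${\rm Subgr}(G^\prime)$ --- and this is exactly what Lemma~\ref{continuous} was proved to supply.
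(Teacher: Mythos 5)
Your proposal is correct and takes essentially the same approach as the paper: the one nontrivial verification (that the $\mcG_\mfp$ remain closed after passing from ${\rm Subgr}(G)$ to ${\rm Subgr}(G^\prime)$) is handled by Lemma~\ref{continuous}, exactly as in the paper, and the remaining points — normality of $G^\prime$, self-generatedness, deficiency, bareness of the quotient — are the bookkeeping the paper dismisses as obvious, which you have simply written out in full.
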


\begin{proof}
By Lemma~\ref{continuous}, $\mathbf{G}^\prime$ is a group pile. The other claims are obvious.
\end{proof}

\begin{Lemma}\label{primesubgroups}
If $\varphi\function\mathbf{G}\rightarrow\mathbf{H}$
is an epimorphism of group piles,
then $\varphi(G^\prime)=H^\prime$,
so $\varphi$ induces epimorphisms
$\varphi^\prime\function\mathbf{G}^\prime\rightarrow\mathbf{H}^\prime$
and
$\bar{\varphi}\function\bar{\mathbf{G}}\rightarrow\bar{\mathbf{H}}$.
\end{Lemma}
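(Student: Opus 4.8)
The plan is to show first that $\varphi(G^\prime) = H^\prime$, and then to check that the induced maps on the two quotient constructions are again epimorphisms of group piles. For the first part, recall that $G^\prime = \left<\mcG\right>$ and $H^\prime = \left<\mcH\right>$, where $\mcG = \bigcup_{\mfpinS}\mcG_\mfp$ and similarly for $\mcH$. Since $\varphi$ is an epimorphism of group piles, $\varphi(\mcG_\mfp) = \mcH_\mfp$ for each $\mfpinS$, hence $\varphi(\mcG) = \mcH$. Now $\varphi$ is a continuous homomorphism of profinite groups, so $\varphi(G^\prime) = \varphi(\left<\mcG\right>)$; I would argue that $\varphi(\left<\mcG\right>) = \left<\varphi(\mcG)\right>$. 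The inclusion $\supseteq$ is clear since $\varphi(\mcG) \subseteq \varphi(G^\prime)$ and $\varphi(G^\prime)$ is a closed subgroup (continuous image of a compact group in a Hausdorff group); the inclusion $\subseteq$ follows because $\varphi^{-1}(\left<\varphi(\mcG)\right>)$ is a closed subgroup containing $\mcG$ pointwise, hence containing $\left<\mcG\right> = G^\prime$. Therefore $\varphi(G^\prime) = \left<\varphi(\mcG)\right> = \left<\mcH\right> = H^\prime$.

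Next, to obtain $\varphi^\prime\function\mbG^\prime\rightarrow\mbH^\prime$, note that $\varphi(G^\prime) = H^\prime$ gives a well-defined surjective homomorphism of profinite groups $G^\prime \rightarrow H^\prime$ by restriction. For each $\mfpinS$ the elements of $\mcG_\mfp$ are subgroups of $G^\prime$ and the elements of $\mcH_\mfp$ are subgroups of $H^\prime$ (since $\mcH_\mfp \subseteq \mcH$ generates $H^\prime$ and each member of $\mcH_\mfp$ is closed), so $\mcG_\mfp$ and $\mcH_\mfp$ may be regarded as subsets of ${\rm Subgr}(G^\prime)$ and ${\rm Subgr}(H^\prime)$ respectively via Lemma~\ref{continuous}. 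The condition $\varphi(\mcG_\mfp) = \mcH_\mfp$ then says exactly that ${\rm Subgr}(\varphi|_{G^\prime})$ carries $\mcG_\mfp$ onto $\mcH_\mfp$, so $\varphi^\prime$ is an epimorphism of group piles (the $\mfp = 0$ components of both $\mbG^\prime$ and $\mbH^\prime$ are $\{1\}$ by definition of the deficient reduct, and $\varphi^\prime(\{1\}) = \{1\}$).

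Finally, for $\bar\varphi\function\bar\mbG\rightarrow\bar\mbH$, recall $\bar\mbG = \mbG/G^\prime$ and $\bar\mbH = \mbH/H^\prime$. Since $\varphi(G^\prime) = H^\prime$, the homomorphism $\varphi$ descends to a surjective homomorphism $\bar\varphi\function G/G^\prime \rightarrow H/H^\prime$. Both $\bar\mbG$ and $\bar\mbH$ are bare group piles (as noted in the preceding lemma), so their $\mcG_\mfp$-components for $\mfpinS$ are $\{1\}$ and are preserved; the $\mfp = 0$ component $\mcG_{0,G^\prime}$ is the image of $\mcG_0$ under $\Gamma \mapsto \Gamma G^\prime/G^\prime$, and since $\varphi(\mcG_0) = \mcH_0$ (the $\mfp = 0$ part of $\varphi$ being an epimorphism) together with the fact that $\bar\varphi$ is the map induced on the quotient, one gets $\bar\varphi(\mcG_{0,G^\prime}) = \mcH_{0,H^\prime}$. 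Hence $\bar\varphi$ is an epimorphism of group piles. I expect no serious obstacle here; the only point requiring a little care is the identity $\varphi(\left<\mcG\right>) = \left<\varphi(\mcG)\right>$, which uses compactness to ensure the image is already closed, and the bookkeeping that subgroups lying in $\mcG_\mfp$ genuinely live inside $G^\prime$ so that the restricted ${\rm Subgr}$ functor applies.
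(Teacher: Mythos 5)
Your proposal is correct and matches the paper's (very terse) argument: both hinge on the identity $\varphi(\left<\mcG\right>)=\left<\mcH\right>$, which the paper justifies by noting that $\varphi$ is continuous and closed, and which you verify via the standard double inclusion ($\varphi(G')$ is a closed subgroup containing $\mcH$, and $\varphi^{-1}(H')$ is a closed subgroup containing every $\Gamma\in\mcG$). The remaining bookkeeping on $\varphi'$ and $\bar\varphi$ is the routine content that the paper elides with ``as claimed,'' and you spell it out correctly.
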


\begin{proof}
By definition of an epimorphism of group piles,
$\varphi(\mathcal{G})=\mathcal{H}$.
Hence, since $\varphi$ is continuous and closed,
$\varphi(\left<\mcG\right>)=\left<\mcH\right>$,
as claimed.
\end{proof}

\begin{Lemma}\label{functoriality}
The map $\mathbf{G}\mapsto\mathbf{G}^\prime$ (resp.~$\mathbf{G}\mapsto\bar{\mathbf{G}}$) is a covariant functor from the
category of group piles with epimorphisms 
to the category of self-generated deficient group piles with epimorphisms
(resp.~the category of bare group piles with epimorphisms).
\end{Lemma}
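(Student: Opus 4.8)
The plan is to verify the two functoriality claims directly from the definitions and from Lemma~\ref{primesubgroups}. First I would record what must be checked: for the assignment $\mathbf{G}\mapsto\mathbf{G}^\prime$ to be a functor from group piles with epimorphisms to self-generated deficient group piles with epimorphisms, one needs (i) that $\mathbf{G}^\prime$ lies in the target category, which is exactly the content of the preceding lemma (that $\mathbf{G}^\prime$ is self-generated and deficient), (ii) that every epimorphism $\varphi\function\mathbf{G}\rightarrow\mathbf{H}$ of group piles gives rise to a well-defined epimorphism $\varphi^\prime\function\mathbf{G}^\prime\rightarrow\mathbf{H}^\prime$ in the target category, and (iii) that this assignment respects identities and composition. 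Point (ii) is Lemma~\ref{primesubgroups}, which already tells us $\varphi(G^\prime)=H^\prime$, so $\varphi$ restricts to a group epimorphism $G^\prime\rightarrow H^\prime$; since $\varphi(\mcG_\mfp)=\mcH_\mfp$ and the subgroups in $\mcG_\mfp$ already lie in $\mathrm{Subgr}(G^\prime)$ (they generate $G^\prime$), the restriction sends $\mcG_\mfp$ onto $\mcH_\mfp$ for each $\mfpinS$, hence is an epimorphism of group piles in the sense of the earlier definition. The same argument applies verbatim to $\bar\varphi\function\bar{\mathbf{G}}\rightarrow\bar{\mathbf{H}}$, using the other half of Lemma~\ref{primesubgroups} together with the earlier lemma stating $\bar{\mathbf{G}}$ is bare.

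For point (iii), I would observe that on underlying profinite groups both $\mathbf{G}\mapsto G^\prime$ and $\varphi\mapsto\varphi^\prime$ are just ``restrict to $G^\prime$'', and restriction of maps is visibly compatible with composition and sends $\mathrm{id}_G$ to $\mathrm{id}_{G^\prime}$; since homomorphisms of group piles are determined by the underlying group homomorphism, this transfers immediately to the category of group piles. For $\mathbf{G}\mapsto\bar{\mathbf{G}}=\mathbf{G}/G^\prime$, functoriality follows because passing to the quotient by $G^\prime$ is compatible with composition: if $\psi\function\mathbf{H}\rightarrow\mathbf{K}$ is another epimorphism, then $\overline{\psi\circ\varphi}$ and $\bar\psi\circ\bar\varphi$ are both induced by $\psi\circ\varphi$ on $G/G^\prime\rightarrow K/K^\prime$, and they agree because $\varphi(G^\prime)=H^\prime$ ensures the two factorizations coincide; identities are clearly preserved.

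I do not expect a serious obstacle here—this lemma is a bookkeeping consequence of the two lemmas just proved. The only point requiring a moment's care is the well-definedness in (ii): one must confirm that restricting $\varphi$ to $G^\prime$ really does land in $H^\prime$ and is surjective onto it (supplied by Lemma~\ref{primesubgroups}) and that the local data $\mcG_\mfp$ is genuinely carried by subgroups of $G^\prime$ so that $\mathbf{G}^\prime=(G^\prime,\mcG_\mfp)_\mfpinS$ is literally a group pile over $G^\prime$—this is Lemma~\ref{continuous}, already invoked in the previous proof. Everything else is formal.
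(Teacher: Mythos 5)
Your proposal is correct and matches the paper's approach: the paper's proof consists of the single line ``This follows from Lemma~\ref{primesubgroups},'' and you simply spell out the routine verification of functoriality (well-definedness on objects and morphisms via Lemma~\ref{primesubgroups} and the preceding lemma, preservation of identities and composition) that the paper leaves implicit.
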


\begin{proof}
This follows from Lemma~\ref{primesubgroups}.
\end{proof}

\begin{Lemma}\label{barecorrespondence}
Let $\mathbf{G}=(G,\mathcal{G}_\mfp)_{\mfpinS}$ be a deficient group pile and 
$\mathbf{A}=A$ a bare deficient group pile.
Then the map $\varphi\mapsto\bar{\varphi}$
gives a bijection between the
epimorphisms from $\mathbf{G}$ to $\mathbf{A}$ and the
epimorphisms from $\bar{G}$ to $A$.
\end{Lemma}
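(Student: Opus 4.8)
The plan is to exhibit an explicit two‑sided inverse $\psi\mapsto\psi\circ\pi_{\mathbf{G}}$ to the map $\varphi\mapsto\bar{\varphi}$, where $\pi_{\mathbf{G}}\colon\mathbf{G}\to\bar{\mathbf{G}}=\mathbf{G}/G'$ denotes the canonical quotient epimorphism of group piles. Two identifications are in force throughout. First, since $\mathbf{G}$ is deficient, $\bar{\mathbf{G}}$ is bare (as already noted) and deficient, so by the Remark identifying bare deficient group piles with profinite groups we may regard $\bar{\mathbf{G}}$ as the profinite group $\bar{G}=G/G'$. Second, since $\mathbf{A}$ is bare, $A'=\langle\mathcal{A}\rangle=1$, so $\bar{\mathbf{A}}=\mathbf{A}/A'=\mathbf{A}/1$ is canonically $\mathbf{A}$ itself. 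Under these identifications, Lemma~\ref{primesubgroups} asserts precisely that an epimorphism $\varphi\colon\mathbf{G}\to\mathbf{A}$ of group piles induces an epimorphism $\bar{\varphi}\colon\bar{G}\to A$ of profinite groups; and since $A'=1$, this $\bar{\varphi}$ is simply the map $gG'\mapsto\varphi(g)$. In particular the assignment $\varphi\mapsto\bar{\varphi}$ of the statement is well defined.

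For surjectivity, let $\psi\colon\bar{G}\to A$ be an epimorphism of profinite groups, equivalently an epimorphism $\bar{\mathbf{G}}\to\mathbf{A}$ of bare deficient group piles, and put $\varphi:=\psi\circ\pi_{\mathbf{G}}\colon\mathbf{G}\to\mathbf{A}$. As a composite of surjective homomorphisms, $\varphi$ is a surjective homomorphism of profinite groups, and it is an epimorphism of group piles: for each $\mfp\in\{0\}\cup\mfS$ and each $\Gamma\in\mathcal{G}_\mfp$ one has $\Gamma\leq\ker\pi_{\mathbf{G}}=G'$ — by the definition $G'=\langle\mathcal{G}\rangle$ when $\mfp\in\mfS$, and because $\mathcal{G}_0=\{1\}$ when $\mfp=0$ — hence $\pi_{\mathbf{G}}(\Gamma)=1$, so $\varphi(\mathcal{G}_\mfp)=\{1\}=\mathcal{A}_\mfp$. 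Finally $\bar{\varphi}=\psi$, because by the description of $\bar{\varphi}$ recalled above it is nothing but the unique factorization of $\varphi=\psi\circ\pi_{\mathbf{G}}$ through $\pi_{\mathbf{G}}$.

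For injectivity, let $\varphi\colon\mathbf{G}\to\mathbf{A}$ be an arbitrary epimorphism of group piles. Since $\mathbf{A}$ is bare, $\varphi(\Gamma)\in\mathcal{A}_\mfp\subseteq\mathcal{A}=\{1\}$ for every $\Gamma\in\mathcal{G}$, so $G'=\langle\mathcal{G}\rangle\leq\ker\varphi$ and $\varphi$ factors uniquely as $\varphi=\psi\circ\pi_{\mathbf{G}}$ for some homomorphism $\psi\colon\bar{G}\to A$, which is surjective because $\varphi$ is and equals $\bar{\varphi}$ by the description in the first paragraph. Thus $\varphi$ is recovered from $\bar{\varphi}$ as $\bar{\varphi}\circ\pi_{\mathbf{G}}$, so the two assignments are mutually inverse bijections. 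I expect the only delicate point to be the bookkeeping in the last step of each direction: one must check that the abstractly defined factor $\psi$ of $\varphi$ through $\mathbf{G}\to\mathbf{G}/G'$ coincides with the morphism $\bar{\varphi}$ produced functorially by Lemma~\ref{primesubgroups}, or equivalently that the functor $\mathbf{G}\mapsto\bar{\mathbf{G}}$ of Lemma~\ref{functoriality} sends $\pi_{\mathbf{G}}$ to an identity morphism. Granting that, the remaining verifications are a routine unwinding of the definitions of quotient, bare, and deficient group piles.
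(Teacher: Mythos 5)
Your proof is correct and follows exactly the same route as the paper's: define the map $\psi\mapsto\psi\circ\pi_{\mathbf{G}}$ and check it is a two-sided inverse to $\varphi\mapsto\bar{\varphi}$, using the identification $\bar{\mathbf{A}}=\mathbf{A}$ for bare $\mathbf{A}$. The paper's version is terser (it simply asserts the two operations are inverse to each other), whereas you carry out the unwinding in detail; the substance is the same.
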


\begin{proof}
If $\varphi\function\mathbf{G}\rightarrow\mathbf{A}$ is an epimorphism, 
then $\bar{\varphi}\function\bar{\mathbf{G}}\rightarrow\bar{\mathbf{A}}=\mathbf{A}$
is an epimorphism.
Conversely, given an epimorphism $\bar{\varphi}\function\bar{G}\rightarrow A$,
the composition $\bar{\varphi}\circ\pi\colon\mathbf{G}\rightarrow\mathbf{A}$, where $\pi\function\mathbf{G}\rightarrow\bar{\mathbf{G}}$ is the quotient map,
is an epimorphism. 
These two operations are inverse to each other.
\end{proof}

\begin{Remark}
Note that a deficient group pile is self-generated
if and only if it is $0$-generated.
Every $e$-bounded group pile is $e$-generated.
\end{Remark}

\begin{Lemma}\label{quotientrank}
Let $\varphi\function\mathbf{G}\rightarrow\mathbf{H}$
be an epimorphism of group piles.
If $\mathbf{G}$ is $e$-generated, 
then $\mathbf{H}$ is $e$-generated.
If $\mbG$ is $e$-bounded, then $\mbH$ is $e$-bounded.
\end{Lemma}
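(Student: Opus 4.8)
The plan is to reduce both assertions to the functoriality statements already established. For the first assertion, recall that $e$-generated means $\rk(\bar\mbG)\leq e$. By Lemma~\ref{primesubgroups}, the epimorphism $\varphi\function\mbG\to\mbH$ induces an epimorphism $\bar\varphi\function\bar\mbG\to\bar\mbH$ of (bare) group piles; on the level of profinite groups this is a surjection $\bar G\to\bar H$. Since rank cannot increase under a continuous surjection of profinite groups (a generating set of $\bar G$ maps onto a generating set of $\bar H$, using that $\bar\varphi$ is continuous and closed), we get $\rk(\bar\mbH)=\rk(\bar H)\leq\rk(\bar G)=\rk(\bar\mbG)\leq e$, so $\mbH$ is $e$-generated.

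For the second assertion, suppose $\mbG$ is $e$-bounded, so $\mbG$ is self-generated and $\rk(G_0)\leq e$ for every $G_0\in\mcG_0$. First I would check that $\mbH$ is self-generated: pick $G_0\in\mcG_0$ with $G=\left<G_0,\mcG\right>$; since $\varphi$ is an epimorphism of group piles, $\varphi(\mcG_0)=\mcH_0$ and $\varphi(\mcG_\mfp)=\mcH_\mfp$ for each $\mfpinS$, hence $\varphi(\mcG)=\mcH$. As $\varphi$ is continuous and closed, $\varphi(\left<G_0,\mcG\right>)=\left<\varphi(G_0),\varphi(\mcG)\right>$, so with $H_0:=\varphi(G_0)\in\mcH_0$ we obtain $H=\varphi(G)=\left<H_0,\mcH\right>$; thus $\mbH$ is self-generated. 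Next, every element of $\mcH_0$ has rank $\leq e$: an arbitrary $H_0'\in\mcH_0=\varphi(\mcG_0)$ is $\varphi(G_0')$ for some $G_0'\in\mcG_0$, and $\varphi|_{G_0'}\function G_0'\to H_0'$ is a continuous surjection, so $\rk(H_0')\leq\rk(G_0')\leq e$. Hence $\mbH$ is $e$-bounded.

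The only point requiring any care is the identity $\varphi(\left<X\right>)=\left<\varphi(X)\right>$ for a closed subset $X$ of a profinite group together with an epimorphism $\varphi$, and the monotonicity of rank under surjections; both are standard facts about profinite groups (cf.~\cite{FJ3}) and were already used in the proof of Lemma~\ref{primesubgroups}. I do not expect any genuine obstacle here — the lemma is essentially a bookkeeping consequence of Lemmas~\ref{primesubgroups} and~\ref{functoriality}.
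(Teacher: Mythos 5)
Your proof is correct and follows essentially the same route as the paper's: use Lemma~\ref{functoriality} (or equivalently Lemma~\ref{primesubgroups}) to get the epimorphism $\bar\varphi\colon\bar\mbG\to\bar\mbH$ and monotonicity of rank for the first claim, and then the identities $\varphi(\mcG_0)=\mcH_0$, $\varphi(\mcG)=\mcH$ to transfer self-generation and the rank bound on $\mcG_0$ for the second. The only difference is that you spell out the self-generation step, which the paper leaves as an observation.
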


\begin{proof}
The induced map $\bar{\varphi}\function\bar{\mathbf{G}}\rightarrow\bar{\mathbf{H}}$
is an epimorphism by Lemma~\ref{functoriality}, so $\rk(\bar{H})\leq\rk(\bar{G})$.
If $\mbG$ is self-generated, then also $\mbH$ is self-generated.
Since $\varphi(\mcG_0)=\mcH_0$, 
for $H_0\in\mcH_0$ there exists $G_0\in\mcG_0$ with $\varphi(G_0)=H_0$
and thus $\rk(H_0)\leq\rk(G_0)$.
\end{proof}

\begin{Proposition}\label{egeneratedcharacterization}
A deficient group pile $\mathbf{G}$ is $e$-generated if and only if every finite quotient of $\mathbf{G}$
is $e$-generated.
\end{Proposition}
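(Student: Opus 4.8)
The plan is to treat the two implications separately; the forward one is a direct consequence of the functoriality results just proved. For the forward implication, note that every finite quotient of $\mathbf{G}$ is an epimorphic image $\mathbf{G}\to\mathbf{G}/N$ with $N$ an open normal subgroup of $G$, so Lemma~\ref{quotientrank} shows that it is $e$-generated as soon as $\mathbf{G}$ is.

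For the converse I would first reformulate both sides in terms of the profinite group $\bar{G}=G/G'$, where $G'=\left<\mathcal{G}\right>$. Since each $\mathcal{G}_\mfp$ is $G$-invariant, $G'$ is a closed normal subgroup of $G$, and (as $\mathbf{G}$ is deficient) $\bar{\mathbf{G}}=\mathbf{G}/G'$ is the bare deficient group pile carried by $\bar{G}$; thus ``$\mathbf{G}$ is $e$-generated'' means precisely $\rk(\bar{G})\le e$. Now recall the standard fact that a profinite group $H$ satisfies $\rk(H)\le e$ if and only if every finite quotient of $H$ can be generated by $e$ elements (see \cite[Ch.~17]{FJ3}): if every finite quotient of $H$ is $e$-generated, the nonempty finite sets of generating $e$-tuples of these quotients form an inverse system whose limit, by compactness, yields an $e$-element generating set of $H$. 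By the correspondence theorem, the finite quotients of $\bar{G}=G/G'$ are exactly the groups $G/M$ with $M\normal G$ open and $G'\subseteq M$.

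So fix such an $M$ and consider the finite quotient group pile $\mathbf{G}/M$ of $\mathbf{G}$. By Lemma~\ref{primesubgroups}, the underlying group of $(\mathbf{G}/M)'$ is the image of $G'$ in $G/M$, which is trivial because $G'\subseteq M$; hence $\overline{\mathbf{G}/M}$ is the bare group pile carried by $G/M$, and $\rk(\overline{\mathbf{G}/M})=\rk(G/M)$. By hypothesis $\mathbf{G}/M$ is $e$-generated, i.e.\ $\rk(G/M)\le e$, so $G/M$ is generated by $e$ elements. As $M$ ranges over all open normal subgroups of $G$ containing $G'$, it follows that every finite quotient of $\bar{G}$ can be generated by $e$ elements, hence $\rk(\bar{G})\le e$; that is, $\mathbf{G}$ is $e$-generated, as desired.

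I do not expect a genuine obstacle here: beyond the lemmas of this section the only ingredient is the standard characterization of the rank of a profinite group through its finite quotients, together with the routine bookkeeping that, for $M\supseteq G'$, forming $\mathbf{G}/M$ and then passing to its bar-quotient simply recovers the finite group $G/M$. (Deficiency of $\mathbf{G}$ enters only to identify $\bar{\mathbf{G}}$ with a plain profinite group, and is otherwise inessential.)
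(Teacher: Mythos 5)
Your proof is correct and follows essentially the same route as the paper: the forward direction via Lemma~\ref{quotientrank}, and the converse by reducing to the standard fact that the rank of a profinite group is controlled by the ranks of its finite quotients, applied to $\bar G=G/G'$. The only cosmetic difference is that the paper argues the converse by contraposition (producing one bad finite quotient) and transports it to $\mathbf{G}$ via Lemma~\ref{barecorrespondence}, whereas you argue directly and identify $\overline{\mathbf{G}/M}$ with $G/M$ via Lemma~\ref{primesubgroups}; these are interchangeable bookkeeping choices within the same argument.
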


\begin{proof}
If $\mathbf{G}$ is $e$-generated, then every finite quotient of $\mathbf{G}$ is $e$-generated by Lemma~\ref{quotientrank}.
Conversely, suppose that $\mathbf{G}$ is not $e$-generated.
Then there is an epimorphism $\bar{G}\rightarrow A$
onto a finite group $A$ with $\rk(A)>e$ (see for example \cite[2.5.3]{RibesZalesskii}),
so $A$ is a finite quotient of $\mathbf{G}$ (Lemma~\ref{barecorrespondence}) which is not $e$-generated.
\end{proof}

\begin{Lemma}\label{GaschuetzPiles}
Let $\mbA$ be an $e$-bounded self-generated group pile and
let $\tilde{\mbB}=(B,\mathcal{B}_\mfp)_\mfpinS$ be an $e$-generated deficient group pile.
For every epimorphism $\pi\function\tilde{\mbB}\rightarrow\mathbf{A}^\rmdef$ 
there exists an $e$-bounded self-generated group pile $\mbB$ with $\mbB^{\rmdef}=\tilde{\mbB}$
such that $\pi\function\mbB\rightarrow\mbA$ is an epimorphism.
\end{Lemma}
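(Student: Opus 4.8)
The plan is to construct $\mbB$ from $\tilde{\mbB}$ by equipping $B$ with a suitable $B$-invariant closed set $\mcB_0\subseteq{\rm Subgr}(B)$, obtained by lifting a distinguished subgroup $A_0\in\mcA_0$ along $\pi$ to a rank-$\leq e$ subgroup $B_0\leq B$ that still generates $B$ together with $\mcB$. Write $\mbA=(A,\mcA_0,\mcA_\mfp)_{\mfpinS}$ and set $A'=\left<\mcA\right>$, $B'=\left<\mcB\right>$. By the Remark following Definition~\ref{Defreduced} we may fix $A_0\in\mcA_0$ with $\mcA_0=(A_0)^A$, and since $\mbA$ is $e$-bounded we have $\rk(A_0)\leq e$, so there are $a_1,\dots,a_e\in A_0$ with $A_0=\left<a_1,\dots,a_e\right>$ (possibly with some $a_i=1$). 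Because $\mbA$ is self-generated, $A=\left<A_0,\mcA\right>=\left<A_0,A'\right>$; and Lemma~\ref{primesubgroups}, applied to the epimorphism $\pi\colon\tilde{\mbB}\to\mbA^\rmdef$, gives $\pi(B')=A'$. Hence $A=\left<a_1,\dots,a_e,\pi(B')\right>$.

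Next I would apply Lemma~\ref{GaschuetzSubgroup} with $G=B$, $H=A$, the epimorphism $\pi$, and $N=B'$: this is a closed normal subgroup of $B$ (as $\mcB$ is $B$-invariant), and $\rk(B/B')\leq e$ since $\tilde{\mbB}$ is $e$-generated, so the lemma yields $b_1,\dots,b_e\in B$ with $\pi(b_i)=a_i$ and $B=\left<b_1,\dots,b_e,B'\right>$. Now put $B_0:=\left<b_1,\dots,b_e\right>$ and $\mcB_0:=(B_0)^B=\{B_0^b:b\in B\}$. The set $\mcB_0$ is the image of the compact group $B$ under the continuous orbit map $b\mapsto B_0^b$ into the profinite space ${\rm Subgr}(B)$, hence nonempty, closed, $B$-invariant, and consisting of pairwise conjugate subgroups; so $\mbB:=(B,\mcB_0,\mcB_\mfp)_{\mfpinS}$ is a group pile with $\mbB^\rmdef=\tilde{\mbB}$. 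Since $\pi$ is continuous and closed, $\pi(B_0)=\left<a_1,\dots,a_e\right>=A_0$, whence $\rk(B_0)\leq e$; and $B=\left<b_1,\dots,b_e,B'\right>=\left<B_0,\mcB\right>$ with $B_0\in\mcB_0$ shows $\mbB$ is self-generated, hence $e$-bounded since every member of $\mcB_0$ is conjugate to $B_0$. Finally $\pi(\mcB_\mfp)=\mcA_\mfp$ for $\mfpinS$ by hypothesis, and, since $\pi$ is surjective, $\pi(\mcB_0)=\pi\big((B_0)^B\big)=\big(\pi(B_0)\big)^{\pi(B)}=(A_0)^A=\mcA_0$, so $\pi\colon\mbB\to\mbA$ is an epimorphism of group piles.

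I do not expect a serious obstacle: the content is concentrated in the single application of Lemma~\ref{GaschuetzSubgroup}, which is exactly tailored to this kind of lift, and the rest is bookkeeping — closedness of the orbit $(B_0)^B$ from compactness, and $\pi((B_0)^B)=(A_0)^A$ from surjectivity of $\pi$. The point worth keeping an eye on is the precise use of the hypotheses: $e$-boundedness of $\mbA$ is what limits us to lifting only $e$ generators, while merely $e$-generatedness of $\tilde{\mbB}$ suffices (and is needed) to bound $\rk(B/B')$ so that Lemma~\ref{GaschuetzSubgroup} applies.
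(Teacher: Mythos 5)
Your proof is correct and follows essentially the same route as the paper's: fix $A_0\in\mcA_0$ with generators $a_1,\dots,a_e$, use Lemma~\ref{primesubgroups} to see $\pi(B')=A'$ and hence $A=\left<a_1,\dots,a_e,\pi(B')\right>$, apply Lemma~\ref{GaschuetzSubgroup} to lift the $a_i$ to $b_i$ with $B=\left<b_1,\dots,b_e,B'\right>$, and set $B_0=\left<b_1,\dots,b_e\right>$, $\mcB_0=(B_0)^B$. The extra bookkeeping you supply (closedness of the conjugacy orbit by compactness, $\pi(\mcB_0)=\mcA_0$ by surjectivity) is sound and only makes explicit what the paper leaves implicit.
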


\begin{proof}
Let $A_0\in\mcA_0$ and choose $a_1,\dots,a_e\in A$ with $A_0=\left<a_1,\dots,a_e\right>$.
By Lemma~\ref{primesubgroups}, $A=\left<a_1,\dots,a_e,A^\prime\right>$ and $A^\prime=\pi(B^\prime)$,
so Lemma~\ref{GaschuetzSubgroup} gives
$b_1,\dots,b_e\in B$ with $B=\left<b_1,\dots,b_e,B^\prime\right>$ and $\pi(b_i)=a_i$.
Let $B_0=\left<b_1,\dots,b_e\right>$ and 
$\mbB=(B,(B_0)^B,\mcB_\mfp)_\mfpinS$.
Then $\mbB$ is $e$-bounded and 
$\pi\function\mbB\rightarrow\mbA$ is an epimorphism.
\end{proof}

\section{Embedding Problems}

\noindent
We recall the notion of embedding problems for group piles from \cite[\S4]{HJPd}
and rephrase some results in terms of $e$-bounded group piles.

\begin{Definition}\label{DefEP}
Let $\mbG$ be a group pile.
An {\bf embedding problem} for $\mathbf{G}$ is a pair
$$
 EP=(\varphi\function\mathbf{G}\rightarrow\mathbf{A},\;\alpha\function\mathbf{B}\rightarrow\mathbf{A})
$$
of epimorphisms of group piles.
It is called {\bf finite}, {\bf self-generated},
{\bf $e$-generated}, {\bf $e$-bounded}, {\bf deficient}, or {\bf bare},
if $\mathbf{B}$ 
has this property.
It is called {\bf rigid} 
if $\alpha$ is rigid.
A {\bf solution} of the embedding problem $(\varphi,\alpha)$
is an epimorphism $\gamma\function\mathbf{G}\rightarrow\mathbf{B}$ such that
$\alpha\circ\gamma=\varphi$.
The embedding problem $EP$
is {\bf locally solvable}
if,  writing $\mathbf{G}=(G,\mcG_0,\mathcal{G}_\mfp)_{\mfpinS}$ and $\mathbf{B}=(B,\mcB_0,\mathcal{B}_\mfp)_{\mfpinS}$, 
the following holds for every $\mfp\in \{0\}\cup\mfS$:
{\renewcommand{\theequation}{$LS$}
\begin{equation}\label{cond}
\begin{minipage}{13,5cm}
{\it  For every $\Gamma\in\mathcal{G}_\mfp$ there is a $\Delta\in\mathcal{B}_\mfp$,
	and for every $\Delta\in\mathcal{B}_\mfp$ there is a $\Gamma\in\mathcal{G}_\mfp$,
	such that there exists an epimorphism $\gamma_\Gamma\function\Gamma\rightarrow\Delta$
	with $\alpha\circ\gamma_\Gamma=\varphi|_\Gamma$.}
\end{minipage}
\end{equation}}

\end{Definition}

\begin{Lemma}\label{locallyzero}
If there exists $G_0\in\mcG_0$ and $B_0\in\mcB_0$ and
an epimorphism $\gamma_0\function G_0\rightarrow B_0$
with $\alpha\circ\gamma_0=\varphi|_{G_0}$,
then {\rm (\ref{cond})} holds for $\mfp=0$.
\end{Lemma}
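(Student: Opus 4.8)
The plan is to produce, for each of the two clauses of \eqref{cond} with $\mfp=0$, an explicit epimorphism obtained from the given $\gamma_0$ by transporting along inner automorphisms. Recall first that, by condition~(2) of Definition~\ref{Defreduced} and the Remark following it, $\mcG_0=(G_0)^G$ and $\mcB_0=(B_0)^B$, where $G_0\in\mcG_0$, $B_0\in\mcB_0$ and $\gamma_0\function G_0\rightarrow B_0$ are as in the hypothesis. Recall also that, being epimorphisms of group piles, $\varphi\function G\rightarrow A$ and $\alpha\function B\rightarrow A$ are surjective homomorphisms of profinite groups, and that conjugation by a fixed element of $G$ (resp.~$B$) is a topological automorphism of $G$ (resp.~$B$), hence restricts to an isomorphism of topological groups between any two conjugate closed subgroups.

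For the first half of \eqref{cond}, let $\Gamma\in\mcG_0$ and fix $g\in G$ with $\Gamma=(G_0)^g$. Since $\alpha$ is surjective, choose $b\in B$ with $\alpha(b)=\varphi(g)$, and put $\Delta=(B_0)^b\in\mcB_0$. Define $\gamma_\Gamma\function\Gamma\rightarrow\Delta$ as the composite of the conjugation isomorphism $\Gamma\rightarrow G_0$, $x\mapsto gxg^{-1}$, with $\gamma_0\function G_0\rightarrow B_0$, and with the conjugation isomorphism $B_0\rightarrow\Delta$, $y\mapsto b^{-1}yb$; explicitly $\gamma_\Gamma(x)=b^{-1}\gamma_0(gxg^{-1})b$. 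Being a composite of a topological isomorphism, an epimorphism, and a topological isomorphism, $\gamma_\Gamma$ is an epimorphism. For $x\in\Gamma$ one has $gxg^{-1}\in G_0$, so $\alpha(\gamma_\Gamma(x))=\alpha(b)^{-1}\alpha(\gamma_0(gxg^{-1}))\alpha(b)=\varphi(g)^{-1}\varphi(gxg^{-1})\varphi(g)=\varphi(x)$, using $\alpha\circ\gamma_0=\varphi|_{G_0}$ and $\alpha(b)=\varphi(g)$. Hence $\alpha\circ\gamma_\Gamma=\varphi|_\Gamma$.

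For the second half of \eqref{cond}, let $\Delta\in\mcB_0$ and fix $b\in B$ with $\Delta=(B_0)^b$. Now use surjectivity of $\varphi$ to choose $g\in G$ with $\varphi(g)=\alpha(b)$, put $\Gamma=(G_0)^g\in\mcG_0$, and define $\gamma_\Gamma\function\Gamma\rightarrow\Delta$ by the same formula $\gamma_\Gamma(x)=b^{-1}\gamma_0(gxg^{-1})b$; the verification that this is an epimorphism with $\alpha\circ\gamma_\Gamma=\varphi|_\Gamma$ is verbatim the one above. Together the two halves give \eqref{cond} for $\mfp=0$.

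The argument is routine; the only points requiring attention are that the conjugation maps must be recognised as isomorphisms of topological groups (so that $\gamma_\Gamma$ is a bona fide epimorphism of profinite groups, not merely a continuous surjection), and that the lifts $b$ and $g$ exist, which is precisely where the surjectivity of $\alpha$ and of $\varphi$ is used. I do not anticipate any serious obstacle.
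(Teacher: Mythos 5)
Your proof is correct and follows essentially the same approach as the paper: both construct $\gamma_\Gamma$ by conjugating $\gamma_0$, choosing the conjugating elements $g$ and $b$ to be compatible via the surjectivity of $\varphi$ and $\alpha$, and then verify $\alpha\circ\gamma_\Gamma=\varphi|_\Gamma$ by the same direct computation. The extra remarks about conjugation being a topological automorphism are fine but routine; the paper simply leaves them implicit.
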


\begin{proof}
If $g\in G$ and $\Gamma=(G_0)^g\in\mcG_0$, choose $b\in B$ with $\alpha(b)=\varphi(g)$, and let 
$\Delta=(B_0)^b$.
If $b\in B$ and $\Delta=(B_0)^b\in\mcB_0$, choose $g\in G$ with $\varphi(g)=\alpha(b)$, and let
$\Gamma=(G_0)^g$.
Define $\gamma_\Gamma\function\Gamma\rightarrow\Delta$ by $\gamma_\Gamma(x)= \gamma_0(x^{{g^{-1}}})^b$.
Then $\alpha(\gamma_\Gamma(x))=\varphi(x^{{g^{-1}}})^{\varphi(g)}=\varphi(x)$ for all $x\in\Gamma$.
\end{proof}

\begin{Lemma}\label{locallyp}
Every rigid embedding problem satisfies {\rm (\ref{cond})} for every $\mfpinS$.
\end{Lemma}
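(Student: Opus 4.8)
The statement asserts that if $EP = (\varphi\function\mathbf{G}\rightarrow\mathbf{A},\;\alpha\function\mathbf{B}\rightarrow\mathbf{A})$ is an embedding problem in which $\alpha$ is rigid — i.e.\ $\alpha|_\Delta$ is injective for every $\Delta\in\mathcal{B}$ — then condition \eqref{cond} holds for each $\mfpinS$. Fix such a $\mfp$. There are two directions to verify: starting from a $\Gamma\in\mathcal{G}_\mfp$ one must produce a $\Delta\in\mathcal{B}_\mfp$ together with an epimorphism $\gamma_\Gamma\function\Gamma\rightarrow\Delta$ satisfying $\alpha\circ\gamma_\Gamma=\varphi|_\Gamma$, and symmetrically starting from a $\Delta\in\mathcal{B}_\mfp$ one must produce a $\Gamma\in\mathcal{G}_\mfp$ with the same data. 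The rigidity of $\alpha$ is exactly what makes the map $\alpha|_\Delta$ a bijection onto its image, hence invertible.

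For the first direction, given $\Gamma\in\mathcal{G}_\mfp$, consider $\varphi(\Gamma)$. Since $\varphi$ is an epimorphism of group piles, $\varphi(\mathcal{G}_\mfp)=\mathcal{A}_\mfp$, so $\varphi(\Gamma)\in\mathcal{A}_\mfp$; and since $\alpha$ is an epimorphism of group piles, $\alpha(\mathcal{B}_\mfp)=\mathcal{A}_\mfp$, so there exists $\Delta\in\mathcal{B}_\mfp$ with $\alpha(\Delta)=\varphi(\Gamma)$. Now $\alpha|_\Delta\function\Delta\rightarrow\varphi(\Gamma)$ is an epimorphism which is injective by rigidity, hence an isomorphism of profinite groups (a continuous bijection of profinite groups is an isomorphism). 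Set $\gamma_\Gamma := (\alpha|_\Delta)^{-1}\circ(\varphi|_\Gamma)\function\Gamma\rightarrow\Delta$. This is a composite of an epimorphism with an isomorphism, hence an epimorphism, and by construction $\alpha\circ\gamma_\Gamma = \alpha|_\Delta\circ(\alpha|_\Delta)^{-1}\circ\varphi|_\Gamma = \varphi|_\Gamma$, as required. For the second direction, given $\Delta\in\mathcal{B}_\mfp$, we have $\alpha(\Delta)\in\mathcal{A}_\mfp=\varphi(\mathcal{G}_\mfp)$, so there is $\Gamma\in\mathcal{G}_\mfp$ with $\varphi(\Gamma)=\alpha(\Delta)$; then $\alpha|_\Delta\function\Delta\rightarrow\alpha(\Delta)=\varphi(\Gamma)$ is again an isomorphism by rigidity, and the same formula $\gamma_\Gamma=(\alpha|_\Delta)^{-1}\circ(\varphi|_\Gamma)$ does the job.

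I do not expect a genuine obstacle here: the lemma is essentially unwinding the definition of rigidity. The only point requiring a moment's care is the passage from ``$\alpha|_\Delta$ is a continuous injective epimorphism'' to ``$\alpha|_\Delta$ is an isomorphism of profinite groups,'' which uses that $\Delta$ is compact and $\alpha(\Delta)$ is Hausdorff so that the continuous bijection $\alpha|_\Delta$ is closed, hence a homeomorphism; this is the profinite-groups analogue of the fact recalled for profinite spaces at the end of Section~\ref{secprofinite}. One should also note that $\varphi|_\Gamma\function\Gamma\rightarrow\varphi(\Gamma)$ and $\alpha|_\Delta\function\Delta\rightarrow\alpha(\Delta)$ are genuinely surjective onto the indicated images by definition, so no additional surjectivity argument is needed. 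Beyond that the proof is a direct diagram chase.
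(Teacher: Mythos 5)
Your proof is correct and follows the same route as the paper: choose $\Delta$ (resp.\ $\Gamma$) matching under $\alpha$ and $\varphi$ using that both are epimorphisms of group piles, then set $\gamma_\Gamma=(\alpha|_\Delta)^{-1}\circ\varphi|_\Gamma$, with rigidity of $\alpha$ ensuring $\alpha|_\Delta$ is invertible. You simply spell out a few standard points (the compact-Hausdorff argument for invertibility, and why the required $\Delta$ and $\Gamma$ exist) that the paper leaves implicit.
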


\begin{proof}
Suppose $EP$ is rigid.
If $\Gamma\in\mcG_\mfp$, choose $\Delta\in\mcB_\mfp$ with $\alpha(\Delta)=\varphi(\Gamma)$.
If $\Delta\in\mcB_\mfp$, choose $\Gamma\in\mcG_\mfp$ with $\varphi(\Gamma)=\alpha(\Delta)$.
Since $\alpha$ is rigid, $\gamma_\Gamma=(\alpha|_\Delta)^{-1}\circ\varphi|_\Gamma$ maps $\Gamma$ onto $\Delta$
and satisfies $\alpha\circ\gamma_\Gamma=\varphi|_\Gamma$.
\end{proof}

\begin{Proposition}\label{rigidlocally}
Every rigid deficient embedding problem is locally solvable.
\end{Proposition}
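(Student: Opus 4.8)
The plan is to reduce the statement to the two local conditions that have just been established. Recall that an embedding problem $EP=(\varphi\function\mbG\rightarrow\mbA,\;\alpha\function\mbB\rightarrow\mbA)$ is locally solvable precisely when \eqref{cond} holds for every $\mfp\in\{0\}\cup\mfS$. Since $EP$ is assumed rigid, Lemma~\ref{locallyp} immediately gives \eqref{cond} for every $\mfpinS$, so the only thing that remains is to verify \eqref{cond} for $\mfp=0$.

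\medskip\noindent
For the prime $\mfp=0$ I would invoke Lemma~\ref{locallyzero}: it suffices to produce $G_0\in\mcG_0$, $B_0\in\mcB_0$, and an epimorphism $\gamma_0\function G_0\rightarrow B_0$ with $\alpha\circ\gamma_0=\varphi|_{G_0}$. But here the hypothesis that $EP$ is \emph{deficient} does the work: by definition $\mbB^\rmdef=\mbB$ means $\mcB_0=\{1\}$, i.e.\ $B_0=1$ is the only element of $\mcB_0$. Pick any $G_0\in\mcG_0$ (nonempty by the definition of a group pile) and let $\gamma_0\function G_0\rightarrow 1$ be the trivial homomorphism. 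It is automatically an epimorphism onto $B_0=1$, and $\alpha\circ\gamma_0$ is the trivial map $G_0\rightarrow 1$, which equals $\varphi|_{G_0}$ because $\varphi(G_0)\subseteq$ (the $0$-component of $\mbA$), and since $\alpha$ is an epimorphism of deficient-on-the-$\mcB_0$-side piles forces $\mcA_0=\{1\}$ as well, so $\varphi(G_0)=1$. Thus Lemma~\ref{locallyzero} applies and \eqref{cond} holds for $\mfp=0$.

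\medskip\noindent
Combining the two cases, \eqref{cond} holds for all $\mfp\in\{0\}\cup\mfS$, which is exactly the assertion that $EP$ is locally solvable. The only subtlety — and the one point I would write out carefully — is the claim that ``deficient'' for the embedding problem (a property of $\mbB$) forces $\mcA_0=\{1\}$: this is because $\alpha\function\mbB\rightarrow\mbA$ is an epimorphism of group piles, hence in particular $\alpha(\mcB_{0})=\mcA_0$ in whatever sense $\mcG_0$-components are required to be compatible, so $\mcB_0=\{1\}$ propagates to $\mcA_0=\{1\}$. If instead one wants to avoid any discussion of how $\mcG_0$ behaves under epimorphisms, one can argue more directly: $\varphi|_{G_0}$ lands in the $0$-component of $\mbA$, $\alpha^{-1}$ of that component contains the trivial subgroup of $\mbB$, and the trivial map through $B_0=1$ lifts it. Either way, this is the only place where a nontrivial (but elementary) remark is needed; everything else is a formal citation of Lemmas~\ref{locallyzero} and~\ref{locallyp}.
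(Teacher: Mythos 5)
Your proof is correct and follows the same route as the paper: Lemma~\ref{locallyp} handles $\mfpinS$, and for $\mfp=0$ the key observation is that $\alpha(\mcB_0)=\mcA_0$ (by definition of epimorphism of group piles) forces $\mcA_0=\{1\}$, so $\varphi(G_0)=1$ and the trivial map $\gamma_0\colon G_0\to 1$ works. One small caution: the ``alternative'' argument you sketch at the end does not actually avoid the point at issue --- to get $\alpha\circ\gamma_0=\varphi|_{G_0}$ with $\gamma_0$ trivial you need $\varphi(G_0)=1$ in any case, and the only way to see this is that $\mbA$ inherits deficiency from $\mbB$; so stick with the first version.
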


\begin{proof}
Suppose $EP$ is rigid and deficient.
By Lemma \ref{locallyp}, {\rm (\ref{cond})} holds for $\mfpinS$.
Since $\mbB$ is deficient, so is $\mbA$,
hence if $G_0\in\mcG_0$, then $\varphi(G_0)=1$.
Thus, {\rm (\ref{cond})} is satisfied for $\mfp=0$.
\end{proof}

\begin{Definition}
Let $\varphi\colon\mbG\rightarrow\mbA$ and $\alpha\colon\mbB\rightarrow\mbA$
be homomorphisms of deficient group piles.
Define the (asymmetric) {\bf rigid product} of $B$ and $G$ over $A$
as 
$\mathbf{B}\times_{\mathbf{A}}^{\rm rig}\mathbf{G}=(H,\mathcal{H}_\mfp)_{\mfpinS}$,
where $H=B\times_AG$ is the fiber product 
and
$$
\mathcal{H}_\mfp=\left\{ \Gamma\in{\rm Subgr}(H) \colon \beta(\Gamma)\in\mathcal{G}_\mfp,\,\pi(\Gamma)\in\mathcal{B}_\mfp,\beta|_\Gamma\mbox{  is injective} \right\},
$$
with $\pi:H\rightarrow B$ and $\beta:H\rightarrow G$ the natural projection maps.
\end{Definition}

\begin{Lemma}\label{fiberproduct}
Let $EP=(\varphi\function\mathbf{G}\rightarrow\mathbf{A},\;\alpha\function\mathbf{B}\rightarrow\mathbf{A})$
be a locally solvable embedding problem of finite deficient group piles.
Then the $\mathbf{B}\times_{\mathbf{A}}^{\rm rig}\mathbf{G}$ is a deficient group pile,
$\pi:\mathbf{B}\times_{\mathbf{A}}^{\rm rig}\mathbf{G}\rightarrow\mathbf{B}$ is an epimorphism and
$\beta:\mathbf{B}\times_{\mathbf{A}}^{\rm rig}\mathbf{G}\rightarrow\mathbf{G}$ is a rigid epimorphism.
\end{Lemma}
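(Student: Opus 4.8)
The plan is to verify the three assertions in turn, using throughout that all groups involved are finite, so that ${\rm Subgr}(H)$ carries the discrete topology and every subset of it is automatically closed; this makes the only potential topological content disappear. I write $H=B\times_AG$ for the fibre product with its projections $\pi\colon H\to B$ and $\beta\colon H\to G$, and recall that $\mcA_0=\mcB_0=\mcG_0=\{1\}$ because $\mbA,\mbB,\mbG$ are deficient.

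For the first assertion the main thing to check is that each $\mcH_\mfp$ is $H$-invariant: if $\Gamma\in\mcH_\mfp$ and $h\in H$, then $\beta(\Gamma^h)=\beta(\Gamma)^{\beta(h)}\in\mcG_\mfp$ and $\pi(\Gamma^h)=\pi(\Gamma)^{\pi(h)}\in\mcB_\mfp$ by the $G$- resp.~$B$-invariance of $\mcG_\mfp$ and $\mcB_\mfp$, while $\beta|_{\Gamma^h}$ is injective since conjugation by $h$ is an automorphism of $H$; hence $\Gamma^h\in\mcH_\mfp$. As $\mcH_0=\{1\}$ plainly fulfils the requirements on the $0$-component, $\mathbf{B}\times_{\mathbf{A}}^{\rm rig}\mathbf{G}$ is a deficient group pile.

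For the two epimorphism statements I would first invoke the standard fact that both projections of a fibre product are surjective when the structure maps $\varphi$ and $\alpha$ are, so $\pi$ and $\beta$ are surjective as group homomorphisms. For $\mfp=0$ one has $\pi(\mcH_0)=\beta(\mcH_0)=\{1\}$, and the inclusions $\pi(\mcH_\mfp)\subseteq\mcB_\mfp$ and $\beta(\mcH_\mfp)\subseteq\mcG_\mfp$ for $\mfpinS$ are immediate from the definition of $\mcH_\mfp$. Local solvability enters in the reverse inclusions, through a graph construction: given $\Gamma\in\mcG_\mfp$ (respectively $\Delta\in\mcB_\mfp$), condition~{\rm (\ref{cond})} supplies some $\Delta\in\mcB_\mfp$ (respectively $\Gamma\in\mcG_\mfp$) together with an epimorphism $\gamma_\Gamma\colon\Gamma\to\Delta$ with $\alpha\circ\gamma_\Gamma=\varphi|_\Gamma$; put $\Gamma^*=\{(\gamma_\Gamma(x),x)\colon x\in\Gamma\}\subseteq B\times G$. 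The compatibility $\alpha\circ\gamma_\Gamma=\varphi|_\Gamma$ says exactly that $\Gamma^*\subseteq H=B\times_AG$, and $\Gamma^*$ is a subgroup because $\gamma_\Gamma$ is a homomorphism. Now $\beta(\Gamma^*)=\Gamma\in\mcG_\mfp$, $\pi(\Gamma^*)=\gamma_\Gamma(\Gamma)=\Delta\in\mcB_\mfp$, and $\beta|_{\Gamma^*}$ is injective, being the map $(\gamma_\Gamma(x),x)\mapsto x$; hence $\Gamma^*\in\mcH_\mfp$, and it is mapped onto $\Gamma$ by $\beta$ and onto $\Delta$ by $\pi$. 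This gives $\mcG_\mfp\subseteq\beta(\mcH_\mfp)$ and $\mcB_\mfp\subseteq\pi(\mcH_\mfp)$, so $\pi$ and $\beta$ are epimorphisms of group piles. Finally, $\beta$ is rigid since injectivity of $\beta|_\Gamma$ for $\Gamma\in\mcH=\bigcup_{\mfpinS}\mcH_\mfp$ is built into the definition of $\mcH_\mfp$.

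I do not expect a genuine obstacle; the only points that need attention are keeping track of which of the two halves of condition~{\rm (\ref{cond})} is applied in each reverse inclusion, and observing that the graph $\Gamma^*$ really lies in the fibre product — which is precisely what the compatibility $\alpha\circ\gamma_\Gamma=\varphi|_\Gamma$ furnished by local solvability guarantees.
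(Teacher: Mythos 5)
Your proof is correct and follows essentially the same route as the paper: $H$-invariance of $\mcH_\mfp$ by conjugation, closedness from finiteness, surjectivity of the projections, and — for the reverse inclusions making $\pi,\beta$ epimorphisms of piles — the graph $\Gamma^*=\{(\gamma_\Gamma(x),x)\}$ is precisely the image $\hat\gamma(G_1)$ of the section $\hat\gamma\colon\Gamma\to H$ that the paper uses, with the compatibility $\alpha\circ\gamma_\Gamma=\varphi|_\Gamma$ ensuring $\Gamma^*$ lands inside the fibre product. Rigidity of $\beta$ being built into the definition of $\mcH_\mfp$ is also the paper's observation.
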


\begin{proof}
Since $\mathcal{G}_\mfp$ is $G$-invariant and $\mathcal{B}_\mfp$ is $B$-invariant,
$\mathcal{H}_\mfp$ is $H$-invariant.
Since $H$ is finite, 
$\mathcal{H}_\mfp$ is closed.
The projections $\beta$ and $\pi$ are surjective and, by the definition of $\mathcal{H}_\mfp$, homomorphisms of group piles.
The definition of $\mathcal{H}_\mfp$ also gives that $\beta$ is rigid.
Given $G_1\in\mathcal{G}_\mfp$, there is $B_1\in\mathcal{B}_\mfp$ and
an epimorphism $\gamma\function G_1\rightarrow B_1$ with $\alpha\circ\gamma=\varphi|_{G_1}$.
It defines a homomorphism $\hat{\gamma}\function G_1\rightarrow H$ with
$\beta\circ\hat{\gamma}={\rm id}_{G_1}$ and $\pi\circ\hat{\gamma}=\gamma$.
Let $H_1=\hat{\gamma}(G_1)$.
Then $\beta(H_1)=G_1\in\mathcal{G}_\mfp$ and $\pi(H_1)=\gamma(G_1)=B_1\in\mathcal{B}_\mfp$.
Furthermore, since $\beta\circ\hat{\gamma}={\rm id}_{G_1}$,
$\beta|_{H_1}$ is injective, so 
$H_1\in\mathcal{H}_\mfp$.
Similarly, given $B_1\in\mathcal{B}_\mfp$, there
is $H_1\in\mathcal{H}_\mfp$ with $\pi(H_1)=B_1$.
Therefore, $\beta$ and $\pi$ are epimorphisms of group piles.
\end{proof}

\begin{Remark}
The rigid product can be seen as a canonical version
of \cite[Lemma-Construction 4.2]{HJPd}.
One could define it as a sub-group pile of the fiber product in the category of deficient group piles,
which always exists.
\end{Remark}

\begin{Lemma}\label{locallyquotient}
Let $(\varphi\function\mathbf{G}\rightarrow\mathbf{A},\;\alpha\function\mathbf{B}\rightarrow\mathbf{A})$
be a locally solvable embedding problem.
Then for every normal subgroup $N\normal B$,
the induced embedding problem
$(\mathbf{G}\rightarrow\mathbf{A}/\alpha(N),\mathbf{B}/N\rightarrow\mathbf{A}/\alpha(N))$
is also locally solvable.
\end{Lemma}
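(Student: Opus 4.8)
The plan is to trace through the definitions and verify the local solvability condition \eqref{cond} for the induced embedding problem $(\varphi'\colon\mathbf{G}\to\mathbf{A}/\alpha(N),\ \alpha'\colon\mathbf{B}/N\to\mathbf{A}/\alpha(N))$ directly, prime by prime. Write $\pi_{\mathbf B}\colon\mathbf B\to\mathbf B/N$ and $\pi_{\mathbf A}\colon\mathbf A\to\mathbf A/\alpha(N)$ for the quotient maps, so that $\varphi'=\pi_{\mathbf A}\circ\varphi$ and $\alpha'\circ\pi_{\mathbf B}=\pi_{\mathbf A}\circ\alpha$. First I would record that $\varphi'$ and $\alpha'$ are again epimorphisms of group piles (composites of epimorphisms, using the functoriality of the quotient construction from Definition of the quotient), so the pair is a genuine embedding problem.

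Now fix $\mfp\in\{0\}\cup\mfS$ and write the pile data as $\mathcal G_\mfp$, $\mathcal B_\mfp$, and $\mathcal B_{\mfp,N}=\{\Delta N/N:\Delta\in\mathcal B_\mfp\}$. For the forward half of \eqref{cond}: given $\Gamma\in\mathcal G_\mfp$, local solvability of the original problem yields $\Delta\in\mathcal B_\mfp$ and an epimorphism $\gamma_\Gamma\colon\Gamma\to\Delta$ with $\alpha\circ\gamma_\Gamma=\varphi|_\Gamma$. Then $\Delta':=\Delta N/N=\pi_{\mathbf B}(\Delta)\in\mathcal B_{\mfp,N}$, and the composite $\gamma_\Gamma':=\pi_{\mathbf B}|_\Delta\circ\gamma_\Gamma\colon\Gamma\to\Delta'$ is an epimorphism (composite of surjections onto $\Delta$ and of the restriction of the quotient map). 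Checking $\alpha'\circ\gamma_\Gamma'=\varphi'|_\Gamma$ is a diagram chase: $\alpha'\circ\pi_{\mathbf B}\circ\gamma_\Gamma=\pi_{\mathbf A}\circ\alpha\circ\gamma_\Gamma=\pi_{\mathbf A}\circ\varphi|_\Gamma=\varphi'|_\Gamma$. For the backward half: given $\bar\Delta\in\mathcal B_{\mfp,N}$, pick $\Delta\in\mathcal B_\mfp$ with $\pi_{\mathbf B}(\Delta)=\bar\Delta$; local solvability of the original problem (backward direction) gives $\Gamma\in\mathcal G_\mfp$ and an epimorphism $\gamma_\Gamma\colon\Gamma\to\Delta$ with $\alpha\circ\gamma_\Gamma=\varphi|_\Gamma$; then the same composite $\pi_{\mathbf B}|_\Delta\circ\gamma_\Gamma\colon\Gamma\to\bar\Delta$ works by the identical chase.

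The only genuinely delicate point — the place where I expect to spend the most care — is verifying that $\Gamma$ remains an \emph{admissible witness}, i.e.\ that the element $\Gamma\in\mathcal G_\mfp$ produced from the original embedding problem (whose pile $\mathbf G$ is unchanged) is exactly what the new condition \eqref{cond} for $(\varphi',\alpha')$ asks for; here this is automatic precisely because $\mathbf G$ and hence $\mathcal G_\mfp$ are not altered by passing to the quotient on the $\mathbf B$-side, so no compatibility between $\Gamma$ and $N$ is needed. I would also note that when $\mfp=0$ the sets $\mathcal G_0,\mathcal B_0$ play no role in \eqref{cond} as stated (that condition only quantifies over $\mathcal G_\mfp,\mathcal B_\mfp$), so there is nothing extra to check there. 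Assembling the two halves over all $\mfp\in\{0\}\cup\mfS$ gives local solvability of the induced embedding problem, completing the proof.
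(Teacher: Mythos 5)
Your proof is correct and essentially identical to the paper's: apply local solvability of the original problem, push the resulting local lift forward through the quotient map $\mathbf{B}\to\mathbf{B}/N$, and chase the commuting square $\pi_{\mathbf A}\circ\alpha=\alpha'\circ\pi_{\mathbf B}$, with the backward direction relying on surjectivity of $\mathcal{B}_\mfp\to\mathcal{B}_{\mfp,N}$. Your parenthetical aside about $\mfp=0$ is oddly worded (the sets $\mathcal{G}_0,\mathcal{B}_0$ are exactly what (\ref{cond}) quantifies over when $\mfp=0$), but since your main argument already handles all $\mfp\in\{0\}\cup\mfS$ uniformly, this does not affect the proof.
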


\begin{proof}
Let $\tilde{\mathbf{B}}=(\tilde{B},\tilde{\mcB}_0,\tilde{\mathcal{B}}_\mfp)_{\mfpinS}=\mathbf{B}/N$
and
$\tilde{\mathbf{A}}=(\tilde{A},\tilde{\mcA}_0,\tilde{\mathcal{A}}_\mfp)_{\mfpinS}=$ $\mathbf{A}/\alpha(N)$,
and let $\pi\function\mathbf{B}\rightarrow\tilde{\mathbf{B}}$, $\tilde{\pi}\function\mathbf{A}\rightarrow\tilde{\mathbf{A}}$
be the quotient maps and
$\tilde{\alpha}\function\tilde{\mathbf{B}}\rightarrow\tilde{\mathbf{A}}$
the induced epimorphism.
Then $\tilde{\pi}\circ\alpha=\tilde{\alpha}\circ\pi$.
We have to prove that the embedding problem
$(\tilde{\pi}\circ\varphi,\tilde{\alpha})$ is locally solvable.

Let $\mfp\in \{0\}\cup\mfS$ and
let $\Gamma\in\mathcal{G}_\mfp$ be given.
Then there is a $\Delta\in\mathcal{B}_\mfp$ and an epimorphism $\gamma_\Gamma\function\Gamma\rightarrow\Delta$
with $\alpha\circ\gamma_\Gamma=\varphi|_\Gamma$.
Let $\Lambda=\pi(\Delta)\in\tilde{\mathcal{B}}_\mfp$. 
Then $\pi\circ\gamma_\Gamma\function\Gamma\rightarrow\Lambda$ is an epimorphism with
$\tilde{\alpha}\circ(\pi\circ\gamma_\Gamma)=\tilde{\pi}\circ\alpha\circ\gamma_\Gamma=(\tilde{\pi}\circ\varphi)|_\Gamma$.

Conversely, let $\Lambda\in\tilde{\mathcal{B}}_\mfp$ be given.
Choose $\Delta\in\mathcal{B}_\mfp$ with $\pi(\Delta)=\Lambda$.
Then there is a $\Gamma\in\mathcal{G}_\mfp$ and an epimorphism $\gamma_\Gamma\function\Gamma\rightarrow\Delta$
with $\alpha\circ\gamma_\Gamma=\varphi|_\Gamma$.
Hence, $\pi\circ\gamma_\Gamma\function\Gamma\rightarrow\Lambda$ is an epimorphism with
$\tilde{\alpha}\circ(\pi\circ\gamma_\Gamma)=\tilde{\pi}\circ\alpha\circ\gamma_\Gamma=(\tilde{\pi}\circ\varphi)|_\Gamma$.
\end{proof}

\begin{Lemma}\label{decomposition}
Let $(\varphi\function\mathbf{G}\rightarrow\mathbf{A},\;\alpha\function\mathbf{B}\rightarrow\mathbf{A})$
be a locally solvable finite embedding problem.
Then there exists an open normal subgroup $N\normal G$ with $N\leq{\rm Ker}(\varphi)$
such that
the induced embedding problem
$(\mathbf{G}/N\rightarrow\mathbf{A},\mathbf{B}\rightarrow\mathbf{A})$
is locally solvable.
\end{Lemma}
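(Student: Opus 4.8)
The plan is to use the finiteness of $\mbB$ (hence of $\mbA$, and hence the openness of $\ker(\varphi)\normal G$) to reduce each of the two halves of condition~(\ref{cond}) to a ``finite-level'' statement, and then to take $N$ to be the intersection of $\ker(\varphi)$ with finitely many open normal subgroups of $G$. Since $\{0\}\cup\mfS$ is finite, it suffices to treat each $\mfp\in\{0\}\cup\mfS$ separately and intersect the resulting subgroups at the end. Recall also that each $\mcG_\mfp$ is a closed, hence compact, subset of the profinite space ${\rm Subgr}(G)$, and that for an open normal $M\normal G$ the space ${\rm Subgr}(G/M)$ is finite discrete while ${\rm Subgr}(G)\to{\rm Subgr}(G/M)$, $\Gamma\mapsto\Gamma M/M$, is continuous by Lemma~\ref{Subgrfunctor}.

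For the ``for every $\Delta\in\mcB_\mfp$'' half of~(\ref{cond}), finiteness of $\mcB_\mfp$ suffices: for each of the finitely many $\Delta\in\mcB_\mfp$ pick, using~(\ref{cond}) for $(\varphi,\alpha)$, a subgroup $\Gamma_\Delta\in\mcG_\mfp$ and an epimorphism $\gamma_\Delta\function\Gamma_\Delta\to\Delta$ with $\alpha\circ\gamma_\Delta=\varphi|_{\Gamma_\Delta}$; since $\Delta$ is finite, $\ker(\gamma_\Delta)$ is open in $\Gamma_\Delta$, so there is an open normal $M_\Delta\normal G$ with $M_\Delta\leq\ker(\varphi)$ and $\Gamma_\Delta\cap M_\Delta\leq\ker(\gamma_\Delta)$. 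Then for any open normal $N\leq M_\Delta$ the map $\gamma_\Delta$ descends to an epimorphism $\Gamma_\Delta N/N\to\Delta$ compatible with $\alpha$ and with the induced epimorphism $\bar\varphi\function\mbG/N\to\mbA$, and $\Gamma_\Delta N/N\in\mcG_{\mfp,N}$.

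The ``for every $\Gamma\in\mcG_\mfp$'' half is the main obstacle: $\mcG_\mfp$ may be infinite, and the epimorphism $\gamma_\Gamma\function\Gamma\to\Delta_\Gamma$ supplied by~(\ref{cond}) need not factor through $\Gamma N/N$ for a single $N$ working for all $\Gamma$ at once. I would get around this by a compactness argument on $\mcG_\mfp$. For each $\Gamma\in\mcG_\mfp$ fix $\Delta_\Gamma\in\mcB_\mfp$ and $\gamma_\Gamma$ as in~(\ref{cond}), and, as in the previous paragraph, an open normal $M_\Gamma\normal G$ with $M_\Gamma\leq\ker(\varphi)$ and $\Gamma\cap M_\Gamma\leq\ker(\gamma_\Gamma)$; then $\gamma_\Gamma$ descends to an epimorphism $\bar\gamma_\Gamma\function\Gamma M_\Gamma/M_\Gamma\to\Delta_\Gamma$ whose composite with $\alpha$ is the descent of $\varphi|_\Gamma$. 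Put $\mathcal{U}_\Gamma=\{\Gamma'\in\mcG_\mfp\colon\Gamma'M_\Gamma/M_\Gamma=\Gamma M_\Gamma/M_\Gamma\}$; by the continuity statement above this is a clopen neighbourhood of $\Gamma$ in $\mcG_\mfp$. The key point is that for every $\Gamma'\in\mathcal{U}_\Gamma$ and every open normal $N\leq M_\Gamma$ the composite $\Gamma'N/N\twoheadrightarrow\Gamma'M_\Gamma/M_\Gamma=\Gamma M_\Gamma/M_\Gamma\to\Delta_\Gamma$ (the last map being $\bar\gamma_\Gamma$) is an epimorphism onto $\Delta_\Gamma\in\mcB_\mfp$ satisfying $\alpha\circ(\,\cdot\,)=\bar\varphi|_{\Gamma'N/N}$, and hence witnesses~(\ref{cond}) for $\Gamma'$. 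Covering the compact space $\mcG_\mfp$ by the $\mathcal{U}_\Gamma$ and passing to a finite subcover $\mathcal{U}_{\Gamma_1},\dots,\mathcal{U}_{\Gamma_k}$, the open normal subgroup $M_{\Gamma_1}\cap\dots\cap M_{\Gamma_k}$ then witnesses this half of~(\ref{cond}) uniformly over all of $\mcG_\mfp$.

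Finally take $N$ to be the intersection of $\ker(\varphi)$ with all the subgroups $M_\Delta$ (from the second paragraph) and $M_{\Gamma_1}\cap\dots\cap M_{\Gamma_k}$ (from the third), over all $\mfp\in\{0\}\cup\mfS$; this is open and normal in $G$ and contained in $\ker(\varphi)$, so the induced problem $(\mbG/N\to\mbA,\ \mbB\to\mbA)$ is defined, and by construction it satisfies both halves of~(\ref{cond}) for every $\mfp$, i.e.\ it is locally solvable. The one point to verify carefully is the identity $\alpha\circ(\,\cdot\,)=\bar\varphi|_{\Gamma'N/N}$ used in the third paragraph; it follows by chasing the commutative diagram relating $\varphi$, its factorization through $G/M_\Gamma$, and $\bar\varphi\colon G/N\to A$, the inclusions $N\leq M_\Gamma\leq\ker(\varphi)$ making all the required descents available.
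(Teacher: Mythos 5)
Your proof is correct, and it fills a gap that the paper itself does not: the paper's proof of this lemma is the one-line citation ``This is a special case of \cite[Lemma~4.1]{HJPd}'', so there is no argument in the paper to compare against. You have supplied a self-contained proof, and the structure is sound. The two halves of {\rm(\ref{cond})} really do behave differently: the ``for every $\Delta\in\mcB_\mfp$'' direction is finitely many conditions since $\mcB_\mfp$ is finite, and each epimorphism $\gamma_\Delta$ descends once you kill a small enough open normal subgroup; the ``for every $\Gamma\in\mcG_\mfp$'' direction is the genuine content, and your clopen-cover argument on the compact profinite space $\mcG_\mfp$ is exactly the right tool. The key verification --- that for $\Gamma'\in\mathcal{U}_\Gamma$ and $N\leq M_\Gamma$ the composite $\Gamma'N/N\twoheadrightarrow\Gamma'M_\Gamma/M_\Gamma=\Gamma M_\Gamma/M_\Gamma\xrightarrow{\bar\gamma_\Gamma}\Delta_\Gamma$ satisfies $\alpha\circ(\,\cdot\,)=\bar\varphi|_{\Gamma'N/N}$ --- does go through: if $xM_\Gamma=yM_\Gamma$ with $x\in\Gamma'$, $y\in\Gamma$, then $x^{-1}y\in M_\Gamma\leq\ker\varphi$, so $\alpha(\bar\gamma_\Gamma(yM_\Gamma))=\varphi(y)=\varphi(x)=\bar\varphi(xN)$, as needed. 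The openness of $\mathcal{U}_\Gamma$ follows from Lemma~\ref{Subgrfunctor} applied to $G\to G/M_\Gamma$ and the discreteness of ${\rm Subgr}(G/M_\Gamma)$, as you note. Since $\{0\}\cup\mfS$ is finite and each paragraph produces an open normal subgroup of finite index contained in $\ker\varphi$, the final intersection $N$ is open normal and works simultaneously for all $\mfp$, and every element of $\mcG_{\mfp,N}$ is of the form $\Gamma N/N$ for some $\Gamma\in\mcG_\mfp$, so local solvability of the induced problem is verified on the nose.

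One small remark: intersecting with $\ker(\varphi)$ at the end is redundant, since all your $M_\Delta$ and $M_{\Gamma_i}$ were already chosen inside $\ker(\varphi)$; but this is harmless.
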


\begin{proof}
This is a special case of \cite[Lemma~4.1]{HJPd}.
\end{proof}

The following proposition is closely related to \cite[Lemma 4.3]{HJPd},
which we need to reprove because we have to take $e$-boundedness into consideration.

\begin{Proposition}\label{dominateSigma}
Let $(\varphi\function\mathbf{G}\rightarrow\mathbf{A},\;\alpha\function\mathbf{B}\rightarrow\mathbf{A})$
be a locally solvable $e$-bounded finite embedding problem where $\mathbf{G}$ is $e$-bounded.
Then it can be dominated by a rigid $e$-bounded finite embedding problem,
i.e.~there exist epimorphisms $\hat{\alpha}\function\hat{\mathbf{B}}\rightarrow\hat{\mathbf{A}}$,
$\hat{\varphi}\function\mathbf{G}\rightarrow\hat{\mathbf{A}}$,
$\hat{\beta}\function\hat{\mathbf{A}}\rightarrow\mathbf{A}$,
$\beta\function\hat{\mathbf{B}}\rightarrow\mathbf{B}$
such that
$\varphi=\hat{\beta}\circ\hat{\varphi}$ and
$\hat{\beta}\circ\hat{\alpha}=\alpha\circ\beta$,
and $(\hat{\varphi},\hat{\alpha})$ is a rigid $e$-bounded finite embedding problem.
\end{Proposition}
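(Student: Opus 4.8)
The plan is to build the dominating rigid embedding problem in two stages, following the strategy of \cite[Lemma~4.3]{HJPd} but keeping track of $e$-boundedness throughout. First I would pass to the deficient reducts: apply Lemma~\ref{decomposition} to the locally solvable finite embedding problem $(\varphi^\rmdef\function\mathbf{G}^\rmdef\rightarrow\mathbf{A}^\rmdef,\alpha^\rmdef\function\mathbf{B}^\rmdef\rightarrow\mathbf{A}^\rmdef)$ to find an open normal $N\normal G$ with $N\leq\mathrm{Ker}(\varphi)$ so that the induced embedding problem $(\mathbf{G}^\rmdef/N\rightarrow\mathbf{A}^\rmdef,\mathbf{B}^\rmdef\rightarrow\mathbf{A}^\rmdef)$ is locally solvable. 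Now all three group piles are finite and deficient, so I can form the rigid product: set $\hat{\mathbf{A}}^\rmdef=\mathbf{B}^\rmdef\times_{\mathbf{A}^\rmdef}^{\rm rig}(\mathbf{G}^\rmdef/N)$, with projections $\hat{\beta}^\rmdef\function\hat{\mathbf{A}}^\rmdef\rightarrow\mathbf{B}^\rmdef$ rigid (wait --- I want the rigid leg pointing the other way) so more carefully: use Lemma~\ref{fiberproduct} to get that the projection to $\mathbf{G}^\rmdef/N$ is rigid. I then obtain a finite deficient group pile $\hat{\mathbf{A}}^\rmdef$ together with a rigid epimorphism onto $\mathbf{G}^\rmdef/N$ and an ordinary epimorphism onto $\mathbf{B}^\rmdef$; composing the latter with $\alpha^\rmdef$ and the former with $\mathbf{G}^\rmdef\rightarrow\mathbf{G}^\rmdef/N$ gives the commuting square on the level of deficient reducts, with $\hat{\mathbf{B}}^\rmdef$ taken to be $\hat{\mathbf{A}}^\rmdef$ itself (or a suitable fiber product $\hat{\mathbf{A}}^\rmdef\times_{\mathbf{A}^\rmdef}\mathbf{B}^\rmdef$ --- I'd check which makes $\hat\alpha$ an epimorphism and keeps the problem rigid).

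The second stage is to reinstate the distinguished conjugacy classes $\mcG_0$, $\mcA_0$, $\mcB_0$ so that everything becomes $e$-bounded rather than merely $e$-generated. Here is where Lemma~\ref{GaschuetzPiles} does the work: starting from the $e$-bounded self-generated pile $\mathbf{G}$ and the $e$-generated deficient pile $\hat{\mathbf{B}}^\rmdef$ (or $\hat{\mathbf{A}}^\rmdef$) equipped with its epimorphism onto $\mathbf{A}^\rmdef=(\mathbf{A})^\rmdef$... actually onto the deficient reduct of the relevant target, I can lift to an $e$-bounded self-generated structure. Concretely: $\hat{\mathbf{A}}^\rmdef$ maps onto $\mathbf{A}^\rmdef$ via $\hat\beta^\rmdef$ precomposed appropriately, and $\mathbf{A}$ is $e$-bounded self-generated, so Lemma~\ref{GaschuetzPiles} (after checking the $e$-generated hypothesis on $\hat{\mathbf{A}}^\rmdef$ via Lemma~\ref{quotientrank}, since $\mathbf{G}$ is $e$-bounded hence $e$-generated and surjects onto it) produces an $e$-bounded self-generated $\hat{\mathbf{A}}$ with $\hat{\mathbf{A}}^\rmdef$ as its deficient reduct and $\hat\beta\function\hat{\mathbf{A}}\rightarrow\mathbf{A}$ an epimorphism. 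One then has to choose the $\mcG_0$-lift compatibly: apply Lemma~\ref{GaschuetzPiles} again, or directly Lemma~\ref{GaschuetzSubgroup}, to get $\hat\varphi\function\mathbf{G}\rightarrow\hat{\mathbf{A}}$ with $\hat\beta\circ\hat\varphi=\varphi$, using that $\mathrm{Ker}(\hat\beta)$ has bounded rank and $G_0$-generation is preserved. The same procedure applied to $\hat{\mathbf{B}}^\rmdef\rightarrow\hat{\mathbf{A}}^\rmdef$ yields the $e$-bounded $\hat{\mathbf{B}}$ and $\hat\alpha\function\hat{\mathbf{B}}\rightarrow\hat{\mathbf{A}}$, and $\beta\function\hat{\mathbf{B}}\rightarrow\mathbf{B}$ comes from the corresponding lift on deficient reducts composed with the $\mcB_0$-lift; rigidity of $(\hat\varphi,\hat\alpha)$ holds because rigidity only concerns the $\mcG_\mfp$, $\mfp\in\mfS$, which were fixed in stage one and untouched in stage two.

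The main obstacle I anticipate is the bookkeeping needed to make the two squares commute \emph{simultaneously} while all four new piles are $e$-bounded: Lemma~\ref{GaschuetzPiles} lifts one epimorphism at a time, and one has to lift the various maps ($\hat\beta$, then $\hat\varphi$ over it, then $\hat\alpha$, then $\beta$ over $\alpha$) in an order that keeps each previously-constructed lift intact. The cleanest route is probably to do all the deficient-reduct work first, obtaining the entire commuting diagram of deficient $e$-generated piles with $(\hat\varphi^\rmdef,\hat\alpha^\rmdef)$ rigid, and only then apply Lemma~\ref{GaschuetzPiles} once to the composite epimorphism $\hat{\mathbf{B}}^\rmdef\rightarrow\mathbf{A}^\rmdef$ (factoring through $\hat{\mathbf{A}}^\rmdef$ and through $\mathbf{B}^\rmdef$) relative to the $e$-bounded $\mathbf{A}$ --- but since the target of that single application must carry a \emph{coherent} choice of $\mcA_0$ for both $\hat{\mathbf{A}}$ and (via $\hat\alpha$) $\hat{\mathbf{B}}$, one really does need to lift $\hat{\mathbf{A}}$ first and then lift $\hat{\mathbf{B}}$ over the already-chosen $\hat{\mathbf{A}}$, invoking Lemma~\ref{GaschuetzPiles} with $\tilde{\mathbf{B}}=\hat{\mathbf{B}}^\rmdef$ and $\mathbf{A}^\rmdef$ there replaced by $\hat{\mathbf{A}}$. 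Checking that the $e$-generated hypothesis is available at each invocation --- which follows from Lemma~\ref{quotientrank} applied to the surjection from the $e$-bounded $\mathbf{G}$ --- is the other point requiring care, and I would state it explicitly before each application.
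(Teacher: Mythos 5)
Your two-stage plan (decompose, take a rigid product, then restore the $\mcG_0$-components) is the right shape, but there are two interlocking problems that prevent it from going through as written.

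First, the labelling of the rigid product is not just cosmetic, it is structurally wrong. You set $\hat{\mathbf{A}}^\rmdef$ to be the rigid product $\mathbf{B}^\rmdef\times_{\mathbf{A}^\rmdef}^{\rm rig}(\mathbf{G}/N)^\rmdef$, so the rigid leg points from $\hat{\mathbf{A}}$ \emph{onto} $\mathbf{G}/N$. But the statement requires an epimorphism $\hat\varphi\function\mathbf{G}\to\hat{\mathbf{A}}$, and there is no natural map of $\mathbf{G}$ onto that fiber product. The objects must be the other way around: $\hat{\mathbf{A}}$ is the quotient $\mathbf{G}/N$ (which inherits its $e$-bounded self-generated structure for free by Lemma~\ref{quotientrank}, no Gasch\"utz step needed here), and the rigid product is (after cutting down) what becomes $\hat{\mathbf{B}}$. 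The rigid epimorphism $\hat\alpha$ is then the projection $\hat{\mathbf{B}}\to\hat{\mathbf{A}}$, as desired.

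Second, and more seriously, passing to deficient reducts \emph{before} applying Lemma~\ref{decomposition} discards exactly the information needed in stage two. The paper applies Lemma~\ref{decomposition} to the full embedding problem $(\varphi,\alpha)$, so the resulting $(\hat\beta,\alpha)$ is locally solvable including at $\mfp=0$; this supplies an epimorphism $\gamma_0\function\hat A_0\to B_0$ with $\alpha\circ\gamma_0=\hat\beta|_{\hat A_0}$. By the universal property of the fiber product, the graph of $\gamma_0$ gives a subgroup $\tilde B_0\leq\tilde B$ that projects onto $\hat A_0$ under $\tilde\alpha$ \emph{and} onto $B_0$ under $\tilde\beta$ \emph{simultaneously}, which is what makes $\hat\alpha$ and $\beta$ both into pile epimorphisms once you take $\hat B=\langle\tilde B_0,\tilde B'\rangle$. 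Your stage two, by contrast, builds $\hat{\mcB}_0$ via Lemma~\ref{GaschuetzPiles} applied to $\hat\alpha^\rmdef$ alone, which controls only $\tilde\alpha(\hat{\mcB}_0)=\hat{\mcA}_0$; nothing forces $\tilde\beta(\hat{\mcB}_0)=\mcB_0$, so $\beta\function\hat{\mathbf{B}}\to\mathbf{B}$ need not be an epimorphism of group piles. You correctly sense that lifting one leg at a time is the obstacle, but Lemma~\ref{GaschuetzPiles} cannot fix it: you need the $\mfp=0$ component of $LS$ for the intermediate problem, which you threw away by reducing to deficient piles too early.
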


\begin{proof}
By Lemma~\ref{decomposition}, there are
a finite group pile $\hat{\mathbf{A}}$ and epimorphisms
$\hat{\varphi}\function\mathbf{G}\rightarrow\hat{\mathbf{A}}$,
$\hat{\beta}\function\hat{\mathbf{A}}\rightarrow\mathbf{A}$ with
$\varphi=\hat{\beta}\circ\hat{\varphi}$ such that
$(\hat{\beta},\alpha)$ is a locally solvable embedding problem.
Since $\mbG$ is $e$-bounded, also $\hat{\mbA}$ is $e$-bounded (Lemma~\ref{quotientrank}).

Let $\tilde{\mathbf{B}}=\mathbf{B}^{\rmdef}\times_{\mathbf{A}^{\rmdef}}^{\rm rig}\hat{\mathbf{A}}^{\rmdef}$ be the rigid product 
and let
$\tilde{\alpha}\function\tilde{\mathbf{B}}\rightarrow\hat{\mathbf{A}}^{\rmdef}$
and $\tilde{\beta}\function\tilde{\mathbf{B}}\rightarrow\mathbf{B}^{\rmdef}$ be the projections.
By Lemma~\ref{fiberproduct},
$\tilde{\alpha}$ is a rigid epimorphism and
$\tilde{\beta}$ is an epimorphism.
Choose $\hat{A}_0\in\hat{\mcA}_0$ and
$B_0\in{\mcB}_0$
and an epimorphism
$\gamma_0\function\hat{A}_0\rightarrow B_0$ 
with $\alpha\circ\gamma_0=\hat{\beta}|_{\hat{A}_0}$.
Then $\gamma_0$ defines a homomorphism $\hat{\gamma}_0\function\hat{A}_0\rightarrow\tilde{B}$
with $\tilde{\alpha}\circ\hat{\gamma}_0={\rm id}_{\hat{A}_0}$ and 
$\tilde{\beta}\circ\hat{\gamma}_0=\gamma_0$.
Let $\tilde{B}_0=\hat{\gamma}_0(\hat{A}_0)$ and note that
$\tilde{\alpha}(\tilde{B}_0)=\hat{A}_0$ and
$\tilde{\beta}(\tilde{B}_0)=B_0$.

We have that
$\rk(\tilde{B}_0)\leq\rk(\hat{A}_0)\leq e$, and
since $\hat{\mbA}$ and $\mbB$ are self-generated,
$\hat{A}=\left<\right.\hat{A}_0,\hat{A}^\prime\left.\right>$
and $B=\left<B_0,B^\prime\right>$.
Let $\hat{B}=\left<\right.\tilde{B}_0,\tilde{B}^\prime\left.\right>\leq\tilde{B}$
and $\hat{\mcB}_0=(\tilde{B}_0)^{\hat{B}}$.
Then $\hat{\mathbf{B}}=(\hat{B},\hat{\mcB}_0,\tilde{\mathcal{B}}_\mfp)_{\mfpinS}$ is a 
self-generated group pile, 
and $\tilde{\alpha}(\hat{B})=\left<\right.\hat{A}_0,\hat{A}^\prime\left.\right>=\hat{A}$
and $\tilde{\beta}(\hat{B})=\left<B_0,B^\prime\right>=B$
by Lemma~\ref{primesubgroups}.
Since $\hat{\mcA}_0=(\hat{A}_0)^{\hat{A}}$ and $\mcB_0=(B_0)^B$, 
$\tilde{\alpha}(\hat{\mcB}_0)=\hat{\mcA}_0$ and $\tilde{\beta}(\hat{\mcB}_0)=\mcB_0$,
so $\tilde{\alpha}|_{\hat{B}}$ and $\tilde{\beta}|_{\hat{B}}$ are epimorphisms of group piles.
Therefore, with $\hat{\alpha}=\tilde{\alpha}|_{\hat{B}}$
and $\beta=\tilde{\beta}|_{\hat{B}}$,
$(\hat{\varphi},\hat{\alpha})$ is a rigid $e$-bounded finite embedding problem
which dominates $(\varphi,\alpha)$.
\end{proof}

\section{Model theory of group piles}

\noindent
This section extends the comodel-theory of profinite groups in \cite{CherlinDriesMacintyre2}
(see also \cite{CherlinDriesMacintyre}) 
to group piles. 
A similar construction can be found in \cite{ErshovUniversal}.
We will give full definitions but focus our proofs on the necessary extensions to the classical theory.
For more on the comodel-theory of profinite groups see \cite{ChatzidakisDissertation}, \cite{ChatzidakisIwasawa}, \cite{Chatzidakisforking} or \cite{Frohn}.

\begin{Definition}
The {\bf colanguage}
 $\mathcal{L}_{{\rm co},\mfS}=\{\leq,\sqsubseteq,P,(G_n)_{n\in\mathbb{N}},(\mathcal{G}_{\mfp,n})_{\mfpinS,n\in\mathbb{N}}\}$
consists of
unary relation symbols $G_n$ ($n\in\mathbb{N}$), 
binary relation symbols $\leq$ and $\sqsubseteq$,
a ternary relation symbol $P$, 
and $n$-ary relation symbols $\mathcal{G}_{\mfp,n}$ ($\mfpinS$, $n\in\mathbb{N}$).
\end{Definition}

\begin{Definition}
To a deficient group pile $\mathbf{G}=(G,\mathcal{G}_\mfp)_\mfpinS$ 
assign an $\mathcal{L}_{{\rm co},\mfS}$-structure 
$S(\mathbf{G})=(S^\mathbf{G},\leq^\mathbf{G},\sqsubseteq^\mathbf{G},P^\mathbf{G},(G_n^\mathbf{G})_{n\in\mathbb{N}},(\mathcal{G}_{\mfp,n}^\mathbf{G})_{\mfpinS,n\in\mathbb{N}})$ as follows:
\begin{enumerate}
 \item[1.] $S^\mathbf{G}=\biguplus_N G/N$, where $N$ runs over all open normal subgroups of $G$.
 \item[2.] $x_1N_1\leq^\mathbf{G} x_2N_2$ if and only if $N_1\subseteq N_2$.
 \item[3.] $x_1N_1\sqsubseteq^\mathbf{G} x_2N_2$ if and only if $x_1N_1\subseteq x_2N_2$.
 \item[4.] $(x_1N_1,x_2N_2,x_3N_3)\in P^\mathbf{G}$ if and only if $N_1=N_2=N_3$ and $x_1x_2N_1=x_3N_1$.
 \item[5.] $xN\in G_n^\mathbf{G}$ if and only if $(G:N)\leq n$.
 \item[6.] $(x_1N_1,\dots,x_nN_n)\in\mathcal{G}^\mathbf{G}_{\mfp,n}$ if and only if $N_1=\dots=N_n\in G_n^\mathbf{G}$ and there is $\Gamma\in\mathcal{G}_\mfp$ such that
       $\Gamma N_1/N_1 = \{x_1N_1,\dots,x_nN_n\}$.
\end{enumerate}
\end{Definition}

\begin{Definition}
An $\mathcal{L}_{{\rm co},\mfS}$-structure $\mathbf{S}=(S,\leq,\sqsubseteq,P,(G_n)_{n\in\mathbb{N}},(\mathcal{G}_{\mfp,n})_{\mfpinS,n\in\mathbb{N}})$
is an {\bf inverse system (of group piles)} if the following statements hold:
\begin{enumerate}
\item $\leq$ is a preorder with a unique largest element.
\item If $\equivleq$ denotes the equivalence relation defined by $\leq$, 
      and $[x]$ denotes the equivalence class of $x\in S$ with respect to $\equivleq$,
      then the induced partial order on $\{[x]:x\in S\}=S/\hspace{-3pt}\equivleq$ is directed downwards.
\item $G_n=\{ x\in S: |[x]|\leq n \}$      
\item $(x,y,z)\in P$ implies $[x]=[y]=[z]$,
      and for each $x\in S$, $P\cap[x]^3$ is the graph of a binary operation
      making $[x]$ into a group. 
\item\label{ax:pile} If $(x_1,\dots,x_n)\in\mathcal{G}_{\mfp,n}$, then $x_1,\dots,x_n\in G_n$ and $[x_1]=\dots=[x_n]$.
      If moreover $y_1,\dots,y_n\in G_n$ and $\{y_1,\dots,y_n\}=\{x_1,\dots,x_n\}$,
      then $(y_1,\dots,y_n)\in\mathcal{G}_{\mfp,n}$.
      If $x\in G_n$, then, with $\mathcal{G}_{\mfp,x}=\{\{x_1,\dots,x_n\}\subseteq[x]:(x_1,\dots,x_n)\in\mathcal{G}_{\mfp,n}\}$,
      $\llbracket x\rrbracket=([x],\mathcal{G}_{\mfp,x})_\mfpinS$ is a (finite) deficient group pile.
\item $x \sqsubseteq y$ implies $x\leq y$,
      and for each $x,y\in S$ with $x\leq y$, $\sqsubseteq$ defines
      an epimorphism of group piles $\pi_{x,y}$ from $\llbracket x\rrbracket$ to $\llbracket y\rrbracket$, depending only on $[x]$ and $[y]$.
\item For all $x\leq y\leq z$, $\pi_{x,x}={\rm id}_{\llbracket x\rrbracket}$ and $\pi_{x,z}=\pi_{y,z}\circ\pi_{x,y}$.
\item\label{ax:complete} If $N$ is a normal subgroup of $\llbracket x\rrbracket$, then there is a unique $[y]$ such that $x\leq y$ and $N={\ker }(\pi_{x,y})$.
\item $S=\bigcup_{n\in\mathbb{N}} G_n$.
\end{enumerate}
\end{Definition}

\begin{Remark}\label{substructure}
Note that for $\mfS=\emptyset$, (\ref{ax:pile}) is vacant, and the remaining axioms
are exactly the ones used in \cite{CherlinDriesMacintyre2}.
Also note that (1)-(8) are $\mathcal{L}_{{\rm co},\mfS}$-elementary statements, but (9) is not.
\end{Remark}

\begin{Lemma}
If $\mathbf{G}$ is a deficient group pile,
then $S(\mathbf{G})$ is an inverse system.
\end{Lemma}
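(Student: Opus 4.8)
The plan is to verify the nine defining conditions of an inverse system for the structure $S(\mathbf{G})$ one by one, following the classical argument of Cherlin-van den Dries-Macintyre for conditions (1)-(4) and (6)-(9), and checking the new condition (5) about the pile structure directly from the definition of $\mathcal{G}_{\mfp,n}^{\mathbf{G}}$. First I would record the standing identification: the universe $S^{\mathbf{G}}=\biguplus_N G/N$ has $\equivleq$-classes exactly the cosets spaces $G/N$ (since $x_1N_1\equivleq x_2N_2$ iff $N_1\subseteq N_2$ and $N_2\subseteq N_1$, i.e.\ $N_1=N_2$), so $[x_1N_1]=G/N_1$ and the induced partial order on classes is $N_1\mapsto G/N_1$ with $G/N_1\leq G/N_2$ iff $N_1\subseteq N_2$; the unique largest class is $G/G$, and the poset is directed downward because for open normal $N_1,N_2$ the subgroup $N_1\cap N_2$ is open normal. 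This settles (1) and (2). For (3), $|[xN]|=|G/N|=(G:N)$, which matches $G_n^{\mathbf{G}}=\{xN:(G:N)\leq n\}$. For (4), $P^{\mathbf{G}}\cap[xN]^3$ is precisely the graph of coset multiplication in $G/N$ by condition 4 of the definition of $S(\mathbf{G})$, which is a group operation, and $P^{\mathbf{G}}$ forces the three arguments into the same class.

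Next I would handle (5), the only genuinely new point. Condition 6 of the definition of $S(\mathbf{G})$ says $(x_1N_1,\dots,x_nN_n)\in\mathcal{G}_{\mfp,n}^{\mathbf{G}}$ iff $N_1=\dots=N_n$, $(G:N_1)\leq n$, and there is $\Gamma\in\mathcal{G}_\mfp$ with $\Gamma N_1/N_1=\{x_1N_1,\dots,x_nN_n\}$; this immediately gives membership in $G_n^{\mathbf{G}}$, equality of classes, and invariance of the relation under reordering and repetition of the tuple entries (since only the set $\{x_1N_1,\dots,x_nN_n\}$ matters). It remains to see that for $xN\in G_n^{\mathbf{G}}$, the pair $\llbracket xN\rrbracket=(G/N,\mathcal{G}_{\mfp,xN})_{\mfpinS}$ with $\mathcal{G}_{\mfp,xN}=\{\Gamma N/N:\Gamma\in\mathcal{G}_\mfp\}$ is a finite deficient group pile: $G/N$ is finite, and each $\mathcal{G}_{\mfp,xN}\subseteq{\rm Subgr}(G/N)$ is $G/N$-invariant because $\mathcal{G}_\mfp$ is $G$-invariant and conjugation commutes with the quotient map, and closedness is automatic in the finite group $G/N$. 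In fact $\llbracket xN\rrbracket$ is exactly the quotient group pile $\mathbf{G}/N$ in the sense defined earlier.

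For (6), if $x_1N_1\sqsubseteq x_2N_2$ then $x_1N_1\subseteq x_2N_2$, which forces $N_1\subseteq N_2$, hence $x_1N_1\leq x_2N_2$; and for $N_1\subseteq N_2$ the relation $\sqsubseteq$ restricted to $G/N_1\times G/N_2$ is the graph of the canonical quotient map $G/N_1\to G/N_2$, which is a group epimorphism sending $\Gamma N_1/N_1$ to $\Gamma N_2/N_2$, hence an epimorphism of group piles $\mathbf{G}/N_1\to\mathbf{G}/N_2$ depending only on the classes $[x_1N_1]$ and $[x_2N_2]$; this is $\pi_{x_1N_1,x_2N_2}$. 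Then (7) is the functoriality of quotient maps: $G/N\to G/N$ is the identity and the composite $G/N_1\to G/N_2\to G/N_3$ is the canonical map $G/N_1\to G/N_3$ when $N_1\subseteq N_2\subseteq N_3$. For (8): a normal subgroup of $\llbracket xN\rrbracket=G/N$ has the form $M/N$ for a unique open normal $M\normal G$ with $N\leq M$, and then the class $[y]=G/M$ satisfies $xN\leq y$ and $\ker(\pi_{xN,y})=M/N$, uniquely. Finally (9) holds since every coset $xN\in G/N$ with $(G:N)\leq n$ lies in $G_n^{\mathbf{G}}$, and $G$ is the inverse limit over its open normal subgroups so $S^{\mathbf{G}}=\bigcup_n G_n^{\mathbf{G}}$. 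I do not expect a serious obstacle here: the content is entirely bookkeeping, and the only new ingredient beyond \cite{CherlinDriesMacintyre2} is the routine observation in (5) that passing to $\mathbf{G}/N$ preserves the group pile axioms, which was already established when quotients of group piles were introduced.
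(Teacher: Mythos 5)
Your proof is correct and follows exactly the route the paper intends: the paper's proof consists solely of the remark that the claim can be checked directly from the definitions, and you carry out that check condition by condition. The key new observation (for condition (5)) that $\llbracket xN\rrbracket$ is just the quotient group pile $\mathbf{G}/N$, and hence a finite deficient group pile, is right and matches the way the paper uses this identification elsewhere.
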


\begin{proof}
This can be checked directly from the definitions.
\end{proof}

\begin{Definition}
If $\varphi:\mathbf{G}\rightarrow\mathbf{H}$ is
an epimorphism of deficient group piles,
$\mathbf{G}=(G,\mathcal{G}_\mfp)_\mfpinS$,
$\mathbf{H}=(H,\mathcal{H}_\mfp)_\mfpinS$,
define a map $\varphi^*$ of $S(\mathbf{H})$ into $S(\mathbf{G})$ by
$$
 \varphi^*(hN)=g\varphi^{-1}(N)\in G/\varphi^{-1}(N),
$$
where $N\normal H$ is open,
$h\in H$, and $g\in G$ satisfies $\varphi(g)=h$.
\end{Definition}

\begin{Lemma}
The map $\varphi^*$ is an embedding of $\mathcal{L}_{{\rm co},\mfS}$-structures.
\end{Lemma}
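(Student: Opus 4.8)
The plan is to unwind the definition of $\varphi^*$ and check, relation symbol by relation symbol, that it is both a well-defined injection on underlying sets and a strict embedding of $\mcL_{{\rm co},\mfS}$-structures. First I would verify well-definedness: if $\varphi(g)=\varphi(g')=h$, then $g^{-1}g'\in{\rm Ker}(\varphi)\subseteq\varphi^{-1}(N)$, so $g\varphi^{-1}(N)=g'\varphi^{-1}(N)$; and since $N$ is open normal in $H$ and $\varphi$ is a continuous epimorphism, $\varphi^{-1}(N)$ is open normal in $G$, so $\varphi^*(hN)$ is a genuine element of $S(\mbG)$. Injectivity is equally direct: $\varphi^*(h_1N_1)=\varphi^*(h_2N_2)$ forces $\varphi^{-1}(N_1)=\varphi^{-1}(N_2)$, hence $N_1=\varphi(\varphi^{-1}(N_1))=\varphi(\varphi^{-1}(N_2))=N_2$ using surjectivity of $\varphi$, and then $g_1\varphi^{-1}(N_1)=g_2\varphi^{-1}(N_1)$ gives $h_1N_1=\varphi(g_1)N_1=\varphi(g_2)N_1=h_2N_1$.

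Next I would check that $\varphi^*$ preserves and reflects each relation. For $\leq$: $h_1N_1\leq^{\mbH}h_2N_2$ iff $N_1\subseteq N_2$ iff $\varphi^{-1}(N_1)\subseteq\varphi^{-1}(N_2)$ (the reverse implication again using surjectivity of $\varphi$), which is exactly $\varphi^*(h_1N_1)\leq^{\mbG}\varphi^*(h_2N_2)$. For $\sqsubseteq$: $h_1N_1\subseteq h_2N_2$ iff $g_1\varphi^{-1}(N_1)\subseteq g_2\varphi^{-1}(N_2)$, which follows by applying $\varphi$ and $\varphi^{-1}$ and using $\varphi(g_i)=h_i$. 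For $P$: one checks $h_1h_2N=h_3N$ iff $g_1g_2\varphi^{-1}(N)=g_3\varphi^{-1}(N)$, again immediate from $\varphi(g_1g_2)=h_1h_2$. For the $G_n$: here one uses that $\varphi$ induces an isomorphism $G/\varphi^{-1}(N)\cong H/N$, so $(G:\varphi^{-1}(N))=(H:N)$, whence $h N\in G_n^{\mbH}$ iff $\varphi^*(hN)\in G_n^{\mbG}$.

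The one genuinely substantive point is preservation and reflection of the relations $\mcG_{\mfp,n}$, and this is where I expect the main work. The key is that the induced isomorphism $\bar\varphi_N\function G/\varphi^{-1}(N)\to H/N$ restricts, on each subgroup, compatibly with the pile structure: concretely, for $\Gamma\in\mcG_\mfp$ one has $\bar\varphi_N(\Gamma\varphi^{-1}(N)/\varphi^{-1}(N))=\varphi(\Gamma)N/N$, and since $\varphi$ is an \emph{epimorphism of group piles} we have $\varphi(\mcG_\mfp)=\mcH_\mfp$, so $\Gamma$ ranges over $\mcG_\mfp$ exactly when $\varphi(\Gamma)$ ranges over $\mcH_\mfp$. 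Thus $(x_1N,\dots,x_nN)\in\mcG_{\mfp,n}^{\mbH}$, i.e.\ this set of cosets equals $\Gamma N/N$ for some $\Gamma\in\mcH_\mfp$ with $(H:N)\leq n$, holds iff the image set $\{\varphi^*(x_iN)\}$ equals $\Gamma'\varphi^{-1}(N)/\varphi^{-1}(N)$ for some $\Gamma'\in\mcG_\mfp$ with $(G:\varphi^{-1}(N))\leq n$ (taking $\Gamma'$ to be any preimage, noting $\varphi(\Gamma')=\Gamma$, and using that $\bar\varphi_N$ is a bijection carrying the one coset set to the other). The subtle direction is reflection: from a subgroup $\Gamma'$ of $G/\varphi^{-1}(N)$ realizing the left side one must produce $\Gamma\in\mcG_\mfp$; here one lifts $\Gamma'$ to a closed subgroup of $G$ containing $\varphi^{-1}(N)$, but it need not a priori lie in $\mcG_\mfp$ — however its image lies in $\mcH_\mfp$, and since $\varphi(\mcG_\mfp)=\mcH_\mfp$ one can replace it by a suitable member of $\mcG_\mfp$ with the same image modulo $\varphi^{-1}(N)$. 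Once these six relations are handled, the remaining $\mcL_{{\rm co},\mfS}$-structure is just the underlying set of sorts, and $\varphi^*$ being an injection that preserves and reflects every atomic formula is by definition an embedding. I would present the $\leq,\sqsubseteq,P,G_n$ cases tersely (they are ``checked directly'') and devote the detail to $\mcG_{\mfp,n}$.
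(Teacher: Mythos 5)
Your proposal is correct and follows essentially the same route as the paper: injectivity and preservation (and reflection) of $\leq,\sqsubseteq,P,G_n$ fall out of $\varphi$ being a surjective homomorphism, and the $\mcG_{\mfp,n}$ relations are handled via $\varphi(\mcG_\mfp)=\mcH_\mfp$ together with the induced isomorphism $G/\varphi^{-1}(N)\cong H/N$. One small quibble on the final paragraph: the step you label \textquoteleft reflection\textquoteright{} --- producing $\Gamma\in\mcG_\mfp$ from a witness $\Delta\in\mcH_\mfp$ on the $\mbH$-side --- is actually the preservation direction, and it is cleaner than you suggest: no lifting of an abstract $\Gamma'\leq G/\varphi^{-1}(N)$ is needed, since surjectivity of $\varphi$ on $\mcG_\mfp$ directly yields some $\Gamma\in\mcG_\mfp$ with $\varphi(\Gamma)=\Delta$, and then $\Gamma\varphi^{-1}(N_1)/\varphi^{-1}(N_1)=\{\varphi^*(h_iN_1)\}$; the genuine reflection direction uses only $\varphi(\mcG_\mfp)\subseteq\mcH_\mfp$, since the definition of $\mcG_{\mfp,n}^{\mbG}$ already hands you a $\Gamma\in\mcG_\mfp$ to push forward.
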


\begin{proof}
First of all note that since $\varphi$ is a surjective homomorphism, $\varphi^*$ is injective 
and preserves the relations $\leq$, $\sqsubseteq$, $P$ and $(G_n)_{n\in\mathbb{N}}$.

Let $h_1N_1,\dots,h_nN_n\in S^\mathbf{H}$ and $\mfpinS$.
Then $(h_1N_1,\dots,h_nN_n)\in\mathcal{G}_{\mfp,n}^\mathbf{H}$
if and only if $N_1=\dots=N_n\in G_n^\mathbf{H}$ and there is $\Delta\in\mathcal{H}_\mfp$ such that
$\Delta N_1/N_1=\{h_1N_1,\dots,h_nN_n\}$.
Since $\varphi(\mathcal{G}_\mfp)=\mathcal{H}_\mfp$, this is the case if and only if
$\varphi^*(N_1)=\dots=\varphi^*(N_n)\in G_n^\mathbf{G}$
and there is $\Gamma\in\mathcal{G}_\mfp$ such that
$\Gamma \varphi^{-1}(N_1)/\varphi^{-1}(N_1) = \{ \varphi^*(h_1N_1),\dots,\varphi^*(h_nN_n) \}$.
This is equivalent to
$(\varphi^*(h_1N_1),\dots,\varphi^*(h_nN_n))\in\mathcal{G}_{\mfp,n}^\mathbf{G}$,
so $\varphi^*$ also preserves the relations $(\mathcal{G}_{\mfp,n})_{\mfpinS,n\in\mathbb{N}}$.
\end{proof}

\begin{Definition}
We assign to each inverse system
$\mathbf{S}=(S,\leq,\sqsubseteq,P,(G_n)_{n\in\mathbb{N}},(\mathcal{G}_{\mfp,n})_{\mfpinS,n\in\mathbb{N}})$
a deficient group pile
$G(\mathbf{S})$ 
as follows:
By axioms (1), (2) and (4)
the $[x]$ are a family of groups,
which by (3) and (9) are all finite.
By (5), the $\llbracket x\rrbracket$ are finite groups piles.
By axioms (6) and (7), the maps $\pi_{x,y}$ turns these group piles into an inverse system in the category of group piles with epimorphisms.
Let $G(\mathbf{S})=(G^\mathbf{S},\mathcal{G}_\mfp^\mathbf{S})_\mfpinS:=\varprojlim_{x\in S/\equivleq}\llbracket x\rrbracket$
be the inverse limit, cf.~Lemma \ref{lem:invlimgp}.
\end{Definition}

\begin{Lemma}
The maps $S$ and $G$ are quasi-inverse to each other.
\end{Lemma}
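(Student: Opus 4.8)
The plan is to establish that $S$ and $G$ are mutually quasi-inverse by exhibiting, for each deficient group pile $\mathbf{G}$ and each inverse system $\mathbf{S}$, natural isomorphisms $G(S(\mathbf{G}))\cong\mathbf{G}$ (in the category of deficient group piles with epimorphisms) and $S(G(\mathbf{S}))\cong\mathbf{S}$ (in the category of inverse systems with the appropriate morphisms), and to check that these are compatible with the functorial actions $\varphi\mapsto\varphi^*$ and the corresponding assignment on the other side. For the classical profinite-group case this is precisely the content of \cite{CherlinDriesMacintyre2}, so the only real work is to track the extra data $(\mathcal{G}_\mfp)_\mfpinS$ through both composites.

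First I would treat $G(S(\mathbf{G}))\cong\mathbf{G}$. By construction $S(\mathbf{G})$ has equivalence classes $[xN]=G/N$ indexed by the open normal subgroups $N\normal G$, the associated finite group piles $\llbracket xN\rrbracket$ are exactly the quotient piles $\mathbf{G}/N=(G/N,\mathcal{G}_{\mfp,N})_\mfpinS$, and the maps $\pi_{xN,xN'}$ for $N\subseteq N'$ are the canonical projections $\mathbf{G}/N\to\mathbf{G}/N'$. Hence $G(S(\mathbf{G}))=\varprojlim_N\mathbf{G}/N$, and since $\mathbf{G}$ is a profinite group this inverse limit is canonically $(G,\mathcal{G}_\mfp)_\mfpinS=\mathbf{G}$: the group component gives the usual identification $G=\varprojlim_N G/N$, while for the pile component one uses that $\mathcal{G}_\mfp\subseteq{\rm Subgr}(G)$ is closed, hence equals $\varprojlim_N\mathcal{G}_{\mfp,N}$ by Definition~\ref{Defsubgrtop}, which is exactly $\mathcal{G}_\mfp^{S(\mathbf{G})}$ by Lemma~\ref{lem:invlimgp}. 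Naturality in $\mathbf{G}$ amounts to checking that an epimorphism $\varphi\colon\mathbf{G}\to\mathbf{H}$ induces on inverse limits the map recovered from $\varphi^*$; this is a diagram chase using the definition of $\varphi^*$ on each $H/N$.

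Next I would treat $S(G(\mathbf{S}))\cong\mathbf{S}$. Write $\mathbf{G}'=G(\mathbf{S})=\varprojlim_{[x]}\llbracket x\rrbracket$. By axiom~(\ref{ax:complete}) the open normal subgroups of $\mathbf{G}'$ of the form $\ker(\mathbf{G}'\to\llbracket x\rrbracket)$ are in order-reversing bijection with the classes $[x]\in S/\hspace{-3pt}\equivleq$, and axioms~(1)--(3), (9) guarantee every open normal subgroup arises this way (this is the classical point, where one uses that the $[x]$ are cofinal among finite quotients). Under this bijection $G(\mathbf{S})/\ker(\cdot\to\llbracket x\rrbracket)\cong[x]$ as groups, and by axiom~(\ref{ax:pile}) together with the definition of the pile component of an inverse limit, the induced pile structure on this quotient is exactly $\llbracket x\rrbracket=([x],\mathcal{G}_{\mfp,x})_\mfpinS$. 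Therefore the $\mathcal{L}_{{\rm co},\mfS}$-structure $S(G(\mathbf{S}))$ has underlying set $\biguplus_{[x]}[x]$ in bijection with $S=\bigcup_n G_n$ (axiom~(9)), and one checks directly that this bijection preserves $\leq$, $\sqsubseteq$ (via axioms (6)--(7)), $P$ (via axiom (4)), the $G_n$ (via axiom (3)), and finally the $\mathcal{G}_{\mfp,n}$ (via axiom (\ref{ax:pile}) and clause~6 of the definition of $S(\mathbf{G})$).

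The main obstacle is the recovery of the pile component in both composites: one must be careful that the finitary relation $\mathcal{G}_{\mfp,n}$, which encodes a subgroup $\Gamma N/N$ of a finite quotient only up to its underlying \emph{set} of cosets, glues correctly along the inverse system to a closed $G$-invariant subset of ${\rm Subgr}(G)$, and conversely that a closed subset of ${\rm Subgr}(G)$ is faithfully reconstructed from its finite images $\mathcal{G}_{\mfp,N}$. Both directions rest on Definition~\ref{Defsubgrtop} (the profinite topology on ${\rm Subgr}(G)$, realized as $\varprojlim_N{\rm Subgr}(G/N)$) and on the elementary observation that a closed subgroup is determined by, and its orbit-closure conditions are reflected in, the collection of its finite quotients; the $G$-invariance of $\mathcal{G}_\mfp$ transfers to $G$-invariance of each $\mathcal{G}_{\mfp,N}$ and back. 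Once this bookkeeping is in place, functoriality and the fact that the two constructions are inverse follow formally, exactly as in the $\mfS=\emptyset$ case of \cite{CherlinDriesMacintyre2}.
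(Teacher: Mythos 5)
Your proof is correct and follows essentially the same approach as the paper: the forward direction $G(S(\mathbf{G}))\cong\mathbf{G}$ rests on the observation that $\mathcal{G}_\mfp$ is closed, hence $\mathcal{G}_\mfp=\varprojlim_N\mathcal{G}_{\mfp,N}$, and the reverse direction constructs the map $\mathbf{S}\to S(G(\mathbf{S}))$ via the natural isomorphisms $\llbracket x\rrbracket\cong G^\mathbf{S}/N_x$ and uses axiom (8) for surjectivity, exactly as the paper does. Your write-up is merely more verbose, spelling out the relation-by-relation check and the role of Definition~\ref{Defsubgrtop} that the paper leaves implicit.
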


\begin{proof}
Let $\mathbf{G}=(G,\mathcal{G}_\mfp)_\mfpinS$ be a deficient group pile.
Since $\mathcal{G}_\mfp$ is closed,
$\mathcal{G}_\mfp=\varprojlim_N \mathcal{G}_{\mfp,N}$,
where $N$ runs over all open normal subgroups of $G$.
Therefore $G(S(\mathbf{G}))\cong \mathbf{G}$.

Conversely, let $\mathbf{S}$ be an inverse system.
Given $x\in S$, let $N_x$ be the kernel of the natural projection
$G^\mathbf{S}\rightarrow\llbracket x\rrbracket$.
Define $\psi:\mathbf{S}\rightarrow S(G(\mathbf{S}))$ to
send $x$ to its image under the natural isomorphism 
$\llbracket x\rrbracket\cong G^\mathbf{S}/N_x\subseteq S^{G(\mathbf{S})}$.
Then $\psi$ is injective and preserves
the relations $\leq$, $\sqsubseteq$, $P$, $G_n$ and $\mathcal{G}_{\mfp,n}$.
By axiom (\ref{ax:complete}), $\psi$ is surjective.
Thus $\psi$ gives an isomorphism $\mathbf{S}\cong S(G(\mathbf{S}))$.
\end{proof}

\begin{Remark}
Given the preceding lemma,
we will sometimes identify 
an inverse system $\mathbf{S}$ with $S(G(\mathbf{S}))$.
In particular,
we will treat elements of $\mathbf{S}$
as cosets $xN$ of open normal subgroups of the group pile $G(\mathbf{S})$.
\end{Remark}

\begin{Definition}
If $\psi:\mathbf{T}\rightarrow\mathbf{S}$ is an embedding of inverse systems,
define an epimorphism of profinite groups $\psi^*:G^\mathbf{S}\rightarrow G^\mathbf{T}$ as follows:

Since $\psi$ preserves the relation $\leq$, it also preserves
the relation $\equivleq$, i.e.~$\psi([x])\leq[\psi(x)]$.
Since $\psi$ preserves the relations $G_n$ ($n\in\mathbb{N}$),
$\psi$ gives a bijection $[x]\rightarrow[\psi(x)]$.
Since $\psi$ preserves the relation $P$, this bijection is an isomorphism of groups.
Since $\psi$ preserves the relation $\sqsubseteq$,
the inverse system of finite groups $([\psi(x)])_{x\in T/\equivleq}$
with homomorphisms $\pi_{\psi(x),\psi(y)}$
is isomorphic to the inverse system of finite groups 
$([x])_{x\in T/\equivleq}$
with homomorphisms $\pi_{x,y}$.
This gives a natural epimorphism
$$
 \psi^*: G^\mathbf{S}=\varprojlim_{y\in S/\equivleq}[x]\rightarrow\varprojlim_{x\in T/\equivleq}[\psi(x)]\cong\varprojlim_{x\in T/\equivleq}[x]= G^\mathbf{T}.
$$
\end{Definition}

\begin{Lemma}
The map $\psi^*$ extends to an epimorphism
$\psi^*:G(\mathbf{S})\rightarrow G(\mathbf{T})$ of group piles.
\end{Lemma}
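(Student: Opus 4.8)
The plan is to keep the epimorphism of profinite groups $\psi^*\function G^{\mathbf{S}}\to G^{\mathbf{T}}$ just constructed and to verify the only additional requirement for an epimorphism of group piles, namely $\psi^*(\mathcal{G}_\mfp^{\mathbf{S}})=\mathcal{G}_\mfp^{\mathbf{T}}$ for every $\mfpinS$ (the condition at $\mfp=0$ is vacuous, since $G(\mathbf{S})$ and $G(\mathbf{T})$ are deficient). First I would unwind what the construction of $\psi^*$ gives at the finite levels. For each $x\in T/\equivleq$ the embedding $\psi$ restricts to a group isomorphism $\psi_x\function[x]\to[\psi(x)]$; since $\psi$, being an embedding of $\mathcal{L}_{{\rm co},\mfS}$-structures, preserves \emph{and reflects} every relation $\mathcal{G}_{\mfp,n}$, we get $\psi_x(\mathcal{G}_{\mfp,x})=\mathcal{G}_{\mfp,\psi(x)}$, so $\psi_x$ is in fact an isomorphism of group piles $\llbracket x\rrbracket\to\llbracket\psi(x)\rrbracket$. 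Moreover, because $\psi$ preserves $\sqsubseteq$, the $\psi_x$ intertwine the transition maps, i.e.\ $\pi_{\psi(x),\psi(y)}\circ\psi_x=\psi_y\circ\pi_{x,y}$ for $x\leq y$; and, unwinding the definition of $\psi^*$, one has $p_x\circ\psi^*=\psi_x^{-1}\circ q_{\psi(x)}$, where $q_w\function G^{\mathbf{S}}\to\llbracket w\rrbracket$ and $p_x\function G^{\mathbf{T}}\to\llbracket x\rrbracket$ denote the canonical projections.

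From this the inclusion $\psi^*(\mathcal{G}_\mfp^{\mathbf{S}})\subseteq\mathcal{G}_\mfp^{\mathbf{T}}$ is immediate: if $\Gamma\in\mathcal{G}_\mfp^{\mathbf{S}}$, i.e.\ $q_w(\Gamma)\in\mathcal{G}_{\mfp,w}$ for all $w$, then $p_x(\psi^*(\Gamma))=\psi_x^{-1}(q_{\psi(x)}(\Gamma))\in\psi_x^{-1}(\mathcal{G}_{\mfp,\psi(x)})=\mathcal{G}_{\mfp,x}$ for every $x\in T/\equivleq$, whence $\psi^*(\Gamma)\in\varprojlim_{x\in T/\equivleq}\mathcal{G}_{\mfp,x}=\mathcal{G}_\mfp^{\mathbf{T}}$. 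As $\psi^*$ is defined on all of ${\rm Subgr}(G^{\mathbf{S}})$, this also settles the degenerate case $\mathcal{G}_\mfp^{\mathbf{T}}=\emptyset$; so from now on I assume $\mathcal{G}_\mfp^{\mathbf{T}}\neq\emptyset$, which forces $\mathcal{G}_{\mfp,w}\neq\emptyset$ for every $w$ (each $\psi_x(\Delta_x)$ witnesses $\mathcal{G}_{\mfp,\psi(x)}\neq\emptyset$, and by axiom~(6) every transition map is surjective on the $\mathcal{G}_{\mfp,\cdot}$, so passing up to the largest element of $S/\equivleq$ and back down gives the claim).

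The main point, and the only real obstacle, is the reverse inclusion. Given $\Delta\in\mathcal{G}_\mfp^{\mathbf{T}}$ with coordinates $\Delta_x=p_x(\Delta)$, the identity $p_x\circ\psi^*=\psi_x^{-1}\circ q_{\psi(x)}$ shows it suffices to find a compatible family $(\Gamma_w)_{w\in S/\equivleq}$ with $\Gamma_w\in\mathcal{G}_{\mfp,w}$ and $\Gamma_{\psi(x)}=\psi_x(\Delta_x)$ for all $x\in T/\equivleq$; for then $\Gamma:=\varprojlim_w\Gamma_w$ lies in $\mathcal{G}_\mfp^{\mathbf{S}}$ and satisfies $\psi^*(\Gamma)=\Delta$. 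Since $(\psi_x(\Delta_x))_{x\in T/\equivleq}$ is already a compatible family over $T/\equivleq$ (by the intertwining relation and $\pi_{x,y}(\Delta_x)=\Delta_y$), this is a patching problem: extend a compatible family defined on the sub-index-set $\psi(T/\equivleq)$ of $S/\equivleq$ to all of $S/\equivleq$. I would carry this out by the usual compactness argument inside the compact space $\prod_{w\in S/\equivleq}\mathcal{G}_{\mfp,w}$, in which the compatibility conditions and the prescribed-value conditions are closed: it is enough to satisfy any finitely many of them at once. The constraints involve only finitely many indices; taking a common lower bound $x_*\in T/\equivleq$ of the relevant indices of $T/\equivleq$ and then a common lower bound $v\in S/\equivleq$ of $\psi(x_*)$ together with the relevant indices of $S/\equivleq$, the crucial input is again axiom~(6): $\pi_{v,\psi(x_*)}\function\llbracket v\rrbracket\to\llbracket\psi(x_*)\rrbracket$ is an \emph{epimorphism} of group piles, so $\pi_{v,\psi(x_*)}(\mathcal{G}_{\mfp,v})=\mathcal{G}_{\mfp,\psi(x_*)}$. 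One then picks $\Gamma_v\in\mathcal{G}_{\mfp,v}$ mapping to the prescribed value $\psi_{x_*}(\Delta_{x_*})$ at $\psi(x_*)$, sets $\Gamma_w:=\pi_{v,w}(\Gamma_v)$ at the remaining relevant $w$, and fills in the other coordinates arbitrarily; axioms~(6) and~(7), the intertwining of the $\psi_x$ with the $\pi$'s, and the compatibility of $(\Delta_x)$ then show that all the required conditions hold. Compactness produces the desired global $\Gamma$, giving $\psi^*(\mathcal{G}_\mfp^{\mathbf{S}})=\mathcal{G}_\mfp^{\mathbf{T}}$ for every $\mfpinS$; together with the surjectivity of the underlying map of profinite groups this proves that $\psi^*\function G(\mathbf{S})\to G(\mathbf{T})$ is an epimorphism of group piles. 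Everything except this surjectivity onto the sets $\mathcal{G}_\mfp^{\mathbf{T}}$ (i.e.\ the patching step) is routine bookkeeping.
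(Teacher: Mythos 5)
Your proof is correct and follows essentially the same route as the paper: show $\psi^*(\mathcal{G}_\mfp^{\mathbf{S}})\subseteq\mathcal{G}_\mfp^{\mathbf{T}}$ by pushing forward coordinate by coordinate, and get the reverse inclusion by extending the compatible family $(\psi_x(\Delta_x))_{x\in T/\equivleq}$ from the sub-index-set $\psi(T/\equivleq)$ to all of $S/\equivleq$ via a compactness argument using axiom (6). The paper compresses the patching step into a single \textquotedblleft hence there exists $\Delta\in\mathcal{G}_\mfp$\textquotedblright, so your only real departure is spelling out the compactness argument (and handling the degenerate empty case) in full, which is a welcome clarification rather than a different method.
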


\begin{proof}
Let $G(\mathbf{S})=(G,\mathcal{G}_\mfp)_\mfpinS$, $G(\mathbf{T})=(H,\mathcal{H}_\mfp)_\mfpinS$, 
$\mfpinS$ and $\Gamma\in\mathcal{G}_\mfp$.
For an open normal subgroup $M$ of $H$ one may check that
$\psi(\psi^*(\Gamma)M/M)=\Gamma N/N\in\mathcal{G}_{\mfp,N}$, where $\psi(H/M)=G/N$.
Since $\psi$ preserves the relations $\mathcal{G}_{\mfp,n}$, this implies $\psi^*(\Gamma)M/M\in\mathcal{H}_{\mfp,M}$.
So $\psi^*(\Gamma)\in\varprojlim_N \mathcal{H}_{\mfp,N}=\mathcal{H}_\mfp$.

Conversely, let $\Gamma\in\mathcal{H}_\mfp$ be given.
The family $\psi(\Gamma M/M)\in\mathcal{G}_{\mfp,N}$, $M\lhd H$ open and $\psi(H/M)=G/N$, is compatible with respect to the $\pi_{x,y}$, hence there exists
$\Delta\in\mathcal{G}_\mfp$ with $\psi(\Gamma M/M)=\Delta N/N$ for all $M\lhd H$ open, where $\psi(H/M)=G/N$,
and thus $\psi^*(\Delta)=\Gamma$.
Thus $\psi^*(\mathcal{G}_\mfp)=\mathcal{H}_\mfp$ for all $\mfpinS$,
so $\psi^*$ is an epimorphism of group piles.
\end{proof}

\begin{Proposition}
The map $S$ defines an equivalence of categories between the category of deficient group piles (with epimorphisms)
and the category of inverse system (with embeddings), with quasi-inverse $G$.
\end{Proposition}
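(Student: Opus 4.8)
The plan is to promote the preceding lemma --- which says only that $S$ and $G$ are quasi-inverse on objects --- to a genuine statement about categories. The action on morphisms is already available from the lemmas just proved: an epimorphism $\varphi\colon\mathbf{G}\rightarrow\mathbf{H}$ of deficient group piles yields an embedding $\varphi^{*}\colon S(\mathbf{H})\rightarrow S(\mathbf{G})$ of inverse systems, and an embedding $\psi\colon\mathbf{T}\rightarrow\mathbf{S}$ of inverse systems yields an epimorphism $\psi^{*}\colon G(\mathbf{S})\rightarrow G(\mathbf{T})$ of group piles; note that both constructions reverse the direction of arrows, so the equivalence asserted is a contravariant one (a duality), exactly as in the classical case treated in \cite{CherlinDriesMacintyre2}. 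It therefore remains to check two things: (i) that $S$ and $G$ are functorial, i.e.\ compatible with identities and with composition; and (ii) that the isomorphisms $\mathbf{G}\cong G(S(\mathbf{G}))$ and $\mathbf{S}\cong S(G(\mathbf{S}))$ supplied by the preceding lemma are natural in $\mathbf{G}$ and in $\mathbf{S}$. Given (i) and (ii), the standard characterization of an equivalence of categories (applied with the arrow-reversal taken into account) yields the claim.

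For (i): the identities $(\mathrm{id}_{\mathbf{G}})^{*}=\mathrm{id}_{S(\mathbf{G})}$ and $(\mathrm{id}_{\mathbf{S}})^{*}=\mathrm{id}_{G(\mathbf{S})}$ are immediate from the definitions. Compatibility of $S$ with composition is a one-line computation from the formula $\varphi^{*}(hN)=g\varphi^{-1}(N)$: given $\psi\colon\mathbf{G}\rightarrow\mathbf{H}$, $\varphi\colon\mathbf{H}\rightarrow\mathbf{K}$, an open normal subgroup $N\normal K$, and $h\in H$, $g\in G$ with $\varphi(h)=k$, $\psi(g)=h$, one reads off $(\varphi\circ\psi)^{*}(kN)=g\,\psi^{-1}(\varphi^{-1}(N))=\psi^{*}(h\varphi^{-1}(N))=\psi^{*}(\varphi^{*}(kN))$, whence $(\varphi\circ\psi)^{*}=\psi^{*}\circ\varphi^{*}$. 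For $G$ one argues similarly by chasing the construction of $\psi^{*}$ through the identification $G^{\mathbf{S}}=\varprojlim_{x}[x]$: on each finite layer $\psi^{*}$ is the group isomorphism $[\psi(x)]\cong[x]$ transported along $\psi$, and these compose correctly. All of this is routine bookkeeping.

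Step (ii) is the substantive point. Write $\eta_{\mathbf{G}}\colon\mathbf{G}\rightarrow G(S(\mathbf{G}))$ for the isomorphism of the preceding lemma --- explicitly the canonical identification $\mathbf{G}=\varprojlim_{N}\mathbf{G}/N$ together with $\mathcal{G}_{\mfp}=\varprojlim_{N}\mathcal{G}_{\mfp,N}$, valid since $\mathcal{G}_{\mfp}$ is closed --- and $\epsilon_{\mathbf{S}}\colon\mathbf{S}\rightarrow S(G(\mathbf{S}))$ for the isomorphism constructed there out of axiom~(\ref{ax:complete}). I would verify naturality by showing that, for every epimorphism $f\colon\mathbf{G}\rightarrow\mathbf{H}$, the naturality condition $G(S(f))\circ\eta_{\mathbf{G}}=\eta_{\mathbf{H}}\circ f$ holds --- here $G\circ S$ is covariant, so this is the correct form of the statement --- and dually that $S(G(g))\circ\epsilon_{\mathbf{T}}=\epsilon_{\mathbf{S}}\circ g$ for every embedding $g\colon\mathbf{T}\rightarrow\mathbf{S}$. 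After unwinding $f^{*}$ and $(f^{*})^{*}$ through the definitions, each of these reduces to the tautological compatibility of a morphism with the projections onto its finite quotients, so they are direct diagram chases; but this is the step where the real (if elementary) work lies. Once naturality of $\eta$ and $\epsilon$ is in hand, the natural isomorphisms $\mathrm{id}\cong G\circ S$ and $\mathrm{id}\cong S\circ G$ show that $S$ and $G$ are mutually quasi-inverse contravariant functors, i.e.\ they exhibit the desired (anti-)equivalence of categories. The main obstacle is thus precisely the naturality in (ii); everything else is already implicit in the earlier lemmas of this section.
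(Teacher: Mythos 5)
Your proposal is correct and follows essentially the same route as the paper: the paper's proof consists of the one-line observation that $S$ and $G$ are functorial together with an appeal to the preceding lemmas (the quasi-inverse bijection on objects and the constructions of $\varphi^{*}$ and $\psi^{*}$ on morphisms). You are somewhat more careful than the paper in two respects --- you explicitly flag that the constructions reverse arrows, so the asserted equivalence is really a duality (or, equivalently, an equivalence with one category replaced by its opposite), and you single out naturality of the unit and counit as the genuine content rather than treating the object-level bijection as already sufficient --- but these are elaborations of the same argument, not a different one.
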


\begin{proof}
Note that $S$ and $G$ are in fact functorial.
Now apply the previous lemmas.
\end{proof}

\begin{Definition}
A {\bf coformula} is an
$\mathcal{L}_{{\rm co},\mfS}$-formula that is bounded,
i.e.~all quantifiers are of the form $(\exists v\in G_n)$.
A {\bf cosentence} is a coformula without free variables.

A group pile $\mathbf{G}$ {\bf cosatisfies} 
a set $\Sigma$ of coformulas with free variables $V$ (or is a {\bf comodel} of $\Sigma$)
if there are elements $x_v\in S(\mathbf{G})$, $v\in V$, 
such that $S(\mathbf{G})\models\varphi(x_v,v\in V)$ for all $\varphi\in\Sigma$.
A {\bf cotheory} $T$ is a set of cosentences. 

The {\bf cocardinality} of a group pile $\mathbf{G}$
is the cardinality of $S(\mathbf{G})$.
A set $\Sigma$ of coformulas with parameters in some inverse system $\mathbf{S}$ is {\bf ranked}
if for every variable $v$ that occurs in some formula $\varphi\in\Sigma$,
also $G_n(v)\in\Sigma$ for some $n\in\mathbb{N}$.
A group pile $\mathbf{G}$ is {\bf $\kappa$-cosaturated}
if every ranked set $\Sigma$ of coformulas with parameters in $S(\mathbf{G})$
with $|\Sigma|<\kappa$
is cosatisfied in $\mathbf{G}$,
provided that every finite subset of $\Sigma$
is cosatisfied in $\mathbf{G}$.
\end{Definition}

\begin{Remark}
An inverse system $\mathbf{S}$ in the language $\mathcal{L}_{{\rm co},\mfS}$  
can also be viewed as an $\omega$-sorted structure $\mathbf{S}^\omega$ in a language $\mathcal{L}_{{\rm co},\mfS}^\omega$,
where the $n$-th sort consists of the $s\in G_n\setminus G_{n-1}$, 
and for each $k$-ary relation relation $R$ in $\mathcal{L}_{{\rm co},\mfS}$ we have an $\omega^k$-family of $k$-ary relations $R^{n_1,\dots,n_k}$ in $\mathcal{L}_{{\rm co},\mfS}^\omega$,
cf.~\cite[\S1]{ChatzidakisIwasawa}.
Clearly, every $\mathcal{L}_{{\rm co},\mfS}$-formula can be translated to a corresponding $\mathcal{L}_{{\rm co},\mfS}^\omega$-formula,
and vice versa.
\end{Remark}

\section{$e$-free C-piles}

\noindent
In this section, we 
generalize the {\em Cantor group piles} of \cite{HJPd}.

\begin{Definition}\label{DefCantorSigma}
An {\bf $e$-free C-pile} is an 
$e$-generated deficient group pile $\mathbf{G}$ 
for which every rigid $e$-generated deficient finite embedding problem is solvable
(cf.~Def.~\ref{defebounded}).
\end{Definition}

\begin{Lemma}\label{Cpilefree}
If $\mathbf{G}$ is an $e$-free C-pile, then $\bar{G}\cong\hat{F}_e$.
\end{Lemma}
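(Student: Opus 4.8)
The plan is to show that the bare quotient $\bar{\mathbf{G}}=\bar G$ (viewed as a bare deficient group pile, i.e.~just a profinite group) is a free profinite group of rank $e$, which by standard theory amounts to checking that $\bar G$ has rank at most $e$ and that every finite embedding problem for $\bar G$ is solvable. The rank bound is immediate: since $\mathbf{G}$ is $e$-free, it is in particular $e$-generated, which by Definition~\ref{defebounded} means exactly $\rk(\bar{\mathbf{G}})\leq e$. So the work is in the embedding-problem condition, and the point is to transfer a finite embedding problem for the group $\bar G$ into a rigid $e$-generated deficient finite embedding problem for $\mathbf{G}$, solve it using the defining property of an $e$-free C-pile, and then push the solution back down to $\bar G$.

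In detail, first I would reduce, via Proposition~\ref{egeneratedcharacterization} and the usual reduction of embedding problems, to a finite embedding problem $(\bar G \to A,\ B\to A)$ for $\bar G$ with $A,B$ finite; here $\rk(B)\leq e$ may be assumed because $\bar G$ has rank $\leq e$ and a finite embedding problem for a profinite group of rank $\leq e$ can always be replaced by (dominated by) one whose top group has rank $\leq e$ — indeed, if $B$ needs more than $e$ generators, choosing generators compatibly with the image of $\bar G$ lets us replace $B$ by the subgroup generated by $e$ suitable elements together with $\ker(B\to A)$, using Lemma~\ref{GaschuetzSubgroup}. Next I would turn $A$ and $B$ into bare deficient group piles $\mathbf{A}=(A,\{1\})_{\mfpinS}$ and $\mathbf{B}=(B,\{1\})_{\mfpinS}$; since they are bare (and deficient), they are trivially self-generated, hence $0$-generated, hence $e$-generated, and the map $\alpha\colon\mathbf{B}\to\mathbf{A}$ is automatically rigid because $\mathcal{B}=\{1\}$ and $\alpha|_{1}$ is injective. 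Using Lemma~\ref{barecorrespondence}, the epimorphism $\bar G\to A$ corresponds to an epimorphism $\varphi\colon\mathbf{G}\to\mathbf{A}$ of deficient group piles (whose induced map on bare quotients is the original one), so $(\varphi,\alpha)$ is a rigid $e$-generated deficient finite embedding problem for $\mathbf{G}$.

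By the defining property of an $e$-free C-pile, this embedding problem has a solution $\gamma\colon\mathbf{G}\to\mathbf{B}$ with $\alpha\circ\gamma=\varphi$. Applying the functor of Lemma~\ref{functoriality} (the bar functor) and Lemma~\ref{barecorrespondence} again, $\gamma$ induces an epimorphism $\bar\gamma\colon\bar G\to \bar{\mathbf{B}}=B$ with $\bar\alpha\circ\bar\gamma = \bar\varphi$, i.e.~a solution of the original finite embedding problem for $\bar G$. Since every finite embedding problem for $\bar G$ is solvable and $\rk(\bar G)\leq e$, the characterization of free profinite groups (\cite[Ch.~17]{FJ3}, in particular the Iwasawa-type criterion) gives $\bar G\cong\hat F_e$.

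The main obstacle is the rank bookkeeping in the reduction step: one must be careful that when passing from an arbitrary finite embedding problem for $\bar G$ to one with an $e$-generated (indeed bare deficient, hence $0$-generated) top pile $\mathbf{B}$, no solvability is lost, and that "rigid $e$-generated deficient finite embedding problem" in Definition~\ref{DefCantorSigma} is literally the class we land in. Making $\mathbf{B}$ bare is what forces rigidity for free, and making it $0$-generated is automatic for bare deficient piles; the only genuine content is the Gasch\"utz-style argument (Lemma~\ref{GaschuetzSubgroup}) ensuring the finite embedding problem for the profinite group $\bar G$ may be assumed to have a top group needing at most $e$ generators — but even this is only needed to match the hypotheses of the C-pile definition, since the solved embedding problem $(\varphi,\alpha)$ over $\mathbf{G}$ already has $\mathbf{B}$ bare deficient and hence $e$-generated regardless of $\rk(B)$, so in fact one does not even need the rank reduction and may take $\mathbf{B}=(B,\{1\})_{\mfpinS}$ directly for an arbitrary finite $B$. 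Thus the proof is essentially a formal consequence of Lemmas~\ref{barecorrespondence} and~\ref{functoriality} together with the definition of an $e$-free C-pile and the standard characterization of $\hat F_e$.
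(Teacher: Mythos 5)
Your plan is in the right spirit (transfer a finite embedding problem for $\bar G$ to a rigid $e$-generated deficient finite embedding problem for $\mathbf{G}$, solve, push down via Lemma~\ref{barecorrespondence}), but there is a genuine error in the bookkeeping that you then rely on in the closing remark. You claim that the bare deficient pile $\mathbf{B}=(B,\{1\})_{\mfpinS}$ is ``trivially self-generated, hence $0$-generated, hence $e$-generated'' regardless of $\rk(B)$. This is false. For a bare deficient pile, $\mathcal{B}=\{1\}$, so $B'=\left<\mathcal{B}\right>=1$ and therefore $\bar{\mathbf{B}}=\mathbf{B}/B'=\mathbf{B}$, i.e.\ $\bar B=B$. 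Hence $\mathbf{B}$ is $e$-generated (Definition~\ref{defebounded}) if and only if $\rk(B)\leq e$; and $\mathbf{B}$ is self-generated (equivalently $0$-generated) if and only if $B=1$. In particular, your final sentence --- that ``one does not even need the rank reduction and may take $\mathbf{B}=(B,\{1\})_{\mfpinS}$ directly for an arbitrary finite $B$'' --- is wrong: if $\rk(B)>e$, the embedding problem $(\varphi,\alpha)$ you build is \emph{not} $e$-generated, so the defining property of an $e$-free C-pile (Definition~\ref{DefCantorSigma}) does not apply and yields no solution.

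The paper's actual argument sidesteps all of this by taking $\mathbf{A}=1$: for any finite group $B$ with $\rk(B)\leq e$, the pair $(\mathbf{G}\to 1,\ B\to 1)$ is automatically a rigid $e$-generated deficient finite embedding problem (rigid because $\mathcal{B}=\{1\}$; $e$-generated because $\bar B=B$ has rank $\leq e$). Solving it and applying Lemma~\ref{barecorrespondence} shows that every finite group of rank $\leq e$ is a quotient of $\bar G$. Combined with $\rk(\bar G)\leq e$, the quotient criterion \cite[16.10.7]{FJ3} gives $\bar{G}\cong\hat{F}_e$ directly, with no need for the Gasch\"utz-style reduction or the Iwasawa-type ``all finite embedding problems solvable'' characterization of freeness. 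Your first paragraph's Gasch\"utz reduction is also more delicate than you acknowledge: replacing $B$ by the subgroup generated by $e$ lifts of generators of $A$ does give a subpile of rank $\leq e$ surjecting onto $A$, but a solution of that modified problem is not literally a solution of the original one, so you would still need to appeal to the quotient criterion rather than ``every finite embedding problem is solvable.'' At that point you have recovered the paper's proof, and the detour through nontrivial $A$ bought nothing.
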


\begin{proof}
If $B$ is a finite group with $\rk(B)\leq e$, then
$(\mathbf{G}\rightarrow 1,B\rightarrow 1)$ is a rigid $e$-generated deficient finite embedding problem for $\mathbf{G}$,
so it has a solution by assumption. 
Therefore, by Lemma~\ref{barecorrespondence},
every finite group $B$ with $\rk(B)\leq e$ is a quotient of $\bar{G}$.
Since $\rk(\bar{G})\leq e$, this implies that $\bar{G}\cong\hat{F}_e$,
cf.~\cite[16.10.7]{FJ3}.
\end{proof}

\begin{Lemma}\label{freefully}
Let $\mbG$ be an $e$-bounded 
group pile, and let
$(\varphi\function\mathbf{G}^{\rmdef}\rightarrow\tilde{\mathbf{A}},\;\alpha\function\tilde{\mathbf{B}}\rightarrow\tilde{\mathbf{A}})$
be a locally solvable $e$-generated deficient finite embedding problem.
If $G_0\cong\hat{F}_e$ for $G_0\in\mcG_0$, then 
there exist $\mbA$ and $\mbB$ with $\mbA^{\rmdef}=\tilde{\mbA}$ and $\mbB^{\rmdef}=\tilde{\mbB}$
such that
$(\varphi\function\mathbf{G}\rightarrow{\mathbf{A}},\;\alpha\function{\mathbf{B}}\rightarrow{\mathbf{A}})$
is a locally solvable $e$-bounded 
finite embedding problem.
\end{Lemma}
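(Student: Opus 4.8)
The plan is to lift the given deficient embedding problem to a non-deficient one in two moves: first choose a $\mcG_0$-datum on $\tilde{\mbA}$ to obtain $\mbA$, then appeal to Lemma~\ref{GaschuetzPiles} to obtain $\mbB$; afterwards one verifies local solvability, the only genuine point being condition~{\rm (\ref{cond})} for $\mfp=0$.

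First I would fix $G_0\in\mcG_0$. Since $\mbG$ is $e$-bounded it is self-generated with $\rk(G_0)\leq e$, and by hypothesis $G_0\cong\hat{F}_e$. Set $A_0:=\varphi(G_0)$, $\mcA_0:=(A_0)^{\tilde{A}}$, and $\mbA:=(\tilde{A},\mcA_0,\tilde{\mcA}_\mfp)_\mfpinS$. Because $\tilde{A}$ is finite this is a group pile, with $\mbA^\rmdef=\tilde{\mbA}$, and since $\varphi\function G\to\tilde{A}$ is onto one gets $\varphi(\mcG_0)=(\varphi(G_0))^{\tilde{A}}=\mcA_0$, so $\varphi\function\mbG\to\mbA$ is an epimorphism of group piles; moreover $\mbA$ is self-generated (as $\tilde{A}=\varphi(\langle G_0,\mcG\rangle)=\langle A_0,\mcA\rangle$) and $\rk(A_0)\leq\rk(G_0)\leq e$, hence $e$-bounded. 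Now Lemma~\ref{GaschuetzPiles}, applied to the $e$-bounded self-generated $\mbA$, the $e$-generated deficient $\tilde{\mbB}$, and the epimorphism $\alpha\function\tilde{\mbB}\to\mbA^\rmdef=\tilde{\mbA}$, produces an $e$-bounded self-generated group pile $\mbB$ with $\mbB^\rmdef=\tilde{\mbB}$ such that $\alpha\function\mbB\to\mbA$ is an epimorphism. Then $(\varphi\function\mbG\to\mbA,\ \alpha\function\mbB\to\mbA)$ is an $e$-bounded finite embedding problem, and only local solvability remains to check.

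For $\mfpinS$ the condition {\rm (\ref{cond})} involves only $\mcG_\mfp$ and $\mcB_\mfp$, which are untouched, so it is inherited from the hypothesis that $(\varphi\function\mbG^\rmdef\to\tilde{\mbA},\ \alpha\function\tilde{\mbB}\to\tilde{\mbA})$ is locally solvable. For $\mfp=0$, by Lemma~\ref{locallyzero} it suffices to produce a $B_0\in\mcB_0$ and an epimorphism $\gamma_0\function G_0\to B_0$ with $\alpha\circ\gamma_0=\varphi|_{G_0}$. I would pick $B_0\in\mcB_0$ with $\alpha(B_0)=A_0$ (possible since $\alpha(\mcB_0)=\mcA_0$); as $\mbB$ is $e$-bounded, $\rk(B_0)\leq e$. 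Writing $G_0\cong\hat{F}_e$ with free generators $x_1,\dots,x_e$, the tuple $\varphi(x_1),\dots,\varphi(x_e)$ generates $A_0$, so Gasch\"utz' lemma (Lemma~\ref{GaschuetzSubgroup} with $N=1$) applied to $\alpha|_{B_0}\function B_0\to A_0$ lifts it to a generating tuple $g_1,\dots,g_e$ of $B_0$ with $\alpha(g_i)=\varphi(x_i)$; then $x_i\mapsto g_i$ defines the desired epimorphism $\gamma_0\function G_0\to B_0$.

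I expect the $\mfp=0$ case to be the crux: it is exactly there that both hypotheses are consumed — the freeness $G_0\cong\hat{F}_e$ and, through Lemma~\ref{GaschuetzPiles}, the $e$-boundedness of $\mbB$ giving $\rk(B_0)\leq e$ — which together reduce the one-orbit \emph{local} embedding problem $(G_0\to A_0,\ B_0\to A_0)$ to Gasch\"utz' lemma. The remaining verifications (that $\mbA$ is a well-formed $e$-bounded group pile, that $\varphi$ stays an epimorphism, and the bookkeeping of deficient reducts) are routine.
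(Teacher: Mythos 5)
Your proof is correct and follows essentially the same route as the paper: fix $G_0$, set $A_0=\varphi(G_0)$ and $\mbA=(\tilde{A},(A_0)^{\tilde{A}},\tilde{\mcA}_\mfp)_\mfpinS$, check $e$-boundedness via Lemma~\ref{primesubgroups}, invoke Lemma~\ref{GaschuetzPiles} to get $\mbB$, normalize so $\alpha(B_0)=A_0$, and settle the $\mfp=0$ case of~{\rm (\ref{cond})} by producing $\gamma_0\function G_0\to B_0$ and applying Lemma~\ref{locallyzero}. The only (harmless) deviation is at the construction of $\gamma_0$: the paper cites the standard lifting property of $\hat{F}_e$ directly, whereas you re-derive it by applying Lemma~\ref{GaschuetzSubgroup} with $N=1$ to $\alpha|_{B_0}\function B_0\to A_0$ — which is a legitimate, slightly more self-contained route to the same fact.
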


\begin{proof}
Let $G_0\in\mcG_0$ and $A_0=\varphi(G_0)$.
Then $G=\left<G_0,G^\prime\right>$ implies
$\tilde{A}=\left<\right.A_0,\tilde{A}^\prime\left.\right>$
(Lemma \ref{primesubgroups}),
so $\mbA=(\tilde{A},(A_0)^{\tilde{A}},\tilde{\mcA}_\mfp)_\mfpinS$
is $e$-bounded. 
By Lemma~\ref{GaschuetzPiles}, there exists
an $e$-bounded 
group pile
$\mbB=(\tilde{B},(B_0)^{\tilde{B}},\tilde{\mcB}_\mfp)_\mfpinS$
such that $\alpha\function\mbB\rightarrow\mbA$ is an epimorphism.
Without loss of generality assume that $\alpha(B_0)=A_0$.
We claim that
$EP=(\varphi\function{\mathbf{G}}\rightarrow{\mathbf{A}},\;\alpha\function{\mathbf{B}}\rightarrow{\mathbf{A}})$
is locally solvable.
Clearly it satisfies (\ref{cond}) for $\mfpinS$.
Since $G_0\cong\hat{F}_e$ and $\mbB$ is $e$-bounded,
there exists an epimorphism $\gamma_0\function G_0\rightarrow B_0$ with $\alpha\circ\gamma_0=\varphi|_{G_0}$,
cf.~\cite[17.7.3]{FJ3}.
Thus, by Lemma~\ref{locallyzero}, $EP$ satisfies (\ref{cond}) for $\mfp=0$.
\end{proof}

\begin{Proposition}\label{locallyCantor}
Every locally solvable $e$-generated deficient finite embedding problem for an $e$-free C-pile $\mbG$
is solvable.
\end{Proposition}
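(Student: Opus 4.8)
The plan is to reduce a locally solvable $e$-generated deficient finite embedding problem for $\mbG$ to a \emph{rigid} one, which is then solvable by the defining property of an $e$-free C-pile. So suppose $EP=(\varphi\function\mbG\rightarrow\tilde{\mbA},\;\alpha\function\tilde{\mbB}\rightarrow\tilde{\mbA})$ is a locally solvable $e$-generated deficient finite embedding problem for the $e$-free C-pile $\mbG$. Since $\mbG$ is an $e$-free C-pile, $\bar{G}\cong\hat{F}_e$ by Lemma~\ref{Cpilefree}. I cannot directly apply Proposition~\ref{dominateSigma}, since that requires an $e$-\emph{bounded} setup (and $\mbG$ is deficient, hence self-generated, so $G_0=\{1\}\not\cong\hat F_e$ unless $e=0$); instead I would first enrich $\mbG$ itself to an $e$-bounded group pile, apply the domination result there, and then pass back to the deficient world.

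\textbf{Step 1 (enrich $\mbG$).} Since $\bar G\cong\hat F_e$ and $G=\langle G',\ker(\bar G\to\cdot)\rangle$, pick a lift $\hat F_e\cong G_1\le G$ so that $G=\langle G_1,G'\rangle$; set $\mbG^+=(G,(G_1)^G,\mcG_\mfp)_\mfpinS$, an $e$-bounded self-generated group pile with $(\mbG^+)^{\rmdef}=\mbG$ and $G_0\cong\hat F_e$. \textbf{Step 2 (upgrade the EP).} Apply Lemma~\ref{freefully} to $\mbG^+$ and the locally solvable $e$-generated deficient finite embedding problem $(\varphi\function\mbG\rightarrow\tilde{\mbA},\;\alpha\function\tilde{\mbB}\rightarrow\tilde{\mbA})$: since $G_0\cong\hat F_e$, we obtain $\mbA,\mbB$ with $\mbA^{\rmdef}=\tilde{\mbA}$, $\mbB^{\rmdef}=\tilde{\mbB}$ such that $EP^+=(\varphi\function\mbG^+\rightarrow\mbA,\;\alpha\function\mbB\rightarrow\mbA)$ is a locally solvable $e$-bounded finite embedding problem with $\mbG^+$ $e$-bounded. \textbf{Step 3 (dominate).} Apply Proposition~\ref{dominateSigma} to $EP^+$: there are a rigid $e$-bounded finite embedding problem $(\hat\varphi\function\mbG^+\rightarrow\hat{\mbA},\;\hat\alpha\function\hat{\mbB}\rightarrow\hat{\mbA})$ and $\hat\beta\function\hat{\mbA}\rightarrow\mbA$, $\beta\function\hat{\mbB}\rightarrow\mbB$ with $\varphi=\hat\beta\circ\hat\varphi$ and $\hat\beta\circ\hat\alpha=\alpha\circ\beta$. \textbf{Step 4 (solve the rigid problem downstairs).} Pass to deficient reducts: $(\hat\varphi\function\mbG\rightarrow\hat{\mbA}^{\rmdef},\;\hat\alpha\function\hat{\mbB}^{\rmdef}\rightarrow\hat{\mbA}^{\rmdef})$ is a rigid deficient finite embedding problem, and it is $e$-generated since $\hat{\mbB}$ is $e$-bounded hence $e$-generated and this passes to $\hat{\mbB}^{\rmdef}$ (and $e$-generatedness only depends on $\bar{(\cdot)}$, which is unchanged by taking the deficient reduct). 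Since $\mbG$ is an $e$-free C-pile, this rigid $e$-generated deficient finite embedding problem has a solution $\hat\gamma\function\mbG\rightarrow\hat{\mbB}^{\rmdef}$ with $\hat\alpha\circ\hat\gamma=\hat\varphi$. \textbf{Step 5 (push down to the original EP).} Set $\gamma=\beta^{\rmdef}\circ\hat\gamma\function\mbG\rightarrow\mbB^{\rmdef}=\tilde{\mbB}$. This is an epimorphism of group piles (composition of epimorphisms), and $\alpha\circ\gamma=\alpha\circ\beta^{\rmdef}\circ\hat\gamma=\hat\beta^{\rmdef}\circ\hat\alpha\circ\hat\gamma=\hat\beta^{\rmdef}\circ\hat\varphi=\varphi$, so $\gamma$ solves $EP$.

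\textbf{Main obstacle.} The delicate point is Step~2/Step~4: matching up $e$-boundedness (a condition on the chosen distinguished subgroup $G_0$) with $e$-generatedness (a condition purely on $\bar{\mbG}$), and making sure that after enriching $\mbG$ to $\mbG^+$, dominating via Proposition~\ref{dominateSigma}, and then stripping back to deficient reducts, the resulting embedding problem for $\mbG$ is again \emph{deficient}, \emph{rigid}, and \emph{$e$-generated} — so that the defining property of the $e$-free C-pile actually applies. Rigidity survives passing to deficient reducts (the $\mcG_\mfp$ for $\mfp\in\mfS$ are untouched), and $e$-generatedness is detected by $\bar{(\cdot)}$, which is insensitive to the $\mcG_0$-component; deficiency of $\hat{\mbB}^{\rmdef}$ is by construction. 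The one thing to check carefully is that Lemma~\ref{freefully} indeed produces a \emph{locally solvable} $EP^+$ whose $\mfp=0$ condition comes precisely from $G_0\cong\hat F_e$ being "free enough" to map onto any $B_0$ of rank $\le e$, which is exactly the hypothesis we arranged in Step~1.
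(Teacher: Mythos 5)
Your proof is correct and follows essentially the same route as the paper: lift $\bar G\cong\hat F_e$ to a subgroup $G_0\cong\hat F_e$ of $G$ to enrich $\mbG$ to an $e$-bounded group pile, apply Lemma~\ref{freefully} and then Proposition~\ref{dominateSigma}, solve the resulting rigid $e$-generated deficient finite embedding problem by the defining property of $\mbG$, and push the solution down. The only cosmetic issue is the garbled expression ``$G=\langle G',\ker(\bar G\to\cdot)\rangle$'' in Step~1, but your intent (choose $G_1\le G$ of rank $\le e$ mapping onto $\bar G$, so $G_1\cong\hat F_e$ and $G=\langle G_1,G'\rangle$) is exactly what the paper does and is fine.
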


\begin{proof}
Let $EP=(\varphi\function\mathbf{G}\rightarrow\tilde{\mathbf{A}},\;\alpha\function\tilde{\mathbf{B}}\rightarrow\tilde{\mathbf{A}})$
be a locally solvable $e$-generated deficient finite embedding problem for $\mbG$.
By Lemma~\ref{Cpilefree},
$\bar{G}\cong\hat{F}_e$.
Let $G_0\leq G$ be a subgroup of rank at most $e$ 
that under the quotient map $G\rightarrow\bar{G}$ maps onto $\bar{G}\cong\hat{F}_e$.
Since every finite group generated by $e$ elements is a quotient of $\hat{F}_e$,
it is also a quotient of $G_0$, and thus $G_0\cong\hat{F}_e$,
cf.~\cite[16.10.7]{FJ3}.
Moreover, $\mbG^*=(G,(G_0)^G,\mcG_\mfp)_\mfpinS$ is $e$-bounded.
By Lemma~\ref{freefully},
there exist $\mbA$ and $\mbB$ with $\mbA^{\rmdef}=\tilde{\mbA}$ and $\mbB^{\rmdef}=\tilde{\mbB}$
such that $EP_1=(\varphi\function\mathbf{G}^*\rightarrow{\mathbf{A}},\;\alpha\function{\mathbf{B}}\rightarrow{\mathbf{A}})$
is locally solvable and $e$-bounded.
By Proposition~\ref{dominateSigma},
$EP_1$ can be dominated by a rigid $e$-bounded finite embedding problem $EP_2$.
The deficient reduct of $EP_2$ is a rigid $e$-generated deficient finite embedding problem, hence has a solution.
It induces a solution of $EP$.
\end{proof}

\begin{Example}
For each $\mfpinS$, let $\Gamma_\mfp$ be a profinite group and $T_\mfp$ a profinite space,
and let $\Gamma_0=\hat{F}_e$ be the free profinite group of rank $e$.
Then \cite[\S5]{HJPd} constructs from this data a certain group pile $\mathbf{G}_T$, which we call
the {\bf $e$-free semi-constant group pile} of $(\Gamma_\mfp)_\mfpinS$ over $(T_\mfp)_\mfpinS$.
We do not repeat this definition but rely on the properties of $\mathbf{G}_T$ proven in \cite{HJPd}.
\end{Example}

\begin{Lemma}\label{efscfgp}
The $e$-free semi-constant group pile $\mathbf{G}$ of 
$(\Gamma_\mfp)_\mfpinS$ over $(T_\mfp)_\mfpinS$
is an $e$-bounded self-generated 
group pile.
\end{Lemma}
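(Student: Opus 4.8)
The plan is to unwind the definition of \textbf{$e$-bounded} -- recall (Definition~\ref{defebounded}) that this means $\mathbf{G}$ is self-generated and $\rk(G_0)\leq e$ for every $G_0\in\mcG_0$ -- and to read off both conditions directly from the construction of the semi-constant group pile in \cite[\S5]{HJPd}. First I would recall the relevant features of that construction, writing $\mathbf{G}=(G,\mcG_0,\mcG_\mfp)_\mfpinS$: by construction $G$ comes equipped with a distinguished closed subgroup $G_0\leq G$ isomorphic to $\Gamma_0=\hat{F}_e$ whose $G$-conjugacy class is exactly $\mcG_0$, i.e.\ $\mcG_0=(G_0)^G$; and for each $\mfpinS$ the set $\mcG_\mfp$ is the ($G$-invariant, closed) family of conjugates of a copy of $\Gamma_\mfp$ indexed by the points of $T_\mfp$.

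The key step is to extract from \cite[\S5]{HJPd} the statement that $G$ is topologically generated by $G_0$ together with the subgroups in $\mcG=\bigcup_\mfpinS\mcG_\mfp$; this is built into the construction, since the semi-constant group pile is assembled as a free product of $\Gamma_0$ with the bundle of the $\Gamma_\mfp$ over $T_\mfp$, and a free product of profinite groups is topologically generated by its factors. Having $G=\left<G_0,\mcG\right>$ for this $G_0\in\mcG_0$ gives that $\mathbf{G}$ is self-generated. Since $G_0\cong\hat{F}_e$, we have $\rk(G_0)=e$; and as the elements of $\mcG_0$ are pairwise conjugate in $G$ (they form the single orbit $(G_0)^G$), every member of $\mcG_0$ likewise has rank $e$. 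Hence $\mathbf{G}$ is $e$-bounded.

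The main obstacle I anticipate is purely bookkeeping: pinning down exactly which statement of \cite[\S5]{HJPd} furnishes the generation claim $G=\left<G_0,\mcG\right>$ and the orbit description $\mcG_0=(G_0)^G$ in the form in which they are proven there. Since the construction is phrased through finite approximations and the profinite spaces $T_\mfp$, the generation claim may have to be passed through the inverse limit (using that the $\mcG_\mfp$ are the inverse limits of the finite-level families, cf.\ Lemma~\ref{lem:invlimgp}), and one must double-check that $\mcG_0$ is genuinely a single $G$-orbit of a rank-$e$ subgroup rather than something larger. Once those facts are cited correctly, the lemma follows immediately.
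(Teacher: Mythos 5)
Your proposal is correct and follows the same route as the paper: cite the HJPd construction (specifically Proposition~5.3(c) there) for self-generation, and note that every $G_0\in\mcG_0$ is isomorphic to $\Gamma_0=\hat{F}_e$, giving $\rk(G_0)\leq e$. The paper's proof is just a terser version of yours, without the extra speculation about free products and inverse limits.
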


\begin{proof}
By \cite[Proposition 5.3(c)]{HJPd}, $\mathbf{G}$ is self-generated.
By the construction, every $G_0\in\mathcal{G}_0$ is isomorphic to $\Gamma_0=\hat{F}_e$, hence $\mathbf{G}$
is $e$-bounded.
\end{proof}

\begin{Proposition}\label{constantCantor}
Let $\mbG$ be an $e$-free semi-constant group pile of non-trivial profinite groups $(\Gamma_\mfp)_\mfpinS$ over
perfect profinite spaces $(T_\mfp)_\mfpinS$.
Then the deficient reduct $\mbG^{\rmdef}$ of $\mbG$ is an $e$-free C-pile.
\end{Proposition}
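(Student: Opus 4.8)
The plan is to reduce, via the machinery of the previous sections, to the embedding-solving property of the semi-constant construction established in \cite[\S5]{HJPd}. First, $\mbG^{\rmdef}$ is deficient by definition. It is also $e$-generated: by Lemma~\ref{efscfgp} the pile $\mbG$ is $e$-bounded, hence self-generated with $\rk(G_0)\leq e$ for $G_0\in\mcG_0$, so $G=\left<G_0,G^\prime\right>$ with $G^\prime=\left<\mcG\right>$, whence $\bar G=G/G^\prime$ is a quotient of $G_0$ and $\rk(G/G^\prime)\leq e$; since $G^\prime$ is unchanged on passing from $\mbG$ to $\mbG^{\rmdef}$, this is exactly the condition that $\mbG^{\rmdef}$ be $e$-generated. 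It therefore remains to solve an arbitrary rigid $e$-generated deficient finite embedding problem $EP=(\varphi\function\mbG^{\rmdef}\rightarrow\tilde\mbA,\;\alpha\function\tilde\mbB\rightarrow\tilde\mbA)$.

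By Proposition~\ref{rigidlocally}, $EP$ is locally solvable. By Lemma~\ref{efscfgp} and its proof, $\mbG$ is $e$-bounded and $G_0\cong\hat{F}_e$ for $G_0\in\mcG_0$, so Lemma~\ref{freefully} applies to $EP$ and produces group piles $\mbA,\mbB$ with $\mbA^{\rmdef}=\tilde\mbA$ and $\mbB^{\rmdef}=\tilde\mbB$ such that $EP_1=(\varphi\function\mbG\rightarrow\mbA,\;\alpha\function\mbB\rightarrow\mbA)$ is a locally solvable $e$-bounded finite embedding problem, the maps $\varphi,\alpha$ extending the given ones. As $\mbG$ itself is $e$-bounded, Proposition~\ref{dominateSigma} dominates $EP_1$ by a rigid $e$-bounded finite embedding problem $EP_2=(\hat\varphi\function\mbG\rightarrow\hat\mbA,\;\hat\alpha\function\hat\mbB\rightarrow\hat\mbA)$ together with epimorphisms $\hat\beta\function\hat\mbA\rightarrow\mbA$ and $\beta\function\hat\mbB\rightarrow\mbB$ satisfying $\varphi=\hat\beta\circ\hat\varphi$ and $\hat\beta\circ\hat\alpha=\alpha\circ\beta$.

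The one non-formal step is to solve $EP_2$ for $\mbG$. Here I would invoke the property of the $e$-free semi-constant group pile proven in \cite[\S5]{HJPd}: because the $\Gamma_\mfp$ ($\mfpinS$) are non-trivial and the spaces $T_\mfp$ are perfect, $\mbG$ solves every rigid $e$-bounded finite embedding problem, so there is an epimorphism $\hat\gamma\function\mbG\rightarrow\hat\mbB$ with $\hat\alpha\circ\hat\gamma=\hat\varphi$. Then $\gamma:=\beta\circ\hat\gamma\function\mbG\rightarrow\mbB$ is an epimorphism of group piles and $\alpha\circ\gamma=\hat\beta\circ\hat\alpha\circ\hat\gamma=\hat\beta\circ\hat\varphi=\varphi$, so $\gamma$ solves $EP_1$. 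Passing to deficient reducts, the same map $\gamma$ is an epimorphism of deficient group piles $\gamma^{\rmdef}\function\mbG^{\rmdef}\rightarrow\mbB^{\rmdef}=\tilde\mbB$ --- indeed $\gamma(\mcG_\mfp)=\mcB_\mfp$ holds for every $\mfp\in\{0\}\cup\mfS$, hence for $\mfpinS$, and for $\mfp=0$ both sides equal $\{1\}$ --- and on deficient reducts $\alpha\circ\gamma^{\rmdef}=\varphi$ is precisely the original embedding problem $EP$. Hence $\gamma^{\rmdef}$ solves $EP$, and $\mbG^{\rmdef}$ is an $e$-free C-pile.

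I expect the main obstacle to be exactly this single appeal to \cite{HJPd}: one must extract from \cite[\S5]{HJPd} the solvability statement for $\mbG_T$ in a form that matches the rigid $e$-bounded embedding problem $EP_2$ produced above, under the perfectness and non-triviality hypotheses. Should that reference package its conclusion differently --- for split embedding problems, say, or for locally solvable rather than rigid ones --- the domination step (Proposition~\ref{dominateSigma}) can be rerun toward whatever normal form is convenient, and in the worst case one reproves the solvability of $EP_2$ directly along the lines of \cite[\S5]{HJPd}. Everything else is formal bookkeeping with the functor $\mbG\mapsto\mbG^{\rmdef}$ and the constructions of the previous sections.
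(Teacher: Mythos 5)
Your proof is essentially correct and follows the paper's structure, but you interpose an extra step that the paper does not need. After constructing the locally solvable $e$-bounded finite embedding problem $EP_1=(\varphi\colon\mbG\to\mbA,\ \alpha\colon\mbB\to\mbA)$ via Lemma~\ref{freefully}, the paper invokes \cite[Prop.~5.3(h)]{HJPd} directly on $EP_1$: that result gives solutions for locally solvable (self-generated) finite embedding problems of the semi-constant pile $\mbG_T$, precisely in the form $EP_1$ has. No domination is required. Your insertion of Proposition~\ref{dominateSigma} to produce a rigid $e$-bounded $EP_2$, and the subsequent appeal to a hypothetical ``$\mbG$ solves every rigid $e$-bounded finite embedding problem'' from \cite[\S5]{HJPd}, is both unnecessary and a misdescription of what the reference proves. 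In fact the direction of implication you would then need is awkward: a rigid $e$-bounded problem is not automatically locally solvable (Lemma~\ref{locallyp} only covers $\mfp\in\mfS$, not $\mfp=0$), so if the reference is for locally solvable problems you cannot simply ``rerun'' Proposition~\ref{dominateSigma} as your hedge suggests --- that proposition goes from locally solvable to rigid, not the reverse. The specific $EP_2$ coming out of Proposition~\ref{dominateSigma} does happen to be locally solvable (because $\hat\alpha\vert_{\hat B_0}$ is an isomorphism by construction), so your chain can be patched, but it is a detour. Note also that the paper deliberately reserves the domination step of Proposition~\ref{dominateSigma} for Proposition~\ref{locallyCantor}, which upgrades a general $e$-free C-pile from solving rigid problems to solving locally solvable ones; using it here, in the proof that the semi-constant pile \emph{is} an $e$-free C-pile, is where your route diverges. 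The remainder of your proof --- the $e$-generation argument via $\bar G\le\rk G_0$ and the bookkeeping with deficient reducts --- matches the paper.
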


\begin{proof}
By Lemma~\ref{efscfgp}, $\mbG$ is $e$-bounded, 
so $\mathbf{G}^{\rmdef}$ is $e$-generated.
Let $EP=(\varphi\function\mbG^{\rmdef}\rightarrow\tilde{\mbA},\alpha\function\tilde{\mbB}\rightarrow\tilde{\mbA})$ be a
rigid $e$-generated deficient finite embedding problem for $\mbG^{\rmdef}$.
By Lemma~\ref{rigidlocally}, $EP$ is locally solvable.
By Lemma~\ref{freefully}
there exist $\mbA$ and $\mbB$ with $\mbA^{\rmdef}=\tilde{\mbA}$ and $\mbB^{\rmdef}=\tilde{\mbB}$
such that
$(\varphi\function\mathbf{G}\rightarrow{\mathbf{A}},\;\alpha\function{\mathbf{B}}\rightarrow{\mathbf{A}})$
is a locally solvable $e$-bounded (and hence self-generated) embedding problem.
By \cite[Proposition 5.3(h)]{HJPd}, 
this embedding problem has a solution,
which in turn induces a solution of $EP$.
\end{proof}

\begin{Definition}\label{DefTCantor}
Let the cotheory $T_{{\rm C},\mfS,e}^{\rm co}$ consist of:
\begin{enumerate}
 \item For $n\in\mathbb{N}$ a cosentence about a group pile $\mathbf{G}=(G,\mathcal{G}_\mfp)_\mfpinS$
       stating that for each $N\lhd G$ with $(G:N)\leq n$, the finite quotient $\mathbf{G}/N$ is $e$-generated.
\item For $n,k\in\mathbb{N}$ a cosentence about a group pile $\mathbf{G}=(G,\mathcal{G}_\mfp)_\mfpinS$
       stating that for every $N\lhd G$ with $(G:N)\leq n$ and every
        rigid epimorphism $\alpha\colon\mathbf{B}\rightarrow\mathbf{G}/N$ with $\mathbf{B}$ 
        an $e$-generated deficient group pile of order $k$,
        there is an $M\lhd G$ with $(G:M)\leq k$ and $M\leq N$ and an isomorphism
        $\beta:\mathbf{G}/M\rightarrow\mathbf{B}$ such that $\alpha\circ\beta$
        is the natural map $\mathbf{G}/M\rightarrow\mathbf{G}/N$.
\end{enumerate}
\end{Definition}

\begin{Proposition}\label{AxCantor}
A deficient group pile $\mathbf{G}$ 
is an $e$-free C-pile if and only if it cosatisfies $T_{{\rm C},\mfS,e}^{\rm co}$.
\end{Proposition}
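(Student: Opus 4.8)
The plan is to verify the two implications by translating the defining property of an $e$-free C-pile (Definition~\ref{DefCantorSigma}) into statements about the finite quotients of $\mathbf{G}$ and matching these with the two families of cosentences in $T_{{\rm C},\mfS,e}^{\rm co}$ (Definition~\ref{DefTCantor}). Two facts are used throughout: Proposition~\ref{egeneratedcharacterization}, which reduces the property that $\mathbf{G}$ is $e$-generated to the condition that every finite quotient of $\mathbf{G}$ is $e$-generated; and the observation made right after the definition of the quotient $\mathbf{G}/N$ that every epimorphism of group piles $\gamma\function\mathbf{G}\rightarrow\mathbf{B}$ factors as $\mathbf{G}\rightarrow\mathbf{G}/M\rightarrow\mathbf{B}$ with $M=\ker(\gamma)$ open normal and the second map an isomorphism of group piles. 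The latter supplies the dictionary I need: for a finite embedding problem $(\varphi\function\mathbf{G}\rightarrow\mathbf{A},\;\alpha\function\mathbf{B}\rightarrow\mathbf{A})$ with $N=\ker(\varphi)$, producing a solution $\gamma$ is the same as producing an open normal $M\normal G$ with $M\leq N$ and an isomorphism $\beta\function\mathbf{G}/M\rightarrow\mathbf{B}$ such that $\alpha\circ\beta$ is the canonical map $\mathbf{G}/M\rightarrow\mathbf{G}/N$ (identifying $\mathbf{A}$ with $\mathbf{G}/N$ via $\varphi$); conversely, from such $M,\beta$ one recovers $\gamma=\beta\circ(\mathbf{G}\rightarrow\mathbf{G}/M)$. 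This is exactly the shape of the conclusion of the part-(2) cosentences.

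Assume first that $\mathbf{G}$ is an $e$-free C-pile. Then $\mathbf{G}$ is $e$-generated, so by Proposition~\ref{egeneratedcharacterization} every finite quotient $\mathbf{G}/N$ is $e$-generated, which is precisely what the part-(1) cosentences assert. For part~(2), fix $N\normal G$ of index at most $n$ and a rigid epimorphism $\alpha\function\mathbf{B}\rightarrow\mathbf{G}/N$ with $\mathbf{B}$ an $e$-generated deficient group pile of order $k$. Then $(\varphi_N\function\mathbf{G}\rightarrow\mathbf{G}/N,\;\alpha)$, where $\varphi_N$ is the quotient map, is a rigid $e$-generated deficient finite embedding problem for $\mathbf{G}$ (all the adjectives follow from the corresponding properties of $\mathbf{B}$ and $\alpha$ via Definition~\ref{DefEP}), hence has a solution $\gamma$. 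The dictionary above converts $\gamma$ into an $M\normal G$ with $(G:M)=|B|=k$ and $M\leq N$ together with the required isomorphism $\beta$, so the part-(2) cosentence holds. Hence $\mathbf{G}$ cosatisfies $T_{{\rm C},\mfS,e}^{\rm co}$.

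For the converse, assume $\mathbf{G}$ cosatisfies $T_{{\rm C},\mfS,e}^{\rm co}$. Ranging the part-(1) cosentences over all $n$ shows that every finite quotient of $\mathbf{G}$ is $e$-generated, so $\mathbf{G}$, which is deficient by hypothesis, is $e$-generated by Proposition~\ref{egeneratedcharacterization}. Now let $(\varphi\function\mathbf{G}\rightarrow\mathbf{A},\;\alpha\function\mathbf{B}\rightarrow\mathbf{A})$ be a rigid $e$-generated deficient finite embedding problem for $\mathbf{G}$; set $N=\ker(\varphi)$, identify $\mathbf{A}$ with $\mathbf{G}/N$ via $\varphi$, and put $n=(G:N)$, $k=|B|$. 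Applying the part-(2) cosentence for this $n,k$ to the rigid epimorphism $\alpha\function\mathbf{B}\rightarrow\mathbf{G}/N$ yields $M\normal G$ with $(G:M)\leq k$, $M\leq N$, and an isomorphism $\beta\function\mathbf{G}/M\rightarrow\mathbf{B}$ with $\alpha\circ\beta$ the natural map $\mathbf{G}/M\rightarrow\mathbf{G}/N$; by the dictionary above, $\gamma=\beta\circ(\mathbf{G}\rightarrow\mathbf{G}/M)$ is a solution of the embedding problem. Thus every rigid $e$-generated deficient finite embedding problem for $\mathbf{G}$ is solvable, i.e.~$\mathbf{G}$ is an $e$-free C-pile. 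The one step that requires genuine care, rather than just unwinding definitions, is verifying that the assertions in Definition~\ref{DefTCantor} really are coformulas and that their internal quantifiers (over $N$ with $(G:N)\leq n$, over the bounded data $(\mathbf{B},\alpha)$, and over the witness $M$ with $(G:M)\leq k$) match the external finite embedding problems and their solutions exactly as used above; because only finitely many isomorphism types of the bounded data occur, this reduces to a finite case distinction over the sorts $G_n$ and $G_k$, and then both implications follow from Proposition~\ref{egeneratedcharacterization} and the quotient/epimorphism correspondence.
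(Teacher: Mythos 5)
Your proof is correct and takes the same approach as the paper's, which is actually just two sentences: the paper invokes Proposition~\ref{egeneratedcharacterization} for the part-(1) cosentences and observes that the part-(2) cosentences are a restatement of the solvability of rigid $e$-generated deficient finite embedding problems. You fill in the dictionary between solutions $\gamma$ and pairs $(M,\beta)$ explicitly, which the paper leaves implicit, but the substance and the key lemma are identical.
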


\begin{proof}
By Proposition \ref{egeneratedcharacterization}, $\mbG$ cosatisfies (1) if and only if $\mbG$ is $e$-generated.
And (2) just says that all rigid $e$-generated deficient finite embedding problems for $\mbG$ are solvable.
\end{proof}

\begin{Proposition}\label{Cantorsaturated}
Let $\mbG$ be an $\aleph_1$-cosaturated $e$-free C-pile.
Then every rigid $e$-generated deficient embedding problem $(\varphi\colon\mbG\rightarrow\mbA,\alpha\colon\mbB\rightarrow\mbA)$
with $\rk(\mbB)\leq\aleph_0$  is solvable.
\end{Proposition}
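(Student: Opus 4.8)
The plan is to exhibit a solution of $EP=(\varphi\function\mbG\rightarrow\mbA,\;\alpha\function\mbB\rightarrow\mbA)$ as a realization in $S(\mbG)$ of a countable type, to obtain such a realization from $\aleph_1$-cosaturation, and to check finite satisfiability of that type by solving finite quotient embedding problems with the help of Propositions~\ref{rigidlocally} and~\ref{locallyCantor}.

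First I would record the reductions and the translation into model-theoretic terms. Since $A$ is a quotient of $B$ we have $\rk(\mbA)\leq\rk(\mbB)\leq\aleph_0$, so $S(\mbA)$ and $S(\mbB)$ are countable, and $S(\mbG)$ is defined because $\mbG$ is deficient. By Proposition~\ref{rigidlocally}, $EP$ is locally solvable. Under the equivalence between deficient group piles (with epimorphisms) and inverse systems (with embeddings), a solution $\gamma\function\mbG\rightarrow\mbB$ of $EP$ corresponds to an embedding of inverse systems $\gamma^*\function S(\mbB)\rightarrow S(\mbG)$ with $\gamma^*\circ\alpha^*=\varphi^*$, which is the same as a family $(z_b)_{b\in S(\mbB)}$ of elements of $S(\mbG)$ realizing the set $\Sigma$ of coformulas over $S(\mbG)$ — with the elements $\varphi^*(c)$, $c\in S(\mbA)$, as parameters — consisting of: the atomic and negated atomic diagram of $S(\mbB)$ written in the variables $z_b$; a formula $G_n(z_b)$ for each $b\in S(\mbB)$, with $n$ chosen so that $b\in G_n^\mbB$; and the equations $z_{\alpha^*(c)}=\varphi^*(c)$ for $c\in S(\mbA)$. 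Then $\Sigma$ is a ranked set of coformulas with parameters in $S(\mbG)$ and $|\Sigma|=\aleph_0<\aleph_1$; moreover any realization of $\Sigma$ preserves and reflects all $\mcL_{{\rm co},\mfS}$-relations, hence is an embedding of $S(\mbB)$ into $S(\mbG)$ as inverse systems, which dualizes to an epimorphism of group piles $\gamma\function\mbG\rightarrow\mbB$ satisfying $\alpha\circ\gamma=\varphi$ (by the equations). So it remains to cosatisfy $\Sigma$ in $\mbG$, and since $\mbG$ is $\aleph_1$-cosaturated and $\Sigma$ is ranked of cardinality $<\aleph_1$, it suffices to cosatisfy every finite $\Sigma_0\subseteq\Sigma$.

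So fix a finite $\Sigma_0\subseteq\Sigma$; it involves only finitely many elements of $S(\mbB)$ and finitely many parameters $\varphi^*(c)$. Choose an open normal subgroup $N_0\normal B$ contained in all the open normal subgroups of $B$ indexing those elements of $S(\mbB)$, and small enough that $\alpha(N_0)$ is contained in all the open normal subgroups of $A$ indexing the relevant $c\in S(\mbA)$; then all these elements lie in the natural copies of $S(\mbB/N_0)$ inside $S(\mbB)$ and of $S(\mbA/\alpha(N_0))$ inside $S(\mbA)$. Consider the induced finite embedding problem $EP_{N_0}=(\varphi_{N_0}\function\mbG\rightarrow\mbA/\alpha(N_0),\;\alpha_{N_0}\function\mbB/N_0\rightarrow\mbA/\alpha(N_0))$. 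It is finite and deficient; it is $e$-generated, since $\mbB/N_0$ is $e$-generated by Lemma~\ref{quotientrank}; and it is locally solvable by Lemma~\ref{locallyquotient}, applied to the locally solvable $EP$ and to $N_0\normal B$. As $\mbG$ is an $e$-free C-pile, Proposition~\ref{locallyCantor} provides a solution $\gamma_{N_0}\function\mbG\rightarrow\mbB/N_0$. Now set $z_b:=\gamma_{N_0}^*(b)$ for the finitely many $b$ occurring in $\Sigma_0$ (identifying these $b$ with elements of $S(\mbB/N_0)$). Using that $\gamma_{N_0}^*$ is an embedding of inverse systems that preserves the relations $G_n$, that $\gamma_{N_0}^*\circ\alpha_{N_0}^*=\varphi_{N_0}^*$, and the functoriality of $S$ together with the above identifications, one checks that the $z_b$ satisfy every formula in $\Sigma_0$. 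Hence $\Sigma_0$ is cosatisfied in $\mbG$, and the proof is complete.

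The main obstacle is the bookkeeping in the last step — tracking the parameters $\varphi^*(c)$ through the functor $S$, identifying $S(\mbB/N_0)$ and $S(\mbA/\alpha(N_0))$ with the correct substructures of $S(\mbB)$ and $S(\mbA)$, and verifying in particular the equations $z_{\alpha^*(c)}=\varphi^*(c)$. The one conceptual subtlety is that one must route the finite approximations through local solvability rather than rigidity: rigidity of $\alpha$ need not pass to the quotient maps $\alpha_{N_0}$, so it is Lemma~\ref{locallyquotient} (local solvability descends to quotient embedding problems) combined with Proposition~\ref{locallyCantor} — not the defining property of an $e$-free C-pile for rigid embedding problems directly — that makes the finite problems solvable.
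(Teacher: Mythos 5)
Your proof is correct and takes essentially the same route as the paper's: both reduce the countable embedding problem to a ranked set of coformulas over $S(\mbG)$, establish finite satisfiability by combining Proposition~\ref{rigidlocally}, Lemma~\ref{locallyquotient}, Lemma~\ref{quotientrank} and Proposition~\ref{locallyCantor}, and then invoke $\aleph_1$-cosaturation to realize the whole type. The paper presents the type via a fixed descending chain $(N_i)_{i\in\mathbb{N}}$ of open normal subgroups of $B$ together with explicit isomorphism and compatibility conditions on the quotients $\mbB/N_i$, whereas you package the same information as the atomic and negated atomic diagram of $S(\mbB)$ plus rank and compatibility-with-$\varphi^*$ conditions; this is a cosmetic rather than substantive difference.
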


\begin{proof}
Since $\rk(B)\leq\aleph_0$, there
is a descending sequence of open normal subgroups $N_i\normal B$, $i\in\mathbb{N}$,
with $\bigcap_{i\in\mathbb{N}} N_i=1$, cf.~\cite[17.1.7(a)]{FJ3}.
For each $i\in\mathbb{N}$ let
$\alpha_i\function\mathbf{B}/N_i\rightarrow\mathbf{A}/\alpha(N_i)$
be the epimorphism induced by $\alpha$,
and 
for $i\leq j\in\mathbb{N}$ let
$\pi_i\function\mathbf{A}\rightarrow\mathbf{A}/\alpha(N_i)$,
$\rho_i\function\mathbf{B}\rightarrow\mathbf{B}/N_i$,
$\rho_{ji}\function\mathbf{B}/N_j\rightarrow\mathbf{B}/N_i$
be the quotient maps.
Then $\alpha_i\circ\rho_i=\pi_i\circ\alpha$.
\begin{diagram}
 && \mbG \\
 &\ldDotsto(2,6)^{\gamma_i}& \dTo_\varphi \\
\mbB &\rTo^\alpha & \mbA \\
\dTo^{\rho_j} && \dTo_{\pi_j} \\
\mbB/N_j &\rTo^{\alpha_j} &\mbA/\alpha(N_j) \\
\dTo^{\rho_{ji}} && \dTo \\
\mbB/N_i &\rTo^{\alpha_i} &\mbA/\alpha(N_i) \\
\end{diagram}
By Lemma~\ref{rigidlocally},
the rigid deficient embedding problem $(\varphi,\alpha)$
is locally solvable,
hence the induced embedding problem $(\pi_i\circ\varphi,\alpha_i)$
is locally solvable by Lemma~\ref{locallyquotient}.
Since $\mbB$ is $e$-generated, $\mbB/N_i$ is $e$-generated by Lemma~\ref{quotientrank}.
Hence,
$(\pi_i\circ\varphi,\alpha_i)$ is a locally solvable $e$-generated deficient finite embedding problem for $\mathbf{G}$.
Since $\mbG$ is an $e$-free C-pile,
this embedding problem has a solution
$\gamma_i\function\mathbf{G}\rightarrow\mathbf{B}/N_i$
by Proposition~\ref{locallyCantor}.

For each $i$, fix an enumeration $\mathbf{B}/N_i=\{b_{i,1},\dots,b_{i,n_i}\}$ and let $a_{i,\nu}=\alpha_i(b_{i,\nu})\in\mbA/\alpha(N_i)\subseteq S(\mbA)$. 
View $S(\mbA)$ as a subset of $S(\mbG)$ via $\varphi^*$ and
let $\Sigma$ be the following set of bounded $\mathcal{L}_{{\rm co},\mfS}$-formulas in the variables $x_{i,\nu}$, $i\in\mathbb{N}$, $1\leq \nu\leq n_i$, with constants from $S(\mbA)$:
\begin{enumerate}
\item For each $i$ and each $1\leq \nu\leq n_i$ the $\mathcal{L}_{{\rm co},\mfS}$-formula $G_{n_i}(x_{i,\nu})$.
\item For each $i$ an $\mathcal{L}_{{\rm co},\mfS}$-formula stating that $[x_{i,1}]=\{x_{i,1},\dots,x_{i,n_i}\}$ and that
  the map $x_{i,\nu}\mapsto b_{i,\nu}$ is an isomorphism of group piles $\beta_i:\llbracket x_{i,1} \rrbracket\rightarrow \mathbf{B}/N_i$.
\item For each $i$ an $\mathcal{L}_{{\rm co},\mfS}$-formula with constants from $S(\mbA)$ stating that $x_{i,1}\leq a_{i,1}$ and
       $\pi_{x_{i,1},a_{i,1}}(x_{i,\nu})=a_{i,\nu}$ for all $1\leq\nu\leq n_i$.
\item For each $i\leq j$, an $\mathcal{L}_{{\rm co},\mfS}$-formula stating that 
 $x_{j,1}\leq x_{i,1}$ and $\beta_i\circ\pi_{x_{j,1}x_{i,1}}=\rho_{ji}\circ\beta_j$.
\end{enumerate}
Every finite subset $\Sigma_0$ of $\Sigma$ is cosatisfied in $\mathbf{G}$:
Let $j$ be the maximal index of a variable $x_{j,\nu}$ appearing in $\Sigma$ and for $i\leq j$, $1\leq\nu\leq n_i$ let 
$g_{i,\nu}=\gamma_j^*(\rho_{ji}^*(b_{i,\nu}))$.
Then $(g_{i,\nu})_{1\leq i\leq j,1\leq\nu\leq n_i}$ satisfies $\Sigma_0$.
By (1), $\Sigma$ is ranked.

Thus, since $\mathbf{G}$ is $\aleph_1$-cosaturated, there are $(\tilde{g}_{i,\nu})_{1\leq i,1\leq\nu\leq n_i}$ in $\mathbf{G}$ that satisfy $\Sigma$.
For each $i$, the map $b_{i,\nu}\mapsto\tilde{g}_{i,\nu}$ gives an isomorphism $\mathbf{B}/N_i\rightarrow\llbracket\tilde{g}_{i,1}\rrbracket$ by (2),
hence has as dual an epimorphism $\tilde{\gamma}_i\colon\mathbf{G}\rightarrow\mathbf{B}/N_i$,
which satisfies $\alpha_i\circ\tilde{\gamma}_i=\pi_i\circ\varphi$ by (3).
By (4), these epimorphisms are compatible, giving rise to an epimorphism $\tilde{\gamma}=\varprojlim_i\tilde{\gamma}_i\colon\mathbf{G}\rightarrow\varprojlim_i\mathbf{B}/N_i=\mathbf{B}$,
which then satisfies $\alpha\circ\tilde{\gamma}=\varphi$.
\end{proof}

\section{Model theory of $\PSCC$ fields}

\noindent
In the next section we will let the finite set $\mfS$ be a set of {\em primes} and associate to each field $F$
a group pile $\GalS(F)$, which extends the absolute Galois group $\Gal(F)$ with $\mfS$-local data.
The notion of {\em prime} and the corresponding local-global principle $\PSCC$ we use is the one developed in \cite{AFPSCC}.
We now briefly recall the main definitions and results but refer to \cite[\S2-3]{AFPSCC} for further details\footnote{The reader who want to check these details should be aware of the fact
that, in the notation of \cite{AFPSCC}, here we consider only the case of relative type $\tau=(1,1)$, so for example the $\PSCC$ property is there called ${\rm P}S^\tau{\rm CC}$ with $S=\mfS$ and $\tau=(1,1)$.}.
Basics on real closed and $p$-adically closed fields are summarized in Appendix \ref{app:real} and \ref{app:padic}.

\begin{Definition}
For a field $F$ of characteristic zero
we denote by $\tilde{F}$ a fixed algebraic closure of $F$, and by $\Gal(F)=\Gal(\tilde{F}/F)$ the absolute Galois group of $F$.
\end{Definition}

\begin{Definition}
A {\bf prime}\footnote{called a {\em classical prime} in \cite{AFPSCC}} of a field $K$ is either an ordering of $K$ or an equivalence class of $p$-valuations on $K$, for some prime number $p$.
It is {\bf local} if the ordering is archimedean resp.~if the value group is isomorphic to $\mathbb{Z}$.
If $\mfP$ is a prime of $K$, we denote by $\CC(K,\mfP)$ the set of all real resp.~$p$-adic closures of $(K,\mfP)$ inside $\tilde{K}$.
If $\mfp$ is a prime of $K$ and $F/K$ is a field extension, 
we denote by $\mcS_\mfp(F)$ the set of all primes $\mfP$ of $F$ that lie above $\mfp$ (we write this as $\mfP|_K=\mfp$) and are of the same type,
and by $\CC_\mfp(F)$ the union of all $\CC(F,\mfP)$, $\mfP\in\mcS_\mfp(F)$,
cf.~\cite[Def.~3.4, Def.~3.7, Def.~4.2]{AFPSCC}.
\end{Definition}

\begin{Setting}
For the rest of this work let $\mfS$ be a finite set of local primes of 
a field $K$ of characteristic zero, and let $F/K$ be a field extension.
For $\mfp\in\mfS$ fix a closure $K_\mfp\in\CC_\mfp(K)$.
\end{Setting}

\begin{Lemma}\label{CCsubfield}
Let $K\subseteq E\subseteq F$,
$\mfpinS$, $\mfQ\in\mcS_\mfp(F)$ and $\mfP=\mfQ|_E\in\mcS_\mfp(E)$.
If $F^\prime\in\CC(F,\mfQ)$, then $E^\prime:=F^\prime\cap\tilde{E}\in\CC(E,\mfP)$
and ${\rm res}\function\Gal(F^\prime)\rightarrow\Gal(E^\prime)$ is an isomorphism.
In particular, $F^\prime\cap\tilde{E}\in\CC_\mfp(E)$ for any $F^\prime\in\CC_\mfp(F)$.
\end{Lemma}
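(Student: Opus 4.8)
The plan is to establish the three assertions in order: that $E':=F'\cap\tilde E$ is a real (resp.\ $p$-adic) closure of $(E,\mfP)$, that restriction induces an isomorphism $\Gal(F')\to\Gal(E')$, and then to read off the final $\CC_\mfp$-statement.

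For the first point I would note that $E\subseteq E'\subseteq\tilde E$ with $E'/E$ algebraic, and that $E'$ carries the ordering (resp.\ $p$-valuation) of $F'$ restricted to it, which on $E$ agrees with $\mfQ|_E=\mfP$ because the order/valuation of $F'$ restricts to $\mfQ$ on $F\supseteq E$. It then remains to see that $E'$ is real closed (resp.\ $p$-adically closed). In the ordered case this is immediate from the elementary characterization of real closed fields (Appendix~\ref{app:real}): given $a\in E'$ it is $\pm b^2$ for some $b$ in the real closed field $F'$, and any odd-degree polynomial over $E'$ has a root in $F'$; since these witnesses are algebraic over $E'$ they lie in $\tilde E$, hence in $F'\cap\tilde E=E'$. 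In the $p$-adic case I would first show $E'$ is Henselian by lifting a simple residue root of an $\mcO_{E'}$-polynomial along the Henselian field $F'$ and noting the root is algebraic over $E'$, hence in $E'$; its value group and residue field lie between those of $E$ --- which are $\mathbb Z$ and $\mathbb F_p$ since $\mfp$ is local of relative type $(1,1)$ --- and those of $F'$, so they are again $\mathbb Z$ and $\mathbb F_p$, and by the characterization of $p$-adically closed fields of $p$-rank one (Appendix~\ref{app:padic}) $E'$ is $p$-adically closed. As the order (resp.\ $p$-valuation) of such a field is unique, the prime of $E'$ is the restriction of that of $F'$ and restricts to $\mfP$; hence $E'\in\CC(E,\mfP)$.

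For the isomorphism I would use that $\tilde E/E'$ is Galois ($\tilde E$ is an algebraic closure of $E'$ and $\ch=0$), so by the standard Galois theory of composita ($L/k$ Galois $\Rightarrow\Gal(LM/M)\cong\Gal(L/(L\cap M))$ via restriction) it is enough to prove $\tilde E\cdot F'=\tilde F$; restriction then gives $\Gal(\tilde F/F')\xrightarrow{\sim}\Gal(\tilde E/(\tilde E\cap F'))=\Gal(\tilde E/E')=\Gal(E')$. To get $\tilde E F'=\tilde F$: this field is algebraic over $F'$ (as $\tilde E/E$ is algebraic and $E\subseteq F'$), hence Henselian, with divisible value group and algebraically closed residue field --- in the ordered case one may simply note $\sqrt{-1}\in\tilde E$ and $[\tilde F:F']=2$ --- so it is algebraically closed, and being an algebraic extension of $F'$ inside $\tilde F$ it equals $\tilde F$.

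Finally, if $F'\in\CC_\mfp(F)$ then $F'\in\CC(F,\mfQ)$ for some $\mfQ\in\mcS_\mfp(F)$; putting $\mfP=\mfQ|_E$ one has $\mfP|_K=\mfp$ and $\mfP$ of the same type as $\mfp$, so $\mfP\in\mcS_\mfp(E)$, and the first part gives $F'\cap\tilde E\in\CC(E,\mfP)\subseteq\CC_\mfp(E)$. The main obstacle I expect is the $p$-adic half of the two structural steps: that $E'$ is genuinely $p$-adically closed rather than merely Henselian with the correct skeleton, and that $\tilde E F'=\tilde F$, which in $p$-rank one ultimately rests on $p$-adically closed fields being defectless so that degrees of finite subextensions are governed by ramification and residue degree; the analogous ordered statements are routine.
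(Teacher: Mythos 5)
Your proof is correct in outline, but it takes a genuinely different route from the paper in both halves of the argument, and in the $p$-adic case it carries more unresolved weight than you acknowledge.

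For the first half, the paper simply observes that $E'=F'\cap\tilde E$ is algebraically closed in $F'$ and cites the general facts (Lemma~\ref{algclosedreal} resp.\ Lemma~\ref{algclosedpadic}, from Prestel and Prestel--Roquette) that a relatively algebraically closed subfield of a real closed (resp.\ $p$-adically closed) field is again real closed (resp.\ $p$-adically closed of the same $p$-rank), and then only has to track which prime of $E'$ this produces. Your sketch re-derives these lemmas by hand. In the real case your witnesses-are-algebraic trick works cleanly. In the $p$-adic case you are essentially reproving \cite[3.4]{PR}, and your shortcut is a bit too glib: the residue field of $E$ at $\mfP$ need not be $\mathbb{F}_p$ (it equals the residue field of $K$ at $\mfp$, a finite extension of $\mathbb{F}_p$, since the relative type is $(1,1)$ but $\mfp$ itself may have higher residue degree), and the value group of $F'$ is only a $\mathbb{Z}$-group, not literally $\mathbb{Z}$, so showing that $v(E'^\times)$ is itself a $\mathbb{Z}$-group by a sandwich argument requires some care (one needs that $v(E'^\times)/v(E^\times)$ is torsion \emph{and} that $v(E^\times)$ is pure in $v(F'^\times)$, which comes from the equal $p$-rank). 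None of this is fatal, but it is exactly what the cited lemma packages away.

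For the isomorphism $\Gal(F')\to\Gal(E')$, the paper's argument is model-theoretic and uniform: by model completeness (Propositions~\ref{QERCF}, \ref{QEpCF}), $E'\prec F'$, so $E'\equiv F'$, hence $\Gal(E')$ and $\Gal(F')$ have the same finite quotients; since $\Gal(F')$ is finitely generated (Propositions~\ref{ArtinSchreier}, \ref{Galpadic}), this forces $\Gal(E')\cong\Gal(F')$ by \cite[20.4.6]{FJ3}, and then the surjective restriction map between isomorphic finitely generated profinite groups is an isomorphism by the Hopf property \cite[16.10.8]{FJ3}. You instead aim at $\tilde E\cdot F'=\tilde F$ and invoke the compositum theorem. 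Your real-case argument ($\sqrt{-1}\in\tilde E$, $[\tilde F:F']=2$) is fine and arguably more elementary than the paper's. Your $p$-adic argument (Henselian, divisible value group, algebraically closed residue field, hence algebraically closed) genuinely needs defectlessness of algebraic extensions of $F'$; as you note, this can be routed through $F'\equiv$ a $p$-adic field plus first-order expressibility of defectlessness at each finite degree, but this is precisely the kind of model-theoretic input the paper's route makes explicit and uses once. Net effect: your route is more concrete in the real case but more laborious and slightly gappy in the $p$-adic case; the paper's route is shorter and symmetric between the two cases at the cost of invoking model completeness and the Hopf property. Both are valid.
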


\begin{proof}
The field $E^\prime$ is algebraically closed
in the real closed resp.~$p$-adically closed field $F^\prime$,
so it is real closed resp.~$p$-adically closed itself, see Lemma~\ref{algclosedreal} and Lemma~\ref{algclosedpadic}.
Let $\mfQ^\prime$ be the unique prime of $F^\prime$ over $\mfQ$.
Then $\mfP^\prime=\mfQ^\prime|_{E^\prime}$ is the unique prime of $E^\prime$
of the same type as $\mfp$,
so $E^\prime\in\CC(E,\mfP^\prime|_E)$.
Since $\mfP^\prime|_E=\mfQ^\prime|_E=\mfP$,
it follows that
$E^\prime\in\CC(E,\mfP)$.

Since $E^\prime\equiv F^\prime$ by model completeness (Proposition~\ref{QERCF} and Proposition~\ref{QEpCF}),
and $\Gal(F^\prime)$ is finitely generated (Proposition~\ref{ArtinSchreier} and Proposition~\ref{Galpadic}),
$\Gal(E^\prime)\cong\Gal(F^\prime)$ by \cite[20.4.6]{FJ3}.
Thus the epimorphism ${\rm res}\function\Gal(F^\prime)\rightarrow\Gal(E^\prime)$ is an isomorphism by \cite[16.10.8]{FJ3}.
\end{proof}

\begin{Definition}
We say that $F$ is $\mathbf{P}\mfS\mathbf{CC}$ 
if it satisfies the following local-global principle
for any smooth absolutely irreducible $F$-variety $V$:
$V(F)\neq\emptyset$ iff $V(F')\neq\emptyset$ for all $F'\in\CC_\mfp(F)$, $\mfpinS$.
\end{Definition}

\begin{Definition}
A prime $\mfP$ is {\bf quasi-local} if it is an ordering or
a $p$-valuation with value group a $\mathbb{Z}$-group, cf.~\cite[Def.~3.5, Rem.~3.6]{AFPSCC},
and $F$ is {\bf $\mfS$-quasi-local} if all $\mfP\in\mcS_\mfp(F)$, $\mfpinS$, are quasi-local.
The field $F$ is {\bf $\mfS$-SAP} if it satisfies the strong approximation property of \cite[Def.~10.1]{AFPSCC}.
(We will not make use of the precise definition.)
\end{Definition}

\begin{Proposition}\label{PSCC:SAP}\label{PSCC:quasilocal}
If $F/K$ is algebraic, or $F$ is $\PSCC$, then $F$ is $\mfS$-quasi-local and $\mfS$-SAP.
\end{Proposition}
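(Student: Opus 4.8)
The plan is to reduce the statement to corresponding facts about the individual completions at each $\mfp\in\mfS$ and then invoke the results of \cite{AFPSCC}. There are two hypotheses to treat, $F/K$ algebraic and $F$ $\PSCC$, and two conclusions, $\mfS$-quasi-local and $\mfS$-SAP; I would handle them largely in parallel.

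First consider the case that $F/K$ is algebraic. Fix $\mfp\in\mfS$ and $\mfP\in\mcS_\mfp(F)$, and let $\mfP_0=\mfP|_K=\mfp$, which is local by the standing Setting. I would argue that a prime lying over a local prime in an algebraic extension is automatically quasi-local: if $\mfp$ is an archimedean ordering of $K$, then $\tilde{K}$ is an algebraic extension of $K$, and the real closure of $(K,\mfp)$ has no proper ordered algebraic extensions with larger archimedean class, so $\mfP$ is again archimedean; if $\mfp$ is a $p$-valuation with value group $\mathbb{Z}$, then in an algebraic extension the value group can only grow to a subgroup of $\mathbb{Q}$ that is still a $\mathbb{Z}$-group. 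This is essentially \cite[Rem.~3.6]{AFPSCC}. Hence $F$ is $\mfS$-quasi-local. For $\mfS$-SAP I would likewise use that an algebraic extension of $K$ inherits the strong approximation property from the fact that $K$, having only finitely many local primes in $\mfS$, trivially satisfies it; the relevant permanence statement is in \cite[\S10]{AFPSCC}.

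Now consider the case that $F$ is $\PSCC$. Here the key point is that the $\PSCC$ local-global principle forces each local field $F'\in\CC_\mfp(F)$ to be ``as large as possible,'' which pins down the type of the primes in $\mcS_\mfp(F)$: the local-global principle applied to suitable varieties (e.g.\ those witnessing that a $p$-valuation has a $\mathbb{Z}$-group value group, or that an ordering is archimedean -- these are expressible via existence of rational points on explicit curves as in \cite[\S3,\S10]{AFPSCC}) shows that if some $\mfP\in\mcS_\mfp(F)$ failed to be quasi-local then one could build a variety with points in every $\CC_\mfp(F)$ but none in $F$, contradicting $\PSCC$. The same circle of ideas, carried out for the approximation statements, gives $\mfS$-SAP; indeed in \cite{AFPSCC} the $\PSCC$ property is shown to imply $\mfS$-SAP directly. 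So in both cases the statement follows by quoting \cite[Def.~3.5, Rem.~3.6, \S10]{AFPSCC}.

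The main obstacle is bookkeeping rather than mathematics: one must be careful that ``the same type'' in the definition of $\mcS_\mfp(F)$ interacts correctly with the notions of quasi-local and local, and that the translation between the relative-type setup of \cite{AFPSCC} and the special case $\tau=(1,1)$ used here (as flagged in the footnote) is applied consistently. Once the definitions are unwound, each conclusion is a direct citation, so I expect the proof to be short -- essentially ``This follows from \cite[Rem.~3.6]{AFPSCC} and the definition of $\mfS$-SAP, since an algebraic extension of $K$, resp.\ a $\PSCC$ field, inherits these properties.''
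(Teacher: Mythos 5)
Your proposal matches the paper's approach: the paper's proof of this proposition is a one-line citation to [AFPSCC] (specifically Lemma~4.8, Prop.~4.9, Lemma~10.5, Prop.~10.7 there), and you likewise reduce the statement to quoting [AFPSCC]. One small point worth noting: some of your surrounding sketch is unnecessary or slightly off — for orderings, quasi-locality is automatic from Definition~3.5/Rem.~3.6 (every ordering is quasi-local, so no archimedean-ness argument is needed), and the claim that the value group of an extension of a $\Z$-valued $p$-valuation in an algebraic extension ``can only grow to a $\mathbb{Z}$-group'' is not quite how the cited results work (the constraint comes from $\mfP$ having the same type as $\mfp$) — but since the actual proof in the paper is exactly the citation, these imprecisions are not load-bearing.
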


\begin{proof}
See \cite[Lemma 4.8, Prop.~4.9, Lemma 10.5, Prop.~10.7]{AFPSCC}.
\end{proof}

\begin{Definition}
An extension $M/F$ is {\bf totally $\mfS$-adic} if the restriction map $\mcS_\mfp(M)\rightarrow\mcS_\mfp(F)$, $\mfP\mapsto\mfP|_F$,
is surjective for all $\mfpinS$, cf.~\cite[Def.~11.1]{AFPSCC}.
For $\mfpinS$ we let $R_\mfp(F)=\bigcap_{\mfP\in\mcS_\mfp(F)}\mathcal{O}_\mfP$,
where $\mathcal{O}_\mfP$ is the positive cone resp.~the valuation ring of $\mfP$, cf.~\cite[Def.~4.2]{AFPSCC}.
\end{Definition}

\begin{Proposition}\label{totallySadic}
If $F$ is $\mfS$-SAP,
then the following statements are equivalent
for 
every extension $M/F$:
\begin{enumerate}
 \item $M/F$ is totally $\mfS$-adic.
 \item $R_\mfp(M)\cap F=R_\mfp(F)$ for every $\mfpinS$.
 \item $R_\mfp(M)\cap F\subseteq R_\mfp(F)$ for every $\mfpinS$.
\end{enumerate}
\end{Proposition}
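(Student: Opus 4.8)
The plan is to prove the cycle (1) $\Rightarrow$ (2) $\Rightarrow$ (3) $\Rightarrow$ (1), noting in advance that only the last implication uses the hypothesis that $F$ is $\mfS$-SAP; indeed (1) $\Rightarrow$ (2) and the equivalence of (2) and (3) will hold for an arbitrary extension $M/F$.

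\emph{The implications (1) $\Rightarrow$ (2) and (2) $\Leftrightarrow$ (3).} First I would record the standard compatibility of primes with subfields: for $\mfpinS$ and $\mfQ\in\mcS_\mfp(M)$, the restriction $\mfP:=\mfQ|_F$ again lies in $\mcS_\mfp(F)$ (it lies over $\mfp$, and restriction preserves the type), and $\mcO_\mfQ\cap F=\mcO_\mfP$ — the positive cone resp.~valuation ring of the restricted prime being the trace of the one upstairs. Consequently
\[
R_\mfp(M)\cap F \;=\; \bigcap_{\mfQ\in\mcS_\mfp(M)}\bigl(\mcO_\mfQ\cap F\bigr) \;=\; \bigcap_{\mfQ\in\mcS_\mfp(M)}\mcO_{\mfQ|_F}.
\]
In particular $R_\mfp(F)\subseteq\mcO_{\mfQ|_F}$ for every such $\mfQ$, so $R_\mfp(F)\subseteq R_\mfp(M)\cap F$ with no hypothesis at all; hence (3) already implies (2), and (2) $\Rightarrow$ (3) is trivial. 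Finally, if $M/F$ is totally $\mfS$-adic then $\mfQ\mapsto\mfQ|_F$ maps $\mcS_\mfp(M)$ onto $\mcS_\mfp(F)$, so the intersection on the right above equals $\bigcap_{\mfP\in\mcS_\mfp(F)}\mcO_\mfP=R_\mfp(F)$, which is (2).

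\emph{The implication (3) $\Rightarrow$ (1).} Here I would argue contrapositively. Suppose $M/F$ is not totally $\mfS$-adic, so for some $\mfpinS$ the continuous restriction map $r\function\mcS_\mfp(M)\to\mcS_\mfp(F)$ fails to be surjective. Since $\mcS_\mfp(M)$ is compact (cf.~\cite{AFPSCC}), its image $C:=r(\mcS_\mfp(M))$ is a \emph{closed} proper subset of the Boolean space $\mcS_\mfp(F)$, so I may choose $\mfP_0\in\mcS_\mfp(F)\setminus C$; by the computation above $R_\mfp(M)\cap F=\bigcap_{\mfP\in C}\mcO_\mfP$. I then separate $\mfP_0$ from $C$ by a clopen subset of $\mcS_\mfp(F)$ and invoke $\mfS$-SAP to realize this clopen set of primes by a single field element, obtaining $a\in F$ with $a\notin\mcO_{\mfP_0}$ but $a\in\mcO_\mfP$ for all $\mfP\in C$. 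Then $a\in\bigcap_{\mfP\in C}\mcO_\mfP=R_\mfp(M)\cap F$ while $a\notin R_\mfp(F)$, contradicting (3).

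The whole substance of the proof sits in this last implication, and it draws on two inputs from \cite[\S3, \S10-11]{AFPSCC}: the compactness of $\mcS_\mfp(M)$ — without which $\mfP_0$ might lie in the closure of $C$ and no separating element could exist — and the precise content of the strong approximation property, which is exactly what turns a separating clopen set of primes into an element of $F$. Everything else is bookkeeping about restrictions of primes to subfields.
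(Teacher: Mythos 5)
The paper proves this proposition by a bare citation to \cite[Lemma~11.4]{AFPSCC}, so there is no in-text argument to compare against. Your self-contained reconstruction is correct and almost certainly mirrors the cited proof: the implications $(1)\Rightarrow(2)\Leftrightarrow(3)$ rest only on the elementary identity $R_\mfp(M)\cap F=\bigcap_{\mfQ\in\mcS_\mfp(M)}\mcO_{\mfQ|_F}$ together with the automatic inclusion $R_\mfp(F)\subseteq R_\mfp(M)\cap F$, while the content of the hypothesis is concentrated in $(3)\Rightarrow(1)$, where you correctly invoke compactness of $\mcS_\mfp(M)$ and continuity of the restriction map to make the image $C$ closed, and then use $\mfS$-SAP to realize a clopen set separating a missed prime $\mfP_0$ from $C$ as the $\mcO$-set of a single element $a\in F$, giving $a\in R_\mfp(M)\cap F\setminus R_\mfp(F)$. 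Your observation that $\mfS$-SAP enters only in this last implication is a worthwhile remark.
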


\begin{proof}
See \cite[Lemma 11.4]{AFPSCC}.
\end{proof}

We now recall some results on the model theory of $\PSCC$ fields.

\begin{Definition}
Let $\mcLr=\{+,-,\cdot,0,1\}$ be the language of rings,
$\mathcal{L}_{{\rm ring},\mfS}=\mcLr\cup\{R_\mfp:\mfpinS\}$, where each $R_\mfp$ is a unary predicate symbol,
and $\mathcal{L}_{{\rm ring},\mfp}=\mcL_{{\rm ring},\{\mfp\}}$.
For a language $\mcL$ we denote by $\mcL(K)=\mcL\cup\{c_a:a\in K\}$ the augmentation by constants from $K$.
\end{Definition}

\begin{Proposition}\label{thm:PSCC1}
There is a recursive $\mcL_{\rm ring}(K)$-theory $T_\PSCC$
such that $F$ satisfies $T_{{\rm P}\mfS{\rm CC}}$ if and only if $F$ is $\PSCC$.
\end{Proposition}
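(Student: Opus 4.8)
The plan is to build $T_\PSCC$ explicitly as a union of recursive pieces, each capturing one of the finitely many ingredients of being $\PSCC$: being a field of characteristic zero, containing $K$, the local conditions at each $\mfp\in\mfS$, and the local-global principle for smooth absolutely irreducible varieties. The language $\mcLr(K)$ already lets us write down the field axioms together with $c_a+c_b=c_{a+b}$, $c_a\cdot c_b=c_{ab}$, and $1\neq 0$, and this is recursive since addition and multiplication in the number field $K$ are computable; this forces any model $F$ to be a field extension of $K$. Next, for each $\mfp\in\mfS$ we add sentences expressing that $F$ is $\mfS$-quasi-local at $\mfp$; by Lemma~\ref{CCsubfield} and the elementarity of being real closed resp.\ $p$-adically closed (Proposition~\ref{QERCF}, Proposition~\ref{QEpCF}), and because $\mfp$ is a \emph{local} prime of $K$ fixed once and for all in the Setting, this is a first-order condition on $F$ with parameters from $K$, and it is recursive because the real/$p$-adic closure axioms are.

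The heart of the theory is the local-global sentence scheme. For each $d,n,m$ one writes an $\mcLr(K)$-sentence $\psi_{d,n,m}$ saying: for every system of polynomials over $F$ in $n$ variables of degree $\leq d$ with $\leq m$ terms that defines (a Zariski-open subset of) a smooth absolutely irreducible variety $V$, if $V$ has an $F'$-point for every $F'\in\CC_\mfp(F)$, $\mfp\in\mfS$, then $V$ has an $F$-point. The subtlety is that the quantifier ``for all $F'\in\CC_\mfp(F)$'' is over infinitely many algebraic extensions and is not literally first-order over $F$; this is the step I expect to be the main obstacle. It is resolved exactly as in the classical $\PRC$/$\PpC$ case: having a point over every real resp.\ $p$-adic closure at $\mfp$ is, by a theorem-transfer / elementary-equivalence argument (using that all closures in $\CC(F,\mfP)$ are elementarily equivalent and that $\mfp$-adic closedness is $\mcL_{{\rm ring},\mfp}$-axiomatizable), equivalent to the non-vanishing of a certain first-order condition on $F$ itself — concretely, to $V$ having a point in the $\mfp$-adically/really closed field generated appropriately, which can be expressed by a sentence in the expanded language and then, using the definability of $R_\mfp$-type conditions over $K$-extensions that are $\mfS$-SAP, pulled back to an $\mcLr(K)$-sentence. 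Here one invokes Proposition~\ref{PSCC:SAP} and Proposition~\ref{totallySadic} to control the local data, together with the smoothness hypothesis which (via an implicit-function-theorem argument in $p$-adically/really closed fields) lets an approximate local solution be promoted to an exact one. Crucially, the smoothness and absolute irreducibility of $V$ of bounded complexity are themselves first-order expressible and recursively so (Noether normalization / effective Nullstellensatz bounds), so $\psi_{d,n,m}$ is a single effectively-producible sentence.

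Finally one sets $T_\PSCC$ to be the union of the field axioms with constants, the quasi-local axioms at each $\mfp\in\mfS$, the $\mfS$-SAP axioms, and $\{\psi_{d,n,m}:d,n,m\in\mathbb{N}\}$. Each piece is recursive and there are finitely many $\mfp$, so $T_\PSCC$ is recursive. For the equivalence: if $F\models T_\PSCC$ then $F$ is a characteristic-zero extension of $K$ that is $\mfS$-quasi-local, $\mfS$-SAP, and satisfies the local-global principle for every smooth absolutely irreducible variety (one passes from an arbitrary such $V$ to a suitable affine chart of bounded complexity), hence $F$ is $\PSCC$. Conversely, if $F$ is $\PSCC$ then by Proposition~\ref{PSCC:quasilocal} it is $\mfS$-quasi-local and $\mfS$-SAP, it contains $K$ and has characteristic zero, and it satisfies each $\psi_{d,n,m}$ by the definition of $\PSCC$ together with the translation in the previous paragraph; so $F\models T_\PSCC$. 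The only real work, as indicated, is the reduction of the ``point over every closure at $\mfp$'' clause to an honest $\mcLr(K)$-sentence, which is where the SAP and quasi-locality inputs from \cite{AFPSCC} and the model completeness of real/$p$-adically closed fields do their job.
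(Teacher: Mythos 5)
The paper's own proof is a one-line citation to \cite[Prop.~9.3]{AFPSCC}; your text is an attempted reconstruction of the content of that reference rather than a reproduction of anything the paper itself argues, so the comparison is really against the implicit argument hidden behind the citation.

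Your outline has a genuine gap, and it sits exactly at the place you flag as ``the only real work.'' You want to eliminate the quantification over $F'\in\CC_\mfp(F)$ by passing through the expanded language $\mcL_{{\rm ring},\mfS}(K)$ (via the machinery of Proposition~\ref{PSCC:quantify}) and then replacing the predicate $R_\mfp$ by the defining formula $\varphi_{R,\mfp}$ of Proposition~\ref{PSCC:holomorphy}. But Proposition~\ref{PSCC:holomorphy} only asserts that $\varphi_{R,\mfp}$ defines $R_\mfp(F)$ when $F$ is \emph{already} $\PSCC$; it says nothing for merely $\mfS$-quasi-local or $\mfS$-SAP $F$, and you are trying to write axioms whose models are exactly the $\PSCC$ fields. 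So in the converse direction of your equivalence --- showing that a model $F$ of your candidate $T_\PSCC$ is $\PSCC$ --- you have no right to assume that $\varphi_{R,\mfp}(F)=R_\mfp(F)$, and hence the $\mcLr(K)$-sentences $\psi_{d,n,m}$ obtained by substitution are not a priori equivalent in $F$ to the intended local-global statements. The same circularity infects your ``quasi-local axioms'' and ``$\mfS$-SAP axioms'': both of these notions refer to the set $\mcS_\mfp(F)$ of primes of $F$ over $\mfp$, to which you have no first-order access in $\mcLr(K)$ without the $R_\mfp$-definability you are trying to bootstrap. The axiomatization in \cite[Def.~9.1]{AFPSCC} (see the unpacking in the proof of Lemma~\ref{Ttotperecursive}) resolves this by including a separate block of $\mcLr(K)$-axioms \cite[Def.~6.5]{AFPSCC} that constrain the \emph{formula} $\varphi_{R,\mfp}$ directly --- forcing it to define a subring/positive cone with the right algebraic behavior and the right restriction to $K$, i.e.\ to $\mcO_\mfp$ --- and only after that expresses the local-global scheme in terms of $\varphi_{R,\mfp}$. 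This block is what breaks the circularity, and it is missing from your sketch; without it, a model of your $T_\PSCC$ need not be $\PSCC$.
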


\begin{proof}
See \cite[Prop.~9.3]{AFPSCC}.
\end{proof}

\begin{Proposition}\label{PSCC:holomorphy}
For each $\mfpinS$ there exists an $\mcLr(K)$-formula $\varphi_{R,\mfp}$ that defines
$R_\mfp(F)$ in $F$ for each $\PSCC$ field $F$.
\end{Proposition}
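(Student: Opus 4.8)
The plan is to establish a \emph{uniform} definability result by first treating a single prime and then combining. Fix $\mfp\in\mfS$. The key observation is that $R_\mfp(F)=\bigcap_{\mfP\in\mcS_\mfp(F)}\mcO_\mfP$ is, by definition, the intersection of all positive cones resp.\ valuation rings attached to primes of $F$ above $\mfp$ of the same type as $\mfp$; and by Proposition~\ref{totallySadic}, for a $\mfS$-SAP field (which every $\PSCC$ field is, by Proposition~\ref{PSCC:SAP}), membership in $R_\mfp(F)$ is controlled by the behaviour in the closures $\CC_\mfp(F)$. Concretely, an element $x\in F$ lies in $R_\mfp(F)$ if and only if $x$ lies in the corresponding local ring (positive cone or valuation ring) of every $F'\in\CC_\mfp(F)$. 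Thus I would first produce, for each $\mfp\in\mfS$, an $\mcLr(K)$-formula that picks out exactly those $x$ that are everywhere-locally in the relevant local ring; that is $\varphi_{R,\mfp}$.

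The main work is producing this local formula. Here I would invoke the model completeness of the theory of real closed resp.\ $p$-adically closed fields (Proposition~\ref{QERCF}, Proposition~\ref{QEpCF}) together with the fact that in a real closed field the positive cone is defined by $\exists y\,(x=y^2)$ and in a $p$-adically closed field the valuation ring is $\mcLr$-definable (the standard Macintyre-style formula expressing that $1+\pi x^n$ is an $n$-th power for suitable $n$, or equivalently using the $p$-adic valuation ring formula from Appendix~\ref{app:padic}). Since $\mfp$ is a \emph{local} prime by the Setting, its closures are honest archimedean real closures resp.\ $p$-adically closed fields with value group $\mathbb{Z}$, so these classical definability statements apply uniformly. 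The point is that the formula is then \emph{independent of $F$}: it only refers to the type of $\mfp$ (real or $p$-adic for a fixed $p$), which is data attached to $K$, not to $F$. To express ``$x$ lies in the local ring of every $F'\in\CC_\mfp(F)$'' as a first-order statement over $F$ itself, I would use the machinery of \cite{AFPSCC}: there it is shown (this is exactly the content behind $T_\PSCC$ being $\mcLr(K)$-recursive, cf.\ Proposition~\ref{thm:PSCC1}) that the predicate $R_\mfp$ is eliminable, i.e.\ there is an $\mcLr(K)$-formula $\varphi_{R,\mfp}$ with $F\models\forall x\,(\varphi_{R,\mfp}(x)\leftrightarrow R_\mfp(x))$ for every $\PSCC$ field $F$. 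So the cleanest route is: cite \cite[\S9]{AFPSCC} for the existence of $\varphi_{R,\mfp}$ directly, observing that the $\PSCC$ hypothesis (hence $\mfS$-SAP, hence $\mfS$-quasi-local) is precisely what makes the local rings $R_\mfp(F)$ first-order definable without the auxiliary predicate.

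The one genuine obstacle is making sure the definition is truly \emph{uniform} over all $\PSCC$ fields $F$ simultaneously, rather than merely definable in each $F$ with a formula that could a priori depend on $F$. This is handled by compactness: the class of $\PSCC$ fields is axiomatized by the recursive $\mcLr(K)$-theory $T_\PSCC$ (Proposition~\ref{thm:PSCC1}), and in each model of $T_\PSCC$ the predicate $R_\mfp$ is definable by the results of \cite{AFPSCC}; a standard compactness argument then yields a single $\mcLr(K)$-formula $\varphi_{R,\mfp}$ that works across all models, i.e.\ $T_\PSCC\vdash\forall x\,(\varphi_{R,\mfp}(x)\leftrightarrow R_\mfp(x))$. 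Finally, since $\mfS$ is finite, doing this for each $\mfp\in\mfS$ gives the full family $(\varphi_{R,\mfp})_{\mfpinS}$, completing the proof. The routine calculations I would not reproduce are the explicit Macintyre-type formulas for $p$-adic integrality and the verification that algebraic extensions are $\mfS$-SAP, both already available in the cited references.
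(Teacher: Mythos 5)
The paper's entire proof is a citation to \cite[Thm.~1.2]{AFPSCC}, and your "cleanest route" of citing \cite[\S 9]{AFPSCC} for the existence of $\varphi_{R,\mfp}$ is essentially the same move. Your sketch of the underlying mechanism -- the local ring is $\mcLr$-definable inside a real closure resp.\ a $p$-adically closed field, and one then needs a transfer principle (what this paper records as Proposition~\ref{PSCC:quantify}, i.e.\ \cite[Lemma~8.3]{AFPSCC}) to turn "$x$ is in the local ring of every $F'\in\CC_\mfp(F)$" into a sentence about $F$ itself -- is also a fair description of what goes into the cited theorem.

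However, the concluding compactness paragraph is not correct as reasoning and should be dropped. The step "in each model of $T_\PSCC$ the set $R_\mfp(F)$ is definable by \emph{some} $\mcLr(K)$-formula; therefore by compactness there is a \emph{single} $\mcLr(K)$-formula working uniformly" is not a valid inference: definability model-by-model (with formulas possibly depending on the model) does not by compactness alone produce a uniform formula. What one would actually need is either implicit definability of $R_\mfp$ relative to $T_\PSCC$ (and then Beth's definability theorem), or the stronger input -- already supplied by the cited result -- that the same formula $\varphi_{R,\mfp}$ works in all $\PSCC$ fields. Since you are already invoking \cite{AFPSCC} for exactly that uniform statement, the compactness argument is both redundant and, as phrased, flawed; the citation alone discharges the proposition.
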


\begin{proof}
See \cite[Thm.~1.2]{AFPSCC}.
\end{proof}

\begin{Proposition}\label{PSCC:quantify}
For every $\mfpinS$
there exists a recursive map $\varphi(\mathbf{x})\mapsto\hat{\varphi}_{\mfp,\exists}(\mathbf{x})$  
from $\mcLr$-formulas to $\mathcal{L}_{{\rm ring},\mfp}(K)$-formulas
such that for $F\supseteq K$ which is $\mfS$-quasi-local and $a_1,\dots,a_m\in F$,
one has
$(F,R_\mfp(F))\models\hat{\varphi}_{\mfp,\exists}(\mathbf{a})$
iff
$F^\prime\models\varphi(\mathbf{a})$ for some $F^\prime\in\CC_\mfp(F)$.
\end{Proposition}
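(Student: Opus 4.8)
The plan is to build $\hat\varphi_{\mfp,\exists}$ by composing three ingredients: a model-completeness result for the local closures (real closed fields in Appendix \ref{app:real}, $p$-adically closed fields in Appendix \ref{app:padic}), a relativization that interprets the local closure $F'\in\CC_\mfp(F)$ inside $(F,R_\mfp(F))$, and an existential quantifier over the finite data describing $\mfP\in\mcS_\mfp(F)$. More precisely, I would first fix $\mfpinS$ and recall from Proposition \ref{QERCF}/Proposition \ref{QEpCF} that the theory of the closure $K_\mfp$ (real or $p$-adically closed) admits quantifier elimination in a suitable language $\mcL^*$ (Macintyre's language with the power predicates $P_n$ in the $p$-adic case, the language with $\leq$ in the ordered case); so any $\mcLr$-formula $\varphi(\mathbf x)$ is equivalent, over that theory, to a quantifier-free $\mcL^*$-formula $\varphi^*(\mathbf x)$ built from polynomial (in)equalities in the $\mathbf x$ and the predicates $P_n$ or $\leq$.

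Next I would observe that, since $F$ is $\mfS$-quasi-local, for each $\mfP\in\mcS_\mfp(F)$ the ring $\mcO_\mfP$ (positive cone or valuation ring) together with its maximal ideal is uniformly definable in $F$ — indeed it is a localization/specialization of $R_\mfp(F)=\bigcap_{\mfP}\mcO_\mfP$, and $R_\mfp(F)$ is one of our basic predicates. The point is that a closure $F'\in\CC(F,\mfP)$ is an immediate-type algebraic extension, and the value group being a $\mathbb Z$-group (resp. the ordering archimedean) pins down the residue/ordering data so that the predicates $P_n$, $\leq$ of $\mcL^*$ pull back to $\mcL_{{\rm ring},\mfp}(K)$-formulas evaluated on $(F,R_\mfp(F))$; here one uses that $\varphi_{R,\mfp}$ from Proposition \ref{PSCC:holomorphy} need not be invoked because $R_\mfp$ is already in the language, but the same circle of ideas (uniform definability of holomorphy rings) is what makes the pullback recursive. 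Having rewritten $\varphi^*(\mathbf a)$ this way relative to a single $\mfP$, I would then existentially quantify over the (finite, bounded) choice of $\mfP\in\mcS_\mfp(F)$: since $a_1,\dots,a_m$ are finitely many, only finitely many "coordinates" of $\mfP$ matter, so "$F'\models\varphi(\mathbf a)$ for some $F'\in\CC_\mfp(F)$" becomes an honest $\mcL_{{\rm ring},\mfp}(K)$-formula $\hat\varphi_{\mfp,\exists}(\mathbf a)$, and the construction $\varphi\mapsto\hat\varphi_{\mfp,\exists}$ is recursive because quantifier elimination for RCF and $p$CF is effective.

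The main obstacle I expect is the uniformity and recursiveness of the interpretation of an arbitrary closure $F'\in\CC(F,\mfP)$ inside $(F,R_\mfp(F))$: one must show that the quantifier-free $\mcL^*$-type of a tuple $\mathbf a\in F^m$ in \emph{some} closure over \emph{some} $\mfP\in\mcS_\mfp(F)$ is captured by a first-order condition on $(F,R_\mfp(F))$ alone, with a bound depending only on $\varphi$ (equivalently on $m$ and the degrees/$n$'s appearing in $\varphi^*$), and that this bound is computable. This is exactly where $\mfS$-quasi-locality is used — it forces the local behaviour at each $\mfP$ to be that of the fixed closure $K_\mfp$ up to elementary equivalence, via the isomorphism of Galois groups in Lemma \ref{CCsubfield} and model completeness — and it is the step that genuinely requires the appendices rather than soft model theory. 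Once that uniform interpretation is in place, composing with effective quantifier elimination and the finite disjunction over $\mcS_\mfp(F)$ finishes the proof; this is presumably carried out in detail in \cite[\S2--3]{AFPSCC}, and I would cite the relevant statements there rather than reproduce the computation.
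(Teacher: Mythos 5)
The paper's proof is a one-line reduction: cite \cite[Lemma 8.3]{AFPSCC}, which produces a formula $\hat\varphi_{\mfp,\forall}$ with the \emph{universal} semantics ``$F'\models\varphi(\mathbf a)$ for all $F'\in\CC_\mfp(F)$'', apply it to $\neg\varphi$, and negate. Your proposal ends up in the same place --- you too defer the real content to \cite{AFPSCC} --- but you try to reconstruct the underlying argument rather than noticing the dualization trick, which is simpler and exactly what the paper does. On the substance, your sketch of how such a translation might be built (effective QE for RCF and for $p$-adically closed fields in Macintyre's language, uniform interpretation of a closure inside $(F,R_\mfp(F))$, then quantifying over the choice of prime) is the right ballpark, and you correctly identify the uniform interpretation step as the hard part.

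Two inaccuracies are worth flagging. First, $\mfS$-quasi-locality does \emph{not} make the orderings archimedean --- for orderings, ``quasi-local'' is automatic and ``local'' is the archimedean condition --- so your parenthetical ``(resp.\ the ordering archimedean)'' overreaches; the quasi-local hypothesis only buys you $\mathbb Z$-group value groups in the $p$-adic case, which is enough for $p$-adic closures to be immediate and unique up to conjugacy, but you should not import the archimedean condition. Second, $\mcS_\mfp(F)$ is generally infinite (it is a profinite space for $\PSCC$ fields), so ``finite disjunction over $\mcS_\mfp(F)$'' is not literally available; what actually makes the existential quantifier over primes first-order is the interplay between $R_\mfp(F)=\bigcap_{\mfP}\mcO_\mfP$ and the individual $\mcO_\mfP$ encoded in the SAP/holomorphy-ring machinery of \cite{AFPSCC}, not compactness over finitely many coordinates of $\mfP$. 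These are exactly the details you would have to take on faith from \cite{AFPSCC}, which is fine --- but then the cleaner move is the paper's: take the universal statement from the reference and apply it to $\neg\varphi$.
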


\begin{proof}
Apply \cite[Lemma 8.3]{AFPSCC} to $\neg\varphi(\mathbf{x})$.
\end{proof}

\begin{Proposition}\label{PSCC:elementaryext}
If $F$ is $\PSCC$ and $F\prec M$ is an elementary extension, 
then $M/F$ is regular and totally $\mfS$-adic.
\end{Proposition}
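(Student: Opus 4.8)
I want to show two things about an elementary extension $F\prec M$ of a $\PSCC$ field $F$: that $M/F$ is regular, and that it is totally $\mfS$-adic. Regularity splits into $M/F$ being separable (automatic in characteristic zero) and $F$ being algebraically closed in $M$. For the latter, I would argue that $F$, being $\PSCC$, satisfies for each $n$ the $\mcLr$-sentence asserting that every monic polynomial of degree $n$ that has a root in some field extension already has one in $F$ --- more precisely, the statement that $F$ has no proper finite extension generated by an element of degree $n$ would follow if $F$ were algebraically closed, which it is not, so instead I use that $\PSCC$ fields are in particular pseudo-algebraically-closed-like over their local closures. Actually the cleanest route: a $\PSCC$ field is ``ample enough'' that it is existentially closed in regular extensions of itself; but here I have an \emph{elementary} extension, which is much stronger. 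For $F$ algebraically closed in $M$: if $\alpha\in M$ were algebraic over $F$ of degree $d>1$, then $M$ would satisfy ``$\exists x\, (m(x)=0)$'' for the minimal polynomial $m$ of $\alpha$ over $F$, an $\mcLr(F)$-sentence, hence $F$ would satisfy it too, contradicting irreducibility of $m$ over $F$. So regularity is essentially immediate from elementarity once phrased in $\mcLr(F)$.

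\textbf{Totally $\mfS$-adic.} For this I would invoke Proposition~\ref{totallySadic}: since $F$ is $\PSCC$, it is $\mfS$-SAP by Proposition~\ref{PSCC:SAP}, so it suffices to check condition (3), namely $R_\mfp(M)\cap F\subseteq R_\mfp(F)$ for every $\mfpinS$. By Proposition~\ref{PSCC:holomorphy} there is an $\mcLr(K)$-formula $\varphi_{R,\mfp}$ that defines $R_\mfp$ in every $\PSCC$ field; but $M$ is an elementary extension of a $\PSCC$ field, hence $M\models T_{\PSCC}$ by Proposition~\ref{thm:PSCC1} (since $T_{\PSCC}$ is an $\mcLr(K)$-theory and $F\prec M$), so $M$ is $\PSCC$ as well, and therefore $\varphi_{R,\mfp}$ defines $R_\mfp(M)$ in $M$ too. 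Now for $a\in F$: $a\in R_\mfp(M)$ iff $M\models\varphi_{R,\mfp}(a)$ iff $F\models\varphi_{R,\mfp}(a)$ (by $F\prec M$, as $a\in F$) iff $a\in R_\mfp(F)$. This gives not just (3) but in fact (2) directly, $R_\mfp(M)\cap F=R_\mfp(F)$, so $M/F$ is totally $\mfS$-adic.

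\textbf{Main obstacle.} The only real subtlety is making sure $M$ is itself $\PSCC$ so that Proposition~\ref{PSCC:holomorphy} applies to $M$: this needs $T_{\PSCC}$ to be a theory in a language in which $F\prec M$ transfers, which is exactly the content of $T_{\PSCC}$ being an $\mcLr(K)$-theory (Proposition~\ref{thm:PSCC1}) together with the fact that constants from $K\subseteq F$ are interpreted the same way in $F$ and $M$. Once that is in place, everything reduces to unwinding definitions and quoting Proposition~\ref{totallySadic}; no construction is needed. I would also remark that $\mfS$-SAP for $M$ is not even required for this argument --- I only use $\mfS$-SAP of $F$, to license the equivalence $(1)\Leftrightarrow(2)\Leftrightarrow(3)$ of Proposition~\ref{totallySadic} with base field $F$ and extension $M/F$.
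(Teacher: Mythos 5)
Your proof is correct. The paper itself gives no argument here, only a citation to an external reference (Corollary~11.5 of the $\PSCC$ paper), so there is no internal proof to compare against; your reconstruction is exactly the natural one and uses only facts already quoted in the present paper. Two small remarks. First, your opening paragraph wanders through a couple of false starts (``every monic polynomial that has a root in some extension already has one in $F$'' and the ``ample enough'' aside) before arriving at the actual argument; the only content needed is the one-line observation you eventually give: if $\alpha\in M$ were algebraic over $F$ of degree $>1$ with minimal polynomial $m\in F[X]$, then $M\models\exists x\,(m(x)=0)$ would force $F\models\exists x\,(m(x)=0)$ by $F\prec M$, contradicting irreducibility of $m$ over $F$; together with characteristic zero this gives regularity, with no use of $\PSCC$ at all. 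Second, your chain for the $\mfS$-adic part is exactly right and is worth spelling out as you did: $F\prec M$ transfers the $\mcLr(K)$-theory $T_\PSCC$ to $M$ (Proposition~\ref{thm:PSCC1}), so $M$ is $\PSCC$ and the single formula $\varphi_{R,\mfp}$ of Proposition~\ref{PSCC:holomorphy} defines $R_\mfp$ in both fields, whence elementarity gives $R_\mfp(M)\cap F=R_\mfp(F)$; since $F$ is $\mfS$-SAP (Proposition~\ref{PSCC:SAP}), Proposition~\ref{totallySadic} applied with base field $F$ then yields the claim. You correctly note that $\mfS$-SAP is only needed for $F$, not for $M$. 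This is a complete and self-contained argument where the paper offers only a pointer.
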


\begin{proof}
See \cite[Cor.~11.5]{AFPSCC}.
\end{proof}

The following embedding theorem will play a central role in Section \ref{sec:axiomatization}:

\begin{Proposition}[Pop]\label{PSCEmbedding}
Let $L\supseteq K$
and let $E/L$, $F/L$ be regular extensions, where
$E$ is countable and $F$ is $\aleph_1$-saturated and $\PSCC$.
Then for every homomorphism 
$\gamma\function{\rm Gal}(F)\rightarrow{\rm Gal}(E)$
with ${\rm res}_{\tilde{E}/\tilde{L}}\circ\gamma={\rm res}_{\tilde{F}/\tilde{L}}|_{{\rm Gal}(F)}$,
there exists an $L$-embedding 
$\tilde{E}\rightarrow \tilde{F}$ 
such that
$\gamma(\tau)=\tau|_{\tilde{E}}$ for all $\tau\in\Gal(F)$.
\end{Proposition}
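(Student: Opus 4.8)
The plan is to realize this as a geometric embedding problem and solve it using the theory of $\PSCC$ fields together with the large/ample nature of $F$. First I would reduce to a finitely generated situation: since $E$ is countable, write $E$ as the union of an ascending chain of finitely generated regular sub-extensions $L \subseteq E_0 \subseteq E_1 \subseteq \cdots$ of $L$, with $E = \bigcup_i E_i$. Correspondingly one gets a tower of restriction maps $\Gal(E) \to \Gal(E_i)$, and it suffices to construct a compatible system of $L$-embeddings $\tilde{E}_i \to \tilde{F}$ inducing the restrictions of $\gamma$; passing to the limit then gives the desired $L$-embedding $\tilde{E} \to \tilde{F}$. By $\aleph_1$-saturation of $F$, a compatible countable system of finitary conditions that is finitely satisfiable is satisfiable, so the real content is the finitely generated step.

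For the finitely generated step, let $E' = E_i$ be finitely generated and regular over $L$, say of transcendence degree $d$, so that $E'$ is the function field of a smooth affine $L$-variety $V$. The homomorphism $\gamma$ composed with $\Gal(E) \to \Gal(E')$ gives a map $\Gal(F) \to \Gal(E')$ compatible with the restrictions to $\Gal(L)$; I need to find an $L$-rational point (or more precisely a place) of $V$ over $F$, or rather an $L$-embedding $E' \to \tilde{F}$, realizing this Galois data. This is exactly the kind of statement that Pop's theory of ample (large) fields is built to prove: a $\PSCC$ field is in particular $\mfS$-SAP and satisfies a local-global principle, and the key point is that $F$, being $\PSCC$ and $\aleph_1$-saturated, is existentially closed in suitable totally $\mfS$-adic extensions, hence has enough rational points on smooth varieties. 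One sets up a smooth variety over $F$ whose function field carries the prescribed Galois extension (this is the ``function field'' incarnation of the embedding problem: the cover of $V$ corresponding to a finite quotient through which $\gamma$ factors), checks that it has points in every $F' \in \CC_\mfp(F)$ using the compatibility of $\gamma$ with the restriction maps and Lemma~\ref{CCsubfield} (which identifies the local Galois groups), and then invokes the $\PSCC$ local-global principle to get an $F$-point, equivalently the required embedding.

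The hard part will be the local solvability check — verifying that the relevant cover of $V$ acquires a rational point over each real or $p$-adic closure $F' \in \CC_\mfp(F)$. Here the hypothesis ${\rm res}_{\tilde{E}/\tilde{L}} \circ \gamma = {\rm res}_{\tilde{F}/\tilde{L}}|_{\Gal(F)}$ is essential: it guarantees that the Galois data $\gamma$ prescribes, when restricted to a decomposition group at a prime of $F$ above $\mfp$, is compatible with what already happens over $L$, and via Lemma~\ref{CCsubfield} the decomposition groups of primes of $F'$ and of the corresponding closure of $L$ (or $E'$) are canonically isomorphic, so the local cover is forced to be split — giving the local point for free. Assembling this carefully across all $\mfp \in \mfS$, and keeping track of the finitely many primes involved as one moves up the tower $E_i$, is the main bookkeeping obstacle; once local solvability is in hand, the global point comes directly from the $\PSCC$ property (Proposition~\ref{thm:PSCC1} and the SAP of Proposition~\ref{PSCC:SAP}).

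Since the full argument is essentially due to Pop in the setting of large fields, I would expect the write-up to cite Pop's embedding/lifting machinery and the corresponding results in \cite{AFPSCC} for the $\PSCC$ case, rather than redevelop it from scratch; the role of this paper is to package it in the form stated here.
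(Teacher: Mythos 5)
The paper gives no proof of its own: the entire argument is a one-line citation to Pop's thesis \cite[6.1]{PopDissertation} (after noting that $\PSCC$ fields are pseudo classically closed, which is the form Pop's hypothesis takes, \cite[Prop.~4.6]{AFPSCC}), together with a pointer to \cite[2.11.5]{AFDiss} for a detailed write-up in the present setting. You anticipate exactly this in your closing paragraph, so your answer is consistent with the paper's actual treatment.

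Your outline of the underlying argument --- exhaust $E$ by finitely generated regular sub-extensions, realize each step geometrically as a cover of a smooth $L$-variety, extract local solvability from the compatibility ${\rm res}_{\tilde{E}/\tilde{L}}\circ\gamma={\rm res}_{\tilde{F}/\tilde{L}}|_{\Gal(F)}$ using Lemma~\ref{CCsubfield}, and invoke the $\PSCC$ local-global principle --- is a reasonable high-level sketch of what Pop's argument actually does. The one place it would need repair in a real write-up is the reduction via $\aleph_1$-saturation: a chain of ``compatible $L$-embeddings $\tilde{E}_i\to\tilde{F}$'' is not a first-order condition on $F$, so the type must instead be phrased in terms of embeddings of the finitely generated $E_i$ together with finite Galois extensions $N/E_i$ and their explicit Galois data, carefully interleaving the transcendental and Galois stages. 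That interleaving is where the real technical weight of the cited proof sits; but since the paper outsources the whole argument, the comparison cannot go further than this.
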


\begin{proof}
This follows from \cite[6.1]{PopDissertation}, as $\PSCC$ fields are pseudo classically closed, see also \cite[Prop.~4.6]{AFPSCC}.
A proof in the present setting with all details can be found in \cite[2.11.5]{AFDiss}.
\end{proof}

\section{$\mfS$-adic Absolute Galois Group Piles}

\noindent
We now define the group pile $\GalS(F)$ and prove some of its basic properties.

\begin{Definition}\label{DefGalp}
The {\bf $\mfS$-adic absolute Galois group pile} of $F$ is the group pile
$$
 \GalS(F)=(\Gal(F), \mcG_\mfp  )_{\mfpinS},
$$ 
where $\mcG_\mfp=\{\Gal(F^\prime) \colon F^\prime\in\CC_\mfp(F)\}$.
For a Galois extension $E/F$, let
$\GalS(E/F)=\GalS(F)/\Gal(E)$ be the
{\bf $\mfS$-adic Galois group pile} of $E/F$.
\end{Definition}

In order to prove that $\GalS(F)$ is a indeed group pile, we will make use of 
the following group theoretical lemma:

\begin{Lemma}\label{fgclosed}
Let $G$ be a profinite group and $\Gamma$ a finitely generated profinite group.
Then 
$\mcG=\{ H\leq G \colon H\mbox{ is a quotient of }\Gamma \}\subseteq{\rm Subgr}(G)$ 
is closed.
\end{Lemma}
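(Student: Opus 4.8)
The plan is to show that the complement of $\mcG$ in ${\rm Subgr}(G)$ is open, by producing, for each $H_0 \in {\rm Subgr}(G) \setminus \mcG$, a basic open neighbourhood of $H_0$ disjoint from $\mcG$. Recall that the topology on ${\rm Subgr}(G) = \varprojlim_N {\rm Subgr}(G/N)$ has as a neighbourhood basis of $H_0$ the sets $U_N(H_0) = \{ H \leq G : HN/N = H_0 N/N \}$, where $N$ ranges over open normal subgroups of $G$. So it suffices to find one open normal $N \normal G$ such that no $H \leq G$ with $HN/N = H_0 N/N$ is a quotient of $\Gamma$; equivalently, such that $H_0 N / N$ is not a quotient of $\Gamma$. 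Indeed, if $H_0 N/N$ is not a quotient of $\Gamma$ but $H$ with $HN/N = H_0N/N$ were a quotient of $\Gamma$, then $HN/N \cong H/(H \cap N)$ would be a quotient of $H$, hence of $\Gamma$, a contradiction.

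\textbf{Key steps.} First I would spell out the neighbourhood basis for the profinite topology on ${\rm Subgr}(G)$ coming from Definition~\ref{Defsubgrtop} and reduce, as above, to finding an open normal $N$ with $H_0 N/N$ not a quotient of $\Gamma$. Second, the main point: since $H_0$ is \emph{not} a quotient of $\Gamma$, and $H_0 = \varprojlim_N H_0 N/N$ is the inverse limit of the finite groups $H_0 N/N$, I claim some finite quotient $H_0 N/N$ is already not a quotient of $\Gamma$. This is where finite generation of $\Gamma$ enters decisively. Suppose to the contrary that every $H_0 N/N$ is a quotient of $\Gamma$. For each open normal $N \normal G$ the set ${\rm Epi}(\Gamma, H_0 N/N)$ of epimorphisms is then finite and nonempty, and these sets form an inverse system (compose an epimorphism onto $H_0 N'/N'$ with the projection $H_0 N'/N' \to H_0 N/N$ for $N' \leq N$); more precisely one should quotient by $\mathrm{Aut}(\Gamma)$ acting on the source, or better, work directly with the finitely many continuous homomorphisms $\Gamma \to H_0$ modulo nothing — since $\Gamma$ is finitely generated, for each fixed finite quotient $Q$ of $G$ the set ${\rm Hom}(\Gamma, Q)$ is finite, so $\varprojlim_N {\rm Epi}(\Gamma, H_0N/N)$ is an inverse limit of nonempty finite sets, hence nonempty by the standard compactness argument \cite[1.1.4]{RibesZalesskii}. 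An element of this inverse limit is a continuous epimorphism $\Gamma \to \varprojlim_N H_0 N/N = H_0$, contradicting that $H_0$ is not a quotient of $\Gamma$. Therefore some $N$ works, and $U_N(H_0)$ is an open neighbourhood of $H_0$ disjoint from $\mcG$.

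\textbf{Main obstacle.} The only real subtlety is the compactness argument in the middle step, and it is exactly here that the hypothesis on $\Gamma$ is used: one needs the sets ${\rm Epi}(\Gamma, H_0N/N)$ (or ${\rm Hom}(\Gamma, H_0N/N)$) to be \emph{finite} so that the inverse limit of nonempty sets is nonempty, and finiteness of ${\rm Hom}(\Gamma, Q)$ for $Q$ finite is precisely equivalent to $\Gamma$ being topologically finitely generated. I would state this finiteness cleanly (a continuous homomorphism from $\Gamma$ to a finite group is determined by the images of a finite generating set) and cite \cite[1.1.4]{RibesZalesskii} or \cite[1.1.12]{FJ3} for the inverse-limit-of-finite-nonempty-sets fact. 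Everything else — that $HN/N$ being a quotient of $\Gamma$ follows from $H$ being one, that $U_N(H_0)$ is open, that it misses $\mcG$ — is routine.
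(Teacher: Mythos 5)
Your proof is correct and takes essentially the same approach as the paper: both show the complement is open by producing, for a subgroup $H$ that is not a quotient of $\Gamma$, an open normal $N\normal G$ with $HN/N$ not a quotient of $\Gamma$, and then taking the basic neighbourhood $\{H'\leq G : H'N=HN\}$. The only difference is that you prove the key step (some finite quotient of $H$ fails to be a quotient of $\Gamma$) directly via the inverse-limit-of-finite-nonempty-sets compactness argument, whereas the paper simply cites \cite[16.10.7(a)]{FJ3} for it.
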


\begin{proof}
We prove that ${\rm Subgr}(G)\setminus\mcG$ is open.
Let $H\leq G$ such that $H$ is not a quotient of $\Gamma$.
Since $\Gamma$ is finitely generated,
by \cite[16.10.7(a)]{FJ3}
there exists an open normal subgroup $H_0\normal H$ such that $H/H_0$ 
is not a quotient of $\Gamma$.
Let $N\normal G$ be an open normal subgroup with $N\cap H\leq H_0$.
Since $H/H_0$ is not a quotient of $\Gamma$,
also $H/(N\cap H)$ is not a quotient of $\Gamma$.
If $H^\prime\leq G$ and $H^\prime N=H N$, then
$H^\prime/(N\cap H^\prime)\cong H^\prime N/N\cong H/(N\cap H)$,
hence $H^\prime$ is not a quotient of $\Gamma$.
The set of such $H'$ forms an open neighborhood of $H$.
\end{proof}

The following statement is similar 
to \cite[Lemma 10.3(c)-(d)]{HJPd},
which, however, is concerned with fields instead of group piles,
and is restricted to certain subfields of $\KtotS$.

\begin{Proposition}\label{SadicGal}
The $\mfS$-adic absolute Galois group pile $\GalS(F)$ is a separated reduced deficient group pile.
\end{Proposition}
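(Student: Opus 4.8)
The plan is to verify each of the four properties—\emph{deficient}, \emph{group pile}, \emph{separated}, \emph{reduced}—in turn, with the group-pile axioms being the only substantial point.

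First, deficiency is immediate from the definition: $\GalS(F)=(\Gal(F),\mcG_\mfp)_\mfpinS$ is written in the shorthand notation, so $\mcG_0=\{1\}$ by convention. Next, to see that $\GalS(F)$ is a group pile we must check conditions (1)--(3) of Definition~\ref{Defreduced}. Condition (1) is clear. Since we are in the deficient case, condition (2) concerns only $\mcG_0=\{1\}$, which is trivially a nonempty $G$-invariant closed subset whose single element is conjugate to itself. The real content is condition (3): for each $\mfpinS$, the set $\mcG_\mfp=\{\Gal(F')\colon F'\in\CC_\mfp(F)\}$ must be a $\Gal(F)$-invariant closed subset of ${\rm Subgr}(\Gal(F))$. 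Invariance follows from the fact that $\CC_\mfp(F)$ is closed under the action of $\Gal(F)$ on subfields of $\tilde F$: conjugating a real resp.\ $p$-adic closure of $(F,\mfP)$ by $\sigma\in\Gal(F)$ yields a real resp.\ $p$-adic closure of $(F,\sigma\mfP)$, and $\sigma\mfP$ again lies over the same local prime $\mfp\in\mfS$, so $\Gal(F')^\sigma=\Gal(F'^{\sigma^{-1}})\in\mcG_\mfp$. For closedness I would invoke Lemma~\ref{fgclosed}: every $F'\in\CC_\mfp(F)$ is a real closed resp.\ $p$-adically closed field, so $\Gal(F')$ is finitely generated by Proposition~\ref{ArtinSchreier} resp.\ Proposition~\ref{Galpadic} (and, since $\mfp$ is a \emph{local} prime and $F$ is $\mfS$-quasi-local by Proposition~\ref{PSCC:quasilocal}—applicable because in this Setting $F/K$ may be taken algebraic, or one argues directly—all these groups are quotients of a single fixed finitely generated group, e.g.\ $\Gal(K_\mfp)$). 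Hence $\mcG_\mfp$ is contained in $\{H\leq\Gal(F)\colon H$ is a quotient of $\Gamma_\mfp\}$ for a suitable finitely generated $\Gamma_\mfp$; but to conclude $\mcG_\mfp$ itself is closed I need that it is exactly the set of subgroups of this form that occur as decomposition groups, so the cleanest route is: $\mcG_\mfp$ is the continuous image of the profinite space $\CC_\mfp(F)$ (topologized as in \cite{AFPSCC}) under $F'\mapsto\Gal(F')$, hence compact, hence closed. I expect this identification of the topology on $\CC_\mfp(F)$ with the subspace topology from ${\rm Subgr}(\Gal(F))$, so that the map is continuous, to be the \textbf{main obstacle}; alternatively one cites the relevant compactness statement from \cite[\S3--4]{AFPSCC} directly.

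Separatedness requires that the sets $\mcG_\mfp$ for $\mfp\in\{0\}\cup\mfS$ are pairwise disjoint. Since $\mcG_0=\{1\}$ and no $\Gal(F')$ for $F'\in\CC_\mfp(F)$ is trivial (a real or $p$-adic closure is a proper extension of $F$ unless $F$ is already real or $p$-adically closed—and even then $\Gal(F')\neq 1$: $\Gal(\mathbb{R})\cong\mathbb{Z}/2$ and $\Gal(\Q_p)$ is nontrivial), $\mcG_0\cap\mcG_\mfp=\emptyset$. For distinct $\mfp,\mfq\in\mfS$: if the types differ (ordering vs.\ $p$-valuation, or different residue characteristics $p\neq p'$), then $\Gal(F')$ for $F'\in\CC_\mfp(F)$ and $\Gal(F'')$ for $F''\in\CC_\mfq(F)$ cannot coincide, since a subgroup $H\leq\Gal(F)$ with fixed field $F'$ of one type cannot simultaneously have fixed field of the other type; concretely the isomorphism types of $\Gal(\mathbb{R})$, $\Gal(\Q_p)$, $\Gal(\Q_{p'})$ are pairwise distinct (e.g.\ by order, or by their abelianizations). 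Since $\mfS$ consists of \emph{local} primes and, as is standard, two local primes of $K$ of the same type with the same $p$ that are inequivalent give inequivalent primes—here one uses that $\mfS$ is a set, so distinct elements are genuinely distinct primes—the corresponding closures are never $F$-isomorphic, giving disjointness.

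Finally, reducedness demands that there be no nontrivial inclusions among elements of $\mcG=\bigcup_\mfpinS\mcG_\mfp$. If $\Gal(F')\leq\Gal(F'')$ with $F',F''\in\CC_\mfp(F)$ (same $\mfp$) or with $F'\in\CC_\mfp(F)$, $F''\in\CC_\mfq(F)$, then $F''\subseteq F'$, and $F''$ is real closed resp.\ $p$-adically closed, hence $F'$ is an algebraic extension of a real resp.\ $p$-adically closed field; but a real closed field has no proper algebraic extension inside $\tilde F$ except $\tilde F=F'(\sqrt{-1})$, which is not real closed, and a $p$-adically closed field is algebraically closed in any $p$-adically closed extension and admits no proper algebraic $p$-adically closed extension (Lemma~\ref{algclosedreal}, Lemma~\ref{algclosedpadic}, together with the fact that $p$-adic closures are minimal). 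So $F'=F''$ and the inclusion is trivial. The cross-type case $F'\in\CC_\mfp(F)$, $F''\in\CC_\mfq(F)$ with $\mfp\neq\mfq$ is handled the same way: $F''\subseteq F'$ forces $F''$ to be both of $\mfq$-type and algebraically closed in the $\mfp$-type field $F'$, contradicting that $F'$ carries no prime of $\mfq$-type (its unique prime is of $\mfp$-type). This finishes the proof.
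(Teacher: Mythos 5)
Your outline correctly identifies deficiency and the $G$-invariance of $\mcG_\mfp$, and the separatedness discussion (equality of subgroups) is roughly workable, but there are two substantive gaps, one of which you flag yourself and one of which you do not.

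\textbf{Closedness of $\mcG_\mfp$.} You acknowledge this as the ``main obstacle'' and the concern is justified. Your containment $\mcG_\mfp\subseteq\{H\leq\Gal(F)\colon H\text{ is a quotient of }\Gamma_\mfp\}$ does not by itself give closedness, and the alternative route via a topology on $\CC_\mfp(F)$ is not set up in the paper. The paper instead proves the \emph{characterization} $G_0\in\mcG_\mfp$ if and only if $G_0$ is a quotient of $\Gamma=\Gal(K_\mfp)$ \emph{and} ${\rm res}_{\tilde F/\tilde K}(G_0)\in\mcH_\mfp$ (where $\mcH_\mfp$ is the corresponding set for $K$, closed because $\mfp$ is local so $\mcH_\mfp=\Gamma^{\Gal(K)}$). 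The nontrivial direction of this characterization uses the Artin--Schreier theorem and the Neukirch--Pop--Efrat--Koenigsmann theorem (Propositions~\ref{ArtinSchreier}, \ref{NPEK}) to recognize the fixed field of $G_0$ as real/$p$-adically closed of the right type, after which $\mcG_\mfp$ is an intersection of two closed sets (Lemma~\ref{fgclosed} for one, continuity of ${\rm Subgr}({\rm res})$ and closedness of $\mcH_\mfp$ for the other). Without this characterization your sketch does not close.

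\textbf{Reducedness.} Here there is a genuine error. You write that a ``$p$-adically closed field is algebraically closed in any $p$-adically closed extension and admits no proper algebraic $p$-adically closed extension,'' but this is false without fixing the $p$-rank (e.g.\ $\Q_p\subsetneq\Q_p(\sqrt p)$, both $p$-adically closed). The same-$\mfp$ case is saved because all members of $\CC_\mfp(F)$ have the same type, but the cross-$\mfp,\mfq$ case is not: you then assert that $F''\subseteq F'$ forces ``$F''$ to be algebraically closed in $F'$,'' which, since $F'/F''$ is algebraic, is precisely the desired conclusion $F'=F''$ and hence circular. The correct argument requires F.~K.~Schmidt's theorem on a field Henselian with respect to two rank-one valuations. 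The paper's proof intersects $F'$ and $F_1'$ with $\tilde K$ to obtain $K'=\tilde K\cap F'\supseteq K_1'=\tilde K\cap F_1'$ in $\CC_\mfp(K)$ resp.\ $\CC_\mfq(K)$ (via Lemma~\ref{CCsubfield}), applies F.~K.~Schmidt to $K'$ (the local hypothesis is used to get rank one), deduces $p=q$, identifies $\mfp=\mfq$ via the unique $p$-valuation, and finally uses maximality of $p$-adic closures of the same type to get $\Gamma=\Gamma_1$. This also yields separatedness simultaneously (take $\Gamma=\Gamma_1$), which is a cleaner organization than running two separate arguments.

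In short: the plan is the right one at a high level, but the two nontrivial steps --- proving $\mcG_\mfp$ is closed, and ruling out nontrivial inclusions across $\mfp\neq\mfq$ --- are where the substance of the proposition lies, and both are left open (the first acknowledged, the second via an unjustified claim).
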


\begin{proof}
Let $\mbG=(G,\mcG_\mfp)_\mfpinS=\GalS(F)$.

We first prove that $\mbG$ is a group pile.
Let
$\GalS(K)=(H,\mcH_\mfp)_\mfpinS$ 
and fix $\mfpinS$.
We have to show that $\mcG_\mfp$ is closed in ${\rm Subgr}(G)$.
Let $\Gamma=\Gal(K_\mfp)$.
Since $\mfp$ is local, we have $\mcH_\mfp=\Gamma^H$, cf.~\cite[Rem.~3.6]{AFPSCC}, hence $\mcH_\mfp$ is closed in ${\rm Subgr}(H)$.
By Proposition \ref{ArtinSchreier} and Proposition \ref{Galpadic}, $\Gamma$ is finitely generated.

Let $G_0\leq G$.
We claim that $G_0\in\mcG_\mfp$ if and only if $G_0$ is a quotient of $\Gamma$ and
${\rm res}_{\tilde{F}/\tilde{K}}(G_0)\in\mcH_\mfp$.
Indeed, if $G_0\in\mcG_\mfp$, then ${\rm res}_{\tilde{F}/\tilde{K}}(G_0)\in\mcH_\mfp$
and $G_0\cong{\rm res}_{\tilde{F}/\tilde{K}}(G_0)\cong\Gamma$ 
by Lemma~\ref{CCsubfield}.
Conversely, 
if ${\rm res}_{\tilde{F}/\tilde{K}}(G_0)\in\mcH_\mfp=\Gamma^H$,
then $\Gamma$ is quotient of $G_0$.
Hence, if also $G_0$ is a quotient of $\Gamma$,
then $G_0\cong\Gamma$ by \cite[16.10.7]{FJ3}.
Therefore, 
by Proposition \ref{ArtinSchreier} and Proposition \ref{NPEK},
the fixed field $F^\prime$ of $G_0$ is real closed resp.~$p$-adically closed
of the same type as $\mfp$.
In addition,
${\rm res}_{\tilde{F}/\tilde{K}}(G_0)\in\mcH_\mfp$
implies that $F^\prime\in\CC_\mfp(F)$, i.e.~$G_0\in\mcG_\mfp$.

By Lemma~\ref{fgclosed}, the set
of $G_0\leq G$ such that $G_0$ is a quotient of $\Gamma$ is closed.
Since $\mcH_\mfp=\Gamma^H$ is closed,
and ${\rm res}_{\tilde{F}/\tilde{K}}\function{\rm Subgr}(G)\rightarrow{\rm Subgr}(H)$
is continuous by Lemma~\ref{Subgrfunctor}, 
the set of $G_0\leq G$ with
${\rm res}_{\tilde{F}/\tilde{K}}(G_0)\in\mcH_\mfp$
is closed.
Therefore, $\mcG_\mfp$ is closed.

We now prove that $\mbG$ is separated and reduced.
Let $\mfp,\mfq\in\mfS$, $\Gamma\in\mcG_\mfp$, $\Gamma_1\in\mcG_\mfq$,
and assume that $\Gamma\subseteq\Gamma_1$.

If $\mfp$ or $\mfq$ is an ordering, 
then both are orderings and $\Gamma=\Gamma_1$,
since the absolute Galois group of a real closed field is finite (Proposition~\ref{ArtinSchreier}),
and the absolute Galois group of a $p$-adically closed field is non-trivial and torsion-free (Proposition~\ref{Galpadic}).
So since the ordering of a real closed field is unique, $\mfp=\mfq$.

If $\mfp$ is a $p$-valuation and $\mfq$ is a $q$-valuation,
let $F^\prime$ and $F_1^\prime$ be the fixed fields of $\Gamma$ resp.~$\Gamma_1$,
and let $K^\prime=\tilde{K}\cap F^\prime$ and $K_1^\prime=\tilde{K}\cap F_1^\prime$.
Then $K_1^\prime\subseteq K^\prime$, and $K^\prime\in\CC_\mfp(K)$ and $K_1^\prime\in\CC_\mfq(K)$
by Lemma~\ref{CCsubfield}.
Thus, since $\mfp$ and $\mfq$ are local,
$K^\prime$ is Henselian with respect to two rank one valuations,
which must be equivalent by F.~K.~Schmidt's theorem, cf.~\cite[4.4.1]{EnglerPrestel}. 
In particular, $p=q$.
Thus the restriction of the unique $p$-valuation of $F^\prime$ to $F_1^\prime$
is the unique $p$-valuation of $F_1^\prime$, so $\mfp=\mfq$.
Therefore, by the maximality of $p$-adically closed fields (of the same type),
$F^\prime=F_1^\prime$,
hence $\Gamma=\Gamma_1$.
\end{proof}

\begin{Remark}
If $(N_i)_{i\in I}$ is a directed family of closed normal subgroups of 
a group pile $\mathbf{G}$ with $\bigcap_{i\in I}N_i=1$, then
$\mathbf{G}\cong\varprojlim_{i\in I}\mathbf{G}/N_i$.
In particular,
$\GalS(F)=\varprojlim_E \GalS(E/F)$,
where $E$ ranges over all finite Galois extensions of $F$.
\end{Remark}

\begin{Lemma}\label{rigid}
Let $F\supseteq K$ be $\mfS$-quasi-local,
let $M/F$ be an extension, and
let
$$
 {\rm res}_{\tilde{M}/\tilde{F}}\function \GalS(M)\rightarrow\GalS(F)
$$
be the restriction map.
Then 
${\rm res}_{\tilde{M}/\tilde{F}}$ is a homomorphism
of group piles, and the following are equivalent:
\begin{enumerate}
 \item ${\rm res}_{\tilde{M}/\tilde{F}}$ is an epimorphism.
 \item ${\rm res}_{\tilde{M}/\tilde{F}}$ is a rigid epimorphism.
 \item $M/F$ is regular and totally $\mfS$-adic.
\end{enumerate}
\end{Lemma}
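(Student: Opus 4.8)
The plan is to establish the three equivalences in the cycle $(3)\Rightarrow(2)\Rightarrow(1)\Rightarrow(3)$, after first checking that $\mathrm{res}_{\tilde M/\tilde F}$ is a homomorphism of group piles at all. For the latter, note that if $M'\in\CC_\mfp(M)$ lies over the prime $\mfP\in\mcS_\mfp(M)$, then by Lemma~\ref{CCsubfield} applied to $K\subseteq F\subseteq M$ the field $M'\cap\tilde F$ lies in $\CC_\mfp(F)$, and $\mathrm{res}_{\tilde M/\tilde F}(\Gal(M'))=\Gal(M'\cap\tilde F)$; hence $\mathrm{res}_{\tilde M/\tilde F}(\mcG_\mfp^M)\subseteq\mcG_\mfp^F$. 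Moreover Lemma~\ref{CCsubfield} tells us this restriction $\Gal(M')\to\Gal(M'\cap\tilde F)$ is an \emph{isomorphism}, which is exactly the rigidity statement and will be used in the implication $(1)\Rightarrow(2)$ for free.

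For $(3)\Rightarrow(1)$: assume $M/F$ is regular and totally $\mfS$-adic. Regularity gives that $\mathrm{res}\colon\Gal(M)\to\Gal(F)$ is surjective (standard: $\tilde F$ and $M$ are linearly disjoint over $F$). It remains to see $\mathrm{res}(\mcG_\mfp^M)=\mcG_\mfp^F$ for each $\mfpinS$, i.e.\ that every $F'\in\CC_\mfp(F)$ arises as $M'\cap\tilde F$ for some $M'\in\CC_\mfp(M)$. Fix $F'\in\CC(F,\mfP)$ with $\mfP\in\mcS_\mfp(F)$. Since $M/F$ is totally $\mfS$-adic, there is $\mfQ\in\mcS_\mfp(M)$ with $\mfQ|_F=\mfP$; choose any $M'\in\CC(M,\mfQ)$ (nonempty since $M'$ exists inside $\tilde M$), and then $M'\cap\tilde F\in\CC(F,\mfP)$ by Lemma~\ref{CCsubfield}. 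However $\CC(F,\mfP)$ can have more than one element (they are conjugate over $F$), so after conjugating $M'$ by a suitable element of $\Gal(M)$ — using surjectivity of $\mathrm{res}$ to lift the conjugating element — we may arrange $M'\cap\tilde F=F'$. This gives $\Gal(F')=\mathrm{res}(\Gal(M'))\in\mathrm{res}(\mcG_\mfp^M)$. The reverse inclusion is the homomorphism property already noted, so $(1)$ holds.

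For $(1)\Rightarrow(2)$: an epimorphism of group piles is rigid iff $\mathrm{res}|_\Gamma$ is injective for every $\Gamma\in\mcG_\mfp^M$; but for $\Gamma=\Gal(M')$ this injectivity is precisely the isomorphism statement of Lemma~\ref{CCsubfield}, so rigidity is automatic. For $(2)\Rightarrow(3)$ (equivalently $(1)\Rightarrow(3)$): surjectivity of $\mathrm{res}\colon\Gal(M)\to\Gal(F)$ forces $\tilde F\cap M=F$, and since $\mathrm{char}=0$ separability is automatic, so $M/F$ is regular. For total $\mfS$-adicity, fix $\mfpinS$ and $\mfP\in\mcS_\mfp(F)$; pick $F'\in\CC(F,\mfP)$, so $\Gal(F')\in\mcG_\mfp^F=\mathrm{res}(\mcG_\mfp^M)$, hence $\Gal(F')=\mathrm{res}(\Gal(M'))$ for some $M'\in\CC_\mfp(M)$, say $M'\in\CC(M,\mfQ)$. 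Then $M'\cap\tilde F$ is a closure of $(F,\mfP'')$ for the prime $\mfP''=\mfQ|_F$ by Lemma~\ref{CCsubfield}, and since its Galois group equals $\Gal(F')=\Gal(M'\cap\tilde F)$ we get $M'\cap\tilde F=F'$ and thus $\mfP''=\mfP$. So $\mfQ\in\mcS_\mfp(M)$ restricts to $\mfP$, proving $\mcS_\mfp(M)\to\mcS_\mfp(F)$ is surjective.

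\textbf{Main obstacle.} The delicate point is the conjugation argument in $(3)\Rightarrow(1)$: one produces \emph{some} closure $M'$ over $\mfQ$, but must hit a \emph{prescribed} $F'\in\CC(F,\mfP)$ rather than merely some element of that conjugacy class. The fix uses that $\Gal(F)$ acts transitively on $\CC(F,\mfP)$ together with surjectivity of $\mathrm{res}$ to lift the conjugator into $\Gal(M)$; I expect this to be routine but it is the step where one must be careful that the primes, not just the Galois subgroups, match up. Everything else reduces to Lemma~\ref{CCsubfield} and the standard correspondence between regular extensions and surjectivity on absolute Galois groups.
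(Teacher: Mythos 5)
Your proof is correct and follows essentially the same route as the paper, with the same three ingredients: Lemma~\ref{CCsubfield} for the homomorphism property and for rigidity, the conjugation-plus-lifting argument for getting an epimorphism from totally $\mfS$-adic, and the Galois-correspondence argument for the converse. The only cosmetic difference is packaging (the paper proves $(3)\Rightarrow(2)$ directly, whereas you split it into $(3)\Rightarrow(1)$ then $(1)\Rightarrow(2)$); the transitivity of $\Gal(F)$ on $\CC(F,\mfP)$ that you flag as the ``main obstacle'' is exactly where the $\mfS$-quasi-local hypothesis enters (cf.\ the paper's citation of \cite[Rem~3.6]{AFPSCC}), which you should state explicitly rather than treat as folklore.
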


\begin{proof}
Let $\mfpinS$.
If $M^\prime\in{\rm CC}_\mfp(M)$, then 
$F^\prime=M^\prime\cap\tilde{F}\in{\rm CC}_\mfp(F)$ by Lemma~\ref{CCsubfield}.
Thus, ${\rm res}_{\tilde{M}/\tilde{F}}\function{\rm Gal}(M)\rightarrow{\rm Gal}(F)$
indeed induces a homomorphism of group piles
${\rm res}_{\tilde{M}/\tilde{F}}:\GalS(M)\rightarrow\GalS(F)$.

Proof of $(1)\Rightarrow(3)$:
Suppose that 
${\rm res}_{\tilde{M}/\tilde{F}}\function\GalS(M)\rightarrow\GalS(F)$
is an epimorphism of group piles.
Then
${\rm res}_{\tilde{M}/\tilde{F}}\function\Gal(M)\rightarrow\Gal(F)$
is surjective,
so $M/F$ is regular.
Let $\mfpinS$, $\mfP\in\mcS_\mfp(F)$, and $F^\prime\in{\rm CC}(F,\mfP)$.
Then there exists $M^\prime\in{\rm CC}_\mfp(M)$ with $M^\prime\cap\tilde{F}=F^\prime$.
Let $\mfQ^\prime$ be the unique prime of $M^\prime$ lying over $\mfp$
and let $\mfQ=\mfQ^\prime|_M$.
Then $\mfQ\in\mcS_\mfp(M)$ and $\mfQ|_F=\mfP$.
Therefore, $M/F$ is totally $\mfS$-adic.

Proof of $(3)\Rightarrow(2)$:
Since $M/F$ is regular, 
${\rm res}_{\tilde{M}/\tilde{F}}\function{\rm Gal}(M)\rightarrow{\rm Gal}(F)$
is surjective.
Consider $\mfpinS$, $\mfP\in\mcS_\mfp(F)$, and $F^\prime\in{\rm CC}(F,\mfP)$.
Since $M/F$ is totally $\mfS$-adic, there exists $\mfQ\in\mcS_\mfp(M)$ lying over $\mfP$.
If $M^{\prime\prime}\in{\rm CC}(M,\mfQ)$, then $F^{\prime\prime}=M^{\prime\prime}\cap\tilde{F}\in{\rm CC}(F,\mfP)$
by Lemma~\ref{CCsubfield}.
Since $\mfP$ is quasi-local,
$F^\prime$ and $F^{\prime\prime}$ are conjugate over $F$, cf.~\cite[Rem~3.6]{AFPSCC}.
Since ${\rm res}_{\tilde{M}/\tilde{F}}$
is surjective, there exists a conjugate $M^\prime\in{\rm CC}(M,\mfQ)$ of $M^{\prime\prime}$
with $M^\prime\cap\tilde{F}=F^\prime$.
Therefore, 
${\rm res}_{\tilde{M}/\tilde{F}}\function\GalS(M)\rightarrow\GalS(F)$
is an epimorphism of group piles.
By Lemma~\ref{CCsubfield}, ${\rm res}\function\Gal(M^\prime)\rightarrow\Gal(F^\prime)$
is an isomorphism, so
${\rm res}_{\tilde{M}/\tilde{F}}$ is rigid.

Proof of $(2)\Rightarrow(1)$: This is trivial.
\end{proof}

We now explain how to interpret statements about $\GalS(F)$ in $F$. 
Due to lack of a suitable reference we present the classical case $\mfS=\emptyset$ in Appendix \ref{app:absGal}.
Here we only explain how to extend this to general $\mfS$.
The following proposition generalizes Proposition \ref{prop:coring}.

\begin{Proposition}\label{prop:coringS}
To every ranked set of coformulas $\Sigma$ in the variables $v_1,v_2,\dots$ we can assign recursively
a set $\Sigma_{\rm ring}$ of $\mathcal{L}_{{\rm ring},\mfS}$-formulas such that
for every $F\supseteq K$ which is $\mfS$-quasi-local,
$\Sigma$ is cosatisfied in $\GalS(F)$ if and only if $\Sigma_{\rm ring}$ is satisfied in $(F,R_\mfp(F))_\mfpinS$.
\end{Proposition}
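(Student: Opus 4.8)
The plan is to build on the classical case $\mathfrak{S}=\emptyset$ recorded in Appendix~\ref{app:absGal}, specifically Proposition~\ref{prop:coring}, which provides the translation from ranked sets of coformulas in the absolute Galois group $\Gal(F)$ to $\mathcal{L}_{\rm ring}(K)$-formulas over $F$. The only extra structure in $\GalS(F)$ beyond $\Gal(F)$ is, for each $\mathfrak{p}\in\mathfrak{S}$, the $G$-invariant closed set $\mcG_\mfp=\{\Gal(F')\colon F'\in\CC_\mfp(F)\}\subseteq{\rm Subgr}(\Gal(F))$, which enters the colanguage only through the relations $\mcG_{\mfp,n}$. So the task reduces to showing that, inside the interpretation of $\Gal(F)$ in $F$, one can express the statement that a finite tuple of cosets $x_1N,\dots,x_nN$ (with $(\Gal(F):N)\le n$) is of the form $\Gamma N/N$ for some $\Gamma\in\mcG_\mfp$. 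Once this single atomic relation is expressible, a routine induction on the structure of coformulas — exactly as in the $\mathfrak{S}=\emptyset$ case — produces $\Sigma_{\rm ring}$ from $\Sigma$, and the recursiveness is immediate because the construction in Appendix~\ref{app:absGal} is recursive and we only add finitely many new recursive clauses (one per $\mfp\in\mfS$).

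First I would recall, from the proof of Proposition~\ref{prop:coring}, that a coset $xN$ of an open normal $N\normal\Gal(F)$ with $(\Gal(F):N)\le n$ corresponds to a finite Galois extension $E/F$ (the fixed field of $N$) together with a choice of element in $\Gal(E/F)$, and that such $E$ are coded by tuples of elements of $F$ via primitive elements of bounded degree; the relations $\leq$, $\sqsubseteq$, $P$, $G_n$ all become $\mathcal{L}_{\rm ring}(K)$-formulas in those codes. Then the key point: a subgroup $\Gamma\le\Gal(F)$ lies in $\mcG_\mfp$ exactly when its fixed field $F'$ lies in $\CC_\mfp(F)$, i.e.\ $F'$ is a real closure resp.\ $p$-adic closure of $F$ with respect to some $\mfP\in\mcS_\mfp(F)$. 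Passing to the finite level $\Gal(E/F)=\Gal(F)/N$, the condition ``$x_1N,\dots,x_nN$ equals $\Gamma N/N$ for some $\Gamma\in\mcG_\mfp$'' says that the subgroup $\{x_1N,\dots,x_nN\}\le\Gal(E/F)$ is the decomposition group at some prime of $E$ above a prime $\mfP\in\mcS_\mfp(F)$ of the appropriate type, equivalently that a corresponding prime of $F$ extends compatibly — and this is precisely the kind of statement that Proposition~\ref{PSCC:quantify} (the map $\varphi\mapsto\hat\varphi_{\mfp,\exists}$, valid for $\mfS$-quasi-local $F$) is designed to express: existence of a closure $F'\in\CC_\mfp(F)$ in which a given finite set of polynomial conditions (cutting out the splitting behavior of $E$) holds, and these conditions live in the language $\mathcal{L}_{{\rm ring},\mfp}(K)$ over $(F,R_\mfp(F))$ by Proposition~\ref{PSCC:holomorphy}. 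I would assemble the finitely many polynomial conditions encoding ``$\{x_iN\}$ is the decomposition group of a prime above $\mfp$'' — using that decomposition groups of a local prime are generated by Frobenius/inertia-type data determined by solvability of explicit systems over the residue/ordered structure — and feed their conjunction through $\hat\varphi_{\mfp,\exists}$.

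The main obstacle I expect is the middle step: translating membership in $\mcG_\mfp$ at the finite level into a first-order statement about $F$ uniformly in the code of $E$ and in $n$. This requires care because $\CC_\mfp(F)$ is a set of infinite algebraic extensions of $F$, so one cannot directly quantify over its elements; instead one must argue that ``$\Gamma N/N\in\mcG_{\mfp,N}$'' depends only on finitely much data about the interaction of $E$ with a closure, namely which cosets in $\Gal(E/F)$ arise as images of $\Gal(F')$ for $F'\in\CC_\mfp(F)$, and that this is controlled by whether a suitable system — expressing that $E$ embeds over $F$ into a closure with prescribed splitting — has a solution in some $F'\in\CC_\mfp(F)$. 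The $\mfS$-quasi-locality hypothesis is essential here: by Proposition~\ref{PSCC:quasilocal} and the remark cited from \cite[Rem.~3.6]{AFPSCC}, for local (hence quasi-local) $\mfp$ the closures in $\CC_\mfp(F)$ form a single $\Gal(F)$-conjugacy class over $F$ with finitely generated absolute Galois group, so $\mcG_\mfp$ is itself a single conjugacy class in ${\rm Subgr}(\Gal(F))$ — which is what makes the finite-level condition depend only on bounded data and hence be captured by $\hat\varphi_{\mfp,\exists}$. I would also need the uniformity in $n$, which follows because the codes for $E$ and for elements of $\Gal(E/F)$ are uniformly bounded-degree and the formula $\hat\varphi_{\mfp,\exists}$ is produced by a recursive procedure from the (uniformly described) polynomial system. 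Once this atomic clause is in place, the remaining induction over coformulas, the handling of ranked parameters, and the verification of the equivalence ``$\Sigma$ cosatisfied in $\GalS(F)$ iff $\Sigma_{\rm ring}$ satisfied in $(F,R_\mfp(F))_\mfpinS$'' are entirely parallel to the classical argument and can be quoted from Appendix~\ref{app:absGal} with the obvious bookkeeping.
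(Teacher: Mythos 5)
Your overall skeleton is the same as the paper's: reduce to expressing the single atomic relation $(v_1,\dots,v_n)\in\mcG_{\mfp,n}$ in $(F,R_\mfp(F))$, and then push the condition "there exists $F'\in\CC_\mfp(F)$ such that $\dots$" through the recursive map $\varphi\mapsto\hat\varphi_{\mfp,\exists}$ of Proposition~\ref{PSCC:quantify} (which is exactly where the $\mfS$-quasi-locality hypothesis is used). That part is right, and so is your reduction to the $\mfS=\emptyset$ case from Appendix~\ref{app:absGal}.

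Where you have a genuine gap is in the step you yourself flag as the main obstacle: what, concretely, is the $\mcL_{\rm ring}$-formula $\varphi(\mathbf{x})$ about $F'$ that you feed into $\hat\varphi_{\mfp,\exists}$? You propose to characterize the image $\Gamma N/N$ (for $\Gamma=\Gal(F')$, $N=\Gal(E)$) as ``the decomposition group at some prime of $E$ above $\mfP$'' and to encode this via Frobenius/inertia data over the residue/ordered structure. This is not the right characterization: for a $p$-valuation, a $p$-adic closure $F'$ is much larger than a Henselization, so $\Gal(F')\cdot N/N = \Gal(E/F'\cap E)$ is in general a \emph{proper} subgroup of the decomposition group of a prime of $E$ over $\mfP$, and Frobenius/inertia data alone will not cut it out. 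The paper sidesteps all of this with a much more elementary observation: letting $H=\{x_1N,\dots,x_nN\}\le\Gal(E/F)$ and $E'$ its fixed field, $H=\Gamma N/N$ for some $\Gamma\in\mcG_\mfp$ if and only if there is $F'\in\CC_\mfp(F)$ such that for every $f\in F[X]$ of degree at most $[E:F]$ having a root in $E$, $f$ has a root in $F'$ if and only if $f$ has a root in $E'$. This is a finite, uniformly-in-the-code-of-$E$ conjunction of $\mcL_{\rm ring}$-conditions on $F'$, which is exactly the shape that $\hat\varphi_{\mfp,\exists}$ accepts, and the equivalence uses only that $E/F$ is Galois together with the single-conjugacy-class fact you already noted. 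So your plan needs this replacement in the middle; without it, the ``explicit systems encoding decomposition groups'' step would not go through as stated.
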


\begin{proof}
Building on the case $\mfS=\emptyset$,
we only have to explain how to translate statements of the form $(v_1,\dots,v_n)\in\mathcal{G}_{\mfp,n}$.
Let $\GalS(F)=(G,\mathcal{G}_\mfp)_\mfpinS$, 
$N={\rm Gal}(E)\normal G$ an open subgroup,
and $\Delta N/N\in{\rm Subgr}(G/N)$.
By definition, $\Delta N/N \in\mathcal{G}_{\mfp,N}$ if and only if there is
some $F'\in\CC_\mfp(F)$ such that,
with $\Gamma={\rm Gal}(F^\prime)\in\mathcal{G}_\mfp$,
$\Gamma N/N=\Delta N/N$.
So $(x_1N_1,\dots,x_nN_n)\in\mathcal{G}_{\mfp,n}$ if and only
if $N_1=\dots=N_n$ and $H=\{x_1N_1,\dots,x_nN_n\}$ is a subgroup of $G/N_1={\rm Gal}(E/F)$
that corresponds to a field $F\subseteq E^\prime\subseteq E$
that is the intersection of some $F'\in\CC_\mfp(F)$ with $E$.
This is equivalent to the fact
that every polynomial $f\in F[X]$ of degree bounded by $[E:F]$
that has a root in $E$
has a root in $F^\prime$ if and only if it has a root in $E^\prime$.
By Proposition \ref{PSCC:quantify}, the existence of such an $F'$ can be expressed by an $\mathcal{L}_{{\rm ring},\mfS}$-formula in $(F,R_\mfp(F))_\mfpinS$.
\end{proof}

\begin{Corollary}\label{interpretCDM}
There is a recursive map $\varphi\mapsto\varphi_{\rm ring}$ from cosentences to $\mathcal{L}_{\rm ring}(K)$-sentences
such that for every $\PSCC$ field $F$ and every cosentence $\varphi$, we have that
$F\models\varphi_{\rm ring}$ iff $S(\GalS(F))\models\varphi$.
\end{Corollary}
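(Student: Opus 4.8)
The plan is to compose the translation of Proposition~\ref{prop:coringS} with the uniform first-order definability of the rings $R_\mfp$ over $\PSCC$ fields, and then to invoke that $\PSCC$ fields are $\mfS$-quasi-local. Given a cosentence $\varphi$, apply Proposition~\ref{prop:coringS} to the singleton $\Sigma=\{\varphi\}$; this set is ranked because $\varphi$, being a cosentence, has no free variables. Inspecting the construction there (the translation is a structural recursion on bounded formulas, reducing for $\mfS=\emptyset$ to the map of Proposition~\ref{prop:coring}), the translation of a single coformula is a single formula, so we obtain recursively one $\mathcal{L}_{{\rm ring},\mfS}(K)$-sentence $\psi$ such that for every $F\supseteq K$ that is $\mfS$-quasi-local,
\[
 S(\GalS(F))\models\varphi\ \Longleftrightarrow\ (F,R_\mfp(F))_\mfpinS\models\psi,
\]
where we use that a deficient group pile cosatisfies a cosentence $\varphi$ exactly when its associated inverse system satisfies $\varphi$, and that $\GalS(F)$ is a deficient group pile by Proposition~\ref{SadicGal}.

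Next I would eliminate the predicates $R_\mfp$. By Proposition~\ref{PSCC:holomorphy}, for each $\mfpinS$ there is a fixed $\mcLr(K)$-formula $\varphi_{R,\mfp}(x)$ that defines $R_\mfp(F)$ in every $\PSCC$ field $F$. Let $\varphi_{\rm ring}$ be the $\mcLr(K)$-sentence obtained from $\psi$ by replacing every atomic subformula of the form $R_\mfp(t)$ by $\varphi_{R,\mfp}(t)$. This substitution is recursive, and for every $\PSCC$ field $F$ it yields $F\models\varphi_{\rm ring}$ iff $(F,R_\mfp(F))_\mfpinS\models\psi$.

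Finally, I would combine the two equivalences: if $F$ is $\PSCC$, then $F$ is $\mfS$-quasi-local by Proposition~\ref{PSCC:quasilocal}, so the first equivalence applies, giving $F\models\varphi_{\rm ring}$ iff $S(\GalS(F))\models\varphi$. Recursiveness of $\varphi\mapsto\varphi_{\rm ring}$ follows since it is the composition of the recursive map of Proposition~\ref{prop:coringS} with the recursive substitution of the finitely many fixed formulas $\varphi_{R,\mfp}$. I expect no genuine obstacle here: this is a routine splicing of the three cited results, the only points worth a sentence being that a cosentence yields a ranked singleton (no free variables) and that the Proposition~\ref{prop:coringS} translation of a single cosentence is again a sentence rather than a formula with free variables.
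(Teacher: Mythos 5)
Your proposal is correct and follows the paper's own proof essentially verbatim: apply Proposition~\ref{prop:coringS} to the singleton $\Sigma=\{\varphi\}$, then eliminate the predicates $R_\mfp$ by substituting $\varphi_{R,\mfp}$ from Proposition~\ref{PSCC:holomorphy}, and invoke Proposition~\ref{PSCC:quasilocal} to bridge $\PSCC$ and $\mfS$-quasi-local. The extra remarks you add (that a cosentence gives a ranked singleton and that a singleton translates to a single sentence) are harmless clarifications of what the paper leaves implicit.
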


\begin{proof}
A $\PSCC$ field $F$ is $\mfS$-quasi-local (Proposition \ref{PSCC:quasilocal}) and the $R_\mfp(F)$ are $K$-definable (Proposition \ref{PSCC:holomorphy})
by some formula $\varphi_{R,\mfp}$,
so the claim follows from the special case $\Sigma=\{\varphi\}$ of Proposition \ref{prop:coringS}
by replacing all occurrences of the predicates $R_\mfp$ by $\varphi_{R,\mfp}$.
\end{proof}

\begin{Corollary}\label{cosaturated}
If $F$ is $\PSCC$ and $\aleph_1$-saturated,
then $\GalS(F)$ is $\aleph_1$-cosaturated.
\end{Corollary}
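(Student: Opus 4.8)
The plan is to transport the relevant data about $\GalS(F)$ into a countable partial type over the field $F$, realize that type using the $\aleph_1$-saturation of $F$, and transport the solution back. So let $\Sigma$ be a ranked set of coformulas with parameters in $S(\GalS(F))$ with $|\Sigma|\leq\aleph_0$, such that every finite subset of $\Sigma$ is cosatisfied in $\GalS(F)$; the goal is to show that $\Sigma$ itself is cosatisfied in $\GalS(F)$. First I would record that, since $F$ is $\PSCC$, it is $\mfS$-quasi-local by Proposition~\ref{PSCC:quasilocal}, and that each $R_\mfp(F)$ is defined in $F$ by an $\mcLr(K)$-formula $\varphi_{R,\mfp}$ by Proposition~\ref{PSCC:holomorphy}; as $\varphi_{R,\mfp}$ uses only finitely many constants from $K$, the structure $(F,R_\mfp(F))_\mfpinS$ is an expansion of $F$ by finitely-definable predicates, hence is $\aleph_1$-saturated because $F$ is.

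Next I would handle the parameters. Each parameter occurring in $\Sigma$ is an element $xN\in S(\GalS(F))$, i.e.\ a coset of an open normal subgroup $N=\Gal(E)$ of $\Gal(F)$ with $E/F$ finite Galois; under the interpretation of $S(\GalS(F))$ in $F$ described in Appendix~\ref{app:absGal} it is coded by a finite tuple from $F$ (describing $E/F$ together with the automorphism $x$). Since $\Sigma$ is countable and each of its coformulas carries only finitely many parameters, only countably many elements of $F$ arise as such codes; collect them into a countable set $C\subseteq F$. Applying the translation underlying Proposition~\ref{prop:coringS} to the parameters coded by $C$ --- its passage from $\mfS=\emptyset$ to general $\mfS$ being supplied by Proposition~\ref{PSCC:quantify}, which lets one express ``there is $F'\in\CC_\mfp(F)$ with $\dots$'' inside $(F,R_\mfp(F))_\mfpinS$ --- I would obtain a countable set $\Sigma_{\rm ring}$ of $\mathcal{L}_{{\rm ring},\mfS}$-formulas, with parameters in a countable subset of $F$ (containing $C$) and in countably many variables, attached to the coformulas of $\Sigma$ in such a way that for every $\Sigma_0\subseteq\Sigma$: $\Sigma_0$ is cosatisfied in $\GalS(F)$ if and only if the corresponding subset of $\Sigma_{\rm ring}$ is satisfied in $(F,R_\mfp(F))_\mfpinS$; here one uses that $F$ is $\mfS$-quasi-local.

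Then the hypothesis that every finite subset of $\Sigma$ is cosatisfied in $\GalS(F)$ gives that every finite subset of $\Sigma_{\rm ring}$ is satisfied in $(F,R_\mfp(F))_\mfpinS$, so $\Sigma_{\rm ring}$ is a countable partial type over a countable subset of $F$ which is finitely satisfiable in $(F,R_\mfp(F))_\mfpinS$, and hence realized there since that structure is $\aleph_1$-saturated. Applying the equivalence of the previous paragraph with $\Sigma_0=\Sigma$ then shows that $\Sigma$ is cosatisfied in $\GalS(F)$, which is what was required; thus $\GalS(F)$ is $\aleph_1$-cosaturated.

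The step I expect to require the most care is the parameter bookkeeping in the middle paragraph: one has to check that the interpretation of $S(\GalS(F))$ in $F$ from Appendix~\ref{app:absGal}, combined with the coformula-by-coformula translation of Proposition~\ref{prop:coringS}, codes the parameters of $\Sigma$ by tuples from $F$ uniformly and compatibly with the translation, and respects finite subsets, so that the displayed equivalence holds simultaneously for all subsets $\Sigma_0$. Once that is in place --- for parameter-free $\Sigma$ it is essentially the ``set of formulas'' version of Corollary~\ref{interpretCDM} --- the rest is the standard transfer of $\aleph_1$-saturation across an interpretation.
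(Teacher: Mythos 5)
Your proposal is correct and follows essentially the same route as the paper: the paper's own proof is the one-line remark that the claim ``follows from Proposition~\ref{prop:coringS} just like Corollary~\ref{cor:sat0} follows from Proposition~\ref{prop:coring}, using Proposition~\ref{PSCC:quasilocal} and Proposition~\ref{PSCC:holomorphy},'' and your argument is exactly an unpacking of that reduction, including the two points the paper flags (that $F$ being $\PSCC$ gives $\mfS$-quasi-locality so that Proposition~\ref{prop:coringS} applies, and that the $R_\mfp(F)$ are $\mcLr(K)$-definable so that the expansion $(F,R_\mfp(F))_\mfpinS$ inherits $\aleph_1$-saturation from $F$). The parameter-bookkeeping step you single out as delicate is the same one carried out explicitly in the proof of Corollary~\ref{cor:sat0}, so your instincts about where the real content lies match the paper's.
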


\begin{proof}
This follows from Proposition \ref{prop:coringS} just like Corollary \ref{cor:sat0} follows from Proposition~\ref{prop:coring},
using, like in the proof of the previous corollary, Proposition \ref{PSCC:quasilocal} and Proposition \ref{PSCC:holomorphy}.
\end{proof}

\begin{Definition}\label{DefTCantorring}
If we apply Corollary \ref{interpretCDM} to the sentences of the
cotheory $T_{{\rm C},\mfS,e}^{\rm co}$ (Definition \ref{DefTCantor}), 
we get an $\mcL_{\rm ring}(K)$-theory, which we denote by $T_{{\rm C},\mfS,e}^{\rm ring}$.
\end{Definition}

\section{Subfields of $\KtotS$}

\noindent
We now turn to the fields $\KtotS(\bfsigma)$ mentioned in the introduction.
We can define
$\KtotS=\bigcap_\mfpinS\bigcap\CC_\mfp(K)$,
cf.~\cite[0.1]{GeyerJarden}, \cite{HJPd}.

\begin{Lemma}\label{KtotSmaxtotS}\label{totSCC}\label{QtotpKochen}
Let $K\subseteq L\subseteq \KtotS$ be a field and let $\mfpinS$. Then the following holds: 
\begin{enumerate}
\item $L/K$ is totally $\mfS$-adic.
\item $\CC_\mfp(L)=\CC_\mfp(K)$
\item $R_\mfp(L)=L\cap R_\mfp(\KtotS)=L\cap\bigcap_{K'\in\CC_\mfp(K)}R_\mfp(K')$
\end{enumerate}
\end{Lemma}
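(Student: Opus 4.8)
The plan is to observe that all three claims are different expressions of the single fact that $L$ — indeed all of $\KtotS$ — lies inside every $\mfp$-closure of $K$, for each $\mfpinS$; this is immediate from the defining formula $\KtotS=\bigcap_\mfpinS\bigcap\CC_\mfp(K)$. The only other input needed is Lemma~\ref{CCsubfield}, and the work is just the bookkeeping of a few restriction maps between sets of primes.

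I would start with (2). For $\CC_\mfp(L)\subseteq\CC_\mfp(K)$, apply the ``in particular'' part of Lemma~\ref{CCsubfield} with $E=K$ and $F=L$: any $L'\in\CC_\mfp(L)$ satisfies $L'=L'\cap\tilde K\in\CC_\mfp(K)$, where $L'\cap\tilde K=L'$ because $\tilde L=\tilde K$. For the reverse inclusion, let $K'\in\CC_\mfp(K)$. By the definition of $\KtotS$ we have $L\subseteq\KtotS\subseteq K'$, so $K'$ is real closed resp.\ $p$-adically closed and algebraic over $L$; its unique prime of the type of $\mfp$ restricts on $L$ to some $\mfP\in\mcS_\mfp(L)$, and therefore $K'$ is a closure of $(L,\mfP)$ of that type, i.e.\ $K'\in\CC_\mfp(L)$.

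Next, (1). Here $\mcS_\mfp(K)=\{\mfp\}$, so it suffices to produce a single prime of $L$ above $\mfp$ of the same type: pick any $K'\in\CC_\mfp(K)$ (this set is nonempty, as $\mfp$-closures exist inside $\tilde K$); then $L\subseteq\KtotS\subseteq K'$, and the restriction to $L$ of the unique prime of $K'$ is such a prime. Running the same argument with $L$ in place of $K$ (and using $\CC_\mfp(L)=\CC_\mfp(K)$ from (2), which forces $\KtotS\subseteq L'$ for every $L'\in\CC_\mfp(L)$) shows that the restriction map $\mcS_\mfp(\KtotS)\to\mcS_\mfp(L)$ is surjective, i.e.\ $\KtotS/L$ is totally $\mfS$-adic; I will use this in (3).

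Finally, (3). Throughout I would use that $R_\mfp(E)=\bigcap_{\mfP\in\mcS_\mfp(E)}\mcO_\mfP$ and that $\mcO_\mfP\cap E_0=\mcO_{\mfP|_{E_0}}$ for any subfield $E_0\subseteq E$. For the first equality, surjectivity of $\mcS_\mfp(\KtotS)\to\mcS_\mfp(L)$ gives
\[
 L\cap R_\mfp(\KtotS)=\bigcap_{\mfQ\in\mcS_\mfp(\KtotS)}(\mcO_\mfQ\cap L)=\bigcap_{\mfQ\in\mcS_\mfp(\KtotS)}\mcO_{\mfQ|_L}=\bigcap_{\mfP\in\mcS_\mfp(L)}\mcO_\mfP=R_\mfp(L).
\]
For the second equality, each $K'\in\CC_\mfp(K)$ contains $\KtotS$ and has a unique prime $\mfq_{K'}$ of the type of $\mfp$, with $R_\mfp(K')=\mcO_{\mfq_{K'}}$; moreover $\mfq_{K'}|_{\KtotS}$ lies in $\mcS_\mfp(\KtotS)$, and as $K'$ ranges over $\CC_\mfp(K)$ it hits every element of $\mcS_\mfp(\KtotS)$, since any $\mfQ\in\mcS_\mfp(\KtotS)$ has a closure $K'\in\CC(\KtotS,\mfQ)\subseteq\CC_\mfp(\KtotS)=\CC_\mfp(K)$ whose prime restricts to $\mfQ$. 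Hence
\[
 \KtotS\cap\bigcap_{K'\in\CC_\mfp(K)}R_\mfp(K')=\bigcap_{K'\in\CC_\mfp(K)}\mcO_{\mfq_{K'}|_{\KtotS}}=\bigcap_{\mfQ\in\mcS_\mfp(\KtotS)}\mcO_\mfQ=R_\mfp(\KtotS),
\]
and intersecting with $L$ and using $L\subseteq\KtotS$ rewrites the left-hand side as $L\cap\bigcap_{K'\in\CC_\mfp(K)}R_\mfp(K')$. There is no serious obstacle here; the only point needing care is checking that the two prime-restriction maps are surjective, and both surjectivities come from the single observation that a $\mfp$-closure of an intermediate field automatically contains $\KtotS$.
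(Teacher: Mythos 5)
Your proof is correct and follows essentially the same approach as the paper: everything hinges on the observation that $L\subseteq\KtotS\subseteq K'$ for every $K'\in\CC_\mfp(K)$, which yields $\CC_\mfp(L)=\CC_\mfp(K)=\CC_\mfp(\KtotS)$ and the surjectivity of the prime-restriction maps. You merely unpack a couple of steps that the paper condenses into a single chain of set equalities in part (3).
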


\begin{proof}
(1): 
For $\mfpinS$, take $K'\in\CC_\mfp(K)$ and $\mfP\in\mcS_\mfp(K')$.
Since $L\subseteq\KtotS\subseteq K'$
we can restrict $\mfP$ to $\mfP|_L\in\mcS_\mfp(L)$, which lies over $\mfp$. 

(2): Since $L/K$ is algebraic, it is clear that $\CC_\mfp(L)\subseteq\CC_\mfp(K)$.
Conversely, if $K'\in\CC_\mfp(K)$,  
then $L\subseteq\KtotS\subseteq K'$
and therefore $K'\in\CC_\mfp(L)$.

(3): By (2), $\CC_\mfp(L)=\CC_\mfp(K)=\CC_\mfp(\KtotS)$.
Thus,
\begin{eqnarray*}
 R_\mfp(L)&=&\bigcap_{\mfP\in\mcS_\mfp(L)}\mcO_\mfP
   \;=\; \bigcap_{L^\prime\in\CC_\mfp(L)} R_\mfp(L^\prime) \cap L
   \;=\; L\cap\bigcap_{K'\in\CC_\mfp(K)}R_\mfp(K') \\
   &=& L\cap \bigcap_{K^\prime\in\CC_\mfp(\KtotS)} R_\mfp(K^\prime) \cap \KtotS 
   \;=\; L\cap \bigcap_{\mfP\in\mcS_\mfp(\KtotS)}\mcO_\mfP \;=\; L\cap R_\mfp(\KtotS).
\end{eqnarray*}
\end{proof}

\begin{Lemma}\label{PSCCextension}
Let $K\subseteq E\subseteq F\subseteq\KtotS$.
If $E$ is $\PSCC$, then $F$ is $\PSCC$.
\end{Lemma}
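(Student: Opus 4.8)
The plan is to reduce to the hypothesis on $E$ by a Weil restriction argument; only the nontrivial implication in the definition of $\PSCC$ needs proof, so I would take a smooth absolutely irreducible $F$-variety $V$ with $V(F')\neq\emptyset$ for all $F'\in\CC_\mfp(F)$, $\mfpinS$, and show $V(F)\neq\emptyset$.

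\emph{Step 1: descent to a finite subextension.} Since $F\subseteq\KtotS$ is algebraic over $K$, hence over $E$, the finitely many coefficients of the equations defining $V$ generate a finite intermediate field $E_0$, $E\subseteq E_0\subseteq F$, and $V=V_0\times_{E_0}F$ for a smooth absolutely irreducible $E_0$-variety $V_0$ (these are geometric properties, checked after base change to $\tilde K$, where the base changes of $V_0$ and $V$ agree). By Lemma~\ref{KtotSmaxtotS}(2), applied to $E_0$ and to $F$, one has $\CC_\mfp(E_0)=\CC_\mfp(K)=\CC_\mfp(F)$ for every $\mfpinS$; and since every $F'\in\CC_\mfp(F)$ contains $F\supseteq E_0$, we get $V_0(F')=V(F')\neq\emptyset$. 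Thus $V_0(E')\neq\emptyset$ for all $E'\in\CC_\mfp(E_0)$, $\mfpinS$, and it suffices to find a point in $V_0(E_0)$, for then $V(F)=V_0(F)\supseteq V_0(E_0)\neq\emptyset$.

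\emph{Step 2: Weil restriction and its local points.} Write $E_0=E(\theta)$, let $g\in E[T]$ be the minimal polynomial of $\theta$ and $d=[E_0:E]$; as $\ch(E)=0$, $E_0/E$ is separable, so I would form the Weil restriction $W=\mathrm{Res}_{E_0/E}(V_0)$, with $W(R)=V_0(R\otimes_E E_0)$ functorially in $E$-algebras $R$. Then $W$ is smooth over $E$, and absolutely irreducible, because $W\times_E\tilde K$ is isomorphic to the product of the $d$ Galois conjugates of $V_0\times_{E_0}\tilde K$, a finite product of irreducible varieties over an algebraically closed field. Now fix $\mfpinS$ and $E'\in\CC_\mfp(E)$. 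By Lemma~\ref{KtotSmaxtotS}(2), $E'\in\CC_\mfp(K)$, so $\KtotS=\bigcap_\mfpinS\bigcap\CC_\mfp(K)\subseteq E'$; since $E_0\subseteq\KtotS$ and $\KtotS/K$ is Galois, every $E$-conjugate of $\theta$ lies in $\KtotS\subseteq E'$, i.e.\ $g$ splits completely over $E'$, and hence $E'\otimes_E E_0\cong\prod_{j=1}^d E'$, where the $j$-th factor carries the $E_0$-algebra structure $\rho_j|_{E_0}$ for suitable $\rho_j\in\Gal(E)$. Consequently $W(E')=\prod_{j=1}^d V_0(E',\rho_j|_{E_0})$, and $\rho_j^{-1}$ identifies $V_0(E',\rho_j|_{E_0})$ with $V_0(\rho_j^{-1}(E'))$; since $\CC_\mfp(K)=\CC_\mfp(E_0)$ is $\Gal(K)$-invariant (elements of $\Gal(K)$ fix $\mfp$ and permute the closures constituting $\CC_\mfp(K)$), we have $\rho_j^{-1}(E')\in\CC_\mfp(E_0)$, so this point set is nonempty by Step~1. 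Therefore $W(E')\neq\emptyset$.

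\emph{Step 3: conclusion, and the main obstacle.} As $W$ is a smooth absolutely irreducible $E$-variety with $W(E')\neq\emptyset$ for all $E'\in\CC_\mfp(E)$, $\mfpinS$, and $E$ is $\PSCC$, I obtain $W(E)\neq\emptyset$, i.e.\ $V_0(E_0)=W(E)\neq\emptyset$, which completes the proof. The descent and the standard properties of Weil restriction are routine; the point one must notice --- and the key consequence of $F\subseteq\KtotS$ rather than just $F/E$ being algebraic --- is that $E_0$ lies inside the \emph{Galois} extension $\KtotS$, which is contained in \emph{every} $E'\in\CC_\mfp(K)=\CC_\mfp(E)$. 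This forces $g$ to split completely over each such $E'$, so that $E'\otimes_E E_0$ is merely a product of copies of $E'$ and $W$ introduces no new local conditions beyond those already met by $V_0$ over the ($\Gal(K)$-conjugate) closures in $\CC_\mfp(E_0)$; a naive Weil restriction without this observation would instead confront varieties over proper finite extensions of the local closures, which the $\PSCC$ property of $E$ does not govern.
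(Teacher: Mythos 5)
Your argument is essentially the paper's proof: descend $V$ to a finite subextension over $E$, form the Weil restriction $W$ down to $E$, verify that $W$ has local points at all closures in $\CC_\mfp(E)$ by observing that the Galois closure of the descent field sits inside $\KtotS$ and hence inside every $E'\in\CC_\mfp(E)$, apply $\PSCC$ of $E$ to get $W(E)\neq\emptyset$, and pull back to a point of $V$. The explicit Galois-theoretic bookkeeping (splitting of the minimal polynomial $g$, the decomposition $E'\otimes_E E_0\cong\prod E'$, the identification of twisted points with points of $\Gal(E)$-conjugate closures) is a slightly more hands-on rendering of what the paper dispatches by citing the Weil-restriction lemma in Fried--Jarden.

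The one step the paper takes that you skip is the preliminary reduction to $V$ affine. The paper invokes Propositions~\ref{varietyreal} and~\ref{varietypadic} to note that $V(F')$ is Zariski-dense in $V$ for every $F'\in\CC_\mfp(F)$, so that one may replace $V$ by an affine open subset that still has points over every local closure; only then is the Weil restriction formed. This is not mere pedantry: the cited construction of $\mathrm{Res}_{F_0/E}$ and its product decomposition over the Galois closure \cite[10.6.2]{FJ3} is stated for affine varieties, and the smoothness and absolute irreducibility of the restriction are established there in that setting. Your write-up treats Weil restriction and its properties as given for an arbitrary smooth absolutely irreducible variety, which goes beyond the form in which the paper uses the tool. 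You should either restrict to an affine open (using the Zariski-density of $V(F')$ as the paper does, so that the local hypotheses survive the restriction) or explicitly justify the existence, smoothness and geometric irreducibility of $\mathrm{Res}_{E_0/E}(V_0)$ for quasi-projective $V_0$. With that amendment the proof is complete and matches the paper's.
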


\begin{proof}\footnote{This proof corrects an inaccuracy in \cite[Proof of Lemma 1.6 Part B]{GeyerJarden}.}
Let $V$ be a smooth absolutely irreducible variety defined over $F$
with $V(F^\prime)\neq\emptyset$ for all $F^\prime\in\CC_\mfp(F)$, $\mfpinS$.
Since by Proposition \ref{varietyreal} and Proposition \ref{varietypadic}, $V(F^\prime)$ is in fact Zariski-dense in $V$ for all $F^\prime\in\CC_\mfp(F)$, $\mfpinS$,
we can assume without loss of generality that $V$ is affine.
Since $F/E$ is algebraic, $V$ is defined over a finite subextension $F_0$ of $F/E$. 
Let $W={\rm res}_{F_0/E}(V)$ be the Weil restriction of $V$
and let $F_1$ be the Galois closure of $F_0/E$.
Then $W$ is a variety defined over $E$
and there are $\sigma_1,\dots,\sigma_n\in\Gal(E)$ with $\sigma_1={\rm id}_{\tilde{E}}$
such that 
$W$ is isomorphic over $F_1$ to $\prod_{i=1}^n V^{\sigma_i}$,
and the projection onto the first factor
$W\rightarrow V^{\sigma_1}= V$ is defined over $F_0$,
cf.~\cite[10.6.2]{FJ3}.
Since $V$ is smooth, also $W$ is smooth.

Since $E\subseteq F\subseteq\KtotS$ and $E\subseteq F_1\subseteq\KtotS$,
Lemma~\ref{KtotSmaxtotS}(2) implies that $\CC_\mfp(E)=\CC_\mfp(F)=\CC_\mfp(F_1)$ for all $\mfpinS$.
In particular, if $E^\prime\in\CC_\mfp(E)$, then
$F_1\subseteq E^\prime$.
Let $E^\prime\in\CC_\mfp(E)$. 
Then $\sigma_i^{-1}(E^\prime)\in\CC_\mfp(E)=\CC_\mfp(F)$,
so $V(\sigma_i^{-1}(E^\prime))\neq\emptyset$ by assumption.
Thus $V^{\sigma_i}(E^\prime)\neq\emptyset$ for all $i$,
and therefore $W(E^\prime)\neq\emptyset$, as $F_1\subseteq E^\prime$.
Since $E$ is $\PSCC$, $W(E)\neq\emptyset$,
so in particular $W(F)\neq\emptyset$.
Hence, since $F_0\subseteq F$,
it follows that
$V(F)\neq\emptyset$, as claimed.
\end{proof}

\begin{Definition}
If $\bfsigma=(\sigma_1,\dots,\sigma_e)\in\Gal(K)^e$,
we denote by $\tilde{K}(\bfsigma)$ 
the fixed field of the group
$\left<\sigma_1,\dots,\sigma_e\right>\leq\Gal(K)$ in $\tilde{K}$,
and let $\KtotS(\bfsigma)=\KtotS\cap\tilde{K}(\bfsigma)$.
We say that a statement holds for {\bf almost all} $\bfsigma\in\Gal(K)^e$
if the set of those $\bfsigma\in\Gal(K)^e$ for which it holds
has measure $1$ with respect to the unique Haar probability measure on $\Gal(K)^e$.
\end{Definition}

\begin{Proposition}[Geyer-Jarden]\label{JardenRazon}\label{GeyerJarden}
Let $\mfS$ be a finite set of local primes of a countable Hilbertian field $K$
of characteristic zero,
and let $e\geq0$.
Then for almost all $\bfsigma\in\Gal(K)^e$,
the field $\KtotS(\bfsigma)$ is $\PSCC$.
\end{Proposition}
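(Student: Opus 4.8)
\medskip

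\noindent\textbf{A proof proposal.}
This is the totally $\mfS$-adic local--global theorem of Geyer--Jarden \cite{GeyerJarden} (see also Jarden--Razon \cite{JardenRazon}), and in the paper one would cite it after observing that the settings agree (finitely many \emph{local} primes, characteristic zero, $K$ countable Hilbertian). Here is the plan of proof. The first step is to reduce to varieties defined over the base field. Any smooth absolutely irreducible variety over $\KtotS(\bfsigma)$ is already defined over a finite subextension $L$ of $\KtotS(\bfsigma)/K$, and any such $L$ lies in $\KtotS$; by Lemma~\ref{KtotSmaxtotS}(2) one has $\CC_\mfp(\KtotS(\bfsigma))=\CC_\mfp(K)$ for every $\mfpinS$, so the local hypothesis in the definition of $\PSCC$ becomes ``$V(K')\neq\emptyset$ for all $K'\in\CC_\mfp(K)$'', a condition on the pair $(L,V)$ alone, and since $\KtotS$ is countable there are only countably many such pairs. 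For $\bfsigma\in\Gal(K)^e$ the condition $L\subseteq\KtotS(\bfsigma)$ is equivalent to $\bfsigma\in\Gal(L)^e$, a set of positive Haar measure on which the Haar measure of $\Gal(K)^e$ is proportional to that of $\Gal(L)^e$, and relative to $L$ the field $\KtotS$ is again the field of totally $\mfS$-adic numbers (Lemma~\ref{KtotSmaxtotS}(2)). Hence, by a countable-union argument and the usual reduction to smooth affine varieties (using Zariski-density of local points, Proposition~\ref{varietyreal} and Proposition~\ref{varietypadic}), the general statement follows from its special case in which $V$ is defined over the base field $K$ itself. (Lemma~\ref{PSCCextension} can be used to shortcut parts of this reduction.)

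\medskip

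For this special case I would follow Jarden's method for the PAC-ness of $\tilde K(\bfsigma)$, adapted to incorporate the local conditions. Inducting on $\dim V$ and using a Bertini-type theorem together with Hilbert irreducibility, one reduces to the case where $V=C$ is a smooth affine curve over $K$ carrying a finite map $x\colon C\to\mathbb A^1_K$ of some fixed degree $d$ with geometrically irreducible generic fibre; the curve section has to be chosen so that $C$ still has a $K'$-point for every $K'\in\CC_\mfp(K)$, $\mfpinS$, which is possible because the local points are dense not only Zariski but also in the $\mfp$-adic topology on $V(K')$ and $V$ is smooth, so a section through (a perturbation of) a smooth $K'$-point keeps a $K'$-point --- and up to $\Gal(K)$-conjugacy only the finitely many fixed closures $K_\mfp$ need be considered, since $C(K')\neq\emptyset$ for all $K'\in\CC_\mfp(K)$ is equivalent to $C(K_\mfp)\neq\emptyset$. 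Since $K$ is Hilbertian there is a Hilbert subset $H$ of $\mathbb A^1(K)$ over which the fibre $C_a=x^{-1}(a)$ is $\mathrm{Spec}(K_a)$ for a field extension $K_a/K$ of degree $d$. If for some $a\in H$ one has $K_a\subseteq\KtotS$ and $\langle\bfsigma\rangle$ fixes a conjugate of $K_a$, then $K_a$ embeds into $\KtotS(\bfsigma)$ and so $C$, hence $V$, acquires a $\KtotS(\bfsigma)$-point. Thus everything reduces to the measure identity $\mu\bigl(\bigcup_a\{\bfsigma:\langle\bfsigma\rangle\text{ fixes a conjugate of }K_a\}\bigr)=1$, where $a$ ranges over those elements of $H$ with $K_a\subseteq\KtotS$.

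\medskip

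Two ingredients establish this identity, and together they form the main obstacle. First, the local hypothesis must be used to make the set of good specialization values $a\in H$ --- those with $K_a\subseteq\KtotS$, i.e.\ with the Galois closure of $K_a$ totally $\mfS$-adic, i.e.\ (Proposition~\ref{totallySadic}) with $K_a$ splitting completely at every $\mfpinS$ --- abundant, in fact Hilbert-dense; here one combines Hilbert irreducibility with a prescription of the behaviour of the specialization at the finitely many primes of $\mfS$, using the density of $C(K')$ and the implicit function theorem over the real resp.\ $p$-adically closed closures $K'$ together with the $\mfS$-SAP property of $K$ (Proposition~\ref{PSCC:SAP}, applicable since $K/K$ is trivially algebraic) to realize the required simultaneous approximation, and choosing the map $x$ compatibly with the local pictures so that a fully split fibre is available over each $K_\mfp$. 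Second, for $e\geq1$ one selects inductively an infinite sequence of such good values whose residue fields have linearly disjoint Galois closures over $K$; then the corresponding events become independent, each has probability bounded below by a constant depending only on $d$ and $e$ (a group-theoretic estimate inside the transitive Galois group of $K_a/K$, a subgroup of the symmetric group on $d$ letters), and a Borel--Cantelli argument yields measure $1$. For $e=0$ the measure argument is vacuous and the statement degenerates to ``$\KtotS$ satisfies the local--global principle'', which follows from the existence of a single good $a$ as above. I expect the delicate part to be the first ingredient: arranging the curve section and the map $x$ so that the Hilbert-abundant specializations have totally $\mfS$-adic residue fields while still inheriting the variety's local $K'$-points --- this compatibility of Hilbert irreducibility with the local splitting conditions is the technical heart, and is precisely what Geyer--Jarden carry out.
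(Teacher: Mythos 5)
Your plan takes a genuinely different route from the paper's. The paper treats Geyer--Jarden's Theorem A as a black box: that theorem applies to the maximal Galois extension $M=\KtotS[\bfsigma]$ of $K$ inside $\KtotS(\bfsigma)$ and gives a local--global principle with respect to a slightly larger family $\mcS_\mfp'(M)$ of \emph{arbitrary} extensions of $\mfp$ to $M$; the paper then observes, using Lemma~\ref{KtotSmaxtotS}(1) and the fact that $M/K$ is Galois, that $\mcS_\mfp'(M)=\mcS_\mfp(M)$, so $M$ is $\PSCC$, and finally passes from $M$ to $\KtotS(\bfsigma)$ by Lemma~\ref{PSCCextension}. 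You instead reconstruct the Jarden--Razon/Geyer--Jarden argument itself: you reduce to $K$-varieties by a countable-union argument over finite subextensions $L\subseteq\KtotS$ (valid since $\KtotS$ is countable and $\CC_\mfp(L)=\CC_\mfp(K)$), reduce further to curve covers of $\mathbb{A}^1_K$ via Bertini, and then run Hilbert irreducibility with prescribed local behaviour followed by Borel--Cantelli. Both routes land on the same technical core --- producing Hilbert-abundant specializations $a$ whose residue fields $K_a$ are totally $\mfS$-adic while the fibre $C_a$ inherits local points --- which you explicitly defer to Geyer--Jarden, so your sketch is a longer re-derivation of the very ingredient the paper imports. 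What the paper's route buys is economy and modularity (the only fresh step beyond the citation is Lemma~\ref{PSCCextension}, which replaces your countable-union reduction); what yours buys is a direct measure-theoretic skeleton for $\KtotS(\bfsigma)$ itself, with no detour through the Galois subfield and no reconciliation of $\mcS_\mfp'$ against $\mcS_\mfp$. Two points to tighten if you were to carry this out: the fixed-point probability bound should simply be $\geq d^{-e}$ for a transitive degree-$d$ action (probability of fixing a specified point), and note that ``$K_a$ totally $\mfS$-adic'' forces the \emph{entire} fibre $C_a$ to become $K_\mfp$-rational for each $\mfp$, which is strictly stronger than having one nearby $K_\mfp$-point --- this is exactly what makes the choice of the cover $x\colon C\to\mathbb{A}^1_K$ delicate, as you flag.
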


\begin{proof}
By \cite[Theorem A]{GeyerJarden},
for almost all $\bfsigma\in\Gal(K)^e$, 
the maximal Galois extension
$M$ of $K$ inside $\KtotS(\bfsigma)$
satisfies a local-global principle with respect to the sets
$\mcS_\mfp'(M)$ of orderings and {\em arbitrary} valuations on $M$ lying over $\mfpinS$.
However, since $M/K$ is totally $\mfS$-adic by Lemma~\ref{KtotSmaxtotS}(1), 
and all extensions of $\mfp$ to $M$ are conjugate since $M/K$ is Galois,
in fact $\mcS_\mfp'(M)=\mcS_\mfp(M)$.
In other words, $M$ is $\PSCC$.
Since $K\subseteq M\subseteq\KtotS(\bfsigma)\subseteq\KtotS$,
the claim follows 
from Lemma~\ref{PSCCextension}.
\end{proof}

\begin{Proposition}[Haran--Jarden--Pop]\label{GalQtotpSigma}
Let $\mfS$ be a finite set of local primes of a countable Hilbertian field $K$
of characteristic zero,
and let $e\geq 0$.
Then for almost all $\bfsigma\in\Gal(K)^e$,
$\GalS(\KtotS(\bfsigma))$ 
is isomorphic to the deficient reduct of the $e$-free
semi-constant group pile of $(\Gal(K_\mfp))_\mfpinS$
over perfect profinite spaces $(T_\mfp)_\mfpinS$.
\end{Proposition}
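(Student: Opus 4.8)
The plan is to deduce this from the work of Haran--Jarden--Pop together with the group-theoretic framework developed above. First I would recall that by \cite{HJPe} (and its predecessor \cite{HJPd}), for almost all $\bfsigma\in\Gal(K)^e$ one has a description of the absolute Galois group of $\KtotS(\bfsigma)$, enriched with the local decomposition data, as a free profinite product of local factors over the relevant spaces of primes. The content of the proposition is to package this description in the language of group piles: namely that the pile $\GalS(\KtotS(\bfsigma)) = (\Gal(\KtotS(\bfsigma)),\mcG_\mfp)_\mfpinS$ coincides, up to isomorphism of group piles, with $\mbG_T^{\rmdef}$, where $\mbG_T$ is the $e$-free semi-constant group pile built from the data $(\Gal(K_\mfp))_\mfpinS$ over suitable perfect profinite spaces $(T_\mfp)_\mfpinS$.

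The key steps, in order. \emph{Step 1:} Identify the local factors. By Lemma~\ref{KtotSmaxtotS}(2), for $L = \KtotS(\bfsigma)$ we have $\CC_\mfp(L) = \CC_\mfp(K)$, so the members $\Gal(F')$, $F'\in\CC_\mfp(L)$, of $\mcG_\mfp$ are (by Lemma~\ref{CCsubfield}) all isomorphic to $\Gal(K_\mfp)$; this matches the local group $\Gamma_\mfp = \Gal(K_\mfp)$ in the semi-constant construction. \emph{Step 2:} Identify the base group. By a Gaschütz-type argument (the Borel--Cantelli / Jarden-style measure computation underlying \cite[Theorem A]{GeyerJarden} and the analogous statements in \cite{HJPe}), for almost all $\bfsigma$ the quotient $\bar G = \Gal(\KtotS(\bfsigma))/\langle\mcG\rangle$ is free of rank $e$, i.e.\ $\cong\hat F_e = \Gamma_0$; here one uses that $e$-tuples of independent Haar-random elements topologically generate the relevant profinite quotients with probability $1$. \emph{Step 3:} Identify the space of primes. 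For each $\mfpinS$, the set $\mcS_\mfp(\KtotS(\bfsigma))$ of primes above $\mfp$ carries a natural profinite topology (it is a closed subspace of $\mathrm{Subgr}(\Gal(\KtotS(\bfsigma)))$ by Proposition~\ref{SadicGal}), and one shows it is perfect (no isolated points) for almost all $\bfsigma$ — this again is a genericity statement, reflecting that no single prime of $K$ splits into only finitely many primes in a generic subfield. Call this space $T_\mfp$. \emph{Step 4:} Verify the universal property. The semi-constant group pile $\mbG_T$ is characterized (via \cite[Proposition 5.3]{HJPd}, as used in Proposition~\ref{constantCantor}) by being $e$-bounded, self-generated, having the prescribed local data, and solving the appropriate class of locally solvable embedding problems; by Proposition~\ref{JardenRazon} the field $\KtotS(\bfsigma)$ is $\PSCC$ for almost all $\bfsigma$, and combining this with Lemma~\ref{rigid} (rigidity of restriction maps for totally $\mfS$-adic regular extensions) and the embedding theorem (Proposition~\ref{PSCEmbedding}) one checks that $\GalS(\KtotS(\bfsigma))^{\rmdef}$ — equivalently $\GalS(\KtotS(\bfsigma))$, which is already deficient by Proposition~\ref{SadicGal} — satisfies exactly these embedding-problem solvability properties. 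An isomorphism then follows by the usual inverse-limit / back-and-forth argument over finite quotients, matching the two piles one finite level at a time using Lemma~\ref{GaschuetzPiles} and Lemma~\ref{fiberproduct}.

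\textbf{Main obstacle.} The delicate point is Step 2 combined with Step 4: one must verify, with probability $1$, not merely that the finite quotients behave correctly but that the \emph{global} pile $\GalS(\KtotS(\bfsigma))$ is genuinely self-generated and $e$-bounded with base group $\hat F_e$, which is where the Haar-measure input of Geyer--Jarden and Haran--Jarden--Pop is essential and cannot be replaced by soft arguments. In practice this proposition is precisely the translation of \cite[main theorem]{HJPe} into the present language, so the honest approach is to cite that result and do only the dictionary work of Steps 1, 3 and the identification of the universal property in Step 4; the rest of the argument in the paper (decidability, axiomatization) then proceeds by treating $\GalS(\KtotS(\bfsigma))$ as the deficient reduct of $\mbG_T$ and invoking Proposition~\ref{constantCantor} to conclude it is an $e$-free C-pile.
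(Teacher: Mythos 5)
The paper's proof is a precise citation chain entirely through [HJPd]: Proposition~12.3 of that work gives that $M=\KtotS(\bfsigma)$ satisfies condition~(1) of \S10 there for almost all $\bfsigma$; the proof of Proposition~11.2 there then identifies the augmented pile $\mathbf{Gal}(M,\mfS)$ as a \emph{Cantor group pile over} $(\Gal(K_\mfp))_\mfpinS$; and Corollary~6.2 together with Proposition~6.3 of [HJPd] give that every such Cantor group pile is isomorphic to the group pile $\mathbf{G}_T$ of Proposition~5.3 there, i.e.\ the $e$-free semi-constant group pile over Cantor spaces. One then only needs that Cantor spaces are perfect and that passing to the deficient reduct recovers $\GalS(M)$. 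Your proposal takes a genuinely different route, attempting a conceptual reconstruction (Steps 1--4) rather than invoking this citation chain, and your closing paragraph cites [HJPe] rather than [HJPd].

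The genuine gap is in Step~4. You claim that the semi-constant pile is ``characterized'' by being $e$-bounded, self-generated, having the right local factors, and satisfying the embedding-problem solvability of Proposition~\ref{constantCantor}, and that one can then match $\GalS(\KtotS(\bfsigma))$ to it by a ``usual inverse-limit / back-and-forth argument''. This does not work as stated: being an $e$-free C-pile is an embedding-problem property that does \emph{not} pin a group pile down up to isomorphism, and the isomorphism of two infinite-rank group piles from such finitary data is precisely the nontrivial content of the uniqueness theorem for Cantor group piles ([HJPd, Cor.~6.2, Prop.~6.3]). A back-and-forth succeeds only after one has the carefully formulated extra hypotheses (the Cantor group pile axioms, including the structure of the spaces $\mcG_\mfp$ as Boolean spaces with the relevant étale-like structure), and establishing that the pile of a random $\KtotS(\bfsigma)$ satisfies \emph{those} axioms is exactly the measure-theoretic input of [HJPd, Prop.~12.3] and [HJPd, proof of Prop.~11.2], which you do not quite reach via the Geyer--Jarden PSCC theorem alone. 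Your Step~3 also slightly misreads the role of $T_\mfp$: it is an input to the construction, supplied as a Cantor space in [HJPd, Prop.~5.3], not something to be identified a posteriori inside $\GalS(\KtotS(\bfsigma))$; the paper only needs the observation that Cantor spaces are perfect.

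In short: your instinct in the final paragraph (cite the HJP theorem and do dictionary work) is exactly right, and Steps 1 and the deficient-reduct remark are fine, but the reconstruction in Steps 2--4 is too soft to stand on its own, and the correct reference for this particular statement as used in this paper is [HJPd], not [HJPe].
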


\begin{proof}
This is proven in \cite{HJPd}.
Indeed, by \cite[Prop.~12.3]{HJPd}, for almost all $\bfsigma\in\Gal(K)^e$,
the field $M=\KtotS(\bfsigma)$ satisfies condition (1) of \S10 of that work.
In the proof of \cite[Prop.~11.2]{HJPd} it is proven that
in this case 
${\bf Gal}(M,\mfS):=(G,\Gal(\tilde{K}(\bfsigma))^G,\mcG_\mfp)_\mfpinS$,
where $\GalS(M)=(G,\mcG_\mfp)_\mfpinS$,
is a so called 
\textquoteleft Cantor group pile over $(\Gal(K_\mfp))_\mfpinS$\textquoteright.
By \cite[Cor.~6.2]{HJPd} and \cite[Prop.~6.3]{HJPd}, every Cantor group pile over $(\Gal(K_\mfp))_\mfpinS$
is isomorphic to the group pile $\mathbf{G}_T$ of \cite[Prop.~5.3]{HJPd},
which is exactly the $e$-free semi-constant group pile of 
$(\Gal(K_\mfp))_\mfpinS$
over Cantor spaces $(T_\mfp)_\mfpinS$. 
Since Cantor spaces are perfect and the deficient reduct of ${\bf Gal}(M,\mfS)$ is $\GalS(M)$,
the claim follows.
\end{proof}

\section{Axiomatization}
\label{sec:axiomatization}

\noindent
We now present the axiomatization of the theory of almost all $\KtotS(\bfsigma)$.

\begin{Setting}
Let $\mfS$ be a finite set of local primes of a countable Hilbertian field $K$ of characteristic zero.
\end{Setting}

\begin{Definition}\label{DefZKR}
For a set $S\subseteq\tilde{K}$ let
$N_K(S) = \left\{ f\in K[X] \colon f\mbox{ has no root in }S \right\}$.
\end{Definition}

\begin{Definition}\label{DefTalg}
Let the $\mcLr(K)$-theory $T_{{\rm alg},\mfS}$ consist of the following sentences:
\begin{enumerate}
 \item For each $f\in N_K(\KtotS)$
    the sentence
$$
\neg(\exists x)(f(x)=0).
$$
 \item For each $\mfpinS$ and each $f\in N_{K}(R_\mfp(\KtotS))$
   the sentence
$$
 \neg(\exists x)(\varphi_{{\rm R},\mfp}(x)\wedge f(x)=0),
$$
where $\varphi_{{\rm R},\mfp}$ is the formula of Proposition~\ref{PSCC:holomorphy}.
\end{enumerate}
\end{Definition}

\begin{Lemma}\label{Kochen}
A $\PSCC$ field $F\supseteq K$ is a model of $T_{{\rm alg},\mfS}$
if and only if
$F\cap\tilde{K}\subseteq\KtotS$ and $F/F\cap\tilde{K}$ is totally $\mfS$-adic.
\end{Lemma}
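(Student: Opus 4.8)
Fix a $\PSCC$ field $F\supseteq K$ and write $E=F\cap\tilde K$ for its relative algebraic part. I would prove the two directions separately, unwinding the two families of sentences in $T_{{\rm alg},\mfS}$ in terms of membership/non-membership of roots.

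\textbf{The sentences of type (1) encode $E\subseteq\KtotS$.} First note that $f\in N_K(\KtotS)$ means $f$ has no root in $\KtotS$, so the sentence $\neg(\exists x)(f(x)=0)$ asserts that $f$ has no root in $F$, equivalently (since the roots of $f\in K[X]$ lie in $\tilde K$) no root in $E=F\cap\tilde K$. Thus $F\models$ all sentences of type (1) if and only if: every $f\in K[X]$ with a root in $E$ also has a root in $\KtotS$. Since $\KtotS$ is a Galois extension of $K$ (being the maximal Galois extension in which the primes of $\mfS$ split), a monic irreducible $f\in K[X]$ having one root in $\KtotS$ has all of them there; so this condition says precisely that every element of $E$ is a root of some $f\in K[X]$ that splits a root off into $\KtotS$, i.e.\ that $E\subseteq\KtotS$. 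Conversely if $E\subseteq\KtotS$ then any $f$ with a root in $E$ has a root in $\KtotS$, so $f\notin N_K(\KtotS)$ and there is no corresponding sentence to violate. This settles the equivalence between type-(1) sentences and the condition $E\subseteq\KtotS$.

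\textbf{The sentences of type (2) encode total $\mfS$-adicity of $F/E$, once $E\subseteq\KtotS$ is known.} By Proposition~\ref{PSCC:holomorphy}, since $F$ is $\PSCC$, $\varphi_{{\rm R},\mfp}$ defines $R_\mfp(F)$ in $F$; hence the sentence in (2) asserts that no $f\in N_K(R_\mfp(\KtotS))$ has a root in $R_\mfp(F)$. Now $f\in N_K(R_\mfp(\KtotS))$ means $f$ has no root in $R_\mfp(\KtotS)$, and the roots of $f\in K[X]$ lie in $\tilde K$, so this is about $R_\mfp(\KtotS)\cap\tilde K=R_\mfp(K)$ by Lemma~\ref{KtotSmaxtotS}(3) (applied with $L=\KtotS$; more precisely $R_\mfp(\KtotS)=\bigcap_{K'\in\CC_\mfp(K)}R_\mfp(K')$ and intersecting with $\tilde K$ gives $R_\mfp(K)$ since $\CC_\mfp(\KtotS)=\CC_\mfp(K)$). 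So $F\models$ all sentences of type (2) for a fixed $\mfp$ if and only if every $f\in K[X]$ with a root in $R_\mfp(F)$ has a root in $R_\mfp(K)$, equivalently (the roots being algebraic over $K$) $R_\mfp(F)\cap\tilde K\subseteq R_\mfp(K)$, i.e.\ $R_\mfp(E)\subseteq R_\mfp(K)$ where I use $R_\mfp(E)=R_\mfp(F)\cap E$ (this identity holds because $E$ is relatively algebraically closed in $F$ and the holomorphy rings are cut out by the primes over $\mfp$; alternatively cite Lemma~\ref{KtotSmaxtotS}(3) once we know $E\subseteq\KtotS$). Since $F$ is $\PSCC$, hence $\mfS$-quasi-local and $\mfS$-SAP (Proposition~\ref{PSCC:quasilocal}), and since $E\subseteq F$ is $\mfS$-quasi-local as well (being algebraic over $K$, Proposition~\ref{PSCC:quasilocal} applied to $E/K$) and $\mfS$-SAP, Proposition~\ref{totallySadic} applies: $R_\mfp(F)\cap E\subseteq R_\mfp(E)$ for all $\mfpinS$ is equivalent to $F/E$ being totally $\mfS$-adic. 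Combining: assuming $E\subseteq\KtotS$, we have $R_\mfp(E)=R_\mfp(K)$ for all $\mfpinS$ by Lemma~\ref{KtotSmaxtotS}(3), so the condition ``$R_\mfp(F)\cap E\subseteq R_\mfp(E)$'' is exactly ``$R_\mfp(F)\cap E\subseteq R_\mfp(K)$'', which is exactly what type-(2) sentences assert; and by Proposition~\ref{totallySadic} this is equivalent to $F/E$ being totally $\mfS$-adic.

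\textbf{Putting it together.} If $F\models T_{{\rm alg},\mfS}$, then by the type-(1) analysis $E=F\cap\tilde K\subseteq\KtotS$, and then by the type-(2) analysis together with Proposition~\ref{totallySadic} the extension $F/E$ is totally $\mfS$-adic. Conversely, if $E\subseteq\KtotS$ and $F/E$ is totally $\mfS$-adic, then no type-(1) sentence is violated (as shown), and by Lemma~\ref{KtotSmaxtotS}(3) we get $R_\mfp(E)=R_\mfp(K)$, so total $\mfS$-adicity of $F/E$ gives $R_\mfp(F)\cap E\subseteq R_\mfp(K)$ via Proposition~\ref{totallySadic}, whence no type-(2) sentence is violated. \textbf{The main obstacle} I anticipate is the bookkeeping of holomorphy rings along the algebraic extension: carefully justifying $R_\mfp(F)\cap\tilde K=R_\mfp(F)\cap E=R_\mfp(E)$ and identifying $R_\mfp(\KtotS)\cap\tilde K$ with $R_\mfp(K)$, using Lemma~\ref{KtotSmaxtotS}(2)--(3) and the fact that $E$ is relatively algebraically closed in $F$; once these identifications are in place the rest is a direct translation between the syntactic sentences and the field-theoretic conditions, with Proposition~\ref{totallySadic} supplying the only nontrivial equivalence.
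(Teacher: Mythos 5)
Your overall structure is the same as the paper's (type-(1) sentences encode $E\subseteq\KtotS$, type-(2) sentences plus Proposition~\ref{totallySadic} give total $\mfS$-adicity), and the type-(1) analysis is correct and appropriately uses that $\KtotS/K$ is Galois. However, the type-(2) analysis contains a genuine error.

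You assert $R_\mfp(\KtotS)\cap\tilde K=R_\mfp(K)$, citing Lemma~\ref{KtotSmaxtotS}(3). This is false: since $R_\mfp(\KtotS)\subseteq\KtotS\subseteq\tilde K$, intersecting with $\tilde K$ does nothing, so $R_\mfp(\KtotS)\cap\tilde K=R_\mfp(\KtotS)$. What Lemma~\ref{KtotSmaxtotS}(3) actually gives is $R_\mfp(K)=K\cap R_\mfp(\KtotS)$, intersection with $K$, not with $\tilde K$. The downstream claim $R_\mfp(E)=R_\mfp(K)$ for an intermediate field $K\subseteq E\subseteq\KtotS$ is likewise false; the correct statement is $R_\mfp(E)=E\cap R_\mfp(\KtotS)$, which in general is strictly larger than $R_\mfp(K)$ whenever $E\neq K$. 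As a result, the translation you extract from the type-(2) sentences, namely ``$R_\mfp(F)\cap\tilde K\subseteq R_\mfp(K)$'', is too strong and not what the axioms say: $T_{{\rm alg},\mfS}$(2) asserts that every $f\in K[X]$ with a root in $R_\mfp(F)$ has a root in $R_\mfp(\KtotS)$, not in $R_\mfp(K)$.

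The correct step, which your argument is missing, uses that $R_\mfp(\KtotS)$ is $\Gal(K)$-invariant. Given $\alpha\in R_\mfp(F)\cap\tilde K$, its minimal polynomial $f$ has, by the axiom, some root $\beta\in R_\mfp(\KtotS)$. Since $\alpha$ and $\beta$ are $K$-conjugate, $\alpha\in L=F\cap\tilde K\subseteq\KtotS$ (from the type-(1) part), and $R_\mfp(\KtotS)$ is $\Gal(K)$-invariant, one concludes $\alpha\in R_\mfp(\KtotS)$. This gives $R_\mfp(F)\cap\tilde K\subseteq R_\mfp(\KtotS)$, hence $R_\mfp(F)\cap L\subseteq R_\mfp(\KtotS)\cap L=R_\mfp(L)$ by the genuine content of Lemma~\ref{KtotSmaxtotS}(3), and then Proposition~\ref{totallySadic} applies. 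The converse direction has the same defect: you need $R_\mfp(L)=L\cap R_\mfp(\KtotS)$, not $R_\mfp(L)=R_\mfp(K)$, to conclude that a root of $f$ in $R_\mfp(F)$ lands in $R_\mfp(\KtotS)$. With these corrections the skeleton you chose does go through and coincides with the paper's proof.
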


\begin{proof}
Since $F$ is $\PSCC$, 
$\varphi_{{\rm R},\mfp}(F)=R_\mfp(F)$
for each $\mfpinS$
by Proposition~\ref{PSCC:holomorphy}.
Let $L=F\cap\tilde{K}$.
Since $L/K$ is algebraic,
$L$ is $\mfS$-SAP by Proposition~\ref{PSCC:SAP}.

Suppose that $F$ satisfies $T_{{\rm alg},\mfS}$.
By (1), $L\subseteq\KtotS$.
If $\mfpinS$,
then
$R_\mfp(L)=R_\mfp(\KtotS)\cap L$
by Lemma~\ref{QtotpKochen}.
Since $R_\mfp(\KtotS)$ is $\Gal(K)$-invariant, 
(2) implies that $R_\mfp(F)\cap\tilde{K}\subseteq R_\mfp(\KtotS)$.
Therefore, $R_\mfp(F)\cap L \subseteq R_\mfp(\KtotS)\cap L = R_\mfp(L)$,
so $F/L$ is totally $\mfS$-adic by Proposition~\ref{totallySadic}.

Conversely, suppose that
$L\subseteq\KtotS$ and $F/L$ is totally $\mfS$-adic.
Since $L\subseteq\KtotS$, $F$ satisfies (1).
By Proposition~\ref{totallySadic}, $R_\mfp(F)\cap L=R_\mfp(L)$.
So since $R_\mfp(F)\cap L=R_\mfp(F)\cap \tilde{K}$ and
$R_\mfp(L)=R_\mfp(\KtotS)\cap L$
by Lemma~\ref{QtotpKochen},
$F$ satisfies (2).
\end{proof}

\begin{Definition}\label{DefTheory}
Let the $\mcLr(K)$-theory
$T_{{\rm tot},\mfS,e}$
consist of the following axioms:
\begin{enumerate}
 \item[(0)] The axioms for fields and the positive diagram of $K$, cf.~\cite[7.3.1]{FJ3}.
 \item[(1)] The theory $T_{\PSCC}$ (Proposition \ref{thm:PSCC1}).
 \item[(2)] The theory $T_{{\rm C},\mfS,e}^{\rm ring}$ (Definition \ref{DefTCantorring}).
 \item[(3)] The theory $T_{{\rm alg},\mfS}$ (Definition \ref{DefTalg}).
\end{enumerate}
\end{Definition}

\begin{Lemma}\label{LemmaTheory}
A field $F\supseteq K$ is a model of $T_{{\rm tot},\mfS,e}$ 
if and only if it satisfies the following conditions:
\begin{enumerate}
\item $F$ is $\PSCC$.
\item $\GalS(F)$ is an $e$-free C-pile.
\item $F\cap\tilde{K}\subseteq\KtotS$ and $F/F\cap\tilde{K}$ is totally $\mfS$-adic.
\end{enumerate}
\end{Lemma}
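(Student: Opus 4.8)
The plan is to prove the two implications by matching each defining axiom group of $T_{{\rm tot},\mfS,e}$ with the corresponding property in the list, using the translation results established earlier. The key observation is that this lemma is essentially a bookkeeping statement: axiom (1) is literally characterized by Proposition~\ref{thm:PSCC1}, axiom (3) is literally characterized by Lemma~\ref{Kochen}, and axiom (2) needs Corollary~\ref{interpretCDM} together with Proposition~\ref{AxCantor} to be translated into the statement that $\GalS(F)$ is an $e$-free C-pile. The only subtlety is that the hypotheses needed to apply these translation results have to be available, and for a $\PSCC$ field they are.

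First I would record that, by Proposition~\ref{thm:PSCC1}, $F\supseteq K$ satisfies the axioms (0) together with $T_{\PSCC}$ if and only if $F$ is a field extension of $K$ that is $\PSCC$; this gives the equivalence of axiom group (1) (plus (0)) with condition~(1) of the lemma. Next, assuming $F$ is $\PSCC$, Lemma~\ref{Kochen} states exactly that $F\models T_{{\rm alg},\mfS}$ if and only if $F\cap\tilde{K}\subseteq\KtotS$ and $F/F\cap\tilde{K}$ is totally $\mfS$-adic; so axiom group (3) corresponds to condition~(3) of the lemma. It remains to handle axiom group (2). Here I would note that a $\PSCC$ field $F$ is $\mfS$-quasi-local by Proposition~\ref{PSCC:quasilocal} and that the holomorphy rings $R_\mfp(F)$ are $K$-definable by Proposition~\ref{PSCC:holomorphy}, so Corollary~\ref{interpretCDM} applies: for every cosentence $\varphi$ we have $F\models\varphi_{\rm ring}$ iff $S(\GalS(F))\models\varphi$. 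Applying this to all of $T_{{\rm C},\mfS,e}^{\rm co}$, which by Definition~\ref{DefTCantorring} is precisely what $T_{{\rm C},\mfS,e}^{\rm ring}$ translates, we get that $F\models T_{{\rm C},\mfS,e}^{\rm ring}$ iff $\GalS(F)$ cosatisfies $T_{{\rm C},\mfS,e}^{\rm co}$. By Proposition~\ref{AxCantor}, the latter holds iff $\GalS(F)$ is an $e$-free C-pile, i.e.\ condition~(2) of the lemma.

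Assembling these three equivalences gives the lemma: $F\models T_{{\rm tot},\mfS,e}$ means $F$ satisfies (0), (1), (2), (3) of Definition~\ref{DefTheory}; (0)+(1) is equivalent to condition (1); once condition (1) holds, (2) is equivalent to condition (2) and (3) is equivalent to condition (3). Conversely, if $F$ satisfies conditions (1)--(3), then $F$ is $\PSCC$ (so it satisfies (0) and $T_{\PSCC}$), hence $\mfS$-quasi-local with definable $R_\mfp$, so condition (2) yields $F\models T_{{\rm C},\mfS,e}^{\rm ring}$ and condition (3) yields $F\models T_{{\rm alg},\mfS}$ by Lemma~\ref{Kochen}.

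The main point to be careful about is the logical dependency: the translation in Corollary~\ref{interpretCDM} and the applicability of Lemma~\ref{Kochen} both presuppose that $F$ is $\PSCC$, so the argument must first extract condition (1) from axiom groups (0)+(1) and only then invoke the other two translations. There is no real obstacle beyond making sure this ordering is respected and that all the cited propositions are quoted with the hypothesis ``$F$ is $\PSCC$'' (equivalently $\mfS$-quasi-local with $K$-definable holomorphy rings) in force; the proof is then a short chain of ``if and only if''s.

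\begin{proof}
By Proposition~\ref{thm:PSCC1}, a field $F\supseteq K$ satisfies axioms (0) and (1) of $T_{{\rm tot},\mfS,e}$ if and only if $F$ satisfies condition~(1), i.e.\ $F$ is $\PSCC$.

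Now assume that $F$ is $\PSCC$. By Proposition~\ref{PSCC:quasilocal}, $F$ is $\mfS$-quasi-local, and by Proposition~\ref{PSCC:holomorphy} the rings $R_\mfp(F)$ are $K$-definable. Hence Corollary~\ref{interpretCDM} applies, and by Definition~\ref{DefTCantorring} we have $F\models T_{{\rm C},\mfS,e}^{\rm ring}$ if and only if $S(\GalS(F))$ cosatisfies every cosentence in $T_{{\rm C},\mfS,e}^{\rm co}$, i.e.\ if and only if $\GalS(F)$ cosatisfies $T_{{\rm C},\mfS,e}^{\rm co}$. By Proposition~\ref{AxCantor}, this holds if and only if $\GalS(F)$ is an $e$-free C-pile, i.e.\ condition~(2). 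Moreover, by Lemma~\ref{Kochen}, $F\models T_{{\rm alg},\mfS}$ if and only if $F\cap\tilde{K}\subseteq\KtotS$ and $F/F\cap\tilde{K}$ is totally $\mfS$-adic, i.e.\ condition~(3).

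Combining these: if $F\models T_{{\rm tot},\mfS,e}$, then axioms (0) and (1) give condition~(1), and then axiom (2) gives condition~(2) and axiom (3) gives condition~(3). Conversely, if $F$ satisfies conditions~(1)--(3), then $F$ is a $\PSCC$ extension of $K$, so it satisfies axioms (0) and (1); being $\PSCC$ it is $\mfS$-quasi-local with $K$-definable holomorphy rings, so condition~(2) yields axiom (2) and condition~(3) yields axiom (3). Thus $F\models T_{{\rm tot},\mfS,e}$.
\end{proof}
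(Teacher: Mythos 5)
Your proof is correct and follows exactly the same route as the paper's proof, which is just a one-line citation of Proposition~\ref{thm:PSCC1}, Proposition~\ref{AxCantor} together with Corollary~\ref{interpretCDM}, and Lemma~\ref{Kochen} for the three conditions respectively. Your write-up additionally makes explicit the necessary ordering of the argument (first extract $\PSCC$ from axiom groups (0)+(1), then use $\mfS$-quasi-locality and definability of $R_\mfp$ to invoke the translation results), which the paper leaves implicit but is indeed the correct dependency structure.
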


\begin{proof}
See Proposition \ref{thm:PSCC1} for (1),
Proposition \ref{AxCantor} and Corollary \ref{interpretCDM} for (2),
and Lem\-ma~\ref{Kochen} for (3).
\end{proof}

\begin{Lemma}\label{mainlemmaSigma}
Let $K\subseteq L\subseteq E,F$ be fields such that
the following conditions are satisfied.
\begin{enumerate}
 \item $E$ and $F$ are models of $T_{{\rm tot},\mfS,e}$.
 \item $E/L$ and $F/L$ are regular and totally $\mfS$-adic.
 \item $E$ is countable and $F$ is $\aleph_1$-saturated.
 \item $L$ is $\mfS$-quasi-local.
\end{enumerate}
Then there exists an $L$-embedding $i\function E\rightarrow F$ with
$F/i(E)$ regular and totally $\mfS$-adic.
\end{Lemma}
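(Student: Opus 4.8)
The plan is to deduce this from the embedding theorem of Pop (Proposition~\ref{PSCEmbedding}), together with the characterization of the relevant group pile epimorphisms in Lemma~\ref{rigid} and the fact that $\GalS(E)$ is an $e$-free C-pile (Lemma~\ref{LemmaTheory}). The strategy is a back-and-forth / co-embedding argument at the level of the $\mfS$-adic Galois group piles, followed by an application of Pop's embedding theorem to realize the abstract group-theoretic map by an actual field embedding.

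First I would pass to a suitable $\aleph_1$-saturated elementary extension $F^*\succ F$ if needed (in fact $F$ is already $\aleph_1$-saturated by hypothesis~(3)), note by Corollary~\ref{cosaturated} that $\GalS(F)$ is $\aleph_1$-cosaturated, and observe that by Lemma~\ref{LemmaTheory}(2) it is an $e$-free C-pile. Next I would set up the group-theoretic datum: the restriction maps ${\rm res}_{\tilde E/\tilde L}\colon\GalS(E)\rightarrow\GalS(L)$ and ${\rm res}_{\tilde F/\tilde L}\colon\GalS(F)\rightarrow\GalS(L)$ are rigid epimorphisms of group piles by Lemma~\ref{rigid}, since $E/L$ and $F/L$ are regular and totally $\mfS$-adic and $L$ is $\mfS$-quasi-local (so in particular $\GalS(L)$ makes sense). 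Now I would build an epimorphism $\gamma\colon\GalS(F)\rightarrow\GalS(E)$ of group piles commuting with the two restrictions to $\GalS(L)$, i.e.\ solving the embedding problem $(\,{\rm res}_{\tilde F/\tilde L}\colon\GalS(F)\rightarrow\GalS(L)\,,\;{\rm res}_{\tilde E/\tilde L}\colon\GalS(E)\rightarrow\GalS(L)\,)$. This is a rigid embedding problem; it is deficient since all these group piles are deficient; it is $e$-generated because $\GalS(E)$ is an $e$-free C-pile hence $e$-generated; and $\rk(\GalS(E))=\rk(\Gal(E))\leq\aleph_0$ since $E$ is countable. By Proposition~\ref{Cantorsaturated}, applied to the $\aleph_1$-cosaturated $e$-free C-pile $\GalS(F)$, this embedding problem is solvable, yielding the desired $\gamma\colon\GalS(F)\rightarrow\GalS(E)$ with ${\rm res}_{\tilde E/\tilde L}\circ\gamma={\rm res}_{\tilde F/\tilde L}$.

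Having $\gamma$, I would apply Pop's embedding theorem: $L\supseteq K$, $E/L$ and $F/L$ are regular, $E$ is countable, and $F$ is $\aleph_1$-saturated and $\PSCC$ (Lemma~\ref{LemmaTheory}(1)). The underlying group homomorphism $\gamma\colon\Gal(F)\rightarrow\Gal(E)$ satisfies the compatibility ${\rm res}_{\tilde E/\tilde L}\circ\gamma={\rm res}_{\tilde F/\tilde L}|_{\Gal(F)}$, so Proposition~\ref{PSCEmbedding} produces an $L$-embedding $i\colon\tilde E\rightarrow\tilde F$ with $\gamma(\tau)=\tau|_{\tilde E}$ for all $\tau\in\Gal(F)$. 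Restricting $i$ to $E$ gives the sought $L$-embedding $E\rightarrow F$ (identifying $i(E)$ with $E$). Finally I would check that $F/i(E)$ is regular and totally $\mfS$-adic: the identity $\gamma(\tau)=\tau|_{\tilde E}$ means exactly that, under the identification, ${\rm res}_{\tilde F/\tilde{i(E)}}=\gamma$, which is an epimorphism of group piles; hence by Lemma~\ref{rigid} (direction $(1)\Rightarrow(3)$) $F/i(E)$ is regular and totally $\mfS$-adic. (One also uses here that $i(E)$ is $\mfS$-quasi-local, which holds since $i(E)\cong E$ is $\PSCC$, Proposition~\ref{PSCC:quasilocal}.)

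The main obstacle is the construction of $\gamma$ — getting it to be an \emph{epimorphism of group piles} (not merely of profinite groups) compatible with the restrictions to $\GalS(L)$. This is where the model theory of Section~5 and the C-pile machinery is essential: the embedding problem must be recognized as rigid (so that local solvability is automatic by Proposition~\ref{rigidlocally}) and as $e$-generated deficient, so that $\aleph_1$-cosaturatedness of $\GalS(F)$ together with Proposition~\ref{Cantorsaturated} applies. A secondary point requiring care is making sure that all the hypotheses of Pop's theorem are genuinely in place, in particular that the compatibility condition on Galois groups exactly matches the one produced by $\gamma$, and that the conclusion $\gamma(\tau)=\tau|_{\tilde E}$ translates back, via Lemma~\ref{rigid}, into regularity and total $\mfS$-adicity of $F/i(E)$.
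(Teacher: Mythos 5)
Your proposal matches the paper's proof: you assemble the same rigid $e$-generated deficient embedding problem from the two restriction maps (via Lemma~\ref{rigid}), solve it using $\aleph_1$-cosaturatedness of $\GalS(F)$ and Proposition~\ref{Cantorsaturated}, and then invoke Pop's embedding theorem (Proposition~\ref{PSCEmbedding}) to realize the resulting epimorphism $\gamma$ as an actual $L$-embedding, finishing with Lemma~\ref{rigid} again for the regularity and total $\mfS$-adicity of $F/i(E)$. The only minor extra care you take (correctly, and silently assumed in the paper) is observing that $i(E)\cong E$ is $\mfS$-quasi-local so that Lemma~\ref{rigid} applies in the final step.
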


\begin{proof}
By (1), 
$E$ and $F$ are $\PSCC$ (Lemma~\ref{LemmaTheory}(1)),
so in particular $\mfS$-quasi-local (Proposition \ref{PSCC:quasilocal}).
Let $\mbG=\GalS(F)$,
$\mbB=\GalS(E)$,
and
$\mbA=\GalS(L)$.
Also by (1), 
$\mathbf{G}$ and $\mathbf{B}$ are $e$-free C-piles (Lemma~\ref{LemmaTheory}(2)).
By (2) and (4),
the restriction maps ${\rm res}_{\tilde{F}/\tilde{L}}\function\mathbf{G}\rightarrow\mathbf{A}$ and
${\rm res}_{\tilde{E}/\tilde{L}}\function\mathbf{B}\rightarrow\mathbf{A}$
are rigid epimorphisms of group piles (Lemma~\ref{rigid}).
So $({\rm res}_{\tilde{F}/\tilde{L}},{\rm res}_{\tilde{E}/\tilde{L}})$
is a rigid $e$-generated deficient embedding problem for $\mathbf{G}$.

By (3), $E$ is countable, so $\mathbf{B}$ has countable rank,
and $F$ is $\aleph_1$-saturated,
so $\GalS(F)$ is $\aleph_1$-cosaturated by Corollary \ref{cosaturated}.
Hence, by Proposition~\ref{Cantorsaturated} 
there exists an epimorphism $\gamma\function\mathbf{G}\rightarrow\mathbf{B}$
such that ${\rm res}_{\tilde{E}/\tilde{L}}\circ\gamma={\rm res}_{\tilde{F}/\tilde{L}}$.
By Proposition~\ref{PSCEmbedding},
this gives an $L$-embedding
$i\function E\rightarrow F$ such that
$\gamma={\rm res}_{\tilde{F}/\widetilde{i(E)}}$.
Hence, since $\gamma$ is an epimorphism of group piles,
$F/i(E)$ is regular and totally $\mfS$-adic by Lemma~\ref{rigid}.
\end{proof}

The proof of the following proposition follows the proof of \cite[20.3.3]{FJ3}.

\begin{Proposition}\label{characterization}
Let $E,F\supseteq K$ be models of $T_{{\rm tot},\mfS,e}$
with $E\cap\tilde{K}\cong_K F\cap\tilde{K}$.
Then $E\equiv_K F$, i.e.\ $E$ and $F$ are elementarily equivalent in $\mathcal{L}_{\rm ring}(K)$.
\end{Proposition}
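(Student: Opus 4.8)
The plan is to imitate the proof of \cite[20.3.3]{FJ3}: a zig-zag construction fed by the embedding Lemma~\ref{mainlemmaSigma}, in which the ``limit stage'' is dealt with only once, at the very end, via the Tarski--Vaught elementary chain theorem. This way no transfinite recursion is needed, and in particular one never has to know that an infinite union of models of $T_{{\rm tot},\mfS,e}$ is again such a model (it is not, since $T_{{\rm tot},\mfS,e}$ is not inductive).

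First I would reduce to the countable case. As $K$ is countable, so is $\tilde K$, hence so are $E\cap\tilde K$ and $F\cap\tilde K$; by L\"owenheim--Skolem pick countable elementary submodels $E_0\prec E$, $F_0\prec F$ with $E\cap\tilde K\subseteq E_0$ and $F\cap\tilde K\subseteq F_0$. Then $E_0\cap\tilde K=E\cap\tilde K\cong_K F\cap\tilde K=F_0\cap\tilde K$, both $E_0,F_0$ are models of $T_{{\rm tot},\mfS,e}$, and $E\equiv_K E_0$, $F\equiv_K F_0$, so it suffices to prove $E_0\equiv_K F_0$. Working inside a fixed sufficiently saturated model of $T_{{\rm tot},\mfS,e}$ and replacing $F_0$ by a $K$-isomorphic copy, we may assume $E_0\cap\tilde K=F_0\cap\tilde K=:L$, which is algebraic over $K$ and hence $\mfS$-quasi-local (Proposition~\ref{PSCC:quasilocal}). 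From now on $E:=E_0$, $F:=F_0$. I record three facts used throughout: (i) every model $M$ of $T_{{\rm tot},\mfS,e}$ is $\PSCC$ (Lemma~\ref{LemmaTheory}(1)), hence $\mfS$-quasi-local (Proposition~\ref{PSCC:quasilocal}), and $M/(M\cap\tilde K)$ is regular and totally $\mfS$-adic (Lemma~\ref{LemmaTheory}(3), together with the fact that $M\cap\tilde K$ is relatively algebraically closed in $M$); (ii) if $M\prec M'$ is an elementary extension of models of $T_{{\rm tot},\mfS,e}$, then $M'/M$ is regular and totally $\mfS$-adic (Proposition~\ref{PSCC:elementaryext}), whence also $M'\cap\tilde K=M\cap\tilde K$; (iii) if $A\subseteq B\subseteq C$ are fields with $C/A$ regular and totally $\mfS$-adic, then so is $B/A$ (an elementary remark on restriction of orderings and valuations).

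Now I would construct, by recursion on $n\in\mathbb{N}$, an increasing chain of countable models $L\subseteq N_0\subseteq N_1\subseteq N_2\subseteq\cdots$ of $T_{{\rm tot},\mfS,e}$ with $N_n\cap\tilde K=L$ and $N_{n+1}/N_n$ regular and totally $\mfS$-adic, such that $E\equiv_K N_0$, $F\equiv_K N_1$, and the even- and odd-indexed subchains are elementary: $N_0\prec N_2\prec N_4\prec\cdots$ and $N_1\prec N_3\prec N_5\prec\cdots$. For the base step, let $F^*\succ F$ be $\aleph_1$-saturated; then $E/L$ and $F^*/L$ are regular and totally $\mfS$-adic by (i)--(ii), so Lemma~\ref{mainlemmaSigma} (bottom field $L$, countable side $E$, saturated side $F^*$) gives an $L$-embedding $\iota\function E\hookrightarrow F^*$ with $F^*/\iota(E)$ regular and totally $\mfS$-adic; set $N_0:=\iota(E)$ and let $N_1\prec F^*$ be a countable elementary submodel containing $N_0\cup F$. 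For the recursion step, given $N_n\subseteq N_{n+1}$ I pick an $\aleph_1$-saturated $N_n^*\succ N_n$ and apply Lemma~\ref{mainlemmaSigma} with bottom field $N_n$, countable side $N_{n+1}$, saturated side $N_n^*$ --- its hypotheses hold by the induction hypotheses and by (i)--(ii), the crucial point being that $N_n$ is $\mfS$-quasi-local because it is a model of $T_{{\rm tot},\mfS,e}$ --- obtaining an $N_n$-embedding $g\function N_{n+1}\hookrightarrow N_n^*$ with $N_n^*/g(N_{n+1})$ regular and totally $\mfS$-adic; replacing $N_{n+1}$ by its image $g(N_{n+1})\subseteq N_n^*$, I let $N_{n+2}\prec N_n^*$ be a countable elementary submodel containing $N_{n+1}$. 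Then $N_n\prec N_{n+2}$ by Tarski--Vaught, $N_{n+2}/N_{n+1}$ is regular and totally $\mfS$-adic by (iii), and $N_{n+2}\cap\tilde K=L$ since $L\subseteq N_n\subseteq N_{n+2}\subseteq N_n^*$ and $N_n^*\cap\tilde K=N_n\cap\tilde K=L$ by (ii). Finally I put $N:=\bigcup_n N_n$; by the elementary chain theorem $N_0\prec N$ and $N_1\prec N$, so $E\equiv_K N_0\equiv_K N\equiv_K N_1\equiv_K F$.

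The one genuinely delicate point is the organisation just described: one must see that the infinite union $N$ need not (and generally will not) be a model of $T_{{\rm tot},\mfS,e}$, but that this is irrelevant because only $N_0\prec N$ and $N_1\prec N$ are required, and these are supplied for free by Tarski--Vaught once the two subchains have been made elementary --- the embedding Lemma~\ref{mainlemmaSigma} does all the real work at the successor stages. The verification of the hypotheses of Lemma~\ref{mainlemmaSigma} at each stage is then routine via (i)--(iii); the only ingredient new relative to the classical PAC situation of \cite[20.3.3]{FJ3} is keeping the successive bottom fields $N_n$ (and $L$) $\mfS$-quasi-local, which is automatic since each is either algebraic over $K$ or a model of $T_{{\rm tot},\mfS,e}$, hence $\PSCC$.
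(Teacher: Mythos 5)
Your proof is correct and takes essentially the same approach as the paper's, which is also modeled on the back-and-forth zig-zag of \cite[20.3.3]{FJ3} driven by Lemma~\ref{mainlemmaSigma}. The only cosmetic difference is in the bookkeeping of the chain: the paper fixes two $\aleph_1$-saturated elementary extensions $E^*,F^*$ once at the outset and builds two intertwined chains of countable elementary submodels $E_0\subseteq F_0\subseteq E_1\subseteq\cdots$ inside them, so that the common union $M$ is $\prec E^*$ and $\prec F^*$ (hence in particular a model of $T_{{\rm tot},\mfS,e}$, contrary to what your closing remark might suggest happens in general), whereas you take a fresh saturated model at each step and conclude directly via the elementary chain theorem applied to the even- and odd-indexed subchains — both are standard variants of the same argument.
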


\begin{proof}
Assume without loss of generality that $L:=E\cap\tilde{K}=F\cap\tilde{K}$.
Let $E^*,F^*$ be $\aleph_1$-saturated elementary extensions of $E$ resp.~$F$. 
By Lemma~\ref{LemmaTheory},
the fields $E,F,E^*,F^*$ are $\PSCC$,
in particular also $\mfS$-SAP (Proposition \ref{PSCC:SAP}),
and the extensions $E/L$, $F/L$, $E^*/L$, $F^*/L$ are regular and totally $\mfS$-adic.

By L\"owenheim-Skolem, there exists a countable elementary subfield
$E_0$ of $E^*$ that contains $L$.
Then also $E_0/L$ is regular and totally $\mfS$-adic,
and $E_0$ is a model of $T_{{\rm tot},\mfS,e}$, in particular $\PSCC$ and $\mfS$-quasi-local (Proposition \ref{PSCC:quasilocal}).
Since $L/K$ is algebraic, $L$ is $\mfS$-quasi-local (Proposition~\ref{PSCC:quasilocal}).
Therefore, by Lemma~\ref{mainlemmaSigma}, there exists an $L$-embedding
$\alpha_0\function E_0\rightarrow F^*$ with $F^*/\alpha_0(E_0)$ regular and totally $\mfS$-adic.

Identify $E_0$ with $\alpha_0(E_0)$.
Let $F_0$ be a countable elementary subfield of $F^*$ that contains $E_0$.
Then $F_0/E_0$ is regular and totally $\mfS$-adic, 
and $F_0$ is a model of $T_{{\rm tot},\mfS,e}$.
Also $E^*/E_0$ is regular and totally $\mfS$-adic by Proposition~\ref{PSCC:elementaryext}.
Hence, by Lemma~\ref{mainlemmaSigma}, there is an $E_0$-embedding
$\beta_0\function F_0\rightarrow E^*$ with $E^*/\beta_0(F_0)$ regular and totally $\mfS$-adic.

Now iterate this process
to construct a tower of countable fields 
$E_0\subseteq F_0\subseteq E_1\subseteq F_1\subseteq\dots$ 
such that each $E_i$ is an elementary subfield of $E^*$
and each $F_i$ is an elementary subfield of $F^*$.
Then $M:=\bigcup_{i\in\mathbb{N}} E_i=\bigcup_{i\in\mathbb{N}} F_i$ is an elementary subfield of both $E^*$ and $F^*$,
see for example \cite[7.4.1(b)]{FJ3},
so $E^*\equiv_M F^*$. 
In particular, $E^*\equiv_K F^*$,
hence $E\equiv_K F$.
\end{proof}

\begin{Lemma}\label{algebraicpart}
If $F$ is a model of
$T_{{\rm tot},\mfS,e}$, then
$L:=F\cap\tilde{K}\subseteq\KtotS$ and
$\rk(\Gal(\KtotS/L))\leq e$.
\end{Lemma}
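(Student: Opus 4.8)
The plan is to reduce the rank bound to the fact that $\GalS(F)$ is $e$-generated --- which is part of being an $e$-free C-pile --- transport this property down along a restriction map to $\GalS(L)$, and then recognise the relevant bare quotient as $\Gal(\KtotS/L)$.

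First I would apply Lemma~\ref{LemmaTheory}: a model $F$ of $T_{{\rm tot},\mfS,e}$ is $\PSCC$, $\GalS(F)$ is an $e$-free C-pile, and $L:=F\cap\tilde K\subseteq\KtotS$ with $F/L$ totally $\mfS$-adic; the inclusion $L\subseteq\KtotS$ is already the first assertion. Next I would note that $L$ is relatively algebraically closed in $F$ --- an element of $F$ algebraic over $L$ is algebraic over $K$, hence lies in $\tilde K\cap F=L$ --- so $F/L$ is regular, and that $L$, being algebraic over $K$, is $\mfS$-quasi-local by Proposition~\ref{PSCC:quasilocal}. Hence Lemma~\ref{rigid} applies and shows that the restriction map ${\rm res}_{\tilde F/\tilde L}\colon\GalS(F)\to\GalS(L)$ is an epimorphism of group piles. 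Since an $e$-free C-pile is $e$-generated by definition (Definition~\ref{DefCantorSigma}), Lemma~\ref{quotientrank} then gives that $\GalS(L)$ is $e$-generated.

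It remains to unwind this. Writing $\GalS(L)=(\Gal(L),\mcH_\mfp)_\mfpinS$ with $\mcH_\mfp=\{\Gal(K')\colon K'\in\CC_\mfp(L)\}$, and putting $H'=\langle\bigcup_\mfpinS\mcH_\mfp\rangle$, the $e$-generatedness of $\GalS(L)$ means precisely $\rk(\Gal(L)/H')\le e$ by Definition~\ref{defebounded}. The computation to carry out is $H'=\Gal(\tilde K/\KtotS)$: since $L\subseteq\KtotS$, Lemma~\ref{totSCC} gives $\CC_\mfp(L)=\CC_\mfp(K)$ for every $\mfpinS$; each $K'\in\CC_\mfp(K)$ satisfies $L\subseteq\KtotS\subseteq K'$, so $\Gal(K')\le\Gal(\tilde K/L)=\Gal(L)$, and the fixed field of $H'$ in $\tilde K$ is
$$
 \bigcap_{\mfpinS}\ \bigcap_{K'\in\CC_\mfp(K)}K'\ =\ \KtotS .
$$
Finally, since $\KtotS/K$ is Galois and $L\subseteq\KtotS$, the extension $\KtotS/L$ is Galois, so $H'=\Gal(\tilde K/\KtotS)$ is normal in $\Gal(L)$ with $\Gal(L)/H'\cong\Gal(\KtotS/L)$, whence $\rk(\Gal(\KtotS/L))\le e$.

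I do not expect a real obstacle here: apart from the (easy) fixed-field computation above, every step is an invocation of the functorial machinery for group piles set up in the earlier sections. The only points that want a little care are checking that the relevant restriction map is an epimorphism \emph{of group piles} (which is where Lemma~\ref{rigid} and the regularity and total $\mfS$-adicity of $F/L$ enter) and verifying that $\langle\bigcup_\mfpinS\mcH_\mfp\rangle$ is exactly the subgroup fixing $\KtotS$, which relies on both $\CC_\mfp(L)=\CC_\mfp(K)$ and the description $\KtotS=\bigcap_\mfpinS\bigcap\CC_\mfp(K)$.
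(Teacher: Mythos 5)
Your proof is correct and follows essentially the same route as the paper's: apply Lemma~\ref{LemmaTheory} to obtain $L\subseteq\KtotS$ and the $e$-free C-pile property, use Lemma~\ref{rigid} to make ${\rm res}\colon\GalS(F)\to\GalS(L)$ an epimorphism of group piles, push $e$-generatedness down via Lemma~\ref{quotientrank}, and identify $\bar{\mbA}$ with $\Gal(\KtotS/L)$ by showing that $\langle\mathcal{H}\rangle$ fixes exactly $\KtotS$. The paper phrases that last step slightly differently (using $\CC_\mfp(L)=\CC_\mfp(\KtotS)$ together with the fact that $\GalS(\KtotS)$ is self-generated by definition of $\KtotS$), but it is the same computation you perform at the level of fixed fields.
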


\begin{proof}
Let $\mbG=\GalS(F)$, and $\mbA=\GalS(L)$.
By Lemma~\ref{LemmaTheory}(3),
$L\subseteq\KtotS$ and $F/L$ is totally $\mfS$-adic.
Since $L/K$ is algebraic,
$L$ is $\mfS$-quasi-local (Proposition \ref{PSCC:quasilocal}),
so the restriction $\mathbf{G}\rightarrow\mathbf{A}$ is an epimorphism of group piles
by Lemma~\ref{rigid}.
By Lemma~\ref{LemmaTheory}(2),
$\mathbf{G}$ is an $e$-free C-pile.
In particular, it is $e$-generated.
Thus, by Lemma~\ref{quotientrank}, also $\mathbf{A}$ is $e$-generated.
By Lemma~\ref{KtotSmaxtotS}(2),
$\CC_\mfp(L)=\CC_\mfp(\KtotS)$, for all $\mfpinS$.
Thus, $\mbA^\prime=\GalS(\KtotS)^\prime$.
But $\GalS(\KtotS)$ is self-generated
by the definition of $\KtotS$,
so $\mathbf{A}^\prime=\GalS(\KtotS)$.
Therefore, ${\rm Gal}(\KtotS/L)=A/A^\prime=\bar{A}$ is generated by $e$ elements.
\end{proof}

\begin{Definition}
Let $T_{{\rm almost},\mfS,e}$ denote the set of all
$\mcLr(K)$-sentences that are true
in almost all fields $\KtotS(\bfsigma)$, $\bfsigma\in\Gal(K)^e$.
\end{Definition}

The proof of the following result follows 
the proof of \cite[20.5.4]{FJ3}.

\begin{Theorem}\label{almostaxioms}
The theory $T_{{\rm tot},\mfS,e}$ is an axiomatization of $T_{{\rm almost},\mfS,e}$,
i.e.~these two theories have the same models.
\end{Theorem}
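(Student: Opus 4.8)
The plan is to prove that the two theories have the same models by establishing the two inclusions between model classes separately. First I would show that almost all $\KtotS(\bfsigma)$ are models of $T_{{\rm tot},\mfS,e}$; this immediately gives that every model of $T_{{\rm almost},\mfS,e}$ is a model of $T_{{\rm tot},\mfS,e}$, since then every sentence of $T_{{\rm tot},\mfS,e}$ holds in almost all $\KtotS(\bfsigma)$ and hence lies in $T_{{\rm almost},\mfS,e}$. To prove that almost all $\KtotS(\bfsigma)$ satisfy $T_{{\rm tot},\mfS,e}$ I verify the three conditions of Lemma~\ref{LemmaTheory}: condition~(1) ($\PSCC$) holds for almost all $\bfsigma$ by Proposition~\ref{GeyerJarden}; condition~(2) ($\GalS(\KtotS(\bfsigma))$ an $e$-free C-pile) holds for almost all $\bfsigma$ by Proposition~\ref{GalQtotpSigma} together with Proposition~\ref{constantCantor}; and condition~(3) holds for \emph{every} $\bfsigma$, because $\KtotS(\bfsigma)$ is algebraic over $K$ with $\KtotS(\bfsigma)\cap\tilde{K}=\KtotS(\bfsigma)\subseteq\KtotS$ and every field is totally $\mfS$-adic over itself.

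For the other inclusion it suffices to prove that whenever an $\mcLr(K)$-sentence $\psi$ is consistent with $T_{{\rm tot},\mfS,e}$, the set $\{\bfsigma\in\Gal(K)^e:\KtotS(\bfsigma)\models\psi\}$ has positive Haar measure: for if $F\models T_{{\rm tot},\mfS,e}$ and $\theta\in T_{{\rm almost},\mfS,e}$ but $F\not\models\theta$, then $\neg\theta$ is consistent with $T_{{\rm tot},\mfS,e}$ and so holds in $\KtotS(\bfsigma)$ for a positive-measure set of $\bfsigma$, contradicting $\theta\in T_{{\rm almost},\mfS,e}$. So I fix a model $F$ of $T_{{\rm tot},\mfS,e}\cup\{\psi\}$, put $L=F\cap\tilde{K}$, and recall from Lemma~\ref{algebraicpart} that $L\subseteq\KtotS$ with $\rk(\Gal(\KtotS/L))\le e$. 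Since $K$ is countable I may fix an increasing tower $K_1\subseteq K_2\subseteq\cdots$ of finite Galois extensions of $K$ inside $\KtotS$ with $\bigcup_m K_m=\KtotS$; for $\bfsigma=(\sigma_1,\dots,\sigma_e)$ write $\bfsigma|_{K_m}$ for the subgroup $\langle\sigma_1|_{K_m},\dots,\sigma_e|_{K_m}\rangle$ of $\Gal(K_m/K)$, and let $H_m\le\Gal(K_m/K)$ be the image of $\Gal(\KtotS/L)$ under restriction to $K_m$, so $K_m^{H_m}=L\cap K_m$ and $\rk(H_m)\le e$.

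The key step I would prove is: there exists $m$ such that every $\bfsigma$ with $\KtotS(\bfsigma)\models T_{{\rm tot},\mfS,e}$ and $\bfsigma|_{K_m}=H_m$ already satisfies $\KtotS(\bfsigma)\models\psi$. Granting this, the set of $\bfsigma$ with $\bfsigma|_{K_m}=H_m$ is clopen in $\Gal(K)^e$ and nonempty — a rank-$\le e$ subgroup of the finite group $\Gal(K_m/K)$ is generated by $e$ elements, which lift to $\Gal(K)$ — hence of positive measure; deleting the null set of $\bfsigma$ for which $\KtotS(\bfsigma)$ fails $T_{{\rm tot},\mfS,e}$ leaves a positive-measure set on which $\KtotS(\bfsigma)\models\psi$, as needed. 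To prove the key step, suppose it fails: then for each $m$ there is $\bfsigma_m$ with $\KtotS(\bfsigma_m)\models T_{{\rm tot},\mfS,e}$, $\bfsigma_m|_{K_m}=H_m$ (so $\KtotS(\bfsigma_m)\cap K_j=L\cap K_j$ for all $j\le m$), and $\KtotS(\bfsigma_m)\not\models\psi$. Forming the ultraproduct $F^{*}=\prod_m\KtotS(\bfsigma_m)/\mathcal{U}$ over a nonprincipal ultrafilter $\mathcal{U}$ on $\mathbb{N}$, \L o\'s' theorem gives $F^{*}\models T_{{\rm tot},\mfS,e}$ and $F^{*}\not\models\psi$, while a direct check shows $F^{*}\cap\tilde{K}\cong_{K}L=F\cap\tilde{K}$ (each finite subextension of $L/K$ embeds into almost every factor and no larger algebraic extension does). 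By Proposition~\ref{characterization} then $F^{*}\equiv_{K}F$, so $F^{*}\models\psi$ — a contradiction.

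The hard part is the key step: upgrading the ``all levels'' rigidity of Proposition~\ref{characterization} (the $\mcLr(K)$-theory of a model of $T_{{\rm tot},\mfS,e}$ is determined by the $K$-isomorphism type of its full algebraic part) to the finitary assertion that a single sentence $\psi$ is already decided by the algebraic part at one finite level $K_m$. This is essential because asking $\KtotS(\bfsigma)$ to realize $F$'s algebraic part exactly describes, in general, only a null set of $\bfsigma$; the ultraproduct/compactness argument is what reduces $\psi$ to a clopen, hence positive-measure, condition, after which the elementary lifting fact for rank-$\le e$ subgroups of finite Galois groups finishes the proof. The remaining verifications — the three conditions of Lemma~\ref{LemmaTheory} for almost all $\KtotS(\bfsigma)$, and the computation of the algebraic part of the ultraproduct — are routine.
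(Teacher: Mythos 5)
Your proof is correct and proceeds from the same inputs as the paper's (the measure-$1$ statements for $T_{{\rm tot},\mfS,e}$, Lemma~\ref{algebraicpart}, Proposition~\ref{characterization}, and Haar-measure estimates on cylinder sets of $\Gal(K)^e$), but the second inclusion is obtained by a genuinely different route. The paper argues directly: given a model $E$ with algebraic part $L=E\cap\tilde{K}$, it forms a \emph{single} ultraproduct $F=\prod_{\bfsigma}\KtotS(\bfsigma)/\mathcal{D}$ over the index set $\Gal(K)^e$, where the ultrafilter $\mathcal{D}$ is chosen (via \cite[7.6.1]{FJ3}) to contain simultaneously all measure-$1$ sets, so that $F\models T_{{\rm almost},\mfS,e}$ by \L o\'s, and the positive-measure cosets $\Sigma(N)$ pinning the restrictions of $\bfsigma$ to each finite level $N$ to match fixed generators $\tau_1,\dots,\tau_e$ of $\Gal(\KtotS/L)$, so that $F\cap\tilde{K}\cong_K L$; Proposition~\ref{characterization} then gives $E\equiv_K F$ at once. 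You instead insert a compactness-flavoured intermediate ``key step'' reducing a consistent sentence $\psi$ to a clopen positive-measure condition at a single finite level, and prove that key step by contradiction with an ultraproduct over $\mathbb{N}$ plus Proposition~\ref{characterization}. Both proofs are complete; the paper's single cleverly-chosen ultrafilter is more economical, while your detour has the minor virtue of making explicit a positive-measure clopen set on which $\psi$ holds, which is closer in spirit to the measurability statement later established in Theorem~\ref{TheoremSigma}.
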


\begin{proof}
First note that every model of 
$T_{{\rm almost},\mfS,e}$
is a field containing $K$.
By Definition~\ref{DefTheory}(0),
the same holds for every model of
$T_{{\rm tot},\mfS,e}$.
Next observe that
almost all $\KtotS(\bfsigma)$ satisfy $T_{{\rm tot},\mfS,e}$, as Lemma \ref{LemmaTheory} shows:
For almost all $\bfsigma$, $\KtotS(\bfsigma)$
satisfies (1) by Proposition~\ref{GeyerJarden},
(2) by Proposition~\ref{GalQtotpSigma} and Proposition~\ref{constantCantor},
and (3) trivially.
Thus, every model of $T_{{\rm almost},\mfS,e}$ is a model of $T_{{\rm tot},\mfS,e}$.

Conversely, let $E$ be a model of $T_{{\rm tot},\mfS,e}$
and let $L=E\cap\tilde{K}$.
If we can construct a model $F$ of $T_{{\rm almost},\mfS,e}$
with $F\cap\tilde{K}\cong_K L$, then
$E\equiv_K F$ by Proposition~\ref{characterization},
so $E$ is a model of $T_{{\rm almost},\mfS,e}$ and we are done.
Lemma~\ref{algebraicpart} implies that $L\subseteq\KtotS$ and 
there exist $\tau_1,\dots,\tau_e\in{\rm Gal}(\KtotS/K)$
that generate ${\rm Gal}(\KtotS/L)$.
Let $\mathcal{N}$ be the set of finite Galois extensions of $K$ inside $\KtotS$.
For each $N\in\mathcal{N}$,
the set 
\begin{eqnarray*}
 \Sigma(N)&:=&\{\bfsigma\in\Gal(K)^e \colon {\rm res}_{\tilde{K}/N}(\sigma_i)={\rm res}_{\tilde{K}/N}(\tau_i),\;i=1,\dots,e\}\\
    &\subseteq&\{\bfsigma\in\Gal(K)^e \colon \KtotS(\bfsigma)\cap N\cong_K L\cap N\}
\end{eqnarray*}
has positive Haar measure.
If $N_1,\dots,N_r\in\mathcal{N}$,
then $N_1\cdots N_r\in\mathcal{N}$ and
$\Sigma(N_1)\cap\dots\cap\Sigma(N_r)=\Sigma(N_1\cdots N_r)$.
Hence, by \cite[7.6.1]{FJ3}, there exists
an ultrafilter $\mathcal{D}$ on $\Gal(K)^e$ which
contains each of the sets $\Sigma(N)$, $N\in\mathcal{N}$,
and all sets of measure~$1$.
Let 
$$
 F=\prod_{\smallbfsigma\in\Gal(K)^e}\KtotS(\bfsigma)/\mathcal{D}
$$ 
be the ultraproduct, and let $M=F\cap\tilde{K}$.
Since $\mathcal{D}$ contains all sets of measure $1$,
and almost all $\KtotS(\bfsigma)$ are models of $T_{{\rm almost},\mfS,e}$,
$F$ is a model of $T_{{\rm almost},\mfS,e}$ by \L os' theorem.
Furthermore, $M\subseteq\KtotS$,
and $M\cap N\cong L\cap N$ for each $N\in\mathcal{N}$,
since $\mathcal{D}$ contains $\Sigma(N)$.
Therefore, $M\cong_K L$, see for example \cite[20.6.3]{FJ3}, as claimed.
\end{proof}

\section{Decidability}

\noindent
We now use the axiomatization of the previous section to prove the decidability of $T_{{\rm almost},\mfS,e}$.
The method follows closely the proof of Jarden-Kiehne in \cite[Chapter 20.6]{FJ3} 
for the special case $\mfS=\emptyset$.

\begin{Definition}
A set $X\subseteq\mathbb{N}^n$ is {\bf recursive} 
if the characteristic function of $X$ is a recursive function, cf.~\cite[Ch.~8.5]{FJ3}.
If $\mcL$ is a countable language with a fixed embedding $\mcL\rightarrow\mathbb{N}$, then
an $\mcL$-theory $T$ 
is {\bf decidable} (or recursive)
if the set $T$,
identified with a subset of $\mathbb{N}$ via a G\"odel numbering, is recursive,
\cite[Ch.~8.6]{FJ3}.

A {\bf presented field} is a countable field $K$ together with an
injection $\rho\function K\rightarrow\mathbb{N}$ such that the
images of the graphs of addition and multiplication
are recursive. 
If $\mcL$ is a finite language containing $\mcLr$, 
then the injection $\rho\function K\rightarrow\mathbb{N}$
induces an injection of the set of $\mcL(K)$-formulas into $\mathbb{N}$.
We refer to this injection when we call an $\mcL(K)$-theory decidable. 

If $K$ is a presented field, one can
inject the ring of polynomials $K[X]$ into $\mathbb{N}$ via a recursive pairing function
$\mathbb{N}\times\mathbb{N}\rightarrow\mathbb{N}$.
We say that $K$ has a {\bf splitting algorithm}
if the set of irreducible polynomials in $K[X]$ is a recursive subset of $K[X]$.
In that case, one can recursively factor elements of $K[X]$ into irreducible factors.
\end{Definition}

\begin{Definition}
A prime $\mfp$ of a presented field $\rho\function K\rightarrow\mathbb{N}$
is {\bf recursive}
if the set $\rho(\mcO_\mfp)\subseteq\mathbb{N}$ is recursive.
\end{Definition}

\begin{Setting}\label{set:splitting}
From now on,
let $\mfS$ be a finite set of recursive local primes of a presented countable Hilbertian field $K$
of characteristic zero that has a splitting algorithm.
\end{Setting}

\begin{Example}\label{Ex:numberfield}
Every finite set of primes $\mfS$ of a number field $K$ satisfies Setting \ref{set:splitting}:
Choose any standard representation $K=\Q^n\hookrightarrow\mathbb{N}^{2n}\hookrightarrow\mathbb{N}$ via recursive pairing functions.
Then $K$ is countable, Hilbertian \cite[13.4.2]{FJ3}, and has a splitting algorithm \cite[19.1.3(b), 19.2.4]{FJ3},
and every $\mfpinS$ is local and recursive.
This last fact is well-known, but for lack of reference we sketch a proof:
Since $\mathcal{O}_\mfp$ is existentially definable in $K$ 
(see for example \cite[p.~212]{Rumely} for the archimedean
and \cite[4.2.4, 4.3.4]{Shlapentokh} for the $p$-adic case), it is recursively enumerable.
Since $K\setminus\mcO_\mfp=-\mcO_\mfp\setminus\{0\}$ in the archimedean
and $K\setminus\mcO_\mfp=(\pi\mcO_\mfp)^{-1}$ in the $p$-adic case, where $\pi$ is a uniformizer at $\mfp$,
also $K\setminus\mcO_\mfp$ is recursively enumerable,
and so $\mcO_\mfp$ is recursive.
\end{Example}

\begin{Lemma}\label{splitting}
The sets $N_K(\KtotS)$ and $N_K(R_\mfp(\KtotS))$, $\mfpinS$, are recursive.
\end{Lemma}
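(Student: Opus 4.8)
The plan is to reduce each of the two membership problems to finitely many decidable first-order queries about the fixed local closures $K_\mfq$, $\mfq\in\mfS$. Given $f\in K[X]$, use the splitting algorithm of $K$ (Setting~\ref{set:splitting}) to factor $f$ into irreducible factors; since $\KtotS/K$ is algebraic and $\KtotS$ is a field, $f$ has a root in $\KtotS$ (resp.\ in $R_\mfp(\KtotS)$) if and only if one of these factors does, so it suffices to handle irreducible $g\in K[X]$, of which we then know the degree $d$ and the coefficients. Using that $\KtotS/K$ is Galois, that $\KtotS=\bigcap_{\mfq\in\mfS}\bigcap\CC_\mfq(K)$, that for local $\mfq$ the closures in $\CC_\mfq(K)$ are pairwise $K$-conjugate (so every $K'\in\CC_\mfq(K)$ equals $\sigma K_\mfq$ for some $\sigma\in\Gal(K)$, cf.~\cite[Rem.~3.6]{AFPSCC}), and Lemma~\ref{QtotpKochen}, I would first establish the criteria
\begin{enumerate}
\item $g$ has a root in $\KtotS$ if and only if $g$ splits completely in $K_\mfq$ for every $\mfq\in\mfS$;
\item $g$ has a root in $R_\mfp(\KtotS)$ if and only if $g$ splits completely in $K_\mfq$ for every $\mfq\in\mfS$ and, moreover, every root of $g$ lies in $R_\mfp(K_\mfp)$.
\end{enumerate}
For (1): if $\beta\in\KtotS$ is a root of $g$, all roots of $g$ are $K$-conjugate to $\beta$ and hence lie in the normal extension $\KtotS\subseteq K_\mfq$; conversely, if $g$ splits in every $K_\mfq$ then its splitting field $N$ over $K$ lies in every $K_\mfq$, and since $N/K$ is normal, $N=\sigma N\subseteq\sigma K_\mfq$ for all $\sigma\in\Gal(K)$, so $N\subseteq\bigcap\CC_\mfq(K)$ for each $\mfq\in\mfS$, whence $N\subseteq\KtotS$. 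For (2): given (1), fix a root $\beta\in\KtotS$ of $g$; applying Lemma~\ref{QtotpKochen}(3) to $K(\beta)$ shows $\beta\in R_\mfp(\KtotS)$ if and only if $\beta\in R_\mfp(K')$ for all $K'\in\CC_\mfp(K)$, which, writing $K'=\sigma K_\mfp$, reads $\sigma^{-1}\beta\in R_\mfp(K_\mfp)$ for all $\sigma\in\Gal(K)$; as $\sigma^{-1}\beta$ ranges over all roots of $g$, this says exactly that every root of $g$ lies in $R_\mfp(K_\mfp)$.

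It then remains to see that, for each $\mfq\in\mfS$, the conditions ``$g$ splits completely in $K_\mfq$'' and ``every root of $g$ lies in $R_\mfq(K_\mfq)$'' can be decided uniformly in $d$ and the coefficients of $g$. Both are $\mcL_{{\rm ring},\mfq}(K)$-sentences about the structure $(K_\mfq,R_\mfq(K_\mfq))$ --- the first is $\exists x_1,\dots,x_d\,(g=c\prod_{j}(X-x_j))$, with $c$ the leading coefficient of $g$, and the second uses the predicate $R_\mfq$ directly --- so it is enough that the $\mcL_{{\rm ring},\mfq}(K)$-theory of $(K_\mfq,R_\mfq(K_\mfq))$ is decidable. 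This holds because that theory is axiomatized by the recursively axiomatized theory of real closed, resp.\ $p$-adically closed, fields of the relevant type (with $R_\mfq$ interpreted as the positive cone, resp.\ the valuation ring) together with the quantifier-free $\mcL_{{\rm ring},\mfq}(K)$-diagram of $(K,R_\mfq(K))$, which is recursive since $R_\mfq(K)=\mcO_\mfq$ and $\mfq$ is a recursive prime, and this combined theory is complete because the real, resp.\ $p$-adic, closure of the local prime $\mfq$ is unique up to $K$-isomorphism. (Alternatively one may invoke Proposition~\ref{PSCC:quantify} with $F=K$, again using $R_\mfq(K)=\mcO_\mfq$.)

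Assembling: by (1), $N_K(\KtotS)$ consists of those $f$ all of whose irreducible factors fail to split completely in at least one $K_\mfq$, $\mfq\in\mfS$; by (2), $N_K(R_\mfp(\KtotS))$ consists of those $f$ all of whose irreducible factors either fail to split completely in some $K_\mfq$ or have a root outside $R_\mfp(K_\mfp)$. Since factoring is effective and the finitely many local tests above are decidable, both sets are recursive. I expect the main obstacle to be precisely this last point --- the effective decision of the splitting and integrality conditions over the local closures. For the archimedean primes of $\mfS$ it is classical (Sturm's algorithm, usable because $\mfq$ being recursive makes signs in $K$ computable), so the substance lies with the $p$-adic primes, where one relies on the effectivity of the theory of $p$-adically closed fields relative to the recursive prime $\mfq$ (equivalently, an effective Hensel/Newton-polygon analysis) --- which is exactly where the hypothesis that the primes of $\mfS$ be recursive is needed.
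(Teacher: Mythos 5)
Your proof is correct, and the reduction to local tests is the same as the paper's, but the mechanism you use to decide those local tests is genuinely different. Both proofs reduce, via the splitting algorithm, to irreducible $f$, and then use the $K$-conjugacy of the members of $\CC_\mfq(K)$ together with Lemma~\ref{QtotpKochen}(3) to conclude that it suffices to test, for each $\mfq\in\mfS$, whether $f$ splits completely in the single fixed closure $K_\mfq$ (and, for $N_K(R_\mfp(\KtotS))$, additionally whether all roots land in $R_\mfp(K_\mfp)$); you write out the Galois-theoretic equivalences (1) and (2) explicitly, which the paper leaves implicit. Where you diverge is the effectivity step. The paper invokes \cite[Prop.~8.2]{AFPSCC} to rewrite each local condition as a \emph{universal} $\mathcal{L}_{{\rm ring},\mfq}(K)$-formula to be evaluated in $(K,\mcO_\mfq)$, then applies the same construction to the negation, and concludes that the set of $f$ satisfying the condition is both r.e.\ and co-r.e.\ -- using only that $(K,\mcO_\mfq)$ is a recursively presented structure (so quantifier-free formulas are decidable and universal/existential ones are co-r.e./r.e.), not that its theory is decidable. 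You instead evaluate the test sentence directly in $(K_\mfq,R_\mfq(K_\mfq))$, arguing that the $\mathcal{L}_{{\rm ring},\mfq}(K)$-theory of that structure is decidable because it is recursively axiomatized by RCF/pCF of the right type plus the diagram of $(K,\mcO_\mfq)$, and complete by uniqueness of the closure over a local prime together with model completeness (Propositions~\ref{QERCF} and \ref{QEpCF}). Your route is more self-contained -- it avoids the quantifier-reduction machinery of \cite{AFPSCC} -- while the paper's is more uniform with the rest of its recursiveness arguments (compare Lemma~\ref{Ttotperecursive}). One small caveat: your parenthetical "alternatively, invoke Proposition~\ref{PSCC:quantify} with $F=K$" is under-specified as written, since that proposition produces a formula to be evaluated in $(K,\mcO_\mfq)$, which is only a recursive structure and not one with decidable theory; to close the argument along those lines you would need precisely the syntactic (universal) control that the paper obtains from \cite[Prop.~8.2]{AFPSCC} together with the r.e./co-r.e.\ observation.
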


\begin{proof}
Let $f=\sum_{i=0}^na_iX^i\in K[X]$ be given. We have to decide whether $f$ has a root in $\KtotS$ resp.~in $R_\mfp(\KtotS)$.
Using the splitting algorithm we can assume without loss of generality that $f$ is irreducible.
Since $\KtotS=\bigcap_{\mfq\in\mfS}\bigcap\CC_\mfq(K)$ and $R_\mfp(\KtotS)=\KtotS\cap\bigcap_{K'\in\CC_\mfp(K)}R_\mfp(K')$ by Lemma \ref{QtotpKochen}(3), and all elements of $\CC_\mfp(K)$ are $K$-conjugate, 
it suffices to decide whether $f$ has all roots in $K_\mfq$ (resp.~in $R_\mfq(K_\mfq)$) for all $\mfq\in\mfS$.

By \cite[Prop.~8.2]{AFPSCC} we can compute a universal $\mcL_{{\rm ring},\mfq}$-formula $\varphi_1$ such
that $f$ has all roots in $K_\mfq$ (resp.~in $R_\mfq(K_\mfq)$) iff $(K,\mcO_\mfq)\models\varphi_1(\mathbf{a})$.
By applying the same result to the negation, we get a universal $\mcL_{{\rm ring},\mfq}$-formula $\varphi_2$
such that $f$ has all roots in $K_\mfq$ (resp.~in $R_\mfq(K_\mfq)$) iff  $(K,\mcO_\mfq)\not\models\varphi_2(\mathbf{a})$.
Therefore, since $\mathcal{O}_\mfq$ is recursive, the set of such $f$ is both r.e.~and co-r.e., hence recursive.
\end{proof}

\begin{Lemma}\label{Ttotperecursive}
The theory $T_{{\rm tot},\mfS,e}$ is recursive.
\end{Lemma}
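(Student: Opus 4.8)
The plan is to read off from Definition~\ref{DefTheory} that $T_{{\rm tot},\mfS,e}$ is the union of the four sets of $\mcLr(K)$-sentences $(0)$--$(3)$, and to use that a finite union of recursive subsets of $\mathbb{N}$ is recursive; so it suffices to show that each of $(0)$--$(3)$ is recursive. Throughout one uses that, by Setting~\ref{set:splitting}, $K$ is a presented field, so that the induced Gödel numbering of $\mcLr(K)$-formulas is available and the graphs of $+$ and $\cdot$ on $K$ are recursive.

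For $(0)$: the field axioms form a fixed finite set, and the positive diagram of $K$ consists of the atomic $\mcLr(K)$-sentences, i.e.\ equations between closed terms, that hold in $K$; membership can be decided by evaluating both sides in $K$ via the recursive ring operations, so $(0)$ is recursive. For $(1)$: the theory $T_{\PSCC}$ is recursive by Proposition~\ref{thm:PSCC1}. For $(3)$: by Lemma~\ref{splitting} the sets $N_K(\KtotS)$ and $N_K(R_\mfp(\KtotS))$, $\mfpinS$, are recursive; and, for each of the finitely many $\mfpinS$, the map sending $f\in K[X]$ to the sentence $\neg(\exists x)(f(x)=0)$, respectively to $\neg(\exists x)(\varphi_{{\rm R},\mfp}(x)\wedge f(x)=0)$ with $\varphi_{{\rm R},\mfp}$ the fixed formula of Proposition~\ref{PSCC:holomorphy}, is a recursive injection, hence has recursive image. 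Thus, by Definition~\ref{DefTalg}, $T_{{\rm alg},\mfS}$ is a finite union of recursive sets and so is recursive.

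For $(2)$: by Definition~\ref{DefTCantor}, $T_{{\rm C},\mfS,e}^{\rm co}$ is the union of a family $(\varphi_n)_{n\in\mathbb{N}}$ of cosentences and a family $(\psi_{n,k})_{(n,k)\in\mathbb{N}^2}$ of cosentences, each built uniformly and recursively from its numerical parameters; since these parameters (bounding indices $(G:N)\le n$, orders $\le k$) occur literally in the respective cosentences, the index-to-cosentence maps are recursive injections whose values have unbounded length, so $T_{{\rm C},\mfS,e}^{\rm co}$ is recursive. By Corollary~\ref{interpretCDM} there is a recursive map $\varphi\mapsto\varphi_{\rm ring}$ from cosentences to $\mcLr(K)$-sentences, and by Definition~\ref{DefTCantorring} the set $T_{{\rm C},\mfS,e}^{\rm ring}$ is the image of $T_{{\rm C},\mfS,e}^{\rm co}$ under this map; as the composites $n\mapsto(\varphi_n)_{\rm ring}$ and $(n,k)\mapsto(\psi_{n,k})_{\rm ring}$ are recursive and injective with lengths tending to infinity (the translation only expands subformulas, hence does not decrease length), membership of a given $\mcLr(K)$-sentence in this image is decidable by a bounded search over the parameters, so $T_{{\rm C},\mfS,e}^{\rm ring}$ is recursive. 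Combining the four cases, $T_{{\rm tot},\mfS,e}$ is recursive. There is no deep obstacle; the only point demanding care is precisely this last observation that the various sentence-forming translations have recursive image, which holds because they are injective with unbounded output length, so the relevant parameters can be recovered from the output up to a computable bound.
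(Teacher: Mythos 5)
Your proof is correct and follows essentially the same decomposition as the paper: break $T_{{\rm tot},\mfS,e}$ into the four axiom sets (0)--(3) of Definition~\ref{DefTheory} and verify each is recursive. The two small deviations are worth flagging. For (1), you simply cite Proposition~\ref{thm:PSCC1}, which does assert recursiveness of $T_{\PSCC}$; the paper instead re-verifies this under Setting~\ref{set:splitting} by unfolding \cite[Def.~9.1]{AFPSCC} and using that each $\mcO_\mfp$ is recursive, presumably because the earlier proposition is stated in a general setting that does not make the computability hypotheses explicit. For (2), the paper merely says $T_{{\rm C},\mfS,e}^{\rm co}$ is recursive and the translation $\varphi\mapsto\varphi_{\rm ring}$ is recursive, leaving implicit why the image is recursive; your extra observation — that the parameter-to-sentence maps are injective with output length tending to infinity, so membership in the image is decidable by a bounded search — fills a genuine gap that the paper glosses over.
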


\begin{proof}
We check that the different sets of axioms in Definition \ref{DefTheory} are recursive:

(0): Since $K$ is a presented field, the positive diagram of $K$ is recursive.

(1): In order to check that $T_\PSCC$ is recursive, we have to look into the definition \cite[Def.~9.1]{AFPSCC}.
Part (1) of these axioms is recursive, because \cite[Def.~6.5]{AFPSCC} (1), (3), (4), (5) consist of finitely many sentences,
and (2) is recursive because $\mcO_\mfp$ is.
Part (2) of \cite[Def.~9.1]{AFPSCC} is recursive because the map $\eta_n\mapsto(\hat{\eta}_n)_{\mfp,\forall}$ 
is recursive by \cite[Prop.~8.4]{AFPSCC}.

(2): The theory $T_{{\rm C},\mfS,e}^{\rm ring}$ is recursive
since $T_{{\rm C},\mfS,e}^{\rm co}$ is obviously recursive and the map 
$\varphi\mapsto\varphi_{\rm ring}$ of Corollary \ref{interpretCDM} is recursive.

(3): Since $N_K(\KtotS)$ and $N_K(R_\mfp(\KtotS))$ are recursive by Lemma~\ref{splitting},
the theory $T_{{\rm alg},\mfS}$ is recursive.
\end{proof}

\begin{Definition}\label{Deftest}
The set of {\bf test sentences}
is the smallest set of $\mcLr(K)$-sentences that
contains all sentences of the form
$(\exists x)(f(x)=0)$,
where $f\in K[X]$ is a polynomial that completely decomposes over $\KtotS$,
and is closed under negations, conjunctions, and disjunctions.
\end{Definition}

\begin{Lemma}\label{testsentences}
Let $E,F\supseteq K$ be models of $T_{{\rm tot},\mfS,e}$.
Then $E\equiv_K F$ if and only if $E$ and $F$ satisfy the same test sentences.
\end{Lemma}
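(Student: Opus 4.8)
The forward direction is trivial: if $E\equiv_K F$ then in particular they agree on every $\mcLr(K)$-sentence, hence on all test sentences. So the content is the converse: assuming $E$ and $F$ satisfy the same test sentences, I want to conclude $E\equiv_K F$. The plan is to reduce this to Proposition~\ref{characterization}, which says that two models of $T_{{\rm tot},\mfS,e}$ with $K$-isomorphic algebraic parts are elementarily equivalent over $K$. So it suffices to show that $E\cap\tilde K\cong_K F\cap\tilde K$.

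First I would observe that by Lemma~\ref{algebraicpart}, $L_E:=E\cap\tilde K$ and $L_F:=F\cap\tilde K$ are both subfields of $\KtotS$, and $\Gal(\KtotS/L_E)$, $\Gal(\KtotS/L_F)$ are generated by at most $e$ elements. The key point is that a subfield $L$ of $\KtotS$ containing $K$ is determined, as a subset of $\KtotS$, by which polynomials $f\in K[X]$ decompose completely over $L$: indeed $L$ is the compositum of all finite subextensions $N$ of $\KtotS/K$ such that the primitive element of $N$ has its minimal polynomial split in $L$, and "$f$ splits completely in $L$" is equivalent to "$f$ splits completely in $E$" (since $E\cap\tilde K=L_E$ and any root of $f$ in $E$ is algebraic over $K$). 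Now for $f\in K[X]$ splitting completely over $\KtotS$, the sentence $(\exists x)(f(x)=0)$ is a test sentence, so $E$ and $F$ agree on it; more generally, using conjunctions and negations of such sentences one can express, for each finite Galois $N/K$ inside $\KtotS$ with primitive element $\theta$ of minimal polynomial $g$, and each subgroup $H\le\Gal(N/K)$, the statement "$L\cap N$ is exactly the fixed field of $H$" — namely "$g$ has a root generating the fixed field of $H$ but no root generating a strictly smaller fixed field", which unwinds to a Boolean combination of root-existence sentences for polynomials (the minimal polynomials over $K$ of the various $\sigma\theta$) each of which splits completely over $\KtotS$. Hence $E$ and $F$ select the same subgroup $H_N\le\Gal(N/K)$ for every such $N$, i.e.\ $L_E\cap N=L_F\cap N$ as subfields of $\KtotS$ for all finite Galois $N/K$ in $\KtotS$. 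Taking the compositum over all such $N$ (which exhaust $\KtotS$) gives $L_E=L_F$ as subfields of $\KtotS$, in particular $L_E\cong_K L_F$.

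With $E\cap\tilde K\cong_K F\cap\tilde K$ in hand, Proposition~\ref{characterization} yields $E\equiv_K F$, completing the converse. The main obstacle, and the step deserving the most care, is the bookkeeping in the previous paragraph: showing that "the algebraic part of a model of $T_{{\rm tot},\mfS,e}$, as a subfield of $\KtotS$, is pinned down by the test sentences it satisfies". This needs (i) that every root in $E$ of a polynomial $f\in K[X]$ lies in $E\cap\tilde K$, hence the root-existence test sentences really probe only the algebraic part; (ii) that for a finite Galois $N/K$ inside $\KtotS$ the conjugates $\sigma\theta$ of a primitive element are themselves primitive for the fixed fields of their stabilizers, and their minimal polynomials over $K$ still split completely over $\KtotS$ (true since $\KtotS/K$ is Galois); and (iii) that "$L$ contains a root of $g_\sigma$" for these auxiliary polynomials $g_\sigma$ suffices to determine $L\cap N$ — this is just ordinary Galois theory of the finite extension $N/K$. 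Once these are assembled, the argument is routine.
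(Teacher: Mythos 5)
Your overall strategy matches the paper's: reduce to Proposition~\ref{characterization} by showing that $E\cap\tilde K$ and $F\cap\tilde K$ are $K$-isomorphic, using that both are subfields of $\KtotS$ (Lemma~\ref{LemmaTheory}(3)) and that test sentences probe exactly which irreducible polynomials over $K$ acquire roots. The difference is how the two arguments establish the isomorphism of the algebraic parts, and here your argument has a genuine gap.

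You claim to pin down $L_E\cap N$ \emph{exactly} as a subfield of $N$, for each finite Galois $N/K$ in $\KtotS$, and then take the compositum over $N$ to get $L_E=L_F$. But test sentences cannot distinguish $K$-conjugate subfields: the sentence $(\exists x)(g_{\alpha_H}(x)=0)$, where $\alpha_H$ is a primitive element of $N^H$ and $g_{\alpha_H}$ its minimal polynomial over $K$, holds in $E$ precisely when $L_E$ contains a root of $g_{\alpha_H}$, i.e.\ contains $N^{\sigma H\sigma^{-1}}$ for \emph{some} $\sigma\in\Gal(N/K)$. So what you actually get is that $L_E\cap N$ and $L_F\cap N$ are $K$-conjugate in $N$ for every $N$, not equal. (Also, the parenthetical remark about the ``minimal polynomials over $K$ of the various $\sigma\theta$'' is off --- those are all equal to $g$; you need primitive elements of the intermediate fields, not conjugates of a primitive element of $N$.) From ``$L_E\cap N\cong_K L_F\cap N$ for all $N$'' you cannot simply take a compositum to conclude $L_E\cong_K L_F$: you must choose the isomorphisms coherently across the directed system of all $N$, which is a nontrivial compactness/inverse-limit argument. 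This is precisely what the paper's citation of \cite[20.6.3]{FJ3} supplies. So the step you flag as ``routine bookkeeping'' is exactly the non-routine content; either invoke that lemma as the paper does, or spell out the coherence argument.
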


\begin{proof}
Trivially, if $E$ and $F$ are elementarily equivalent over $K$, then they satisfy the same test sentences.
Conversely, assume that $E$ and $F$ satisfy the same test sentences, 
and let $E_0=E\cap\tilde{K}$ and $F_0=F\cap\tilde{K}$.
By Lemma~\ref{LemmaTheory}(3), $E_0\subseteq\KtotS$ and $F_0\subseteq\KtotS$.
Let $f\in K[X]$ be an irreducible polynomial.
If $f$ does not completely decompose over $\KtotS$, then it has no root in $\KtotS$,
so it has no root in $E_0$ and it has no root in $F_0$.
If $f$ completely decomposes over $\KtotS$, then $(\exists x)(f(x)=0)$ is a test sentence.
Hence, $f$ has a root in $E_0$ if and only if it has a root in $F_0$.
Therefore, $E_0\cong_K F_0$, see for example \cite[20.6.3]{FJ3}.
By Proposition~\ref{characterization}, $E\equiv_K F$ .
\end{proof}

\begin{Lemma}\label{testsentencesrecursive}
The set of test sentences is recursive.
\end{Lemma}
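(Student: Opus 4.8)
The plan is to exhibit a decision procedure for the predicate ``$\varphi$ is a test sentence'', proceeding by recursion on the syntactic structure of the given $\mcLr(K)$-sentence $\varphi$. Since by Definition~\ref{Deftest} the test sentences form the smallest set containing the base sentences $(\exists x)(f(x)=0)$ (with $f\in K[X]$ completely decomposing over $\KtotS$) and closed under $\neg$, $\wedge$, $\vee$, membership can be tested by unfolding this inductive definition. Concretely, given a Gödel number of $\varphi$ I would check, in order: whether $\varphi$ literally has the form $(\exists x)(t_1=t_2)$ for terms $t_1,t_2$ in the single variable $x$ and constants from $K$ (in which case I read off $f=t_1-t_2\in K[X]$ using the presentation $\rho\function K\to\mathbb{N}$ and the induced presentation of $K[X]$, and apply the subroutine below); whether $\varphi=\neg\psi$; whether $\varphi=\psi_1\wedge\psi_2$ or $\varphi=\psi_1\vee\psi_2$; and otherwise declare $\varphi$ not a test sentence. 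In the connective cases I recurse on the proper subformulas. The recursion terminates because subformulas are strictly shorter.

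The only non-syntactic ingredient is a subroutine deciding, for $f\in K[X]$, whether $f$ completely decomposes over $\KtotS$. First I would use the splitting algorithm of $K$ (Setting~\ref{set:splitting}) to factor $f$ into irreducible factors; $f$ decomposes completely over $\KtotS$ if and only if each of its irreducible factors of degree $\geq 1$ does. For an irreducible $g\in K[X]$ of degree $\geq 1$ I would invoke that $\KtotS/K$ is Galois: if $g$ has one root in $\KtotS$, then all its roots, being $\Gal(\tilde K/K)$-conjugates and hence still lying in $\KtotS$, are in $\KtotS$; so $g$ decomposes completely over $\KtotS$ if and only if $g$ has a root in $\KtotS$, i.e.\ if and only if $g\notin N_K(\KtotS)$. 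By Lemma~\ref{splitting} the set $N_K(\KtotS)$ is recursive, so this test is effective. Composing with the splitting algorithm makes the whole subroutine effective.

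Combining the structural recursion with this subroutine gives a recursive characteristic function for the set of test sentences. I do not expect a serious obstacle: the real content is already contained in Lemma~\ref{splitting} together with the splitting algorithm and the elementary Galois-theoretic reduction of ``completely decomposes'' to ``has a root'' for irreducible polynomials, and the remainder is a routine recursion on formulas. The one point requiring mild care is making the pattern-matching on $\varphi$ precise --- recognizing the base form $(\exists x)(f(x)=0)$ and extracting the coefficients of $f$ recursively from the Gödel number of $\varphi$ --- which is immediate from $K$ being a presented field.
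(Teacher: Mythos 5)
Your proof is correct and follows the same approach as the paper: reduce the base case to deciding membership in $N_K(\KtotS)$ via Lemma~\ref{splitting}, then recurse on the Boolean structure. You make explicit one step the paper leaves implicit, namely that for irreducible $g$ the condition ``completely decomposes over $\KtotS$'' is equivalent to ``has a root in $\KtotS$'' because $\KtotS/K$ is Galois, so that the splitting algorithm plus Lemma~\ref{splitting} suffice; this is a welcome clarification rather than a deviation.
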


\begin{proof}
Given a polynomial $f\in K[X]$,
one can decide whether $(\exists x)(f(x)=0)$ is a test sentence because $N_K(\KtotS)$
is recursive by Lemma \ref{splitting}.
Induction on the structure of formulas then shows that
the set of test sentences is recursive.
\end{proof}

\begin{Definition}
For each $\mcLr(K)$-sentence $\theta$ let
$\Sigma_{\mfS,e}(\theta)=\{\bfsigma\in\Gal(K)^e \colon \KtotS(\bfsigma)\models\theta\}$.
We denote by $\mu$ the unique Haar probability measure on $\Gal(K)^e$.
\end{Definition}

\begin{Lemma}\label{testrecursive}
Let $\lambda$ be a test sentence.
Then $\Sigma_{\mfS,e}(\lambda)$ is open-closed in $\Gal(K)^e$ and
$\mu(\Sigma_{\mfS,e}(\lambda))\in\mathbb{Q}$.
The map $\lambda\mapsto\mu(\Sigma_{\mfS,e}(\lambda))$ 
from test sentences to $\Q$ is recursive.
\end{Lemma}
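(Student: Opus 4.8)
The plan is to reduce everything to a finite computation inside a finite Galois group of $K$. By Definition~\ref{Deftest}, $\lambda$ is built by the Boolean operations from atomic test sentences $\theta_f:=(\exists x)(f(x)=0)$ in which $f\in K[X]$ decomposes completely over $\KtotS$. Let $f_1,\dots,f_m$ be the polynomials occurring in these atoms. Since each $f_i$, hence also $f:=f_1\cdots f_m$, splits completely over the Galois extension $\KtotS/K$, the splitting field $N$ of $f$ over $K$ is a finite Galois extension of $K$ with $N\subseteq\KtotS$. The crucial observation is that if $\alpha$ is a root of some $f_i$, then $\alpha\in N\subseteq\KtotS$, so $\alpha\in\KtotS(\bfsigma)=\KtotS\cap\tilde K(\bfsigma)$ if and only if $\sigma_1(\alpha)=\dots=\sigma_e(\alpha)=\alpha$, and this depends only on $\bar\bfsigma:=({\rm res}_N(\sigma_1),\dots,{\rm res}_N(\sigma_e))\in\Gal(N/K)^e$, where ${\rm res}_N\colon\Gal(K)\to\Gal(N/K)$ is the restriction. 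Hence $\KtotS(\bfsigma)\models\theta_{f_i}$ if and only if $\bar\bfsigma$ belongs to the set $T_{f_i}\subseteq\Gal(N/K)^e$ of those $(\tau_1,\dots,\tau_e)$ admitting a root of $f_i$ in $N$ fixed by all $\tau_j$.

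I would then propagate this through the Boolean structure: for every test subsentence $\psi$ of $\lambda$ put $T_{\theta_{f_i}}:=T_{f_i}$, $T_{\neg\psi}:=\Gal(N/K)^e\setminus T_\psi$, $T_{\psi_1\wedge\psi_2}:=T_{\psi_1}\cap T_{\psi_2}$, and $T_{\psi_1\vee\psi_2}:=T_{\psi_1}\cup T_{\psi_2}$; an immediate induction then gives $\KtotS(\bfsigma)\models\psi$ if and only if ${\rm res}_N(\bfsigma)\in T_\psi$, since satisfaction commutes with the Boolean connectives and $\Sigma_{\mfS,e}(\neg\psi)=\Gal(K)^e\setminus\Sigma_{\mfS,e}(\psi)$, etc. In particular $\Sigma_{\mfS,e}(\lambda)=({\rm res}_N^e)^{-1}(T_\lambda)$ is the preimage of a subset of the finite discrete group $\Gal(N/K)^e$ under a continuous map, hence open-closed. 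Moreover ${\rm res}_N$ carries the Haar probability measure on $\Gal(K)$ to the uniform measure on $\Gal(N/K)$ — its fibres are the cosets of the open subgroup $\Gal(\tilde K/N)$ of index $[N:K]$, each of measure $1/[N:K]$ — so ${\rm res}_N^e$ carries $\mu$ to the uniform measure on $\Gal(N/K)^e$, and therefore $\mu(\Sigma_{\mfS,e}(\lambda))=|T_\lambda|/[N:K]^e\in\Q$.

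For the recursiveness statement I would check that every ingredient is effective. Parsing $\lambda$ into its atoms $\theta_{f_i}$ and its Boolean skeleton is effective because the set of test sentences is recursive (Lemma~\ref{testsentencesrecursive}). Using the splitting algorithm for $K$ one can effectively construct the splitting field $N$ of $f=f_1\cdots f_m$ as a presented field, compute $[N:K]$, and determine $\Gal(N/K)$ as a permutation group on the roots of $f$ (cf.~\cite[\S19]{FJ3}; a splitting algorithm for $K$ also yields one for finite extensions of $K$, which is what the successive adjunction of roots requires). Given this data, each $T_{f_i}$ is computed by running over the finite set $\Gal(N/K)^e$ and testing, for every tuple, the finitely verifiable condition of fixing some root of $f_i$; the Boolean recursion then produces $T_\lambda\subseteq\Gal(N/K)^e$; and finally one outputs the rational number $|T_\lambda|/[N:K]^e$.

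I expect the only genuinely delicate point to be the effective Galois theory over $K$ used in the last paragraph — extracting from a splitting algorithm a presentation of $N$ together with the permutation action of $\Gal(N/K)$ on the roots of $f$ — which is classical but should be cited with care. Everything else is bookkeeping over the finite group $\Gal(N/K)^e$ together with the elementary measure-theoretic fact that an open subgroup of index $n$ has Haar measure $1/n$.
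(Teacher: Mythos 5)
Your proof is correct and follows essentially the same route as the paper: reduce to the splitting field $N = L_\lambda\subseteq\KtotS$ of all the polynomials occurring in $\lambda$, observe that $\KtotS(\bfsigma)\cap N$ depends only on ${\rm res}_N(\bfsigma)$, propagate through the Boolean structure, and conclude $\Sigma_{\mfS,e}(\lambda)=({\rm res}_N^e)^{-1}(T_\lambda)$ with $\mu(\Sigma_{\mfS,e}(\lambda))=|T_\lambda|/[N:K]^e$, computable via the splitting algorithm. The only cosmetic difference is that you fix the large field $N$ once and for all, whereas the paper lets the containing Galois extension $L$ vary in the induction hypothesis; both devices serve the same purpose of making the atom case work uniformly for all subformulas.
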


\begin{proof}
Let $f_1,\dots,f_n\in K[X]$ be the polynomials
occurring in $\lambda$.
Their splitting field $L_\lambda$ is a finite Galois extension of $K$ inside $\KtotS$.
Let $L/K$ be a Galois extension with $L_\lambda\subseteq L\subseteq\KtotS$ (this is needed for the induction).
Then
$\KtotS(\bfsigma)\cap L=L({\rm res}_{\tilde{K}/L}(\bfsigma))$
for each $\bfsigma\in\Gal(K)^e$.
Let 
$$
 \Sigma_{L,\lambda}=\left\{ \mbox{\boldmath$\tau$}\in\Gal(L/K)^e \colon L(\mbox{\boldmath$\tau$})\models\lambda \right\}.
$$
We claim that
$\Sigma_{\mfS,e}(\lambda)=\{\bfsigma\in\Gal(K)^e \colon {\rm res}_{\tilde{K}/L}(\bfsigma)\in\Sigma_{L,\lambda}\}$.
Indeed, if $\lambda$ is of the form $(\exists x)(f(x)=0)$, where 
$f\in K[X]$ completely decomposes over $\KtotS$,
then 
$$
 \Sigma_{L,\lambda}=\{\mbox{\boldmath$\tau$}\in\Gal(L/K)^e \colon f\mbox{ has a zero in }L(\mbox{\boldmath$\tau$})\}.
$$
Since $L$ contains all roots of $f$, 
$\KtotS(\bfsigma)\models\lambda$ if and only if
$\KtotS(\bfsigma)\cap L\models\lambda$,
so the claim is true in that case.
Induction on the structure of $\lambda$ shows that the claim
holds for all test sentences $\lambda$.
Thus, $\Sigma_{\mfS,e}(\lambda)$ is open-closed, in particular measurable. Furthermore,
$$
 \mu(\Sigma_{\mfS,e}(\lambda))=\frac{|\Sigma_{L_\lambda,\lambda}|}{[L_\lambda:K]^e}\in\Q
$$ 
is computable since $K$ has a splitting algorithm, see for example \cite[19.3.2]{FJ3}.
\end{proof}

\begin{Theorem}\label{TheoremSigma}
Under Setting \ref{set:splitting}, the following holds:
\begin{enumerate}
 \item For every $\mcLr(K)$-sentence $\theta$,
         $\Sigma_{\mfS,e}(\theta)$ is $\mu$-measurable, and
        $\mu(\Sigma_{\mfS,e}(\theta))\in\mathbb{Q}$.
 \item The map $\theta\mapsto\mu(\Sigma_{\mfS,e}(\theta))$ from $\mcLr(K)$-sentences to $\Q$
        is recursive.
\end{enumerate}
In particular, the theory
$T_{{\rm almost},\mfS,e}$ of almost all fields $\KtotS(\bfsigma)$, $\bfsigma\in\Gal(K)^e$,
is decidable.
\end{Theorem}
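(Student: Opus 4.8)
The plan is to follow the strategy of \cite[Chapter~20.6]{FJ3}: reduce an arbitrary $\mcLr(K)$-sentence to a test sentence modulo $T_{{\rm tot},\mfS,e}$, and then transfer to all sentences the effective measure-theoretic control over test sentences provided by Lemma~\ref{testrecursive}. Throughout write $T=T_{{\rm tot},\mfS,e}$.

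The key step, which I would carry out first, is to show that every $\mcLr(K)$-sentence $\theta$ is equivalent modulo $T$ to a test sentence $\lambda_\theta$, i.e.\ $T\models\theta\leftrightarrow\lambda_\theta$; this is a compactness argument resting on Lemma~\ref{testsentences}. Let $S_\theta$ be the set of test sentences $\lambda$ with $T\cup\{\theta\}\models\lambda$. I claim $T\cup S_\theta\models\theta$. If not, choose $F\models T\cup S_\theta$ with $F\models\neg\theta$, and let ${\rm Tr}(F)$ be the set of test sentences true in $F$; since $F$ decides every test sentence and these are closed under negation, any model of $T\cup\{\theta\}\cup{\rm Tr}(F)$ would satisfy exactly the test sentences in ${\rm Tr}(F)$, hence by Lemma~\ref{testsentences} be elementarily equivalent to $F$ over $K$, which is impossible as $F\models\neg\theta$. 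So $T\cup\{\theta\}\cup{\rm Tr}(F)$ is inconsistent, and by compactness there are $\lambda_1,\dots,\lambda_n\in{\rm Tr}(F)$ with $T\cup\{\theta\}\models\neg(\lambda_1\wedge\dots\wedge\lambda_n)$; then $\neg(\lambda_1\wedge\dots\wedge\lambda_n)$ is a test sentence belonging to $S_\theta$, yet $F\models\lambda_1\wedge\dots\wedge\lambda_n$, contradicting $F\models S_\theta$. Hence $T\cup S_\theta\models\theta$, so by compactness $T\cup\{\lambda_1,\dots,\lambda_m\}\models\theta$ for some $\lambda_1,\dots,\lambda_m\in S_\theta$, and $\lambda_\theta:=\lambda_1\wedge\dots\wedge\lambda_m$ is a test sentence with $T\models\theta\to\lambda_\theta$ and $T\models\lambda_\theta\to\theta$.

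Granting this, part~(1) is immediate. By Theorem~\ref{almostaxioms}, and the fact established in its proof that almost all $\KtotS(\bfsigma)$ are models of $T$, we have $\KtotS(\bfsigma)\models\theta\leftrightarrow\lambda_\theta$ for almost all $\bfsigma$, so $\Sigma_{\mfS,e}(\theta)$ differs from $\Sigma_{\mfS,e}(\lambda_\theta)$ by a $\mu$-null set; by Lemma~\ref{testrecursive}, $\Sigma_{\mfS,e}(\lambda_\theta)$ is open-closed with rational measure, whence $\Sigma_{\mfS,e}(\theta)$ is $\mu$-measurable and $\mu(\Sigma_{\mfS,e}(\theta))=\mu(\Sigma_{\mfS,e}(\lambda_\theta))\in\Q$.

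For part~(2) I would make the reduction effective. The set of test sentences is recursive (Lemma~\ref{testsentencesrecursive}) and $T$ is recursive (Lemma~\ref{Ttotperecursive}), so from $\theta$ one can search in parallel through test sentences $\lambda$ and through proofs from $T$ until one finds $\lambda$ together with proofs of $\theta\to\lambda$ and $\lambda\to\theta$ from $T$; by the first step this search terminates with some $\lambda_\theta$, and composing with the recursive map $\lambda\mapsto\mu(\Sigma_{\mfS,e}(\lambda))$ of Lemma~\ref{testrecursive} computes $\mu(\Sigma_{\mfS,e}(\theta))$. Finally $T_{{\rm almost},\mfS,e}$ is the set of $\theta$ with $\mu(\Sigma_{\mfS,e}(\theta))=1$, and since this rational number is computable from $\theta$ and equality with $1$ is decidable, $T_{{\rm almost},\mfS,e}$ is decidable. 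I expect the only genuine obstacle to be the compactness argument of the first step --- more precisely, making sure that Lemma~\ref{testsentences} really yields that test sentences separate the completions of $T$; all the substantive field-theoretic and group-pile input is already packaged there, and what remains is routine effectivity bookkeeping and measure-theoretic transfer.
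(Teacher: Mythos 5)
Your proposal is correct and follows essentially the same route as the paper: reduce $\theta$ modulo $T_{{\rm tot},\mfS,e}$ to a test sentence via Lemma~\ref{testsentences}, use Lemma~\ref{testrecursive} for the measure statement, and make the reduction recursive via Lemma~\ref{Ttotperecursive} and Lemma~\ref{testsentencesrecursive}. The only deviation is that you spell out the compactness/separation argument (that test sentences modulo $T$ separate models up to elementary equivalence, hence every sentence is $T$-equivalent to a test sentence), which the paper delegates to \cite[7.8.2]{FJ3}; your argument for this is the standard one and is correct.
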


\begin{proof}
By Theorem~\ref{almostaxioms},
$T_{{\rm tot},\mfS,e}\models T_{{\rm almost},\mfS,e}$
and
$T_{{\rm almost},\mfS,e}\models T_{{\rm tot},\mfS,e}$.
By Lemma~\ref{testsentences}
and \cite[7.8.2]{FJ3},
for every $\mcLr(K)$-sentence $\theta$ there exists
a test sentence $\lambda$ such that
the sentence $\theta\leftrightarrow\lambda$ is in $T_{{\rm almost},\mfS,e}$.
In particular, $\Sigma_{\mfS,e}(\theta)$ and $\Sigma_{\mfS,e}(\lambda)$ differ only by a zero set.
Lemma~\ref{testrecursive} implies that $\Sigma_{\mfS,e}(\lambda)$ is $\mu$-measurable and $\mu(\Sigma_{\mfS,e}(\lambda))\in\Q$,
so also $\Sigma_{\mfS,e}(\theta)$ is $\mu$-measurable and $\mu(\Sigma_{\mfS,e}(\theta))=\mu(\Sigma_{\mfS,e}(\lambda))\in\mathbb{Q}$.

Since $T_{{\rm tot},\mfS,e}\models T_{{\rm almost},\mfS,e}$,
we have
$T_{{\rm tot},\mfS,e}\models\theta\leftrightarrow\lambda$.
The set of test sentences is recursive by Lemma~\ref{testsentencesrecursive}.
By Lemma~\ref{Ttotperecursive}, the theory
$T_{{\rm tot},\mfS,e}$ is recursive,
so the set of consequences of $T_{{\rm tot},\mfS,e}$ is recursively enumerable.
Therefore, there is a recursive map $\theta\mapsto\lambda_{\theta}$
from $\mcLr(K)$-sentences to test sentences such that
$\theta\leftrightarrow\lambda_\theta$ is in $T_{{\rm almost},\mfS,e}$
for every $\theta$.
In particular, $\mu(\Sigma_{\mfS,e}(\theta))=\mu(\Sigma_{\mfS,e}(\lambda_{\theta}))$.
Since also the map $\lambda\mapsto\mu(\Sigma_{\mfS,e}(\lambda))$ from test sentences to $\Q$
is recursive by Lemma~\ref{testrecursive},
so is the composition
$\theta\mapsto\lambda_\theta\mapsto\mu(\Sigma_{\mfS,e}(\lambda_\theta))=\mu(\Sigma_{\mfS,e}(\theta))$.
\end{proof}

\begin{Remark}
By applying this theorem to Example \ref{Ex:numberfield}
we finally deduce Theorem~\ref{TheoremIntro1} from the introduction.
\end{Remark}

\begin{Remark}
Note that the assumption that the primes in $\mfS$ are recursive
is {\em necessary}.
Indeed,$R_\mfp(\KtotS)$ is $K$-definable in $\KtotS$
for each $\mfpinS$, cf.~Proposition \ref{PSCC:holomorphy}. An element $x\in K$ lies in $\mcO_\mfp$
if and only if $x\in R_\mfp(\KtotS)$,
so the decidability of the complete $\mcLr(K)$-theory of $\KtotS$ implies
that $\mcO_\mfp$ is recursive.
On the other hand we do not know whether the assumption that $K$ has 
a splitting algorithm is necessary.

The theorem does certainly not hold anymore if we allow
$\mfS$ to be an arbitrary (possibly infinite) set of recursive local primes of $K$.
In fact, although there exist trivial examples of Hilbertian fields $K$
with an infinite set of local primes $\mfS$ such that $\KtotS$ is decidable,
we do not know any infinite set of primes $\mfS$ of $K=\Q$ for which the theorem holds.
Moreover, \cite[Example 10.4]{JardenPop} gives an example of an infinite
set of primes $\mfS$ of $\mathbb{Q}$ {\em that has Dirichlet density zero},
but $\Q^\mfS=\Q$, hence
$T_{{\rm almost},\mfS,e}={\rm Th}(\Q)$
is undecidable.
\end{Remark}

\begin{Remark}
With the machinery developed here and some additional work one can 
show that Theorem \ref{TheoremSigma} holds with $\KtotS(\bfsigma)$ 
replaced by the maximal Galois extension $\KtotS[\bfsigma]$ of $K$ inside $\KtotS(\bfsigma)$.
We refer the interested reader to \cite[Chapter 5]{AFDiss},
where this is shown for number fields $K$.
\end{Remark}

\appendix
\section{Real closed fields}
\label{app:real}

\noindent
We recall the notion of real closed fields and
quote some well known results from \cite{Prestel1}.

Let $K$ be a field.
A {\bf positive cone} of $K$ is a semiring $P\subseteq K$
such that $P\cup(-P)=K$ and $P\cap(-P)=\{0\}$.
An {\bf ordering} of $K$ is a total order $\leq$ on $K$
such that $\{x\in K \colon x\geq 0\}$ is a positive cone.
The map that assigns to an ordering the corresponding positive cone
induces a natural bijection between the orderings of $K$ and the positive cones of $K$.
An {\bf ordered field} is a field $K$ together with an ordering.
An ordering $\leq$ of $K$ is {\bf archimedean} if for every $x\in K$ there exists $y\in\mathbb{N}\subseteq K$ with $x< y$.
A {\bf pre-positive cone} of $K$ is 
a semiring $P\subseteq K$ such that
$K^2\subseteq P$ and $-1\notin P$.

\begin{Lemma}\label{prepositive}
Each pre-positive cone of $K$ is the intersection of the
positive cones of $K$ containing it.
In particular, 
each pre-positive cone of $K$ is contained in a positive cone of $K$.
\end{Lemma}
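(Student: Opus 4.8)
The plan is to give the classical Artin--Schreier extension argument: produce enough positive cones above $P$ to separate points not in $P$. First I would note that the family of pre-positive cones of $K$ containing a fixed pre-positive cone $P$ is closed under unions of chains — a union of a chain of subsemirings containing $K^2$ is again such a subsemiring, and it misses $-1$ because every member does — so by Zorn's lemma every pre-positive cone is contained in a \emph{maximal} one. The whole problem then reduces to the single claim that a maximal pre-positive cone $Q$ is actually a positive cone, i.e.\ $Q\cap(-Q)=\{0\}$ and $Q\cup(-Q)=K$.

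For $Q\cap(-Q)=\{0\}$: if $0\neq c\in Q\cap(-Q)$ then $-1=(-c)\cdot c\cdot c^{-2}\in Q$, since $c^{-2}=(c^{-1})^2\in K^2\subseteq Q$, contradicting $-1\notin Q$. For $Q\cup(-Q)=K$: for any $c\in K$ the set $Q+cQ$ is a subsemiring of $K$ containing $K^2$ (using $c^2\in K^2\subseteq Q$) and containing $c$. If $c\notin Q$ then $Q+cQ\supsetneq Q$, so by maximality $Q+cQ$ is not a pre-positive cone, hence $-1\in Q+cQ$. Assuming both $b\notin Q$ and $-b\notin Q$, apply this to $c=b$ and $c=-b$ to get $q_1,q_2,q_3,q_4\in Q$ with $bq_2=-(1+q_1)$ and $bq_4=1+q_3$; multiplying these gives $b^2q_2q_4=-(1+q_1)(1+q_3)$, whose left-hand side lies in $Q$ and whose right-hand side lies in $-Q$, so by the previous paragraph it equals $0$. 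Then $(1+q_1)(1+q_3)=0$, forcing $-1=q_1\in Q$ or $-1=q_3\in Q$, a contradiction. Hence $Q$ is a positive cone.

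Finally, to prove $P=\bigcap\{Q\colon Q\text{ a positive cone},\,P\subseteq Q\}$, the inclusion $\subseteq$ is immediate, and for $\supseteq$ I take $a\notin P$ (note $a\neq0$ as $0=0^2\in P$) and show some positive cone above $P$ omits $a$. Here $P-aP=P+(-a)P$ is a subsemiring containing $K^2$ and $-a$, and it avoids $-1$: from $p_1-ap_2=-1$ one gets either $p_2=0$ and $-1=p_1\in P$, or $p_2\neq0$ and $a=(1+p_1)\cdot p_2\cdot p_2^{-2}\in P$ (using $1\in K^2\subseteq P$), both impossible. So $P-aP$ is a pre-positive cone, which by the first two paragraphs extends to a positive cone $Q$ with $-a\in Q\supseteq P$; then $a\notin Q$ since otherwise $a\in Q\cap(-Q)=\{0\}$. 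This establishes the displayed equality, and the ``in particular'' clause follows at once since every pre-positive cone, being contained in a maximal one, is contained in a positive cone. The argument is entirely classical; the only points needing a little care are the chain-union closure and the verification that $P-aP$ omits $-1$, and neither presents a genuine obstacle.
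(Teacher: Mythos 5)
Your proof is correct, and it is exactly the classical Artin--Schreier argument: extend to a maximal pre-positive cone via Zorn, show maximality forces it to be a positive cone, and then separate an element $a\notin P$ by passing to the pre-positive cone $P-aP$. The paper does not give its own proof but simply cites \cite[1.6]{Prestel1}, where the same argument appears, so you have in effect supplied the content of the reference.
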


\begin{proof}
See \cite[1.6]{Prestel1}.
\end{proof}

A field is {\bf real closed} if it has an ordering but
each proper algebraic extension has no ordering.
A real closed field $K$ has a unique ordering, 
given by the positive cone $K^2$, \cite[3.2]{Prestel1}.
A real closed field $F$ is a {\bf real closure} of an ordered field $K$
if $F$ is an algebraic extension of $K$ and the unique ordering of $F$ extends 
the ordering of $K$.
Any ordered field $K$ has a real closure,
which is unique up to $K$-isomorphism, \cite[3.10]{Prestel1}.
If $L$ is a finite extension of an ordered field $K$, then
the extensions of the ordering of $K$ to $L$ bijectively correspond to
the $K$-embeddings of $L$ into a fixed real closure of $K$, \cite[3.12]{Prestel1}.

\begin{Lemma}\label{algclosedreal}
A field which is algebraically closed in a real closed field 
is real closed.
\end{Lemma}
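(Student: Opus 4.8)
The plan is to show that $K$ satisfies the three classical Artin--Schreier conditions characterising real closed fields: $K$ is formally real, every element of $K$ or its negative is a square in $K$, and every odd degree polynomial over $K$ has a root in $K$. Throughout, let $R$ denote the real closed field in which $K$ is relatively algebraically closed, and fix the unique ordering $\leq$ of $R$, given by $R^2$ (cf.~\cite[3.2]{Prestel1}); the crucial feature used repeatedly is that every element of $R$ that is algebraic over $K$ already lies in $K$.

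First I would record that $K$ is formally real: since $R$ carries an ordering, $-1$ is not a sum of squares in $R$, hence not in the subfield $K$. In particular $\sum K^2$ is a pre-positive cone of $K$, so $K$ admits an ordering by Lemma~\ref{prepositive}.

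Next, the square condition. Let $a\in K$ with $a\geq 0$. Since $R$ is real closed, $a=b^2$ for some $b\in R$; as $b$ is a root of $X^2-a\in K[X]$ it is algebraic over $K$, hence $b\in K$ and $a\in K^2$. Together with $K^2\cap(-K^2)=\{0\}$ (formal reality) this shows $K^2$ is precisely the positive cone of $\leq|_K$, so $K=K^2\cup(-K^2)$, and this is in fact the unique ordering of $K$. By the same mechanism, if $f\in K[X]$ has odd degree, then $f$, regarded in $R[X]$, has a root $c\in R$ (a basic property of real closed fields), and $c$ is algebraic over $K$, hence $c\in K$.

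Finally I would invoke the Artin--Schreier theorem: a formally real field in which every element or its negative is a square and every odd degree polynomial has a root is real closed; concretely, the usual $2$-Sylow argument shows that its only proper algebraic extension is $K(\sqrt{-1})$, which is not orderable, so $K$ is real closed in the sense of the definition above (see \cite{Prestel1}). This last step is the only nontrivial ingredient and is imported from the literature; everything else is a direct exploitation of the relative algebraic closedness of $K$ in $R$. Alternatively, one may finish more quickly: $R(\sqrt{-1})$ is algebraically closed, so it contains an algebraic closure $\tilde K$ of $K$, and $\tilde K\cap R=K$ gives $[\tilde K:K]=[\tilde K\cdot R:R]\leq[R(\sqrt{-1}):R]=2$; since $K$ is formally real (hence $\tilde K\neq K$), one concludes by the form of Artin--Schreier stating that a field $K$ with $1<[\tilde K:K]<\infty$ is real closed.
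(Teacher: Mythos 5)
Your argument is correct. Note, though, that the paper does not actually give a proof of this lemma: it simply cites \cite[3.13]{Prestel1}, so there is no in-paper argument to compare against. What you have written is a complete and correct reconstruction of the standard fact: you verify the three Artin--Schreier conditions (formally real; $K=K^2\cup(-K^2)$; odd-degree polynomials have roots) by repeatedly using that any element of $R$ algebraic over $K$ already lies in $K$, and then invoke the Artin--Schreier characterization. The alternative finish is also sound, provided one notes that $\tilde K\cap R=K$ together with $\tilde K/K$ being Galois (char.\ $0$) yields linear disjointness of $\tilde K$ and $R$ over $K$, whence $[\tilde K:K]=[\tilde K R:R]\leq[R(\sqrt{-1}):R]=2$; since $K$ is formally real, $\tilde K\neq K$, and the Artin--Schreier theorem in the form ``$1<[\tilde K:K]<\infty$ implies $K$ real closed'' applies. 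In short: correct, and it fills in exactly the content the paper delegates to \cite{Prestel1}.
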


\begin{proof}
See \cite[3.13]{Prestel1}.
\end{proof}

\begin{Proposition}[Artin-Schreier]\label{ArtinSchreier}
A field $K$ is real closed if and only if $\Gal(K)\cong\mathbb{Z}/2\mathbb{Z}$.
\end{Proposition}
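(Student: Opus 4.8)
This is the classical Artin--Schreier theorem; in the paper I would ultimately refer to \cite{Prestel1} for it, but let me indicate the route.

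\emph{``$K$ real closed $\Rightarrow\Gal(K)\cong\mathbb{Z}/2\mathbb{Z}$''.} The unique ordering of $K$ has positive cone $K^2$ \cite[3.2]{Prestel1}, so $-1\notin K^2$ (otherwise $1+(\sqrt{-1})^2=0$ would be both $\geq 0$ and $\leq 0$), whence $[K(\sqrt{-1}):K]=2$. The plan is to show that $K(\sqrt{-1})$ is algebraically closed, so that $\tilde K=K(\sqrt{-1})$ and $\Gal(K)\cong\mathbb{Z}/2\mathbb{Z}$. I would use two standard facts about real closed fields: (i) every element of $K(\sqrt{-1})$ is a square there --- for $a+b\sqrt{-1}$ with $a,b\in K$, the quantities $a^2+b^2$ and then $(a+\sqrt{a^2+b^2})/2$ are sums of squares, hence squares in $K$, which gives an explicit square root (the degenerate case being immediate); and (ii) every odd-degree polynomial over $K$ has a root in $K$ (equivalently, an odd-degree extension of the formally real field $K$ is again formally real, hence orderable by Lemma~\ref{prepositive}, contradicting real-closedness). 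From (ii), every finite extension of $K$ has $2$-power degree, so $\Gal(K)$ is pro-$2$; then $\Gal(\tilde K/K(\sqrt{-1}))$ is pro-$2$ as well, and were it nontrivial it would admit an open subgroup of index $2$, hence a quadratic extension of $K(\sqrt{-1})$, contradicting (i). So $\tilde K=K(\sqrt{-1})$.

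\emph{``$\Gal(K)\cong\mathbb{Z}/2\mathbb{Z}\Rightarrow K$ real closed''} --- the substantive direction. Write $C$ for the separable closure of $K$, so $[C:K]=2$, with nontrivial $\sigma\in\Gal(C/K)$. First one shows $\ch(K)=0$: if $\ch(K)=2$ then $C/K$ is an Artin--Schreier extension $C=K(\theta)$ with $\wp(\theta):=\theta^2+\theta=a\in K\setminus\wp(K)$, but $\operatorname{Tr}_{C/K}$ is surjective, $\wp$ is surjective on $C$ (as $X^2+X+c$ is separable and $C=K^{\rm sep}$), and $\operatorname{Tr}(x^2)=\operatorname{Tr}(x)^2$ in characteristic $2$, so writing $a=\operatorname{Tr}(z)=\operatorname{Tr}(\wp(c))=\wp(\operatorname{Tr}(c))$ yields $a\in\wp(K)$, a contradiction; and if $\ch(K)=p$ is an odd prime, one reaches a contradiction by a roots-of-unity analysis. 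Hence $\ch(K)=0$ and $C=\tilde K$, with $[\tilde K:K]=2$. Next, $-1$ is not a sum of squares in $K$ --- this is the point where $|\Gal(K)|=2$ is genuinely used --- so the sums of squares of $K$ form a pre-positive cone and, by Lemma~\ref{prepositive}, are contained in a positive cone; thus $K$ is orderable and has a real closure $R$ with $K\subseteq R\subseteq\tilde K$. Since $\tilde K$ is algebraically closed, it is not orderable, so $R\neq\tilde K$; as $[\tilde K:K]=2$, this forces $R=K$, i.e.\ $K$ is real closed.

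The only delicate points are in the converse: eliminating positive characteristic in full (for odd $\ch(K)$), and proving that $-1$ is not a sum of squares. In the classical treatment both rest on a case analysis of the prime-degree subextensions of $\tilde K/K$ (the cases $\ell=\ch(K)$, $\ell$ odd, and $\ell=2$), which I would not reproduce but take from \cite{Prestel1}.
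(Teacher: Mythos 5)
Your proposal is correct. The paper's own proof of this proposition is simply the two-reference citation to \cite[VI.9.3]{L-A} and \cite[3.3]{Prestel1}, so your detailed outline of the classical Artin--Schreier argument --- the explicit square roots in $K(\sqrt{-1})$, the odd-degree/Sylow reduction to pro-$2$, the Artin--Schreier/trace trick eliminating characteristic $2$, and Lemma~\ref{prepositive} producing the ordering --- with the two genuinely hard steps deferred to \cite{Prestel1}, as you note, is a more explicit rendering of the same approach.
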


\begin{proof}
This follows from \cite[VI.9.3]{L-A} and \cite[3.3]{Prestel1}.
\end{proof}

\begin{Proposition}\label{varietyreal}
Let $V$ be a smooth absolutely irreducible variety over a real closed field $K$.
If $V(K)\neq\emptyset$, then $V(K)$ is Zariski-dense in $V$.
\end{Proposition}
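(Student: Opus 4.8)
The plan is to exhibit, starting from a single smooth $K$-point of $V$, an entire order-topology box of $K$-rational points by means of the implicit function theorem over the real closed field $K$, and then to observe that no such box can lie inside a proper subvariety.

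First I would reduce to the affine case. Since $V(K)\neq\emptyset$, fix $p\in V(K)$ and an affine open $U\subseteq V$ with $p\in U(K)$; as $V$ is irreducible, $U$ is Zariski-dense in $V$, so it suffices to prove that $U(K)$ is Zariski-dense in $U$, and replacing $V$ by $U$ we may assume $V\subseteq\mathbb{A}^n_K$ is affine and absolutely irreducible of some dimension $d$. If $d=0$ then $V=\operatorname{Spec}K$ and there is nothing to show, so assume $d\geq1$. Because $V$ is smooth of dimension $d$ at the $K$-point $p$, there are a Zariski-open neighbourhood $U'$ of $p$ in $\mathbb{A}^n_K$ and polynomials $g_1,\dots,g_{n-d}\in K[X_1,\dots,X_n]$ such that $V\cap U'=\{x\in U' : g_1(x)=\dots=g_{n-d}(x)=0\}$ and the differentials $dg_1|_p,\dots,dg_{n-d}|_p$ are $K$-linearly independent; after permuting the coordinates we may assume $\det\bigl(\partial g_i/\partial X_j\bigr)_{1\leq i,j\leq n-d}(p)\neq0$.

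Next I would invoke the implicit function theorem over $K$. Write $p=(p',p'')\in K^{n-d}\times K^{d}$ and let $\pi\colon\mathbb{A}^n\to\mathbb{A}^d$ be the projection onto the last $d$ coordinates. The classical implicit function theorem over $\mathbb{R}$, applied to $g_1,\dots,g_{n-d}$, yields radii $\delta_1,\delta_2>0$ such that for every $x''$ with $\|x''-p''\|<\delta_2$ there is an $x'$ with $\|x'-p'\|<\delta_1$, with $(x',x'')\in U'$, and with $g_i(x',x'')=0$ for all $i$. The existence of such $\delta_1,\delta_2$ is expressed by a first-order sentence in the language of ordered fields having the coefficients of the $g_i$ (and the coordinates of $p$) as parameters, so by model completeness of the theory of real closed fields (Proposition~\ref{QERCF}) the analogous statement holds over $K$: there is an open box $B\subseteq K^{d}$ around $p''$ such that for every $x''\in B$ some point $(x',x'')$ lies in $V(K)$. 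Hence $\pi\bigl(V(K)\bigr)\supseteq B$.

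Finally I would derive Zariski density. If $V(K)$ were not Zariski-dense, then $V(K)\subseteq Z$ for some proper closed subset $Z\subsetneq V$; since $V$ is irreducible, every irreducible component of $Z$ has dimension $<d$, so $\dim Z\leq d-1$, whence the Zariski closure of the constructible set $\pi(Z)$ is a closed subset of $\mathbb{A}^d$ of dimension $\leq d-1$ and is therefore contained in $\{h=0\}$ for some nonzero $h\in K[Y_1,\dots,Y_d]$. But then $B\subseteq\pi\bigl(V(K)\bigr)\subseteq\pi(Z)\subseteq\{h=0\}$, so the nonzero polynomial $h$ vanishes identically on the box $B$, which is impossible because $B$ is a product of $d$ infinite subsets of the infinite field $K$. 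This contradiction proves the proposition. The only step requiring genuine care is the second one: one must phrase the needed consequence of the implicit function theorem (``over a small enough box, $\pi$ restricted to $V(K)$ is surjective'') as a bona fide first-order statement so that it transfers from $\mathbb{R}$; alternatively this is a standard instance of the implicit function theorem over real closed fields. The reduction to the affine case, the dimension count, and the fact that a nonzero polynomial cannot vanish on a box in an infinite field are all routine.
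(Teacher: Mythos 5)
Your proof is correct and complete, whereas the paper disposes of this proposition with a one-line citation to \cite[7.10]{Prestel1}, so you have in effect supplied the argument that the reference encapsulates. The route you take -- reduce to an affine chart, apply the implicit function theorem over the real closed field to produce a $d$-dimensional box of $K$-points in the image of the projection, and then observe that no nonzero polynomial can vanish on such a box -- is the standard semi-algebraic argument and is almost certainly what underlies Prestel's result as well, so this is the ``same'' proof made explicit rather than a genuinely different one.

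One small imprecision worth fixing: you say the existence of $\delta_1,\delta_2$ is a sentence ``having the coefficients of the $g_i$ (and the coordinates of $p$) as parameters, so by model completeness\ldots the analogous statement holds over $K$.'' Since those parameters live in $K$, not in $\mathbb{R}$, a statement with those parameters is not a sentence about $\mathbb{R}$ at all, and model completeness of RCF (Proposition~\ref{QERCF}) only moves statements along extensions of real closed fields. What you actually need is the \emph{completeness} of RCF: formulate the implicit function theorem, for each fixed $(n,d)$ and degree bound, as a parameter-free sentence quantifying universally over the coefficient vectors and over the point $p$ (with the hypotheses ``$g_i(p)=0$'' and ``the $(n-d)\times(n-d)$ Jacobian minor is nonzero at $p$'' in the antecedent), prove it over $\mathbb{R}$ by the classical theorem, and then transfer it to $K$ by completeness. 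Completeness of RCF does follow from model completeness together with the existence of the prime model $\mathbb{R}_{\mathrm{alg}}$, so your citation is morally adequate, but the phrasing as written would not survive a careful referee. Alternatively one can cite the semi-algebraic implicit function theorem directly (e.g.\ Bochnak--Coste--Roy), avoiding any transfer. Everything else -- the reduction to the affine case, the choice of $g_1,\dots,g_{n-d}$ with independent differentials, the containment $B\subseteq\pi(V(K))$, the dimension count $\dim\overline{\pi(Z)}\leq\dim Z\leq d-1$, and the fact that a nonzero $h\in K[Y_1,\dots,Y_d]$ cannot vanish on a product of $d$ infinite sets -- is sound.
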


\begin{proof}
This follows for example from \cite[7.10]{Prestel1}.
\end{proof}

\begin{Proposition}[Tarski]\label{QERCF}
The $\mcL$-theory of real closed ordered fields is model complete. 
\end{Proposition}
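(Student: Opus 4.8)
The plan is to prove quantifier elimination for the theory of real closed ordered fields, from which model completeness is immediate. By the usual reduction --- disjunctive normal form, and the standard criterion for eliminating a single existential quantifier (passing to the real closure of a common substructure) --- it suffices to show the following: if $M\subseteq N$ are real closed ordered fields (the ordering of $M$ being the restriction of that of $N$), $\bar a$ is a tuple from $M$, and $\varphi(x,\bar a)$ is a conjunction of conditions of the form $p_i(x)=0$, $q_j(x)\ne 0$, $r_k(x)>0$ with $p_i,q_j,r_k\in M[X]$, then $N\models(\exists x)\varphi(x,\bar a)$ implies $M\models(\exists x)\varphi(x,\bar a)$.

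First I would dispose of the case where some genuine equation $p_i(x)=0$ with $p_i\ne 0$ occurs. Then every witness $b\in N$ is algebraic over $M$; but $M$, being real closed, has no proper algebraic ordered extension, hence is relatively algebraically closed in the ordered field $N$, so $b\in M$ and $M\models\varphi(b,\bar a)$. In the remaining case there is no equation; replacing each $q_j(x)\ne 0$ by $q_j(x)^2>0$, I may assume $\varphi(x,\bar a)$ asserts $\bigwedge_k r_k(x)>0$. Here the key input is the intermediate value property for polynomials over a real closed field: over $N$, each $r_k$ changes sign only at its roots, and those roots lie in $M$ since $M$ is relatively algebraically closed in $N$. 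So, writing $c_1<\dots<c_m$ for the roots in $N$ of $\prod_k r_k$, all $c_\ell$ lie in $M$, and on each of the intervals $(-\infty,c_1),(c_1,c_2),\dots,(c_m,+\infty)$ of $N$ every $r_k$ has constant sign. A witness $b\in N$ lies in one such interval $I$, whose endpoints lie in $M\cup\{\pm\infty\}$; then $I\cap M\ne\emptyset$ --- take the midpoint of the two endpoints, or $c_1-1$, or $c_m+1$, or $0$, as appropriate --- and any element of $I\cap M$ is then a witness of $\varphi$ in $M$. This completes the reduction and hence the proof of quantifier elimination.

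The one step that requires algebra rather than bookkeeping is the intermediate value property over an arbitrary real closed field, and the main obstacle is to obtain it cleanly. I would derive it from Proposition~\ref{ArtinSchreier}: from $\Gal(N)\cong\mathbb{Z}/2\mathbb{Z}$ it follows that $N[\sqrt{-1}]$ is algebraically closed, hence every positive element of $N$ is a square and every odd-degree polynomial over $N$ has a root; a factoring argument then shows that any polynomial over $N$ taking values of opposite sign at $c<d$ has a root in the open interval $(c,d)$. Granting this, the two cases above complete the argument, and model completeness of the theory of real closed ordered fields follows.
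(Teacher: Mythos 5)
Your proof is correct and is essentially the standard quantifier-elimination argument for real closed ordered fields (reduce to a real closed substructure via uniqueness of real closures, then use that $M$ is relatively algebraically closed in $N$ together with sign-constancy of polynomials between their roots). The paper gives no independent argument but simply cites \cite[3.3.15, 3.3.16]{Marker}, where the same proof appears, so your proposal matches the paper's intended route; the only cosmetic remark is that the algebraic facts you extract via Proposition~\ref{ArtinSchreier} (positive elements are squares, factorization into linear and definite quadratic factors, the intermediate value property) could equally be quoted directly from \cite{Prestel1} as is done elsewhere in Appendix~\ref{app:real}.
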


\begin{proof}
See \cite[3.3.15, 3.3.16]{Marker}.
\end{proof}

\section{$p$-adically closed fields}
\label{app:padic}
  
\noindent
We recall the notion of $p$-adically closed fields
and quote some well known results from \cite{PR}
and some properties of the absolute Galois group
of a $p$-adically closed field.

A valuation $v$ on a field $K$ of characteristic zero
with residue field of characteristic $p>0$
and corresponding valuation ring $\mcO$
is a {\bf $p$-valuation of $p$-rank $d\in\mathbb{N}$}
if 
${\rm dim}_{\mathbb{F}_p}\mcO/p\mcO=d$.
We also say that the valued field $(K,v)$ is a {\bf $p$-valued} field.

The residue field $\bar{K}_v$ of a $p$-valued field $(K,v)$ is finite,
and the value group $v(K^\times)$ is discrete and $v(p)\in\mathbb{Z}$.
If $e=v(p)$ and $f=[\bar{K}_v:\mathbb{F}_p]$, then $d=ef$, \cite[p.~15]{PR}.
We call $(p,e,f)$ the {\bf type} of $(K,v)$.
Thus, if two $p$-valued fields have the same type, then they have the same $p$-rank.
If $L/K$ is an extension of $p$-valued fields,
then $L$ and $K$ have the same $p$-rank if and only if
they have the same type.
In that case, this type is also the type of each intermediate extension of $L/K$.

A $p$-valued field is {\bf $p$-adically closed} if it has no proper $p$-valued algebraic extension
of the same $p$-rank.
Every $p$-adically closed valued field $(K,v)$ has a unique $p$-valuation, \cite[6.15]{PR}.
We therefore also call $K$ $p$-adically closed.
A {\bf $p$-adic closure} of a $p$-valued field $(K,v)$ is an
algebraic extension of $(K,v)$ which is $p$-adically closed of the same $p$-rank as $(K,v)$.
A $p$-valued field $(K,v)$ is $p$-adically closed if and only if
it is Henselian and the value group $v(K^\times)$ is a $\mathbb{Z}$-group, \cite[3.1]{PR}.
Here, an ordered abelian group $\Gamma$ is a {\bf $\mathbb{Z}$-group}
if it is discrete and $(\Gamma:n\Gamma)=n$ for each $n\in\mathbb{N}$.
Any $p$-valued field $(K,v)$ has a $p$-adic closure.
A $p$-adic closure of $(K,v)$ is unique up to $K$-isomorphism 
if and only if $v(K^\times)$ is a $\mathbb{Z}$-group, \cite[3.2]{PR}.

\begin{Lemma}\label{algclosedpadic}
If a field is algebraically closed in a $p$-adically closed field $K$,
then it is $p$-adically closed of the same $p$-rank as $K$.
\end{Lemma}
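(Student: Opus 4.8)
The plan is to verify the criterion \cite[3.1]{PR} that a $p$-valued field is $p$-adically closed if and only if it is Henselian with value group a $\mathbb Z$-group, keeping track of the $p$-rank along the way. Let $v$ be the unique $p$-valuation of $K$, of $p$-rank $d=\dim_{\mathbb F_p}\mcO_v/p\mcO_v<\infty$, let $E$ be algebraically closed in $K$, and set $w:=v|_E$, so $\mcO_w=\mcO_v\cap E$. Three inputs are routine. First, if $x\in\mcO_w$ and $x/p\in\mcO_v$, then $x/p\in E\cap\mcO_v=\mcO_w$, so $p\mcO_v\cap\mcO_w=p\mcO_w$ and $\mcO_w/p\mcO_w\hookrightarrow\mcO_v/p\mcO_v$; hence $(E,w)$ is $p$-valued of $p$-rank $d'\le d$, and in particular $w(E^{\times})$ is discrete. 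Second, $(E,w)$ is Henselian: a polynomial $f\in\mcO_w[T]$ with a simple root modulo $\mathfrak m_w$ has, by Hensel's lemma in $K$, a root in $\mcO_v$ reducing to it, and this root, being a zero of $f\in E[T]$ lying in $K$, lies in $\mcO_v\cap E=\mcO_w$. Third, a Hensel lift $\omega\in\mcO_v$ of a generator of $(\mcO_v/\mathfrak m_v)^{\times}\cong\mathbb F_{p^{f}}^{\times}$ is a zero of $X^{p^{f}-1}-1\in\mathbb Q[X]\subseteq E[X]$ in $K$, hence $\omega\in E$, so $\mcO_w/\mathfrak m_w=\mcO_v/\mathfrak m_v$ has degree $f$ over $\mathbb F_p$.

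The substance is to show that $d'=d$ and that $w(E^{\times})$ is a $\mathbb Z$-group. Both follow — in exact parallel with the use of squares in the real-closed analogue \cite[3.13]{Prestel1} — from the principle that if $a\in E^{\times}$ and $v(a)=n\beta$ for some $n\ge 1$ and some $\beta\in v(K^{\times})$, then $\beta\in w(E^{\times})$. Applied with $a=p$ and $n$ the ramification index of $v$, the principle puts a uniformizer of $K$ into $E$, which gives $d'=d$ and that $w(E^{\times})$ and $v(K^{\times})$ have the same least positive element; applied in general it gives $w(E^{\times})\cap n\,v(K^{\times})=n\,w(E^{\times})$ for all $n$, which together with discreteness is the $\mathbb Z$-group property. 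To prove the principle, write $a=b^{n}u$ with $v(b)=\beta$ and $u\in\mcO_v^{\times}$, and reduce (via the coprime factorization of $n$) to the cases where $n$ is prime to $p$ — here $\mcO_v^{\times}/(\mcO_v^{\times})^{n}$ is a quotient of $\mu_{p^{f}-1}$, represented by powers of $\omega\in E$ — and where $n$ is a power of $p$ — here this quotient is detected at a finite level of the principal-unit filtration, whose successive quotients are spanned over $\mathbb F_p$ by monomials in $p$, a uniformizer, and Teichmüller representatives. In either case one may modify $b$ by a suitable $c\in\mcO_v^{\times}$ so that $a/u'=(bc)^{n}$ with $u'\in\mcO_w^{\times}$; then $bc$, being a zero of $T^{n}-a/u'\in E[T]$ lying in $K$, lies in $E$, and so $\beta=v(bc)\in w(E^{\times})$. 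Once Henselianity and the $\mathbb Z$-group property are established, \cite[3.1]{PR} yields that $(E,w)$ is $p$-adically closed, necessarily of $p$-rank $d'=d$, which is the $p$-rank of $K$.

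The step I expect to be the main obstacle is the last one, because of a mild circularity: the convenient description of $\mcO_v^{\times}/(\mcO_v^{\times})^{n}$ in the $p$-power case refers to a uniformizer, which is among the objects being produced. I would resolve this by a short bootstrap — first settle the case of $n$ prime to $p$, which needs only $\omega$; then handle $n=p$ using that $E$ already has a uniformizer for its own valuation and that $1+\mathfrak m_v^{N}\subseteq(1+\mathfrak m_v)^{p}$ for a computable $N$ — or, more economically, simply cite the corresponding facts from \cite{PR}, which are entirely internal to the theory of $p$-adically closed fields developed there.
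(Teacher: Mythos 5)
The paper proves this lemma by a single citation to \cite[3.4]{PR}, so your proposal takes a genuinely different and much more ambitious route: a self-contained proof via the criterion \cite[3.1]{PR}. Much of it is sound. The Henselianity of $(E,w)$, the embedding $\mcO_w/p\mcO_w\hookrightarrow\mcO_v/p\mcO_v$, the equality of residue fields via the Teichm\"uller representative $\omega$, the prime-to-$p$ case of your purity principle via Hensel's lemma (and relative algebraic closedness), and the derivation of the $\mathbb{Z}$-group property and $d'=d$ from the full purity principle are all correct.

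The gap, which you flag but do not close, is the purity principle for $n$ a power of $p$. Reducing it (as you do) to the surjectivity of $\mcO_w^{\times}\to\mcO_v^{\times}/(\mcO_v^{\times})^p$ is genuinely circular: if the least positive element of $w(E^{\times})$ were $m'\gamma_0$ with $m'>1$ (necessarily a $p$-power, by the prime-to-$p$ case), then every element of $\mcO_w^{\times}$ is, modulo a power of $\omega$, a principal unit $1+y$ with $v(y)\ge m'\gamma_0\ge 2\gamma_0$, and also $(1+\mathfrak{m}_v)^p\subseteq\{1+z:v(z)\ge 2\gamma_0\}$; so the class of $1+a\pi$ in $\mcO_v^{\times}/(\mcO_v^{\times})^p$, with $v(\pi)=\gamma_0$ and $a$ a nonzero Teichm\"uller representative, would \emph{not} be hit. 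Thus the surjectivity you need is equivalent to the very statement $m'=1$ you are trying to establish. Your proposed bootstrap does not break the cycle: the uniformizer of $E$'s own valuation has value $m'\gamma_0$, not $\gamma_0$, so it cannot supply the missing representatives of the principal-unit filtration. Closing this hole seems to require the Kochen-ring or embedding machinery of \cite{PR}, which in practice amounts to the paper's direct citation of \cite[3.4]{PR}.
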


\begin{proof}
See \cite[3.4]{PR}.
\end{proof}

\begin{Proposition}\label{varietypadic}
Let $V$ be a smooth absolutely irreducible variety over a $p$-adically closed field $K$.
If $V(K)\neq\emptyset$, then $V(K)$ is Zariski-dense in $V$.
\end{Proposition}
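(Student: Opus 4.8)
The plan is to use the implicit function theorem for the Henselian valued field $K$ together with a dimension count; this is the $p$-adic counterpart of Proposition~\ref{varietyreal}. Let $d=\dim V$ and fix $a\in V(K)$. Since $V$ is absolutely irreducible, it suffices to produce a Zariski-dense subset of $U(K)$ for some affine Zariski-open neighbourhood $U$ of $a$. As $V$ is smooth of dimension $d$, after shrinking $U$ there is a standard étale morphism $\pi\colon U\to\mathbb{A}^d_K$: thus $U$ is a basic open subset of a hypersurface $\{h=0\}\subseteq\mathbb{A}^{d+1}_K$ with $h$ monic in the last coordinate $t$ and $\partial h/\partial t$ invertible on $U$, and $\pi$ is the projection onto the first $d$ coordinates. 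Writing $a=(a',a_{d+1})$ with $a'\in K^d$, we have $h(a',a_{d+1})=0$ and $\delta:=(\partial h/\partial t)(a',a_{d+1})\neq 0$.

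The first step is the Hensel argument. The field $K$ is $p$-adically closed, hence Henselian with value group a $\mathbb{Z}$-group; let $\mathfrak{m}$ be the maximal ideal of its valuation ring, normalized so that the least positive value is $1$. For $N$ large and any $c$ with $c-a'\in(\mathfrak{m}^N)^d$, continuity of polynomial maps in the valuation topology gives that $v(h(c,a_{d+1}))$ is large while $v((\partial h/\partial t)(c,a_{d+1}))=v(\delta)$ stays bounded, so the strong form of Hensel's lemma yields $t_c\in K$ with $h(c,t_c)=0$ and $v(t_c-a_{d+1})$ large. For $N$ large the point $\omega_c:=(c,t_c)$ lies in the basic open set $U$ (again by continuity, as $a\in U$), and $\pi(\omega_c)=c$. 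Hence the box $B:=\prod_{i=1}^{d}(a'_i+\mathfrak{m}^N)\subseteq K^d$ admits a section $c\mapsto\omega_c$ into $U(K)$.

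The second step is that $B$ is Zariski-dense in $\mathbb{A}^d_K$: the value group is infinite, so $\mathfrak{m}^N$ contains elements of valuation $N,N+1,N+2,\dots$ and is in particular infinite, each factor $a'_i+\mathfrak{m}^N$ is therefore infinite, and a polynomial over $K$ vanishing on a product of $d$ infinite sets is zero. To conclude, let $Z$ be the Zariski closure of $\{\omega_c:c\in B\}$ inside $U$. Then $\overline{\pi(Z)}\supseteq\overline{B}=\mathbb{A}^d_K$, so $\dim Z\ge d=\dim U$, and since $U$ is irreducible this forces $Z=U$. Thus $V(K)\supseteq U(K)$ is Zariski-dense in $U$, hence in $V$. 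I expect the only point requiring care is making the implicit function theorem precise — the reduction to standard étale form, an affine coordinate change placing $a$ in the valuation ring, and the exact hypotheses of Hensel's lemma — everything else being elementary; alternatively one may simply cite the corresponding statement in \cite{PR}.
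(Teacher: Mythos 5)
Your argument is correct and gives a complete, self-contained proof: after reducing to a standard \'etale neighbourhood of a $K$-point $a$, you use Hensel's lemma (valid since a $p$-adically closed field is Henselian with value group a $\mathbb{Z}$-group) to lift a box $B=\prod_i(a'_i+\mathfrak{m}^N)\subseteq K^d$ to a subset of $U(K)$, observe that $B$ is Zariski-dense in $\mathbb{A}^d$ because the value group is infinite, and conclude by a dimension count that the closure of the lifted set is all of the irreducible affine open $U$. The paper, by contrast, dispenses with the proposition in one line by citing \cite[7.8]{PR} (mirroring the citation of \cite[7.10]{Prestel1} for the real closed analogue Proposition~\ref{varietyreal}). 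Your approach costs a bit more effort but makes transparent exactly which input is used -- Henselianity plus discreteness and infinitude of the value group -- and in particular shows that the same proof, verbatim, yields the real closed case (with the valuation replaced by an order and Hensel by the intermediate value theorem), whereas the paper treats the two as separate citations. The one place where your sketch needs care is the precise normalization before applying Hensel: the standard statement is for polynomials over the valuation ring with initial approximations in the ring, so one should perform the affine change of coordinates placing $a$ in $\mathcal{O}^{d+1}$ and, if necessary, clear denominators of $h$; you flag this explicitly at the end, and it is routine.
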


\begin{proof}
This follows for example from \cite[7.8]{PR}.
\end{proof}

\begin{Proposition}\label{QEpCF}
The $\mcL$-theory of $p$-adically closed valued fields of $p$-rank $d$ is model complete.
\end{Proposition}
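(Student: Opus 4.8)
The plan is to deduce model completeness from the relative model completeness of finitely ramified Henselian valued fields over their value groups, together with the model completeness of Presburger arithmetic. Throughout, $\mcL$ denotes the language of valued fields --- the language of rings augmented by a predicate for the valuation (say, the binary divisibility relation $x\mid y$, i.e.\ $v(x)\le v(y)$). First I would observe that ``$p$-adically closed of $p$-rank $d$'' is first-order axiomatizable in $\mcL$: being $p$-valued of $p$-rank $d$ is expressible (the value group is discrete, $p$ lies in the maximal ideal, and $\mcO/p\mcO$ has $\mathbb{F}_p$-dimension exactly $d$), Henselianity is a recursive axiom scheme, and ``value group is a $\mathbb{Z}$-group'' is a recursive axiom scheme; call the resulting theory $T_d$. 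By the Robinson test it then suffices to show that every $\mcL$-embedding $(K,v)\hookrightarrow(L,w)$ between models of $T_d$ is elementary.

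Given such an embedding, both $(K,v)$ and $(L,w)$ have $p$-rank $d$, hence the same type $(p,e,f)$ by the discussion of $p$-rank and type in this appendix; in particular they have the same finite residue field $\mathbb{F}_{p^{f}}$ and the same value-of-$p$ invariant $e$. The key structural input, which is the content of \cite[Ch.~3--5]{PR} (an Ax--Kochen--Ershov relative quantifier-elimination statement for finitely ramified Henselian valued fields), is: for two such valued fields with the same residue field and the same $e$, an $\mcL$-embedding is elementary as soon as the induced embedding of value groups $v(K^{\times})\hookrightarrow w(L^{\times})$ is elementary in the language of ordered abelian groups augmented by the congruence predicates $\equiv_{n}$ (equivalently, in Presburger's language); the residue-field sort contributes nothing, being a fixed finite field.

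Now both value groups are $\mathbb{Z}$-groups, i.e.\ models of Presburger arithmetic, and Presburger's theorem gives quantifier elimination for $\mathbb{Z}$-groups in that language, so that theory is model complete; moreover an $\mcL$-embedding of valued fields already induces an embedding of value groups that preserves the predicates $\equiv_{n}$ in both directions (this uses that both groups are $\mathbb{Z}$-groups). Hence $v(K^{\times})\hookrightarrow w(L^{\times})$ is elementary, and therefore so is $(K,v)\hookrightarrow(L,w)$. This proves $T_d$ model complete.

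The step I expect to be the main obstacle is the Ax--Kochen--Ershov relative model-completeness statement of the middle paragraph; proving it from scratch requires the full valuation-theoretic machinery --- analysing residue, ramified, and immediate one-generator extensions and running a back-and-forth over the value group --- which is precisely what is carried out in \cite[Ch.~3--5]{PR}, so I would simply cite it. A more hands-on alternative is to run this back-and-forth directly for $p$-adically closed fields: embed $(L,w)$ over $(K,v)$ into a sufficiently saturated elementary extension of $(K,v)$ by adjoining generators one at a time (residue steps are impossible since the residue field is a fixed finite field; ramified steps reduce, via Hensel's lemma and the $\mathbb{Z}$-group structure, to Presburger data; immediate steps are governed by pseudo-Cauchy sequences), taking care that $\aleph_1$-saturation of the target does \emph{not} suffice --- cofinalities in a $\mathbb{Z}$-group can be uncountable, so one should use an $|L|^{+}$-saturated elementary extension --- and that the $p$-rank invariant $(p,e,f)$ is preserved at every step.
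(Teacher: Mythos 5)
The paper's ``proof'' is a bare citation to Prestel--Roquette \cite[5.1, 5.2, 5.6, 5.4]{PR}, so there is no internal argument to compare against; your proposal is, in effect, an unpacking of the model-completeness theorem of \cite{PR} rather than an alternative to the paper's reasoning.

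Your argument is correct and is the standard Ax--Kochen--Ershov-style route: reduce model completeness, via the Robinson test, to showing that an $\mcL$-embedding between two $p$-adically closed valued fields of $p$-rank $d$ is elementary; observe that such an embedding automatically preserves the type $(p,e,f)$ (this is exactly the assertion in the appendix that an extension of $p$-valued fields has the same $p$-rank iff it has the same type, so the residue fields are literally the same finite field); quote the relative model-completeness statement for finitely ramified Henselian valued fields to reduce everything to the value group; and finish with Presburger's quantifier elimination for $\mathbb{Z}$-groups. Two small points worth being explicit about, both of which your sketch handles implicitly but would need to be spelled out in a careful write-up: (i) the induced map of value groups must send the least positive element to the least positive element, which follows because $v(p)$ is $e$ times the least positive element on both sides and the embedding is order-preserving and torsion-free, and (ii) the congruence predicates $\equiv_n$ are then preserved in both directions because in a $\mathbb{Z}$-group the residue of an element modulo $n$ is determined by its unique expression $n\delta + m\gamma_0$ with $0\le m<n$ and $\gamma_0$ the least positive element. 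Your closing remark about needing $|L|^+$-saturation rather than $\aleph_1$-saturation in the hands-on back-and-forth version is a genuine and worthwhile caution: the back-and-forth in \cite{PR} is carried out against sufficiently saturated models precisely for this reason, and the hands-on version is in fact closer to what Prestel--Roquette actually do than the packaged AKE citation. In short, the proposal is correct and, modulo the single nontrivial black box (the relative model completeness / embedding theorem for finitely ramified Henselian fields, which you rightly attribute to \cite{PR}), gives a complete proof of the statement the paper only cites.
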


\begin{proof}
See \cite[5.1, 5.2, 5.6, 5.4]{PR}.
\end{proof}

A {\bf $p$-adic field} is the completion of a $p$-valued number field,
i.e.~a finite extension of $\hat{\Q}_p$.
A $p$-adic field $F$ is $p$-adically closed of $p$-rank $[F:\hat{\Q}_p]$,
\cite[p.~21]{PR}.

\begin{Lemma}\label{elempadic}
Every $p$-adically closed field is elementarily equivalent to a $p$-adic field.
\end{Lemma}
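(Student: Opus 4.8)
The plan is to place the given $p$-adically closed field $K$ between its algebraic core and the completion of that core, and to apply the model completeness of Proposition~\ref{QEpCF} at both ends. The only point to keep in mind is that the theory of $p$-adically closed valued fields of fixed $p$-rank $d$ is \emph{model} complete but not complete, so one genuinely needs a common valued subfield that is itself a model of that theory.

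First I would pass to $K_0:=K\cap\tilde{\mathbb{Q}}$, the relative algebraic closure of $\mathbb{Q}$ inside $K$. It is algebraically closed in $K$, so by Lemma~\ref{algclosedpadic} it is $p$-adically closed of the same $p$-rank $d$ as $K$; write $(p,e,f)$ for its type, so that $d=ef$ with $f\geq 1$, whence $e\geq 1$. Since $K_0$ is algebraic over $\mathbb{Q}$, its value group embeds into $\mathbb{Q}$, and being the value group of a $p$-adically closed field it is discrete, hence a nontrivial cyclic subgroup of $\mathbb{Q}$; thus $v(K_0^\times)\cong\mathbb{Z}$, and after normalising the valuation we have $v(p)=e$ and residue field $\mathbb{F}_{p^f}$.

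Next I would take $F:=\hat{K}_0$, the completion of $(K_0,v)$. This is a complete discretely valued field of characteristic zero whose residue field is the finite field $\mathbb{F}_{p^f}$ and in which $v(p)=e$; by the structure theory of complete discretely valued fields it is therefore a finite extension of $\hat{\mathbb{Q}}_p=\mathbb{Q}_p$ of degree $ef=d$, i.e.\ a $p$-adic field. Hence $F$ is $p$-adically closed of $p$-rank $[F:\hat{\mathbb{Q}}_p]=d$.

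Finally, the canonical inclusions $K_0\hookrightarrow K$ and $K_0\hookrightarrow F$ are embeddings of $p$-adically closed valued fields of $p$-rank $d$, so by Proposition~\ref{QEpCF} both are elementary embeddings; therefore $K\equiv K_0\equiv F$, and $F$ is a $p$-adic field, as required. The only step calling for genuine care is the verification that $F=\hat{K}_0$ is indeed a $p$-adic field, i.e.\ that the completion of the algebraic core is finite over $\mathbb{Q}_p$; this comes down to $K_0$ having finite residue field and cyclic value group, which in turn uses that $K_0$ is algebraic over $\mathbb{Q}$ and of finite $p$-rank. All remaining steps are immediate from Lemma~\ref{algclosedpadic} and the model completeness of Proposition~\ref{QEpCF}.
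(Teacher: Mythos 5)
Your proof is correct, and it diverges from the paper's argument in one interesting place. Both proofs begin identically: pass to the relative algebraic closure $K_0 = K\cap\tilde{\Q}$, apply Lemma~\ref{algclosedpadic} to see that it is $p$-adically closed of the same $p$-rank $d$, and plan to sandwich both $K$ and a $p$-adic field as elementary extensions of $K_0$ via the model completeness of Proposition~\ref{QEpCF}. Where you differ is in producing the $p$-adic field. The paper introduces $L_0:=\tilde{\Q}\cap\hat{\Q}_p$, cites \cite[2.9]{PR} to get $[K_0:L_0]<\infty$, and then takes the compositum $F = K_0\hat{\Q}_p$ inside a fixed algebraic closure of $\hat{\Q}_p$; this requires tacitly choosing an embedding of the $p$-valued field $K_0$ into $\tilde{\hat{\Q}_p}$ extending the valuation. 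You instead observe that the value group of $K_0$, being a discrete subgroup of $\Q$, must be cyclic, that the residue field is $\mathbb{F}_{p^f}$, and then invoke the structure theory of complete discretely valued fields of mixed characteristic with finite residue field (Cohen rings / Witt vectors and an Eisenstein equation) to conclude that the completion $\hat{K}_0$ is finite of degree $ef=d$ over $\Q_p$. This is an honest alternative: it replaces the specific citation \cite[2.9]{PR} by classical DVR structure theory, it avoids the compositum and the implicit choice of embedding into $\tilde{\hat{\Q}_p}$, and the field you obtain is canonical in $K$. Your cautionary remark that $p\textrm{CF}_d$ is model complete but not complete — so that one genuinely needs a common model $K_0$ rather than just a common substructure — is exactly the point on which the whole argument turns, and is well taken.
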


\begin{proof}
Let $E$ be $p$-adically closed, and let $K=\tilde{\Q}\cap E$, $K_0=\tilde{\Q}\cap\hat{\Q}_p$.
By Lemma~\ref{algclosedpadic}, $K$ is $p$-adically closed of the same $p$-rank as $E$,
so $K_0=\tilde{\Q}\cap\hat{\Q}_p\subseteq K$.
Then $[K:K_0]<\infty$, c.f.~\cite[2.9]{PR}, so $F:=K\hat{\Q}_p$ is a $p$-adic field.
Since $K$ is algebraically closed in $F$, $K$ and $F$ have the same $p$-rank by Lemma~\ref{algclosedpadic}.
Therefore, $E\equiv K\equiv F$ by model completeness (Proposition~\ref{QEpCF}).
\end{proof}

\begin{Proposition}\label{Galpadic}
Let $K$ be $p$-adically closed. 
Then $\Gal(K)$ is fi\-nite\-ly generated and torsion-free.
\end{Proposition}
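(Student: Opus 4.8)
The plan is to reduce to the case of a genuine $p$-adic field and then transport the classical structure of the absolute Galois group of a local field along an elementary equivalence. This is the same mechanism already used in the proof of Lemma~\ref{CCsubfield}; to avoid circularity we feed it with the classical statement about $p$-adic fields rather than with Proposition~\ref{Galpadic} itself.

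Concretely, by Lemma~\ref{elempadic} the $p$-adically closed field $K$ is elementarily equivalent, in $\mcLr$, to a $p$-adic field $F$, i.e.\ a finite extension of $\hat{\Q}_p$. It is classical (Iwasawa) that $\Gal(F)$ is topologically finitely generated, so by \cite[20.4.6]{FJ3} --- which says that if $K\equiv K'$ and $\Gal(K')$ is finitely generated then $\Gal(K)\cong\Gal(K')$ --- we obtain $\Gal(K)\cong\Gal(F)$; in particular $\Gal(K)$ is finitely generated. For torsion-freeness one may simply quote that $\Gal(F)$ is torsion-free, or argue directly on $K$: since $\ch K=0$, Artin--Schreier (Proposition~\ref{ArtinSchreier}) shows that every nontrivial finite subgroup of $\Gal(K)$ has order $2$, so it suffices to exclude an element $\tau\in\Gal(K)$ of order $2$; its fixed field $E$ would satisfy $\Gal(E)\cong\mathbb{Z}/2\mathbb{Z}$, hence be real closed by Proposition~\ref{ArtinSchreier}, hence formally real, so that $K\subseteq E$ would be formally real and $-1$ not a sum of squares in $K$. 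But $-1$ is a sum of finitely many squares in the $p$-adic field $F$ (which contains $\hat{\Q}_p$ and is therefore not formally real), and this is expressed by an $\mcLr$-sentence that then also holds in $K\equiv F$ --- a contradiction.

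The only step that is not pure model-theoretic bookkeeping is the finite generation of $\Gal(F)$ for a $p$-adic field $F$, and this I would cite rather than reprove; everything else rests on the model-completeness of $p$-adically closed valued fields (Proposition~\ref{QEpCF}, used through Lemma~\ref{elempadic}) and on \cite[20.4.6]{FJ3}. A self-contained argument is possible but amounts to reproving that classical theorem: the $p$-valuation of $K$ is Henselian with finite residue field and $\mathbb{Z}$-group value group, so the tame quotient of $\Gal(K)$ is topologically generated by two elements, while the wild (pro-$p$) part is finitely generated because $K^\times/(K^\times)^p$ is finite --- the latter being a first-order consequence of $K\equiv F$ --- and I would not carry this out here.
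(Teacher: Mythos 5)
You follow essentially the same route as the paper: reduce to a genuine $p$-adic field $F$ via Lemma~\ref{elempadic}, cite the classical finite generation of $\Gal(F)$, and transfer along the elementary equivalence by \cite[20.4.6]{FJ3}. For torsion-freeness your first option (simply quoting the classical fact for $\Gal(F)$) is exactly the paper's argument, which derives it from ${\rm cd}_l(\Gal(F))=2$ for every $l$, citing \cite{NSW}. Your alternative is a genuine and more elementary replacement for that cohomological-dimension step, and it is correct: if $\tau\in\Gal(K)$ generates a nontrivial finite subgroup, Proposition~\ref{ArtinSchreier} forces $\tau$ to have order $2$ with real closed fixed field $E\supseteq K$, so $K$ is formally real; on the other hand the level of $\hat{\Q}_p$, hence of any $p$-adic field $F\supseteq\hat{\Q}_p$, is at most $4$, so the $\mcLr$-sentence "$-1$ is a sum of four squares" holds in $F$ and therefore in $K\equiv F$, a contradiction. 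What this buys is that you avoid Galois cohomology entirely and only invoke Artin--Schreier and a first-order transfer already in play. The one imprecision worth tightening is that "$-1$ is a sum of finitely many squares" is not by itself a single $\mcLr$-sentence; you must fix a bound on the number of squares (four works uniformly over all $p$-adic fields), which is easily done.
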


\begin{proof}
First, if $K$ is a $p$-adic field, then $\Gal(K)$ is
finitely generated 
and ${\rm cd}_l(\Gal(K))=2$ for every $l$, \cite[7.4.1, 7.1.8(i)]{NSW},
so in particular it is torsion-free.
Since every $p$-adically closed field $K$ is elementarily equivalent
to a $p$-adic field $K_0$ (Lemma~\ref{elempadic}), 
and $\Gal(K_0)$ is finitely generated,
$\Gal(K)\cong\Gal(K_0)$ has all the asserted properties, cf.~\cite[20.4.6]{FJ3}.
\end{proof}

\begin{Proposition}[Neukirch-Pop-Efrat-Koenigsmann]\label{NPEK}
Let $K$ be $p$-adically closed,
and let $L$ be a field.
If ${\rm Gal}(K)\cong{\rm Gal}(L)$,
then $L$ is $p$-adically closed of the same type as $K$.
\end{Proposition}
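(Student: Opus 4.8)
The statement is the Galois-theoretic characterization of $p$-adically closed fields, a local analogue of Neukirch's theorem on number fields, and my plan is to prove it by reducing to the case of $p$-adic fields and then quoting the work of Pop, Efrat and Koenigsmann. First I would dispose of the possibility that $K$ is merely $p$-adically closed but not a $p$-adic field: by Lemma~\ref{elempadic} there is a $p$-adic field $K_0$ with $K\equiv K_0$, and since $\Gal(K_0)$ is finitely generated (Proposition~\ref{Galpadic}), \cite[20.4.6]{FJ3} yields $\Gal(K)\cong\Gal(K_0)$; moreover the type of a $p$-adically closed field is an elementary invariant (its residue field and value group being interpretable), so $K$ and $K_0$ have the same type. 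Hence $\Gal(L)\cong\Gal(K_0)$ with $K_0$ a $p$-adic field, and it suffices to show that such an $L$ is $p$-adically closed of the same type as $K_0$.

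For this reduced statement I would invoke the Galois characterization of $p$-adically closed fields, whose proof runs roughly as follows. Writing $G\cong\Gal(L)\cong\Gal(K_0)$, one first notes that $\ch(L)\neq p$ --- for instance because $\mathrm{cd}_p(G)=2$, whereas a field of characteristic $p$ has absolute Galois group of $p$-cohomological dimension at most $1$. Using the theory of $p$-rigid elements and of ($p$-)Henselian valuations (Koenigsmann, Engler-Koenigsmann, Efrat), one reconstructs from the abstract group $G$ a valuation $w$ on $L$ that is Henselian, has residue field finite of characteristic $p$, and value group a $\mathbb{Z}$-group; by \cite[3.1]{PR} this means precisely that $(L,w)$ is $p$-adically closed. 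Finally one reads off the type: the degree $n=ef$ is recovered from $\dim_{\mathbb{F}_p}H^1(G,\mathbb{F}_p)$ (equal to $n+1$, or $n+2$ if $\mu_p\subseteq L$), and the residue cardinality $q=p^f$ is recovered from the canonical tame quotient of $G$, in which a Frobenius lift $\sigma$ and a tame-inertia generator $\tau$ satisfy $\sigma\tau\sigma^{-1}=\tau^{q}$; hence $(p,e,f)$ agrees with the type of $K_0$.

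The reduction above is routine; the substance --- and the reason the proposition carries four names --- lies entirely in reconstructing a Henselian valuation on $L$ with the prescribed residue and value-group behaviour purely from the isomorphism type of $\Gal(L)$. The point demanding care is that no a priori hypothesis on $L$ (formally $p$-adic, of characteristic zero, carrying a distinguished valuation) may be assumed: all of this has to be extracted from $G$, which is precisely what the cited theorems accomplish. Accordingly, in the paper I would give this proposition with a proof consisting of a reference to \cite[3.1]{PR} together with the work of Pop, Efrat and Koenigsmann, rather than reproducing the argument.
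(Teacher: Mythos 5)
Your proposal is correct and follows essentially the same route as the paper: reduce to a $p$-adic field $K_0$ via Lemma~\ref{elempadic}, Proposition~\ref{Galpadic} and \cite[20.4.6]{FJ3}, then invoke Koenigsmann's Galois characterization \cite[Theorem 4.1]{Koenigsmannpadic} to conclude $L$ is $p$-adically closed, and finally observe that $\Gal(L)$ determines the type (for which the paper cites \cite[Lemma 1]{JardenRitter} rather than sketching the cohomological computation as you do). The extra detail you give on the internals of the cited theorems and on the type recovery is accurate but not part of the paper's short proof.
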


\begin{proof}
By Lemma~\ref{elempadic},
$K$ is elementarily equivalent to a $p$-adic field $K_0$,
and $\Gal(K)\cong\Gal(K_0)$ by Proposition~\ref{Galpadic} and \cite[20.4.6]{FJ3}.
By \cite[Theorem 4.1]{Koenigsmannpadic},
if $\Gal(K_0)\cong\Gal(L)$,
then $L$ is $p$-adically closed.
But $\Gal(L)$ determines the type of $L$,
see for example \cite[Lemma~1]{JardenRitter}.
\end{proof}

\section{Model theory of absolute Galois groups}
\label{app:absGal}

We now review how to translate statements about the inverse system $S(\Gal(F))$ of the absolute Galois group of a field into statements about the field itself.
Such a translation is claimed already in the unpublished \cite{CherlinDriesMacintyre2}, but without proof.
In fact, the careless reader might be tempted to assume that one can
prove \cite[Lemma 17]{CherlinDriesMacintyre2}
by giving an honest interpretation of the $\omega$-sorted structure $S(\Gal(F))^\omega$
in $F$, but experts think that this is actually impossible.
On the other hand, the abstract proof presented in \cite{Chatzidakisforking} is 
complete, but does not allow to conclude that the translation is recursive.
For this section, we fix $\mfS=\emptyset$ and let $\mathcal{L}_{\rm co}=\mathcal{L}_{{\rm co},\emptyset}$.

\begin{Definition}
A explained in \cite[\S1]{CherlinDriesMacintyre2} one can encode finite extensions of $F$ of degree $n$ by $n^3$-tuples $\mathbf{a}\in F^{n^3}$. 
We denote the extension defined by $\mathbf{a}\in F^{n^3}$ by $F_\mathbf{a}$. 
If $\mathbf{a}_1\in F^{n_1^3}$ and $\mathbf{a}_2\in F^{n_2^3}$ encode extensions of degree $n_1$ resp.\ $n_2$, then
an $F$-embedding of $F_{\mathbf{a}_1}$ into $F_{\mathbf{a}_2}$ is encoded by an $n_1n_2$-tuple $\mathbf{b}\in F^{n_1n_2}$.
\end{Definition}

\begin{Definition}
An {\bf admissible} sequence of length $\lambda\leq\omega$ and degrees $(n_i)_{i<\lambda}$ is a sequence 
$\mathbf{a}=\mathbf{a}_1\mathbf{a}_2\dots$ of elements of $F$ such that for $i<\lambda$, the tuple $\mathbf{a}_i$ encodes
\begin{enumerate}
\item a finite Galois extension $F_{\mathbf{a},i}$ of $F$ of degree at most $n_i$, 
\item an automorphism $\sigma_{\mathbf{a},i}\in{\rm Gal}(F_{\mathbf{a},i}/F)$,
\item a finite extension $F_{\mathbf{a},i}^*$ of $F$,
\item an embedding $\epsilon_{\mathbf{a},i}:F_{\mathbf{a},i}\rightarrow F_{\mathbf{a},i}^*$,
\item and for each $j=0,\dots,i$ an embedding $\epsilon_{\mathbf{a},j,i}:F_{\mathbf{a},j}^*\rightarrow F_{\mathbf{a},i}^*$,
\end{enumerate}
such that $F_{\mathbf{a},i}^*$ is the compositum of
all $\epsilon_{\mathbf{a},j,i}(\epsilon_{\mathbf{a},j}(F_{\mathbf{a},j}))$, $j\leq i$,
and
for $k\leq j\leq i$, $\epsilon_{\mathbf{a},i,i}={\rm id}_{F_{\mathbf{a},i}^*}$ and $\epsilon_{\mathbf{a},j,i}\circ\epsilon_{\mathbf{a},k,j}=\epsilon_{\mathbf{a},k,i}$.

We intentionally do not keep track of the size of the tuples in order to avoid an overload of notation.
We will also handle finite and infinite sequences rather sloppily.
\end{Definition}

\begin{Lemma}
For each $\lambda<\omega$ and $\mathbf{n}=(n_i)_{i<\lambda}$ there is an $\mathcal{L}_{\rm ring}$-formula
$\alpha_{\lambda,\mathbf{n}}$ such that for a tuple $\mathbf{a}$ of elements of $F$,
$F\models\alpha_{\lambda,\mathbf{n}}(\mathbf{a})$ iff $\mathbf{a}$ is admissible of length $\lambda$ and degrees $\mathbf{n}$.
\end{Lemma}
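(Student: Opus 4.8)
The plan is to unwind the definition of admissibility and to observe that, since $\lambda$ is finite and the degrees $n_i$ are fixed, it reduces to a finite Boolean combination of conditions each of which is expressible by an $\mathcal{L}_{\rm ring}$-formula in the parameter tuple $\mathbf{a}$, with all auxiliary quantifiers ranging over powers $F^m$ for $m$ bounded in terms of the $n_i$. Note first that the block $\mathbf{a}_i$ has length a fixed function of $n_0,\dots,n_i$: the field $F_{\mathbf{a},i}^*$, being forced to be the compositum of the images of the $F_{\mathbf{a},j}$ with $j\le i$, has degree at most $\prod_{j\le i}n_j$, so it too is encoded by a bounded tuple, and so are $\sigma_{\mathbf{a},i}$, $\epsilon_{\mathbf{a},i}$ and the $\epsilon_{\mathbf{a},j,i}$.

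I would then recall from \cite[\S1]{CherlinDriesMacintyre2} the explicit polynomial conditions behind the encoding. For a tuple $\mathbf{c}$ of the appropriate length, ``$\mathbf{c}$ is the structure-constant tuple of a commutative associative unital $F$-algebra of dimension $m$'' is a conjunction of polynomial equations in $\mathbf{c}$; ``that algebra is a field'' is $(\forall\mathbf{x},\mathbf{y}\in F^m)(\mathbf{x}\cdot\mathbf{y}=0\to\mathbf{x}=0\vee\mathbf{y}=0)$, where multiplication is read off $\mathbf{c}$; and ``$F_{\mathbf{c}}/F$ is Galois of degree $m$'' can be phrased as the existence of $m$ pairwise distinct $F$-algebra automorphisms, each encoded by a bounded tuple subject to polynomial conditions. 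Likewise ``$\mathbf{b}$ encodes an $F$-embedding $F_{\mathbf{a}_1}\to F_{\mathbf{a}_2}$'' and ``the composite of two encoded embeddings equals a third'' are cut out by polynomial equations on the encoding tuples. Since the Definition asks only for degree ``at most $n_i$'', the matching formula is the finite disjunction over $m\le n_i$ of the degree-$m$ versions. All of this is routine; the only thing to monitor is boundedness of the quantifiers, which holds because the $n_i$ are fixed.

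With these building blocks in hand, $\alpha_{\lambda,\mathbf{n}}(\mathbf{a})$ is the conjunction, over $i<\lambda$, of the formulas expressing clauses (1)--(5) for the block $\mathbf{a}_i$, together with the compatibility clauses $\epsilon_{\mathbf{a},i,i}=\mathrm{id}$, $\epsilon_{\mathbf{a},j,i}\circ\epsilon_{\mathbf{a},k,j}=\epsilon_{\mathbf{a},k,i}$ for $k\le j\le i$, and the requirement that $F_{\mathbf{a},i}^*$ be the compositum of the $\epsilon_{\mathbf{a},j,i}(\epsilon_{\mathbf{a},j}(F_{\mathbf{a},j}))$, $j\le i$. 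Since $\lambda$ and all $n_i$ are fixed, this conjunction is a single $\mathcal{L}_{\rm ring}$-formula, and the claimed equivalence is immediate from the construction.

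The clause deserving a separate word is the compositum condition. In the finite-dimensional setting it says that the $F$-subalgebra of $F_{\mathbf{a},i}^*$ generated by the images of the $F_{\mathbf{a},j}$, $j\le i$, is all of $F_{\mathbf{a},i}^*$; equivalently, that the $F$-linear span of the finitely many products of the (encoded) basis vectors of those images already spans $F_{\mathbf{a},i}^*$. Since the dimensions are bounded, this is the solvability of an explicit finite linear system over $F$, hence first-order. I therefore expect the main --- indeed the only --- obstacle to be bookkeeping: writing down, for each bounded-size piece of data, the polynomial conditions from \cite[\S1]{CherlinDriesMacintyre2} and verifying that no unbounded quantifier sneaks in. No genuinely new idea is needed, and one may note in passing that the whole construction is uniform and recursive in $\lambda$ and $\mathbf{n}$, which will matter later for decidability.
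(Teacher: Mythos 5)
Your proposal is correct and fleshes out exactly the argument the paper leaves implicit: the paper's proof is simply ``This is clear, see also \cite[\S1]{CherlinDriesMacintyre2}.'' Your observation that all tuple lengths and quantifier ranges are bounded in terms of $n_0,\dots,n_{\lambda-1}$ (in particular $[F_{\mathbf{a},i}^*:F]\le\prod_{j\le i}n_j$), and that each clause of admissibility --- being an algebra, being a field, being Galois of degree $\le n_i$, being an embedding, composition identities, and the compositum/spanning condition --- reduces to a finite Boolean combination of polynomial conditions with bounded auxiliary quantification, is precisely the bookkeeping the paper declares clear.
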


\begin{proof}
This is clear, see also \cite[\S1]{CherlinDriesMacintyre2}.
\end{proof}

\begin{Definition}
Every $x\in S(\Gal(F))$ is of the form $gN$ for some open normal subgroup $N\lhd\Gal(F)$
and we denote by $F_x$ the finite extension of $F$ that is the fixed field of $N$,
and by $\sigma_x\in{\rm Gal}(F_x/F)$ the automorphism corresponding to $x$ under the natural isomorphism
$\Gal(F)/N\cong\Gal(F_x/F)$ induced by restriction.
\end{Definition}

\begin{Definition}\label{def:compatible}
A sequence $\mathbf{x}=(x_i)_{i<\lambda}$ of elements of $S(\Gal(F))$ and an admissible sequence $\mathbf{a}$ of length $\lambda$ 
are {\bf compatible} if there exist 
$F$-isomorphisms $\phi_i:F_{x_0}\cdots F_{x_i}\rightarrow F_{\mathbf{a},i}^*$
with $\phi_i(F_{x_i})=\epsilon_{\mathbf{a},i}(F_{\mathbf{a},i})$  -- 
so $\phi_i$ induces an isomorphism $\tilde{\phi}_i:F_{x_i}\rightarrow F_{\mathbf{a},i}$ --
and $\tilde{\phi}_i\circ\sigma_{x_i}= \sigma_{\mathbf{a},i}\circ\tilde{\phi}_i$ for all $i<\lambda$,
such that for $j\leq i$, $\phi_i|_{F_{x_0}\cdots F_{x_j}}= \epsilon_{\mathbf{a},j,i}\circ\phi_j$.
\end{Definition}

\begin{Lemma}\label{lem:compatible}
Let $\mathbf{x}$ and $\mathbf{a}$ be compatible sequences of length $\lambda<\omega$.
\begin{enumerate}
\item[(a)] For every $x_{\lambda}\in S(\Gal(F))$ there exists $\mathbf{a}'$ such that
$\mathbf{a}\mathbf{a}'$ is admissible of length $\lambda+1$ and $(x_0,\dots,x_\lambda)$ and $\mathbf{a}\mathbf{a}'$ are compatible.
\item[(b)] For every $\mathbf{a}'$ such that $\mathbf{a}\mathbf{a}'$ is admissible of length $\lambda+1$ there exists
 $x_{\lambda}\in S(\Gal(F))$ such that $(x_0,\dots,x_{\lambda})$ and $\mathbf{a}\mathbf{a}'$ are compatible.
\end{enumerate}
\end{Lemma}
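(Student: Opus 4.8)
The plan is to reduce both parts to elementary Galois theory and field-theoretic compositum constructions, exploiting that all extensions involved are finite. The key point throughout is that ``admissible'' and ``compatible'' are bookkeeping notions for a tower of finite Galois extensions together with chosen automorphisms and embeddings, so one simply has to produce the next floor of the tower on the appropriate side of the equivalence.

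For part~(a), suppose $\mathbf{x}$ and $\mathbf{a}$ are compatible of length $\lambda$, with $F$-isomorphisms $\phi_i\colon F_{x_0}\cdots F_{x_i}\to F_{\mathbf{a},i}^*$ as in Definition~\ref{def:compatible}. Given $x_\lambda\in S(\Gal(F))$, I would let $F_{\mathbf{a},\lambda}$ be (an abstract copy of) $F_{x_\lambda}$ and $\sigma_{\mathbf{a},\lambda}$ the image of $\sigma_{x_\lambda}$ under the chosen isomorphism; then take $F_{\mathbf{a},\lambda}^*$ to be a copy of the compositum $F_{x_0}\cdots F_{x_\lambda}$. The embeddings $\epsilon_{\mathbf{a},\lambda}$ and $\epsilon_{\mathbf{a},j,\lambda}$ ($j<\lambda$) are obtained by transporting the inclusions $F_{x_\lambda}\hookrightarrow F_{x_0}\cdots F_{x_\lambda}$ and $F_{\mathbf{a},j}^*\cong F_{x_0}\cdots F_{x_j}\hookrightarrow F_{x_0}\cdots F_{x_\lambda}$ (the latter using $\phi_j^{-1}$ on the source). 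One sets $\phi_\lambda$ to be this transport isomorphism; compatibility of $\phi_\lambda$ with the $\phi_j$ and the commutation $\tilde\phi_\lambda\circ\sigma_{x_\lambda}=\sigma_{\mathbf{a},\lambda}\circ\tilde\phi_\lambda$ hold by construction. Encoding $F_{\mathbf{a},\lambda}$, $F_{\mathbf{a},\lambda}^*$ and the various maps by tuples over $F$ as in \cite[\S1]{CherlinDriesMacintyre2} produces the desired $\mathbf{a}'$. Some care is needed because $F_{\mathbf{a},\lambda}$ must be encoded as an extension of degree at most $n_\lambda$; but $n_\lambda$ is the bound coming with the length-$(\lambda+1)$ data and there is no constraint tying it to $x_\lambda$ beyond $[F_{x_\lambda}:F]\le n_\lambda$, which one arranges in the statement (equivalently, one proves it for those $x_\lambda$ with $[F_{x_\lambda}:F]\le n_\lambda$, since $S(\Gal(F))$ is the ranked union of the $G_n$).

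For part~(b), suppose $\mathbf{a}\mathbf{a}'$ is admissible of length $\lambda+1$. Now I work inside a fixed separable closure $\tilde F$. The isomorphisms $\phi_i$ identify $F_{x_0}\cdots F_{x_\lambda_{-}}$ (writing $\lambda_-=\lambda-1$) with $F_{\mathbf{a},\lambda_-}^*$; using $\epsilon_{\mathbf{a},\lambda_-,\lambda}$ I get an $F$-embedding $\psi\colon F_{x_0}\cdots F_{x_{\lambda-1}}\to F_{\mathbf{a},\lambda}^*$, and since $F_{\mathbf{a},\lambda}$ is Galois over $F$, the embedding $\epsilon_{\mathbf{a},\lambda}$ lets me regard $F_{\mathbf{a},\lambda}$ inside $F_{\mathbf{a},\lambda}^*$ as a Galois subextension with a distinguished automorphism $\sigma_{\mathbf{a},\lambda}$. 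I then choose an $F$-embedding $\chi\colon F_{\mathbf{a},\lambda}^*\to\tilde F$ extending $\psi^{-1}$ composed with the inclusion into $\tilde F$; this exists because $\tilde F$ is a separable closure. Let $E=\chi(\epsilon_{\mathbf{a},\lambda}(F_{\mathbf{a},\lambda}))\subseteq\tilde F$, a finite Galois extension of $F$, and let $\sigma\in\Gal(E/F)$ be the transport of $\sigma_{\mathbf{a},\lambda}$ under $\chi\circ\epsilon_{\mathbf{a},\lambda}$. Pick any $g\in\Gal(F)$ restricting to $\sigma$ on $E$ and set $x_\lambda=gN$, where $N=\Gal(\tilde F/E)$, so that $F_{x_\lambda}=E$ and $\sigma_{x_\lambda}=\sigma$. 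Taking $\phi_\lambda$ to be (the inverse of) $\chi$ restricted appropriately then witnesses that $(x_0,\dots,x_\lambda)$ and $\mathbf{a}\mathbf{a}'$ are compatible; the compositum condition $F_{\mathbf{a},\lambda}^*=\prod_{j\le\lambda}\epsilon_{\mathbf{a},j,\lambda}(\epsilon_{\mathbf{a},j}(F_{\mathbf{a},j}))$ guarantees that $\chi$ is onto $F_{x_0}\cdots F_{x_\lambda}$, as needed for $\phi_\lambda$ to be an isomorphism.

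The main obstacle is essentially notational rather than conceptual: one must verify that the many encoded maps $\epsilon_{\mathbf{a},\cdot}$ and the field isomorphisms $\phi_i$ fit together coherently (the cocycle-type conditions $\phi_i|_{F_{x_0}\cdots F_{x_j}}=\epsilon_{\mathbf{a},j,i}\circ\phi_j$ and $\epsilon_{\mathbf{a},j,i}\circ\epsilon_{\mathbf{a},k,j}=\epsilon_{\mathbf{a},k,i}$), and that the compositum clause in the definition of admissibility matches the compositum $F_{x_0}\cdots F_{x_\lambda}$ on the group side. This is precisely the kind of routine but lengthy diagram-chasing that the authors warn they will ``handle rather sloppily,'' so I would present the construction of the new floor and then assert the verification of compatibility as a direct check, pointing to \cite[\S1]{CherlinDriesMacintyre2} for the encoding conventions.
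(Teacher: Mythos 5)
Your proposal is correct and follows essentially the same route as the paper's proof: in part (a) you transport the new floor $F_{x_\lambda}$, $\sigma_{x_\lambda}$ and the compositum to abstract encoded copies and pull back all the embeddings along the $\phi_i$, and in part (b) you extend $\phi_{\lambda-1}^{-1}$ to an embedding of $F_{\mathbf{a}\mathbf{a}',\lambda}^*$ into the algebraic closure (your $\chi$ is the paper's $\psi$) and read off $x_\lambda$ from the image of $\epsilon_{\mathbf{a}\mathbf{a}',\lambda}(F_{\mathbf{a}\mathbf{a}',\lambda})$. Your observation about the degree bound $n_\lambda$ in part (a) is a reasonable point that the paper leaves implicit, but it causes no difficulty since the degrees may be chosen freely when producing $\mathbf{a}'$.
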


\begin{proof}
 Let $\phi_i:F_{x_0}\cdots F_{x_i}\rightarrow F_{\mathbf{a},i}^*$ for $i<\lambda$ be as in Definition \ref{def:compatible}.
 
(a) 
Choose $\mathbf{a}'$ such that there are isomorphisms $\phi':F_{x_{\lambda}}\rightarrow F_{\mathbf{a}\mathbf{a}',\lambda}$
and $\phi_\lambda:F_{x_0}\cdots F_{x_{\lambda}}\rightarrow F_{\mathbf{a}\mathbf{a}',\lambda}^*$,
and $\epsilon_{\mathbf{a}\mathbf{a}',\lambda}=\phi_\lambda\circ\phi'^{-1}$,
$\epsilon_{\mathbf{a}\mathbf{a}',i,\lambda}=\phi_\lambda\circ\phi_i^{-1}$,
$\sigma_{\mathbf{a},\lambda}=\phi'\circ\sigma_{x_\lambda}\circ\phi'^{-1}$.

(b) Extend the isomorphism $\phi_{\lambda-1}^{-1}: F_{\mathbf{a},\lambda-1}^*\rightarrow F_{x_0}\cdots F_{x_{\lambda-1}}$ to an embedding
$\psi:F_{\mathbf{a}\mathbf{a}',\lambda}^*\rightarrow(F_{x_0}\cdots F_{x_\lambda})^{\tilde{}}$ with $\psi\circ\epsilon_{\mathbf{a}\mathbf{a}',\lambda-1,\lambda}=\phi_{\lambda-1}^{-1}$,
and choose $x_{\lambda}$ such that $F_{x_{\lambda}}=\psi(\epsilon_{\mathbf{a}\mathbf{a}',\lambda}(F_{\mathbf{a}\mathbf{a}',\lambda}))$
and $\sigma_{x_\lambda}$ corresponds to $\sigma_{\mathbf{a}\mathbf{a}',\lambda}$ under $\psi\circ\epsilon_{\mathbf{a}\mathbf{a}',\lambda}$.
\end{proof}

\begin{Definition}\label{def:coring}
We assign to each bounded $\mathcal{L}_{\rm co}$-formula $\varphi(v_0,\dots,v_{\lambda-1})$ and tuple
$\mathbf{n}=(n_i)_{i<\omega}$ an $\mathcal{L}_{\rm ring}$-formula $\varphi_{{\rm ring},\mathbf{n}}(\mathbf{u})$,
where $\mathbf{u}=\mathbf{u}_0\mathbf{u}_1\dots$, as follows:
\begin{enumerate}
\item If $\varphi$ is $v_i\leq v_j$, then $\varphi_{{\rm ring},\mathbf{n}}$ expresses
that $\alpha_{k+1,\mathbf{n}}(\mathbf{u})$, where $k=\max\{i,j\}$, and that $\epsilon_{\mathbf{u},i,k}(\epsilon_{\mathbf{u},i}(F_{\mathbf{u},i}))\supseteq\epsilon_{\mathbf{u},j,k}(\epsilon_{\mathbf{u},j}(F_{\mathbf{u},j}))$,
so that $\epsilon_{\mathbf{u}}$ induces an embedding $F_{\mathbf{u},j}\rightarrow F_{\mathbf{u},i}$.
\item If $\varphi$ is $v_i\sqsubseteq v_j$,
then $\varphi_{{\rm ring},\mathbf{n}}$ expresses
that 
$\alpha_{k+1,\mathbf{n}}(\mathbf{u})$, where $k=\max\{i,j\}$, and that
$\epsilon_\mathbf{u}$ induces an embedding $F_{\mathbf{u},j}\rightarrow F_{\mathbf{u},i}$,
and, under this embedding, $\sigma_{\mathbf{u},i}|_{F_{\mathbf{u},j}}=\sigma_{\mathbf{u},j}$.
\item If $\varphi$ is $P(v_{i_1},v_{i_2},v_{i_3})$, then $\varphi_{{\rm ring},\mathbf{n}}$ expresses that
$\alpha_{k+1,\mathbf{n}}(\mathbf{u})$, where $k=\max\{i_1,i_2,i_3\}$, and that
the three embeddings $\epsilon_{\mathbf{u},i_\nu,k}\circ\epsilon_{\mathbf{u},i_\nu}$,
$\nu=1,2,3$, induce isomorphisms between the $F_{\mathbf{u},i_\nu}$, and,
under these isomorphisms, $\sigma_{\mathbf{u},i_1}\circ\sigma_{\mathbf{u},i_2}=\sigma_{\mathbf{u},i_3}$.
\item If $\varphi$ is $v_i=v_j$, then $\varphi_{{\rm ring},\mathbf{n}}$ expresses that
$\alpha_{k+1,\mathbf{n}}(\mathbf{u})$, where $k=\max\{i,j\}$, and that
 $\epsilon_\mathbf{u}$ induces an isomorphism $\phi:F_{\mathbf{u},i}\rightarrow F_{\mathbf{u},j}$,
 and $\phi\circ\sigma_{\mathbf{u},i}=\sigma_{\mathbf{u},j}\circ\phi$.
\item If $\varphi$ is $v_i\in G_n$, then $\varphi_{{\rm ring},\mathbf{n}}$ 
expresses that 
$\alpha_{i+1,\mathbf{n}}(\mathbf{u})$ and that
$[F_{\mathbf{u},i}:F]\leq n$.
\item If $\varphi$ is of the form $\psi\wedge\eta$, then $\varphi_{{\rm ring},\mathbf{n}}$ is $\psi_{{\rm ring},\mathbf{n}}\wedge\eta_{{\rm ring},\mathbf{n}}$,
and if $\varphi$ is of the form $\neg\psi$, then $\varphi_{{\rm ring},\mathbf{n}}$ is $\neg\psi_{{\rm ring},\mathbf{n}}$.
\item If $\varphi$ is of the form $(\exists v_i\in G_n)(\psi)$ 
and $\lambda\geq i$ is minimal such that the free variables in $\psi$ are among $v_0,\dots,v_{\lambda}$,
then, after renumbering the variables we can assume that $i=\lambda$, and
 $\varphi_{{\rm ring},\mathbf{n}}(\mathbf{u})$ 
 is 
$$
 (\exists\mathbf{u}')(\alpha_{\lambda+1,\mathbf{n}'}(\mathbf{u}\mathbf{u}')\wedge \psi_{{\rm ring},\mathbf{n}'}(\mathbf{u}_0\dots\mathbf{u}_{\lambda-1}\mathbf{u}')),
$$ 
 where $\mathbf{n}':=(n_0,\dots,n_{\lambda-1},n)$.
\end{enumerate}
If $\Sigma$ is a ranked set of coformulas in the variables $(v_i)_{i<\lambda}$, we let 
$\mathbf{n}:=\mathbf{n}_\Sigma:=(n_i)_{i<\lambda}$ with $n_i$ minimal such that 
the formula $G_{n_i}(v_i)$ is in $\Sigma$, and let
$\Sigma_{\rm ring}$ consist of all $\alpha_{i,\mathbf{n}}$, for $i<\lambda$, and all
$\varphi_{{\rm ring},\mathbf{n}}$ where $\varphi\in\Sigma$.
\end{Definition}

\begin{Lemma}\label{lem:maincoring}
Let $\mathbf{x}$, $\mathbf{a}$ be compatible sequences of length $\lambda\leq\omega$ and degrees $\mathbf{n}$ and let
$\varphi(v_0,\dots,v_{\lambda-1})$ be a bounded $\mathcal{L}_{\rm co}$-formula. Then
$S(\Gal(F))\models\varphi(\mathbf{x})$ if and only if $F\models\varphi_{{\rm ring},\mathbf{n}}(\mathbf{a})$.
\end{Lemma}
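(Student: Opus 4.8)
The plan is to prove the statement by induction on the structure of the bounded $\mathcal{L}_{\rm co}$-formula $\varphi$, following verbatim the recursive definition of $\varphi_{{\rm ring},\mathbf{n}}$ in Definition \ref{def:coring}. Since $\varphi$ mentions only finitely many variables, and since compatibility of two sequences of length $\lambda\leq\omega$ restricts to compatibility of every initial segment, I would first reduce to the case $\lambda<\omega$ by discarding the variables that do not occur, so that $\mathbf{x}$ and $\mathbf{a}$ may be taken finite.

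For the atomic cases I would unwind both sides. Writing $x_i=g_iN_i$ with $N_i\lhd\Gal(F)$ open, so that $F_{x_i}$ is the fixed field of $N_i$ and $\sigma_{x_i}$ is the image of $g_i$ under the restriction isomorphism $\Gal(F)/N_i\cong\Gal(F_{x_i}/F)$, the relation $S(\Gal(F))\models G_n(x_i)$ becomes $(\Gal(F):N_i)\leq n$, i.e.\ $[F_{x_i}:F]\leq n$; the relation $v_i\leq v_j$ becomes $N_i\subseteq N_j$, i.e.\ $F_{x_j}\subseteq F_{x_i}$; and similarly for $\sqsubseteq$, $P$ and $=$, where one in addition reads off the stated conditions on the $\sigma_{x_i}$. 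I would then transport these conditions through the isomorphisms $\phi_k\colon F_{x_0}\cdots F_{x_k}\to F_{\mathbf{a},k}^*$ supplied by compatibility (with $k$ the maximum of the indices occurring), using the coherence $\phi_k|_{F_{x_0}\cdots F_{x_\ell}}=\epsilon_{\mathbf{a},\ell,k}\circ\phi_\ell$, the normalization $\phi_\ell(F_{x_\ell})=\epsilon_{\mathbf{a},\ell}(F_{\mathbf{a},\ell})$, and the intertwining $\tilde\phi_\ell\circ\sigma_{x_\ell}=\sigma_{\mathbf{a},\ell}\circ\tilde\phi_\ell$; the resulting statements about the data encoded by $\mathbf{a}$ are exactly the geometric clauses of $\varphi_{{\rm ring},\mathbf{n}}$. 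The admissibility conjunct $\alpha_{k+1,\mathbf{n}}$ appearing in each atomic $\varphi_{{\rm ring},\mathbf{n}}$ holds automatically because $\mathbf{a}$ is assumed admissible of length $\lambda\geq k+1$. The Boolean cases are then immediate from the induction hypothesis and clause (6) of Definition \ref{def:coring}.

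The heart of the argument is the existential quantifier, where I expect the bookkeeping to be most delicate. For $\varphi\equiv(\exists v_\lambda\in G_n)(\psi)$ after the renumbering of clause (7), with $\mathbf{n}'=(n_0,\dots,n_{\lambda-1},n)$: given a witness $x_\lambda$ with $S(\Gal(F))\models G_n(x_\lambda)\wedge\psi(x_0,\dots,x_\lambda)$, I would invoke Lemma \ref{lem:compatible}(a) to get $\mathbf{a}'$ of degree $n$ with $\mathbf{a}\mathbf{a}'$ admissible of length $\lambda+1$ and compatible with $(x_0,\dots,x_\lambda)$, and then apply the induction hypothesis to $\psi$ to conclude that $\mathbf{a}'$ witnesses $F\models\varphi_{{\rm ring},\mathbf{n}}(\mathbf{a})$; conversely, given such a witness $\mathbf{a}'$, Lemma \ref{lem:compatible}(b) supplies $x_\lambda$ with $(x_0,\dots,x_\lambda)$ compatible with $\mathbf{a}\mathbf{a}'$, while $S(\Gal(F))\models G_n(x_\lambda)$ because $\tilde\phi_\lambda$ is an isomorphism $F_{x_\lambda}\to F_{\mathbf{a}\mathbf{a}',\lambda}$ and $[F_{\mathbf{a}\mathbf{a}',\lambda}:F]\leq n$ by admissibility, after which the induction hypothesis for $\psi$ closes the loop. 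The main obstacle is purely notational — keeping the degree tuples $\mathbf{n},\mathbf{n}'$, the renumbering of variables, and the direction of the embeddings straight — since Lemma \ref{lem:compatible} is precisely engineered to furnish the back-and-forth needed, so no new ideas are required beyond a careful audit of the definitions.
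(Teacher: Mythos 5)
Your proof is correct and follows the same approach as the paper's (very terse) proof: induction on the structure of $\varphi$, with the atomic cases handled by unwinding compatibility through the $\phi_i$, the Boolean cases by clause (6), and the existential case by the back-and-forth of Lemma~\ref{lem:compatible}. Your preliminary reduction to $\lambda<\omega$ and your explicit bookkeeping of the degree tuple $\mathbf{n}'$ in the quantifier case are sensible elaborations of details the paper leaves implicit.
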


\begin{proof}
This is proven by case distinction according to Definition \ref{def:coring}:
In case (1)-(5), the claim follows immediately from the compatibility.
In case (6), the claim is trivial by induction.
In case (7), the claim follows by induction and Lemma \ref{lem:compatible}.
\end{proof}

\begin{Proposition}\label{prop:coring}
Let $\Sigma$ be a ranked set of coformulas in the variables $v_0,v_1,\dots$. 
Then $\Sigma$ is cosatisfied in $\Gal(F)$ if and only if $\Sigma_{\rm ring}$ is satisfied in $F$.
\end{Proposition}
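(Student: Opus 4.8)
The plan is to connect the purely combinatorial/model-theoretic cosatisfaction statement for $\Sigma$ in the inverse system $S(\Gal(F))$ with the first-order satisfaction of $\Sigma_{\rm ring}$ in $F$, by pairing sequences of elements of $S(\Gal(F))$ with admissible sequences of field-theoretic data. First I would unwind the definitions: $\Sigma$ is ranked, so for each variable $v_i$ occurring in $\Sigma$ there is a minimal $n_i$ with $G_{n_i}(v_i)\in\Sigma$, giving the tuple $\mathbf{n}=\mathbf{n}_\Sigma$; and $\Sigma_{\rm ring}$ consists of the admissibility formulas $\alpha_{i,\mathbf{n}}$ together with the translations $\varphi_{{\rm ring},\mathbf{n}}$ for $\varphi\in\Sigma$. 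The key technical fact is Lemma~\ref{lem:maincoring}: for compatible sequences $\mathbf{x}$ in $S(\Gal(F))$ and $\mathbf{a}$ in $F$ of the same length and degrees $\mathbf{n}$, one has $S(\Gal(F))\models\varphi(\mathbf{x})$ iff $F\models\varphi_{{\rm ring},\mathbf{n}}(\mathbf{a})$.

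For the forward direction, suppose $\Sigma$ is cosatisfied in $\Gal(F)$, witnessed by elements $x_i\in S(\Gal(F))$, $i<\lambda$. I would build, by recursion on $i<\lambda$ using Lemma~\ref{lem:compatible}(a), an admissible sequence $\mathbf{a}=\mathbf{a}_0\mathbf{a}_1\dots$ of degrees $\mathbf{n}$ such that $(x_j)_{j\le i}$ and $\mathbf{a}|_{i+1}$ are compatible for every $i$. (The base case $\lambda$ finite is immediate; the case $\lambda=\omega$ is handled by taking the union, since compatibility is a condition on finite initial segments.) By construction $F\models\alpha_{i,\mathbf{n}}(\mathbf{a})$ for all $i$, and by Lemma~\ref{lem:maincoring} applied to each $\varphi\in\Sigma$ we get $F\models\varphi_{{\rm ring},\mathbf{n}}(\mathbf{a})$. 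Hence the tuple $\mathbf{a}$ witnesses that $\Sigma_{\rm ring}$ is satisfied in $F$. For the converse, suppose $\mathbf{a}$ witnesses satisfaction of $\Sigma_{\rm ring}$ in $F$. Since all $\alpha_{i,\mathbf{n}}\in\Sigma_{\rm ring}$, the sequence $\mathbf{a}$ is admissible of degrees $\mathbf{n}$, and now I would run Lemma~\ref{lem:compatible}(b) recursively to produce a sequence $\mathbf{x}=(x_i)_{i<\lambda}$ in $S(\Gal(F))$ compatible with $\mathbf{a}$; again Lemma~\ref{lem:maincoring} transfers $F\models\varphi_{{\rm ring},\mathbf{n}}(\mathbf{a})$ back to $S(\Gal(F))\models\varphi(\mathbf{x})$ for all $\varphi\in\Sigma$, so $\Sigma$ is cosatisfied in $\Gal(F)$ via $\mathbf{x}$, with the ranking conditions $G_{n_i}(v_i)$ automatically satisfied because $[F_{\mathbf{x},i}:F]\le n_i$ follows from $\alpha_{i+1,\mathbf{n}}(\mathbf{a})$ via the bijection $[x_i]\cong\Gal(F_{x_i}/F)$.

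The main obstacle is bookkeeping in the $\lambda=\omega$ case: one must check that the recursively constructed infinite admissible sequence (resp.\ infinite sequence in $S(\Gal(F))$) is coherent, i.e.\ that the compatibility isomorphisms $\phi_i$ chosen at stage $i$ can be taken to extend those at earlier stages, and that passing to the colimit (the compositum of all the finite extensions, resp.\ the inverse limit defining $\Gal(F)$) does not destroy compatibility. This is exactly the content guaranteed by the transitivity conditions built into the definition of an admissible sequence ($\epsilon_{\mathbf{a},j,i}\circ\epsilon_{\mathbf{a},k,j}=\epsilon_{\mathbf{a},k,i}$) and into Definition~\ref{def:compatible} ($\phi_i|_{F_{x_0}\cdots F_{x_j}}=\epsilon_{\mathbf{a},j,i}\circ\phi_j$), so the argument goes through, but some care is needed to state the recursion cleanly. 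A minor additional point is to note that the degrees $\mathbf{n}$ used on both sides agree with $\mathbf{n}_\Sigma$, so that the same $\Sigma_{\rm ring}$ is being discussed throughout; this is automatic from the definition of $\Sigma_{\rm ring}$ in Definition~\ref{def:coring}.
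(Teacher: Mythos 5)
Your proposal is correct and follows essentially the same route as the paper: use Lemma~\ref{lem:compatible} iteratively to construct a compatible pair $(\mathbf{x},\mathbf{a})$, then invoke Lemma~\ref{lem:maincoring} to transfer satisfaction between $S(\Gal(F))$ and $F$. The additional care you take with the $\lambda=\omega$ bookkeeping and with verifying the ranking conditions $G_{n_i}(v_i)$ is accurate but is handled implicitly in the paper's shorter write-up.
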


\begin{proof}
Let $\mathbf{n}=\mathbf{n}_\Sigma$ as in Definition \ref{def:coring}.

If $\Sigma$ is cosatisfied in $\Gal(F)$ by $\mathbf{x}=(x_0,x_1,\dots)$, then $[F_{x_i}:F]\leq n_i$ for all $i$.
By applying Lemma \ref{lem:compatible} iteratively we
get an admissible sequence $\mathbf{a}$ of degrees $\mathbf{n}$ such that $\mathbf{x}$ and $\mathbf{a}$ are compatible.
In particular, $F\models\alpha_{i,\mathbf{n}}(\mathbf{a})$ for every $i$.
For every $\varphi\in\Sigma$, $S(\Gal(F))\models\varphi(\mathbf{x})$ implies $F\models\varphi_{{\rm ring},\mathbf{n}}(\mathbf{a})$
by Lemma \ref{lem:maincoring}, so $\Sigma_{\rm ring}$ is satisfied in $F$.

Conversely, if $\Sigma_{\rm ring}$ is satisfied by a sequence $\mathbf{a}$, then,
since $\Sigma_{\rm ring}$ contains all $\alpha_{i,\mathbf{n}}$, $\mathbf{a}$ is admissible of degrees $\mathbf{n}$.
Lemma \ref{lem:compatible} gives a sequence $\mathbf{x}$ in $S(\Gal(F))$ such that $\mathbf{x}$ and $\mathbf{a}$ are compatible.
For every $\varphi\in\Sigma$, $F\models\varphi_{{\rm ring},\mathbf{n}}(\mathbf{a})$ implies that $S(\Gal(F))\models\varphi(\mathbf{x})$
by Lemma \ref{lem:maincoring}, so $\Sigma$ is satisfied in $F$.
\end{proof}

\begin{Corollary}
There is a recursive map $\varphi\mapsto\varphi_{\rm ring}$ from bounded $\mathcal{L}_{\rm co}$-sentences to 
$\mathcal{L}_{\rm ring}$-sentences such that for any field $F$,
$S(\Gal(F))\models\varphi$ if and only if $F\models\varphi_{\rm ring}$.
\end{Corollary}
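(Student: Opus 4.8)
The plan is to read off the corollary as the special case $\Sigma=\{\varphi\}$ of Proposition~\ref{prop:coring}, after checking that the construction of Definition~\ref{def:coring} is recursive and behaves correctly for a single cosentence.

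First I would note that a bounded $\mathcal{L}_{\rm co}$-sentence $\varphi$ has no free variables, so in the notation of Definition~\ref{def:coring} one takes $\lambda=0$ and $\mathbf{n}=()$ the empty tuple, and sets $\varphi_{\rm ring}:=\varphi_{{\rm ring},()}$. Inspecting clauses (1)--(7) of Definition~\ref{def:coring} shows that $\varphi_{{\rm ring},()}$ has no free variables (the recursion feeds each subformula the parameter tuple matching its free variables, and at the top level that tuple is empty), so $\varphi_{\rm ring}$ is indeed an $\mathcal{L}_{\rm ring}$-sentence.

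Next I would apply Proposition~\ref{prop:coring} to $\Sigma=\{\varphi\}$. Since $\varphi$ has no free variables, $\Sigma$ is (vacuously) ranked, $\mathbf{n}_\Sigma=()$, and $\Sigma_{\rm ring}=\{\varphi_{\rm ring}\}$, because the family $\{\alpha_{i,\mathbf{n}}:i<\lambda\}$ is empty when $\lambda=0$. Unwinding the definition of cosatisfaction, $\Gal(F)$ cosatisfies $\{\varphi\}$ precisely when $S(\Gal(F))\models\varphi$, and $\Sigma_{\rm ring}$ is satisfied in $F$ precisely when $F\models\varphi_{\rm ring}$; so Proposition~\ref{prop:coring} yields $S(\Gal(F))\models\varphi$ if and only if $F\models\varphi_{\rm ring}$, for every field $F$.

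Finally, for the recursiveness of $\varphi\mapsto\varphi_{\rm ring}$ I would observe that clauses (1)--(7) of Definition~\ref{def:coring} define $\varphi_{{\rm ring},\mathbf{n}}$ by recursion on the structure of $\varphi$, and each step is plainly computable: the auxiliary formulas $\alpha_{\lambda,\mathbf{n}}$ are produced recursively by the lemma preceding Definition~\ref{def:coring}, and forming conjunctions, negations, and bounded existential quantifications is recursive, as is the recursion on formula complexity itself. There is essentially no obstacle here; the only point needing attention is verifying that the parametrized construction of Definition~\ref{def:coring} specializes cleanly to the empty parameter tuple, i.e.\ that it outputs genuine sentences rather than formulas carrying stray free variables.
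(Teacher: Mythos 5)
Your proposal is correct and takes exactly the same route as the paper, which simply states the corollary is the special case $\Sigma=\{\varphi\}$ of Proposition~\ref{prop:coring}. The additional verifications you supply (that $\mathbf{n}_\Sigma$ is empty for a sentence, that $\varphi_{{\rm ring},()}$ is again a sentence, and that the recursive definition in Definition~\ref{def:coring} is effectively computable) are all accurate and are implicitly what makes the paper's one-line proof go through.
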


\begin{proof}
This is the special case $\Sigma=\{\varphi\}$ of Proposition \ref{prop:coring}.
\end{proof}

\begin{Corollary}\label{cor:sat0}
If $F$ is $\aleph_1$-saturated, then $\Gal(F)$ is $\aleph_1$-cosaturated.
\end{Corollary}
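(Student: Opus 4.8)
The plan is to transfer the cosaturation demand on $\Gal(F)$ into a saturation demand on $F$ by means of Proposition~\ref{prop:coring}. Concretely, let $\Sigma$ be a countable ranked set of coformulas with parameters in $S(\Gal(F))$, every finite subset of which is cosatisfied in $\Gal(F)$; I must show that $\Sigma$ itself is cosatisfied in $\Gal(F)$.

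The first step is to translate $\Sigma$, parameters and all, to the ring side. Each parameter $c\in S(\Gal(F))$ appearing in $\Sigma$ is a coset $gN$ of an open normal subgroup $N\normal\Gal(F)$, hence amounts to a finite Galois extension $F_c/F$ together with an automorphism $\sigma_c\in\Gal(F_c/F)$, which is encoded by a fixed tuple $\mathbf{a}_c$ over $F$ as in Appendix~\ref{app:absGal}. Carrying out the construction of Definition~\ref{def:coring} with the variables arising from these parameters frozen to the tuples $\mathbf{a}_c$, Proposition~\ref{prop:coring} produces a countable set $\Sigma_{\rm ring}$ of $\mathcal{L}_{\rm ring}$-formulas whose parameters all lie in the countable set $A\subseteq F$ of entries of the $\mathbf{a}_c$, with the property that $\Sigma$ is cosatisfied in $\Gal(F)$ if and only if $\Sigma_{\rm ring}$ is satisfied in $F$; and the same equivalence holds for each finite subset of $\Sigma$ and its image, since $\Sigma$ is ranked and hence adjoining to a finite $\Sigma_0\subseteq\Sigma$, for each of its variables, a formula $G_n(v)\in\Sigma$ keeps it a finite subset of $\Sigma$. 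Since by hypothesis every finite subset of $\Sigma$ is cosatisfied in $\Gal(F)$, it follows that $\Sigma_{\rm ring}$ is finitely satisfiable in $F$.

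The second step is to realize $\Sigma_{\rm ring}$ in $F$: it is a finitely satisfiable set of at most countably many $\mathcal{L}_{\rm ring}(A)$-formulas, in at most countably many (tuple-)variables, over the countable parameter set $A$, so the $\aleph_1$-saturation of $F$ lets us realize it by treating the tuple-variables one after another --- at each stage one realizes the consistent type in finitely many variables over a countable parameter set obtained from the finitely satisfiable remainder of $\Sigma_{\rm ring}$ by existentially quantifying the still-unassigned variables. A realization of $\Sigma_{\rm ring}$ in $F$ then witnesses, through Proposition~\ref{prop:coring} again, that $\Sigma$ is cosatisfied in $\Gal(F)$, which is what we wanted. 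The one place that needs genuine care is the bookkeeping in the first step, namely checking that the translation $\Sigma\mapsto\Sigma_{\rm ring}$ behaves correctly in the presence of parameters and with respect to passing to finite subsets; the saturation argument of the second step is completely routine.
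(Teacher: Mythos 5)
Your proposal is correct and takes essentially the same approach as the paper: translate $\Sigma$ to $\Sigma_{\rm ring}$ via Proposition~\ref{prop:coring}, transfer finite cosatisfiability to finite satisfiability by enlarging a given finite subset with the ranking formulas $G_n(v)$ so that the translation of the finite subset agrees with the restriction of $\Sigma_{\rm ring}$, invoke $\aleph_1$-saturation of $F$, and translate back. Your extra care about encoding the parameters from $S(\Gal(F))$ as tuples over $F$ and about realizing a countable type in countably many variables one variable at a time is exactly the bookkeeping the paper leaves implicit.
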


\begin{proof}
Let $\Sigma$ be a countable ranked set of coformulas in the variables $v_0,v_1,\dots$ with parameters in $S(\Gal(F))$
for which every finite subset is cosatisfied in $\Gal(F)$,
and let $\mathbf{n}=\mathbf{n}_\Sigma$.
For a finite subset 
$$
 \Sigma_0'=\{(\varphi_1)_{{\rm ring},\mathbf{n}},\dots,(\varphi_m)_{{\rm ring},\mathbf{n}},\alpha_{i_1,\mathbf{n}},\dots,\alpha_{i_l,\mathbf{n}}\}\subseteq\Sigma_{\rm ring}
$$ 
choose $\lambda\geq\max\{i_1,\dots,i_l\}$ such that the free variables of the $\varphi_i$ are among $v_0,\dots,v_\lambda$, 
and let 
$$
 \Sigma_0:=\{\varphi_1,\dots,\varphi_m,G_{n_0}(v_0),\dots,G_{n_\lambda}(v_\lambda)\}.
$$
Since $(\varphi_i)_{{\rm ring},\mathbf{n}}=(\varphi_i)_{{\rm ring},\mathbf{n}_{\Sigma_0}}$ we see that $\Sigma_0'\subseteq(\Sigma_0)_{\rm ring}$,
so since $\Sigma_0$ is cosatisfied in $\Gal(F)$, Proposition \ref{prop:coring} gives that $(\Sigma_0)_{\rm ring}$, and hence $\Sigma_0'$ is satisfied in $F$.
Therefore, since $F$ is $\aleph_1$-saturated, $\Sigma_{\rm ring}$ is satisfied in $F$,
hence $\Sigma$ is cosatisfied in $\Gal(F)$, again by Proposition~\ref{prop:coring}.
\end{proof}

\section*{Acknowledgements}

\noindent
The author would like to thank Zo\'e Chatzidakis and Franziska Jahnke for helpful discussions concerning the model theory of profinite groups.
This work builds on Chapters 3 and 4 of the author's Ph.D. thesis \cite{AFDiss} supervised by Moshe Jarden at Tel Aviv University.
As such, it was supported by the European Commission under contract MRTN-CT-2006-035495.

\end{document}